\newcommand{\rd}{{\mathrm d}}
\newcommand{\cA}{\mathcal{A}}
\newcommand{\cF}{\mathcal{F}}
\newcommand{\cH}{\mathcal{H}}
\newcommand{\cI}{\mathcal{I}}
\newcommand{\cM}{\mathcal{M}}
\newcommand{\cP}{\mathcal{P}}
\newcommand{\cX}{\mathcal{X}}
\newcommand{\sA}{\mathscr{A}}
\newcommand{\sF}{\mathscr{F}}
\newcommand{\sI}{\mathscr{I}}
\newcommand{\sQ}{\mathscr{Q}}
\newcommand{\fn}{{\mathfrak n}}
\newcommand{\Z}{\mathbb{Z}}
\newcommand{\R}{\mathbb{R}}
\newcommand{\C}{\mathbb{C}}
\renewcommand{\H}{\mathbb{H}}
\newcommand{\tensor}{\otimes}
\renewcommand{\Re}{\mathrm{Re}}
\renewcommand{\Im}{\mathrm{Im}}
\newcommand{\Hom}{\mathrm{Hom}}
\newcommand{\End}{\mathrm{End}}
\newcommand{\bHom}{\overline{\mathrm{Hom}}}
\newcommand{\del}{\partial}
\newcommand{\delbar}{\overline\partial}
\newcommand{\vol}{\mathrm{vol}}
\newcommand{\Grad}{\nabla}
\newcommand{\loc}{\mathrm{loc}}
\newcommand{\id}{\mathrm{id}}
\newcommand{\Sp}{\mathrm{Sp}}
\newcommand{\SO}{\mathrm{SO}}
\newcommand{\GL}{\mathrm{GL}}
\newcommand{\U}{\mathrm{U}}
\newcommand{\Diff}{\mathrm{Diff}}
\newcommand{\Symp}{\mathrm{Symp}}
\numberwithin{equation}{section}
\theoremstyle{plain}
\newcommand{\mynewtheorem}[2]{
  \newaliascnt{#1}{dummy}
  \newtheorem{#1}[#1]{#2}
  \aliascntresetthe{#1}
  \expandafter\def\csname #1autorefname\endcsname{#2}
}
\theoremstyle{plain}
\theoremstyle{definition}
\theoremstyle{plain}
\title{Holomorphic Floer theory and the Fueter equation}
\author{Aleksander Doan, Semon Rezchikov}
\date{~\\ \small\today}
\begin{document}
\maketitle
 
 \abstract{We outline a proposal for a $2$-category $\mathrm{Fuet}_M$ associated to a hyperk\"ahler manifold $M$, which categorifies the subcategory of the Fukaya category of $M$ generated by complex Lagrangian submanifolds. Morphisms in this $2$-category are formally the Fukaya--Seidel categories of holomorphic symplectic action functionals. As such, $\mathrm{Fuet}_M$ is based on counting maps $[0,1]\times\R^2\to M$ satisfying the Fueter equation with boundary values on complex Lagrangians. 
 
 We make the first step towards constructing this category by establishing some basic analytic results about Fueter maps, such as an energy bound and a maximum principle. When $M=T^*X$ is the cotangent bundle of a Kähler manifold $X$ and $L_0, L_1$ are the zero section and the graph of the differential of a holomorphic function $F \colon X \to \C$, we prove that Fueter maps correspond to the complex gradient trajectories of $F$ in $X$, which relates our proposal to the Fukaya--Seidel category of $F$. This is a complexification of Floer's theorem on pseudo-holomorphic strips in cotangent bundles. 
 
 Throughout the paper, we suggest problems and research directions for  analysts and geometers that may be interested in the subject. }

\setcounter{tocdepth}{2}
\tableofcontents
\newpage

\section{Introduction}

This paper studies a complexification of Lagrangian Floer homology. Like its real counterpart, this theory arises from a $\sigma$-model: it involves the study of maps
\begin{equation*}
    U \colon E \to M
\end{equation*}
from a three-manifold $E$, which we will take to be
\begin{equation*}
    E = [0,1]\times \R^2,
\end{equation*}
to a manifold $M$ of real dimension $4n$ equipped with an almost quaternionic structure $(I,J,K)$, that is: a triple of almost complex structures satisfying the quaternionic relation $IJ = K$. The analog of the pseudo-holomorphic map equation used in Lagrangian Floer theory,
\begin{equation}
\begin{gathered}
    u \colon \R\times[0,1] \to M, \\
    \label{Eq_PseudoHolomorphicIntroduction}
      \partial_t u + J(u)\partial_\tau u = 0,
\end{gathered}
\end{equation}
where $(t,\tau)$ are the coordinates on $\R\times[0,1]$, is the \emph{Fueter equation}:
\begin{equation}
\label{Eq_FueterIntroduction}
\begin{gathered}
    U \colon [0,1]\times\R^2 \to M, \\
     I(U)\partial_\tau U + J(U)\partial_s U + K(U)\partial_t U = 0,
\end{gathered}
\end{equation}
where $(\tau,s,t)$ are the coordinates on $[0,1]\times\R^2$. This equation is perhaps the simplest higher-dimensional generalization of the pseudo-holomorphic map equation which shares its good analytical properties. As such, many analytical aspects of the Fueter equation are tractable and can be developed systematically by analogy with pseudo-holomorphic maps, although some core questions—most importantly, in compactness theory—remain challenging  \cite{Hohloch2009, Walpuski2017c}. On the other hand, due to the kinship of the Fueter equation with the pseudo-holomorphic map equation, it conjecturally gives rise to a host of algebraic structures generalizing the well-known structures appearing in symplectic topology, with rich connections to low-dimensional topology, representation theory, and mathematical physics. The goal of this paper is to explore some of these ideas and connections.

The Fueter equation has been previously studied from many different perspectives. We list some of them below:  
\begin{itemize}
    \item The linear variant of the Fueter equation, with $M$ a quaternionic vector space, is the $3d$ massless Dirac equation. Solutions to the Dirac equation on $\R^3$ are known to obey various quaternionic analogs of phenomena in complex analysis \cite{Fueter1935}. Indeed, finding a quaternionic analog of holomorphic maps was the original motivation for Fueter's paper.
    \item Solutions to the Fueter equation whose domain $E$ is a closed, framed three-manifold are critical points of a functional on the space of maps $E \to M$. Gradient trajectories of this functional are solutions to the four-dimensional version of the Fueter equation for maps $E \times \R \to M$. When $M$ is a flat hyperkähler manifold, the moduli spaces of solutions to these equations can be used to build an invariant of $M$ analogous to Hamiltonian Floer homology \cite{Hohloch2009}.
    \item The Fueter equation arises naturally as a degeneration limit of various equations in gauge theory, such as the instanton equation on Riemannian manifolds with special holonomy \cites{Donaldson1998, Donaldson2011, Haydys2012, Walpuski2017a, Walpuski2017b} and the generalized Seiberg--Witten equations on three- and four-dimensional manifolds \cite{Taubes2013, Taubes2015, Taubes2016, Haydys2015a, Walpuski2021}. In both  situations, $M$ is the hyperkähler quotient of a hyperkähler manifold by an action of a compact Lie group, a typical example being the moduli space of framed instantons on $\R^4$. There is also a parallel picture for calibrated submanifolds of special holonomy manifolds. 
    \item Maps satisfying the Fueter equation appear as a result of applying supersymmetric localization to the physical 3d $N=4$ $\sigma$-model. The corresponding result for the 4d N=2 theory, which reduces to the 3d N=4 theory, dates back at least to \cite{anselmi-fre}. As such, Fueter maps  are a mathematical incarnation of the topological field theory underlying 3d mirror symmetry—an active topic in mathematical physics and representation theory, which thus far has been treated largely from an algebraic perspective. 
\end{itemize}

Our motivations are closely aligned with the last point above, although we learned this physical perspective after thinking about the results of this paper and much of the algebraic structure indicated below from a purely mathematical viewpoint. Heuristically, one can arrive at many expectations regarding the Fueter equation by complexifying known results in symplectic Floer theory, with the proviso that
\begin{center}
    \emph{complexification categorifies symplectic invariants}.
\end{center}

Specifically, we study the analytic underpinnings of the complexification of a classical theorem of Floer on cotangent bundles \cite{Floer1988, Floer1988a}, which we now briefly review. Let $X$ be a closed manifold with a Morse function $f \colon X \to \R$. The cotangent bundle $T^*X$ is a symplectic manifold and the zero section and graph of $\rd f$ are two Lagrangian submanifolds, which we denote by $L_0$ and $L_1$. Floer proved that if the $C^2$ norm of $f$ is sufficiently small (which can be always guaranteed by rescaling $f$), then the Lagrangian Floer homology of $L_0$ and $L_1$ is canonically identified with the Morse homology of $f$:
\begin{equation}
    \label{Eq_MorseEqualsFloer}
    HF_*(L_0,L_1) = HM_*(X,f). 
\end{equation}
The two homology groups have the same generators, as the intersection points of $L_0$ and $L_1$ correspond to the critical points of $f$, so the proof consists in identifying the differentials. In Morse theory they are given by counting gradient trajectories of $f$, while in Floer homology by counting pseudo-holomorphic strips, that is: solutions to \eqref{Eq_PseudoHolomorphicIntroduction} with boundary on $L_0$ and $L_1$ and asymptotic to $L_0\cap L_1$. Floer proved \eqref{Eq_MorseEqualsFloer} by constructing a time-dependent almost complex structure $J_\tau$ on $T^*X$ compatible with the symplectic structure and with the property that $J_\tau$-holomorphic strips in $M=T^*X$ correspond to the gradient trajectories of $f$ in $X$. 

\emph{We prove a complexified version of Floer's correspondence.} On the Morse theory side, we assume that $X$ is a Kähler manifold and $F \colon X \to \C$ is a holomorphic function. Denote by $I$ the complex structure on $X$. Gradient trajectories in Floer's theorem are replaced by the \emph{complex gradient trajectories} of $F$, that is: maps $v \colon \R^2 \to X$ satisfying
\begin{equation*}
    \delbar_{I}(v) = i\nabla F(v)
\end{equation*}
or, in coordinates $(s,t)$ on $\R^2$: 
\begin{equation}
    \label{Eq_ComplexGradient}
    \partial_s v + I(v)(\partial_t v - \nabla\Re(F)(v)) = 0. 
\end{equation}
This equation is also known as the \emph{Witten equation} or \emph{$\zeta$-instanton equation} in the physics literature. Of course, it is the same as the non-homogenous perturbation of the $I$-holomorphic map equation by the Hamiltonian function $\Im(F)$, introduced by Floer. Solutions with $\partial_s v = 0$ are simply the real gradient trajectories of $\Re(F)$. 

On the Lagrangian side, we consider, as before, the cotangent bundle $M=T^*X$. The complex structure on $X$ induces one on $M$, and $M$ has a canonical holomorphic symplectic form $\Omega$. Let $L_0, L_1$ be the zero section and the graph of $\Re(\rd F)$. They are complex submanifolds of $M$ which are Lagrangian with respect to $\Omega$. Pseudo-holomorphic strips \eqref{Eq_PseudoHolomorphicIntroduction} are replaced by \emph{Fueter strips} in $M$. By definition, a Fueter strip is a map $U \colon [0,1]\times \R^2 \to M$ satisfying the Fueter equation \eqref{Eq_FueterIntroduction}, with boundary on $L_0$ and $L_1$, asymptotic to $L_0\cap L_1$ as $t\to\pm\infty$ and to pseudo-holomorphic strips as $s\to\pm\infty$. These asymptotic boundary  conditions are explained in detail in \autoref{Sec_FueterEquation}. 

\begin{theorem}
    \label{Thm_FloerCorrespondence}
    Let $X$ be a compact Kähler manifold with boundary and let $F \colon X \to \C$ be a holomorphic function. If $F$ is $C^2$ small, then there is a $\tau$-dependent almost quaternionic structure $(I_\tau,J_\tau,K_\tau)$ on $M=T^*X$ such that all Fueter strips
    \begin{gather*}
        U \colon [0,1] \times \R^2 \to M, \\
        I_\tau(U)\partial_\tau U + J_\tau(U)\partial_s U + K_\tau(U)\partial_t U = 0,
    \end{gather*}
    with boundary on $L_0$ and $L_1$, the zero section and the graph of $\Re(\rd F)$, correspond to the complex gradient trajectories of $F$ in $X$.
\end{theorem}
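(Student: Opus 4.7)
The plan is to adapt Floer's proof of the symplectic analog, constructing $(I_\tau, J_\tau, K_\tau)$ so that the Fueter equation becomes \emph{exactly equivalent} to the complex gradient equation along an explicit ansatz, then invoking a maximum principle to show the ansatz exhausts all Fueter strips.

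The construction uses pullback along a family of fiberwise translations: define $\phi_\tau \colon T^*X \to T^*X$ by $\phi_\tau(x, p) := (x, p + \tau \Re(\rd F)(x))$, which is a symplectomorphism with $\phi_0(L_0) = L_0$ and $\phi_1(L_0) = L_1$. Fix an almost quaternionic structure $(I_0, J_0, K_0)$ on a neighborhood of the zero section in $T^*X$, obtained for instance from the Feix--Kaledin hyperkähler metric, and normalized so that on the zero section $I_0$ restricts to the Kähler complex structure of $X$ (on both horizontal and vertical subbundles defined by the Kähler connection) while $J_0$ interchanges these two subbundles via the Kähler metric. Set $(I_\tau, J_\tau, K_\tau) := (\phi_\tau)_* (I_0, J_0, K_0)$. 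Under the change of variables $\tilde U := \phi_\tau^{-1} \circ U$, a Fueter strip $U$ with boundary on $(L_0, L_1)$ becomes a map $\tilde U$ with boundary on $L_0$ for all $\tau$, satisfying a perturbed Fueter equation with an additional zeroth-order term coming from $\partial_\tau \phi_\tau(\tilde U) = (0, \Re(\rd F)(x))$.

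Given a complex gradient trajectory $v \colon \R^2 \to X$, the ansatz is
\[
U(\tau, s, t) := \phi_\tau(v(s, t), 0) = \bigl(v(s, t),\, \tau \Re(\rd F)(v(s, t))\bigr),
\]
so that $\tilde U = (v, 0)$ lies on the zero section and $\partial_\tau \tilde U = 0$. A direct computation, using the normalization of $(I_0, J_0, K_0)$ at the zero section together with the Riemannian identification $(\Re \rd F)^\sharp = \nabla \Re(F)$, reduces the perturbed Fueter equation \emph{exactly} to the complex gradient equation $\partial_s v + I(v)(\partial_t v - \nabla \Re(F)(v)) = 0$. The asymptotic boundary conditions match by construction: intersection points of $L_0 \cap L_1$ correspond to critical points of $F$, and the pseudo-holomorphic strip asymptotics as $s \to \pm\infty$ correspond to real gradient trajectories of $\Re(F)$ via the classical Floer theorem.

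For the converse, decompose an arbitrary Fueter strip as $\tilde U = (u, \eta)$ and consider $\xi := \eta - \tau \Re(\rd F)(u)$. The $C^2$-smallness of $F$ combined with the energy bound and the maximum principle established earlier in the paper confines $\tilde U$ to a tubular neighborhood of the zero section whose radius shrinks with $\|F\|_{C^2}$; inside this neighborhood, the Fueter equation should force $\xi$ to satisfy an elliptic inequality of maximum-principle type, with vanishing boundary values at $\tau = 0, 1$ (from $\eta|_{\tau=0}=0$ and $\eta|_{\tau=1}=\Re(\rd F)(u)$) and vanishing asymptotic values as $s, t \to \pm\infty$, forcing $\xi \equiv 0$. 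It then follows from the normalization of $(I_0, J_0, K_0)$ that $u$ satisfies the complex gradient equation. The main obstacle is extracting the correct scalar inequality for $\xi$: in Floer's two-dimensional setup this reduces to a standard subharmonicity argument for a single fiber coordinate, but here the Fueter equation on the three-dimensional cylinder $[0, 1] \times \R^2$ couples $\xi$ to derivatives in all three directions and to the nonlinearities of the almost quaternionic structure. Deriving a scalar elliptic inequality of the correct sign---likely for an auxiliary quantity such as $|\xi|^2$---requires exploiting a delicate cancellation between the structure of $\phi_\tau$ and the normalization of $(I_0, J_0, K_0)$, and is where the special choice of the almost quaternionic structure must be used in full.
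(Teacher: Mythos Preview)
Your construction of the $\tau$-dependent almost quaternionic structure via pushforward along fiberwise translations is essentially the paper's approach (the paper phrases it as the Hamiltonian flow of $f\circ\pi$ on $T^*X$, which for a pulled-back Hamiltonian is exactly fiberwise translation by $\tau\,\rd f$). The ansatz direction is also correct. However, your converse argument misses the actual mechanism and introduces unnecessary detours.

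First, the appeal to the energy bound and maximum principle to confine $\tilde U$ near the zero section is a red herring: the paper's proof does not use either result here, and no a priori confinement is needed. Second, and more importantly, the paper does \emph{not} extract a scalar pointwise inequality for $|\xi|^2$; it uses an $L^2$ Weitzenb\"ock argument. After the change of variables (so that both boundary conditions are on the zero section and $\xi|_{\tau=0}=\xi|_{\tau=1}=0$), one writes $U=(v,\xi)$ and decomposes the Fueter equation into horizontal and vertical parts:
\[
\nabla_\tau\xi + I\,\partial_I v + \nabla f(v) = 0,\qquad
\partial_\tau v - I\,L_v^*\xi = 0,
\]
where $L_v$ is the linearized Cauchy--Riemann operator along $v$. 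Applying $\nabla_\tau^*=-\nabla_\tau$ to the first equation, using $\nabla_\tau\partial_I v = L_v\partial_\tau v$, and substituting the second equation yields
\[
\nabla_\tau^*\nabla_\tau\xi + L_vL_v^*\xi - I\,\nabla^2 f(v)\,L_v^*\xi = 0.
\]
Pairing with $\xi$ in $L^2$ and integrating by parts (the boundary terms vanish since $\xi=0$ at $\tau=0,1$ and $\xi$ decays at infinity) gives
\[
\int |\nabla_\tau\xi|^2 + |L_v^*\xi|^2 \;=\; \int \langle I\,\nabla^2 f(v)\,L_v^*\xi,\,\xi\rangle
\;\leq\; C\|f\|_{C^2}\int |L_v^*\xi|\,|\nabla_\tau\xi|,
\]
where the last step uses the Poincar\'e inequality $\|\xi\|_{L^2}\le C\|\nabla_\tau\xi\|_{L^2}$ coming from the vanishing of $\xi$ at $\tau=0,1$. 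For $\|f\|_{C^2}$ small the cross term is absorbed, forcing $\nabla_\tau\xi=0$ and hence $\xi\equiv 0$. This is the ``delicate cancellation'' you allude to, but it is an integral identity, not a pointwise one; the K\"ahler hypothesis enters precisely so that the vertical component of $\partial_I U$ equals $-L_v^*\xi$ without a torsion correction.
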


\begin{remark}
When restricted to the asymptotic boundary $s\to\pm\infty$, this result recovers Floer's correspondence between the real gradient trajectories of $\Re(F)$ and pseudo-holomorphic strips. 
\end{remark}

\begin{remark}
    While \autoref{Thm_FloerCorrespondence} concerns compact manifolds with boundary and arbitrary holomorphic functions, typically one considers the complex gradient equation for a holomorphic Morse function $F \colon \widehat X \to \C$ on a noncompact, complete Kähler manifolds (or, more generally, a symplectic Lefschetz fibration). Under appropriate assumptions on the growth of $F$ and the geometry of $\widehat X$ at infinity, all critical points and complex gradient trajectories of $F$ lie in a fixed compact subset $X\subset\widehat X$ \cite{Wang2022}, \cite{fan2018fukaya}. There is an analogous maximum principle for Fueter strips  discussed below. 
\end{remark}

\begin{remark}
    This result is a special case of \autoref{Thm_FloerCorrespondencePerturbed} about the cotangent bundles of almost complex manifolds. In the general case, the correspondence holds for a perturbation of the Fueter equation by first order terms which are proportional to the distance from the zero section and the torsion of a natural connection on the almost complex manifold. This perturbation term vanishes if and only if the manifold is Kähler.
\end{remark}

\autoref{Thm_FloerCorrespondence} is a complexification of Floer's result in the sense that a real Morse function and its gradient trajectories have been replaced by their complex analogs. At the same time, our theorem can be seen as the first steps toward categorifying Floer's isomorphism \eqref{Eq_MorseEqualsFloer}. This is best explained by the following three general predictions:
\begin{enumerate}
    \item For a holomorphic Morse function $F \colon X \to \C$ on an Kähler manifold or, more generally, a symplectic Lefschetz fibration, there should be an associated $A_\infty$-category $\mathrm{FS}(X,F)$ constructed using the complex gradient trajectories of $F$. This category is conjecturally quasi-isomorphic to the Fukaya--Seidel category of $F$ defined in \cite{Seidel2008} by counting pseudo-holomorphic polygons with boundaries on the Lefschetz thimbles of $F$.
    Summarizing the analogies with the real case:
    \begin{center}
        \begin{tabular}{ c|c } 
            Morse theory & Complex Morse theory \\
            \hline
            smooth manifold $X$ & Kähler manifold $X$ \\
            Morse function $f \colon X \to \R$ & holomorphic Morse function $F \colon X \to \C$ \\
            gradient trajectories of $f$ & complex gradient trajectories of $F$ \\
            Morse homology $HM_*(X,f)$ & Fukaya--Seidel category $\mathrm{FS}(X,F)$ 
        \end{tabular}
    \end{center}
    \item Let $(M,I,J,K)$ be a hyperkähler manifold (or, more generally, an almost quaternionic manifold with three symplectic forms). For every pair of complex Lagrangians $L_0, L_1 \subset M$, i.e. submanifolds which are holomorphic with respect to $I$ and Lagrangian with respect to the symplectic forms corresponding to $J$ and $K$, there should be an associated $A_\infty$-category $\mathrm{Fuet}(L_0,L_1)$ constructed using Fueter strips in $M$ with boundary on $L_0$ and $L_1$.
    Comparing again to the real case:
    \begin{center}
        \begin{tabular}{ c|c } 
          Floer theory & Complex Floer theory \\
            \hline
            symplectic manifold $M$ & hyperkähler manifold $M$ \\
            Lagrangians $L_0, L_1$ & complex Lagrangians $L_0, L_1$ \\
            pseudo-holomorphic strips in $M$ & Fueter strips in $M$ \\
            Floer homology $HF_*(L_0,L_1)$ & Fueter category $\mathrm{Fuet}(L_0,L_1)$  
        \end{tabular}
    \end{center} 
    
    \item We expect these two categories to be related by the following conjecture, which categorifies \eqref{Eq_MorseEqualsFloer}. We discuss it in detail in \autoref{sec:cotangent-bundles-conjecture}.
    \begin{conjecture}
    \label{conj:fuet-equals-fs-1}
        Let $F \colon X \to \C$ be a holomorphic Morse function on a Kähler manifold. If $M = T^*X$ and $L_0$, $L_1$ are the zero section and graph of $\Re(\rd F)$, then there is an equivalence of $A_\infty$-categories
        \begin{equation*}
            \mathrm{Fuet}(L_0,L_1) = \mathrm{FS}(X,F).
        \end{equation*}
    \end{conjecture}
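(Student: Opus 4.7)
The strategy is to bootstrap the pointwise correspondence of \autoref{Thm_FloerCorrespondence} to an equivalence of $A_\infty$-categories by routing through an intermediate ``complex Morse'' category $\mathrm{CM}(X,F)$, generated by $\Crit(F)$, whose $A_\infty$-operations count rooted trees of complex gradient trajectories \eqref{Eq_ComplexGradient} in $X$, in direct analogy with the Morse $A_\infty$-category of Fukaya--Oh. The generators of $\mathrm{Fuet}(L_0,L_1)$ and $\mathrm{CM}(X,F)$ match under the bijection $\Crit(F) = L_0 \cap L_1$ provided by \autoref{Thm_FloerCorrespondence}, and the differentials $\mu^1$ are already identified by that theorem.

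The next step is to extend the identification to higher operations. On the Fueter side, $\mu^k$ should count Fueter maps on half-planes with $k+1$ strip-like ends and boundary on $L_0$, $L_1$; on the complex Morse side, $\mu^k$ counts gradient trees in $X$. I would mimic the proof of \autoref{Thm_FloerCorrespondence}: the rescaling $F \mapsto \epsilon F$ forces Fueter solutions to localize near the zero section, and a normal-form analysis in cotangent coordinates, combined with a Kuranishi-type comparison of linearized operators along a complex gradient tree, should yield a diffeomorphism between the compactified moduli of Fueter polygons and the moduli of gradient trees. A uniformity argument in $k$, analogous to the standard $\epsilon$-small Morse/Fukaya correspondence, then matches the $A_\infty$-structures.

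Finally one identifies $\mathrm{CM}(X,F)$ with the Fukaya--Seidel category $\mathrm{FS}(X,F)$. This is a complexification of the Nadler--Zaslow equivalence between the Morse $A_\infty$-category of $X$ and the infinitesimal Fukaya category of $T^*X$; its closest existing incarnations are the comparisons between $\zeta$-instanton counts and Lefschetz thimble intersections developed in \cite{Wang2022} and \cite{fan2018fukaya}, which I would invoke (or, where necessary, extend). The conjecture then follows from the chain of quasi-isomorphisms
\begin{equation*}
    \mathrm{Fuet}(L_0,L_1) \simeq \mathrm{CM}(X,F) \simeq \mathrm{FS}(X,F).
\end{equation*}

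The hardest step will be the Fueter-side compactness theorem required to define $\mu^k$ for $k \ge 2$ and to verify the $A_\infty$-relations: one must rule out or stratify Fueter sphere bubbling and codimension-one non-strip degenerations, which is precisely the open problem flagged in \cite{Hohloch2009, Walpuski2017c}. A secondary but substantial obstruction is constructing a coherent system of abstract perturbations of the Fueter equation which is compatible with the strip-like ends (where it must reduce to a standard Floer perturbation on $T^*X$) and with the rescaling $F \mapsto \epsilon F$ used to force localization near the zero section, so that the bijection with gradient trees holds uniformly across all $k$. Without resolving both points, the equivalence can at best be established at the level of $H^*(\mu^1)$, recovering \autoref{Thm_FloerCorrespondence} but not the full categorical statement.
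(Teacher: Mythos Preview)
Your proposal rests on a mis-identification of the categorical structure on both sides. In the paper's framework, $\mathrm{Fuet}(L_0,L_1)$ is not a Floer-type complex generated by $L_0\cap L_1$ with $\mu^k$ counting Fueter ``polygons''; it is an $A_\infty$-\emph{category} whose \emph{objects} are the intersection points $p\in L_0\cap L_1$, whose \emph{morphism spaces} $\Hom(p_-,p_+)$ are generated by $J_\theta$-holomorphic strips between $p_-$ and $p_+$, and whose differential and higher operations count Fueter maps on three-dimensional domains asymptotic to such strips. Likewise $\mathrm{FS}(X,F)$, in the complex Morse theory model the paper adopts, has objects the critical points of $F$, morphisms generated by real gradient flowlines of $\Re(e^{-i\theta}F)$, and operations counting complex gradient trajectories with prescribed flowline asymptotics. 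There are no rooted trees anywhere in this picture: the Fukaya--Oh and Nadler--Zaslow analogies you invoke belong to the \emph{real} Morse story, one categorical level below, and your intermediate category $\mathrm{CM}(X,F)$ as described does not match either side of the conjecture.

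The paper's route is correspondingly more direct and requires no intermediate object. By definition $\mathrm{Fuet}(L_0,L_1)=\mathrm{FS}(\cP,\cA_\C)$ is the complex Morse theory model of the holomorphic action functional on path space. Floer's classical theorem identifies the morphism generators (holomorphic strips in $T^*X$ $\leftrightarrow$ gradient flowlines in $X$), and \autoref{Thm_FloerCorrespondence} identifies the differentials (Fueter strips $\leftrightarrow$ complex gradient trajectories of $F$). The conjecture is then expected to follow once both $A_\infty$-categories are rigorously constructed and the correspondence of \autoref{Thm_FloerCorrespondence} is extended to the higher operations, i.e.\ to Fueter maps on the more elaborate three-dimensional domains and their matching complex gradient trajectories; the paper points to adapting the Haydys/Wang model of $\mathrm{FS}(X,F)$ to the infinite-dimensional setting as the natural path. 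Your concerns about compactness and perturbation schemes for the Fueter moduli are well-placed and align with what the paper flags as the genuine obstacles, but the architecture of the argument should be the direct one above rather than a detour through gradient trees.
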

\end{enumerate}

The first prediction was made independently by Gaiotto--Moore--Witten \cite{Gaiotto2015,Gaiotto2017} and Haydys \cite{Haydys2015}, with some of the ideas going back to the seminal work of Donaldson and Thomas on higher-dimensional gauge theory \cite{Donaldson1998}. See also \cite{Kapranov2016} for a discussion of the algebraic structures arising from the Gaiotto--Moore--Witten proposal. The putative $A_\infty$ category $\mathrm{FS}(X,F)$ associated with a holomorphic function $F \colon X \to \C$ has:
\begin{itemize}
    \item Objects given by the critical points of $F$.
    \item Morphisms given by graded vector spaces generated by gradient flowlines between the critical points of $F$. That is, for two critical points $p_-$ and $p_+$, let $\theta$ be the slope of the line connecting the corresponding critical values. $\Hom(p_-,p_+)$ is the vector space whose basis are the maps $\gamma \colon \R \to X$ satisfying
    \begin{equation*}
        \frac{\rd\gamma}{\rd t} = \nabla \Re(e^{-i\theta}F), \quad \lim_{t \to \pm\infty}\gamma(t) = p_\pm.
    \end{equation*}
    \item The differential operation given by complex gradient trajectories \eqref{Eq_ComplexGradient} of $e^{-i\theta} F$ asymptotic to the gradient flowlines of $\Re(e^{-i\theta} F)$ as $s\to\pm\infty$ and to $p_\pm$ as $t\to\pm\infty$. The higher $A_\infty$-operations are also defined by counting complex gradient trajectories but with more complicated asymptotic boundary conditions.
\end{itemize}
If the critical values of $F$ are in a convex position, this construction is expected to recover the Fukaya--Seidel category of $F$. In general, one must either perform a complex algebraic operation on the counts of the complex gradient trajectories to define the $A_\infty$-operations \cites{Gaiotto2015, Kapranov2016}, or modify the construction to use curved gradient trajectories \cite{Haydys2015}. A rigorous construction of $\mathrm{FS}(X,F)$ involves solving a number of interesting analytical problems concerning the moduli spaces of complex gradient trajectories. Significant progress in this direction following \cite{Haydys2015} was made in \cite{Wang2022}.

The second prediction—that we can associate an $A_\infty$-category $\mathrm{Fuet}(L_0,L_1)$ with a pair of $I$-complex Lagrangians $L_0, L_1$ inside a hyperkähler manifold $(M,I,J,K)$— is an infinite-dimensional version of the first one. To our knowledge, such a proposal has not yet been described in the literature, although it is closely related to ongoing projects of Kontsevich and Soibelman \cite{Kontsevich2020} from an algebraic perspective, and Bousseau \cite{Bousseau2022} who proposed a number of interesting conjectures relating the Fueter $2$-category to Donaldson--Thomas theory. The idea of associating a Lagrangian $2$-category with a hyperkähler manifold have been previously considered in physics by Khan \cite{ahsan-khan}; see also Bullimoore--Gaiotto--Dimofte--Hilburn \cite[Appendix~A]{Bullimore2016} for connections with $3d$ mirror symmetry.

Given $M$ and $L_0, L_1$ as above, let $(\omega_I,\omega_J,\omega_K)$ be the triple of Kähler forms corresponding to $(I,J,K)$. The starting point of holomorphic Floer theory is the observation that the space of paths 
\begin{equation*}
    \cP = \{ \gamma \colon [0,1] \to M \ | \ \gamma(0) \in L_0, \ \gamma(1) \in L_1 \}
\end{equation*}
is an infinite-dimensional Kähler manifold with respect to the complex structure and metric induced by $I$ and $\omega_I$. Moreover, the \emph{holomorphic symplectic action functional}
\begin{equation}
\label{eq:complex-symplectic-action-functional-lagrangian}
    \cA_\C = \cA_J + i\cA_K \colon \cP \to \C
\end{equation}
combining the symplectic action functionals with respect to $\omega_J$ and $\omega_K$, is a holomorphic function on $\cP$. (We ignore here that $\cA_J$ and $\cA_K$ are actually multi-valued functions. For example, we can assume that $\omega_J, \omega_K$ are exact and $L_0,L_1$ are exact Lagrangians.) Therefore, we can formally apply the construction the Fukaya--Seidel category, outlined above, to $\cA_\C \colon \cP \to \C$. This is discussed in \autoref{Sec_CategoricalAspects}. The result should be an $A_\infty$-category $\mathrm{Fuet}(L_0,L_1)$ with
\begin{itemize}
    \item Objects given by the critical points of $\cA_\C$, which correspond to the intersection points of $L_0$ and $L_1$, as in Lagrangian Floer theory.
    \item Morphism spaces between two intersection points $p_-$ and $p_+$ generated by the gradient trajectories of $\Re(e^{-i\theta} \cA_\C)$ where $\theta$ is the slope of the line connecting the corresponding critical values of $\cA_\C$. As in Lagrangian Floer theory, these trajectories can be interpreted as $J_{\theta+\pi}$-holomorphic strips $u \colon \R\times[0,1] \to M$ with boundary on $L_0, L_1$, where 
    \begin{equation}
        \label{Eq_RotatedComplexStructure}
        J_\theta = (\cos\theta) J + (\sin\theta) K.
    \end{equation}
    \item The differential and $A_\infty$-operations defined by counting complex gradient flowlines of $e^{-i\theta}\cA_\C$. As we show in \autoref{Sec_CategoricalAspects}, these correspond to the Fueter strips $U \colon [0,1]\times\R^2 \to M$ with respect to the rotated quaternionic triple $(I,J_\theta,K_\theta)$, where $K_\theta = IJ_\theta$, with boundary on $L_0, L_1$, and asymptotic to $J_\theta$-holomorphic strips as $s\to\pm\infty$ and to the intersection points $p_\pm$ as $t\to \pm\infty$. 
\end{itemize}

This brings us to the third prediction. If $F \colon X \to \C$ is a holomorphic Morse function on a Kähler manifold, then $M = T^*X$ is a holomorphic symplectic manifold, and the zero section and the graph of $\rd(\Re F)$ are complex Lagrangians. While $M$ does not have to carry a complete hyperkähler metric, we expect that $\mathrm{Fuet}(L_0,L_1)$ is still defined by counting solutions to the Fueter equation with respect to the time-dependent almost quaternionic triple from \autoref{Thm_FloerCorrespondence}. If this is the case, \autoref{conj:fuet-equals-fs-1} would be a direct consequence of the correspondence between Fueter strips in $M$ and complex gradient flowlines in $X$ established by \autoref{Thm_FloerCorrespondence}.

In general, defining $\mathrm{Fuet}(L_0,L_1)$ is a challenging task, which involves a number of interesting analytical questions. Some of these foundational questions, such as transversality and compactness for the Fueter equation, are considered in \cite{Hohloch2009, Walpuski2017c, Esfahani2022}. Compactness theory, in particular, requires further study, as it is currently unclear what role, if any, is played by singular solutions to the Fueter equation. This is parallel to the recent developments in gauge theory and calibrated geometry, which suggest that such singular solutions are an unavoidable part of the theory \cite{Taubes2014,Takahashi2018, Doan2021, Donaldson2021, Haydys2022}. In this paper we do not tackle these foundational difficulties. We do, however, establish some basic analytic results on Fueter strips, mimicking well-known theorems in Lagrangian Floer theory. 

\begin{remark}
    The proof of  \autoref{Thm_FloerCorrespondence} excludes the possibility of the local sphere bubbling or the formation of essential singularities described in \cite{Walpuski2017c}, assuming that $X$ is exact and $F$ is such that its complex gradient trajectories obey an a priori $C^0$ bound. This is interesting as the corresponding metrics on $T^*X$ may have complicated curvature, and thus existing methods for excluding bubbling of Fueter maps do not apply to this setting. For more details, see  \autoref{remark:exclude-fueter-bubbling}.
\end{remark}

The first such result is an a priori energy bound for Fueter strips. Let $(M,I,J,K)$ be an almost quaternionic manifold and $L_0,L_1 \subset M$ be two submanifolds. The main examples are, of course, hyperkähler manifolds and complex Lagrangians, but the next two theorems work in greater generality. Let $U \colon [0,1] \times \R^2 \to M$ be a map with boundary on $L_0, L_1$. If $U$ is asymptotic to $J$-holomorphic strips as $s\to\infty$ in the sense explained earlier, then the integral of $|\rd U|^2$ cannot be finite. Instead, by analogy with pseudo-holomorphic strips, we introduce the following notion of energy:
\begin{equation}
\label{eq:energy-introduction}
    E(U) = \int_{[0,1]\times\R^2} |\partial_s U|^2 + |\partial_t U - J(U)\partial_\tau U|^2.
\end{equation}
To state the energy bound, we need two more notions. First, let $[U|_{\R^2}]$ be a homology class in $M$ relative to the limiting $J$-holomorphic strips, represented by the restriction of $U$ to any slice $\{\tau\}\times\R^2$; this is well-defined and independent of $\tau$. Second, let us say that a complex two-form $\eta$ on $M$ is  \emph{$KI$-taming} if it is of type $(2,0)$ with respect to $J$ and $\Re(\eta)$ tames $K$ and $\Im(\eta)$ tames $I$. 

\begin{theorem}[Energy bound]
    \label{Thm_EnergyIdentity}
    Let $(M,I,J,K)$ be an almost quaternionic manifold. Suppose that there is a closed $KI$-taming two-form $\eta$ on $M$ and $L_0, L_1 \subset M$ are Lagrangian with respect to $\Re(\eta)$. There is a constant $C>0$ such that for every Fueter strip $U \colon [0,1]\times \R^2 \to M$ with boundary on $L_0, L_1$,
    \begin{equation*}
        E(U) \leq -C \langle [\Im(\eta)], [U|_{\R^2}]\rangle,
    \end{equation*}
    where the right-hand side denotes the pairing between the second cohomology and homology of $M$ relative to the limiting $J$-holomorphic strips.
\end{theorem}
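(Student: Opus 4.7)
The approach is a Floer-type energy identity: I will decompose the integrand of $E(U)$ into a topological piece (pairing with $[\Im(\eta)]$) plus a pointwise nonnegative quantity equivalent to $|\partial_s U|^2$.

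\emph{Reducing the energy.} The Fueter equation rearranges to $\partial_\tau U = K\partial_s U - J\partial_t U$, and using $JK=I$ and $J^2=-1$ this gives $\partial_t U - J\partial_\tau U = -I\,\partial_s U$. Assuming the metric entering the definition of $E(U)$ makes $I,J,K$ isometries, the standard setup in almost quaternionic geometry, one then concludes
\begin{equation*}
    E(U) = 2\int_{[0,1]\times \R^2}|\partial_s U|^2.
\end{equation*}

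\emph{Pointwise identity and topological integral.} Substituting $\partial_t U = J\partial_\tau U - I\partial_s U$ and using the characteristic identity $\eta(X,JY) = i\,\eta(X,Y)$ of $(2,0)$-forms with respect to $J$ gives $\eta(\partial_s U,\partial_t U) = i\,\eta(\partial_s U,\partial_\tau U) - \eta(\partial_s U, I\,\partial_s U)$, whose imaginary part reads
\begin{equation*}
    \Im(\eta)(\partial_s U, \partial_t U) = \Re(\eta)(\partial_s U, \partial_\tau U) - \Im(\eta)(\partial_s U, I\,\partial_s U).
\end{equation*}
Closedness of $\eta$ makes $U^*\eta$ closed on $[0,1]\times\R^2$, so $\int_{\{\tau\}\times\R^2}U^*\Im(\eta)$ is $\tau$-independent and equals $\langle[\Im(\eta)], [U|_{\R^2}]\rangle$ by definition. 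A parallel Stokes argument on the slices $\Sigma_t = [0,1]\times\R\times\{t\}$ shows $\int_{[0,1]\times\R^2}\Re(\eta)(\partial_s U,\partial_\tau U)\,d\tau\,ds\,dt = 0$: the lateral boundary contributions vanish on the Lagrangian walls $\tau\in\{0,1\}$ by the hypothesis on $L_0, L_1$, at $s=\pm\infty$ because the asymptotic $J$-holomorphic strips $u$ satisfy $u_\tau=-Ju_t$ which forces $\Re(\eta)(u_\tau, u_t) = -\Re[i\,\eta(u_t,u_t)] = 0$, and at $t=\pm\infty$ because $U$ converges to intersection points.

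\emph{Conclusion and main obstacle.} Combining the two integral identities yields
\begin{equation*}
    \int_{[0,1]\times\R^2}\Im(\eta)(\partial_s U, I\partial_s U)\,d\tau\,ds\,dt = -\langle[\Im(\eta)],[U|_{\R^2}]\rangle,
\end{equation*}
and the $KI$-taming hypothesis supplies $c>0$ (uniform on a compact set containing the image of $U$) with $\Im(\eta)(X, IX)\ge c|X|^2$, so $E(U) = 2\int|\partial_s U|^2 \le -\tfrac{2}{c}\langle[\Im(\eta)],[U|_{\R^2}]\rangle$ with $C=2/c$. The analytically delicate point is making the Stokes arguments rigorous: this requires uniform asymptotic integrability of $U^*\eta$ at all three types of ends of the domain (Lagrangian walls, $J$-holomorphic strip limits, and intersection-point limits), which should follow the template of Floer's asymptotic convergence estimates for pseudo-holomorphic strips but must be built into the precise notion of ``Fueter strip'' used in \autoref{Sec_FueterEquation}.
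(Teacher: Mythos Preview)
Your proof is correct and follows essentially the same route as the paper's proof (Proposition~\ref{Prop_EnergyBound}): both split the energy integrand into a $\Re(\eta)$-piece on the $(\tau,s)$-slices (shown to vanish via the Lagrangian condition, the $J$-linearity of $\eta$ on the asymptotic strips, and degeneration to constants at $t\to\pm\infty$) and an $\Im(\eta)$-piece on the $(s,t)$-slices (identified with the topological pairing), then invoke the $KI$-taming inequality. Your version is slightly more explicit in that you first use the Fueter equation to rewrite $E(U)=2\int|\partial_s U|^2$ and obtain a pointwise identity before applying taming, whereas the paper applies the taming lemma $|A|_J^2\,\vol\le -cA^*\eta_{KI}$ directly; the content is the same.
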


\begin{remark}
    If $(M,I,J,K)$ is a hyperkähler manifold and $(\omega_I,\omega_J,\omega_K)$ is the corresponding triple of Kähler forms, then $\eta = \omega_K + i\omega_I$ is a $KI$-taming two-form, and  
    \begin{equation*}
        E(U) = - \langle [\omega_K], [U|_{\R^2}]\rangle.
    \end{equation*}
\end{remark}
 
The second result is a maximum principle for Fueter strips. In \autoref{Sec_Convexity} we introduce the notion of an almost quaternionic manifold with an $IJK$-conical end. This is a quaternionic analog of pseudo-convexity in complex geometry. We, moreover, define what it means for submanifolds to be conical with respect to two of the three almost complex structures. As an example, we show that the cotangent bundle of an almost complex manifold equipped with a pseudo-convex function can be made into an almost quaternionic manifolds with an $IJK$-conical end, with the zero section being $JK$-conical and $I$-complex Lagrangian. With these notions, we prove the following. 

\begin{theorem}[Maximum principle]
    \label{Thm_MaximumPrinciple}
    Let $(M,I,J,K)$ be an almost quaternionic manifold with an $IJK$-conical end and let $L_0, L_1 \subset M$ be submanifolds which are either compact or $JK$-conical with compact intersection $L_0 \cap L_1$. There exists a compact subset $M_0 \subset M$ with the following property. If $U \colon [0,1]\times\R^2 \to M$ is a Fueter strip with boundary on $L_0$ and $L_1$, then the image of $U$ is contained in $M_0$. 
\end{theorem}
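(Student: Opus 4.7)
The plan is to exhibit a quaternionic analog of a plurisubharmonic exhaustion function $\rho$ on the end of $M$, show that $\rho \circ U$ is weakly subharmonic on the three-dimensional domain $[0,1]\times\R^2$ for any Fueter strip $U$, and then apply the weak maximum principle with a Hopf-type boundary condition on $\{0,1\}\times\R^2$ together with bounded asymptotic data at $s, t \to \pm\infty$. By the definition of an $IJK$-conical end from \autoref{Sec_Convexity}, outside some compact core $K_0 \subset M$ there is a smooth proper exhaustion $\rho \colon M \setminus K_0 \to [c_0,\infty)$ whose (generalized) Hessian satisfies a positivity condition tailored to all three almost complex structures; the $JK$-conical hypothesis on $L_0, L_1$ plays the analogous Lagrangian-side role.

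The argument has three steps. First (interior subharmonicity), the chain rule gives schematically
\[
\Delta(\rho \circ U) = \Hess(\rho)(\partial_\tau U, \partial_\tau U) + \Hess(\rho)(\partial_s U, \partial_s U) + \Hess(\rho)(\partial_t U, \partial_t U) + (\text{first-order term}),
\]
and the Fueter equation expresses each of $\partial_\tau U, \partial_s U, \partial_t U$ as a rotation of the others via $I, J, K$ (for instance $\partial_\tau U = K(U)\partial_s U - J(U)\partial_t U$), so the three Hessian terms combine into an expression that is pointwise non-negative under $IJK$-conicality, while the first-order residue vanishes in the Kähler model and is to be absorbed into the definition. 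Second (boundary condition), on $\{\tau = i\}\times\R^2$ with $i \in \{0,1\}$ the derivatives $\partial_s U, \partial_t U$ are tangent to $L_i$, and the Fueter equation rewrites $\partial_\tau U$ on that face in terms of them; the $JK$-conicality of $L_i$ is designed precisely so that $d\rho(\partial_\tau U)$ has the correct sign for the Hopf lemma to apply in the conical region of $M$. Third (asymptotic bound), a Fueter strip $U$ converges to points of the compact set $L_0 \cap L_1$ as $t \to \pm\infty$ and to $J$-holomorphic strips $u_\pm \colon \R\times[0,1] \to M$ with boundary on $L_0, L_1$ as $s \to \pm\infty$, and the $u_\pm$ are bounded in $\rho$ by the classical Lagrangian Floer maximum principle applied to the $J$-components of the $IJK$- and $JK$-conical data. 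Combining these three ingredients via the weak maximum principle yields $\sup(\rho \circ U) \leq C$ for a constant $C$ depending only on $M, L_0, L_1$, and we take $M_0 := K_0 \cup \rho^{-1}([c_0, C])$.

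The main obstacle is the interior subharmonicity step: pinning down the correct linear-algebraic form of "quaternionic plurisubharmonicity" on $\Hess(\rho)$ so that, after re-expression via the Fueter equation, the sum of the three Hessian contributions is manifestly non-negative, and controlling the first-order error in the almost-quaternionic setting where the natural connection carries torsion. This is precisely what the definition of an $IJK$-conical end must accomplish, and the $JK$-conicality of the Lagrangians must simultaneously deliver the Hopf inequality on the boundary and the standard $J$-pluri-subharmonicity needed on the limiting holomorphic strips. Verifying that these compatibility conditions are satisfied by natural examples — in particular the zero section of $T^*X$ over a pseudo-convex Kähler $X$, as indicated in the introduction — is an independent technical check carried out in \autoref{Sec_Convexity}.
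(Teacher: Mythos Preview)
Your three-step strategy --- interior subharmonicity of $\rho\circ U$, Hopf-type boundary control via $JK$-conicality, and asymptotic boundedness from the limiting strips and intersection points --- is exactly the paper's approach. Two points are worth sharpening.

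First, your worry about a ``first-order error in the almost-quaternionic setting where the natural connection carries torsion'' is misplaced. The paper's definition of $IJK$-convexity is phrased directly in terms of the Levi forms $\sigma_\rho^I = -\rd(\rd\rho\circ I)$, $\sigma_\rho^J$, $\sigma_\rho^K$, which require no connection. The interior computation then goes via the differential-form identity $\ast\rd U + \cI\wedge\rd U = 0$ (the Fueter equation rewritten), giving
\[
\ast\rd(\rho\circ U) = U^*(\rd\rho\circ\cI), \qquad \Delta(\rho\circ U)\,\vol = -U^*\sigma_\rho \geq c|\rd U|^2\vol,
\]
with no residual first-order term to absorb. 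Your Hessian-based expansion would eventually reach the same conclusion but obscures this.

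Second, on the boundary $\{\tau=0,1\}$ the normal derivative is not merely of ``the correct sign'' but is \emph{exactly zero}: the Fueter equation gives $\partial_\tau U = K\partial_s U - J\partial_t U$, and $JK$-conicality means $\rd\rho$ vanishes on both $J(TL_i)$ and $K(TL_i)$, so $\rd\rho(\partial_\tau U)=0$. This directly contradicts the strict inequality $\rd r(n)>0$ from the Hopf lemma, forcing the maximum off the Lagrangian boundary. Your Step~3 is correct and in fact more explicit than the paper, which only remarks that the general theorem ``applied to Fueter strips'' yields the result; you are right that boundedness of the limiting $J$-antiholomorphic strips $u_\pm$ follows from the classical Lagrangian maximum principle, since $JK$-conical implies $J$-conical.
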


In symplectic geometry, pseudo-convex functions are often used to confine pseudo-holomorphic curves. In particular, due to the the Weinstein tubular neighborhood theorem, Floer's result on pseudo-holomorphic strips in cotangent bundles gives a description of local pseudo-holomorphic strips between two Lagrangians in any symplectic manifold in terms of Morse-theoretic, or topological data, and thus gives the first approximation to any invariant in Lagrangian Floer theory. We expect that the convexity theory discussed in this paper can be used similarly to compute the local Fueter trajectories between complex Lagrangians in a general hyperkähler manifold.

This completes the summary of the results of this paper. These analytical results are motivated by a rich set of expectations regarding the algebraic and categorical structures that should arise from the study of the Fueter equation. Indeed, Floer's classical theorem mentioned earlier is the basic input used to compute the Fukaya category of the cotangent bundle. The Fukaya category is an $A_\infty$-category associated with a symplectic manifold. From a physical perspective,  it is related to the $A$-twist of the $2$-dimensional $(2,2)$ superconformal $\sigma$-model with target $M$. Floer's isomorphism \eqref{Eq_MorseEqualsFloer} is the simplest computation of the morphism group between Lagrangians $L_0$, $L_1$, thought of as objects in the Fukaya category of $M=T^*X$. 

In the complex setting, all of this structure should be categorified. Namely, we expect that that if $M$ is a hyperkähler manifold (or some weak version of it, such as a holomorphic symplectic manifold or an almost quaternionic manifold equipped with three symplectic forms), the $A_\infty$-categories $\mathrm{Fuet}(L_0,L_1)$ for all pairs of complex Lagrangians assemble into an $(A_\infty, 2)$-category $\mathrm{Fuet}_M$, in the sense of \cite{bottman-carmel}. Its objects are given by complex Lagrangians, morphisms by intersection points, and $2$-morphisms by pseudo-holomorphic strips between the Lagrangians. This $2$-category should categorify the subcategory of the usual Fukaya category of $(M,\omega_I)$ whose objects are complex Lagrangians. From a physical point of view, this $2$-category is the $2$-category of branes in $M$ for the $A$-twist of the $3d$ $N=4$ $\sigma$-model. By analogy with the real case, we expect that extensions of \autoref{Thm_FloerCorrespondence} will allow us to compute the \emph{category of morphisms} $\mathrm{Fuet}_{T^*X}(L_0, L_1)$ between the pair of objects $(L_0, L_1)$ of the Fueter $2$-category $\mathrm{Fuet}_{T^*X}$.

While a rigorous construction of $\mathrm{Fuet}_M$ involves various nontrivial difficulties,  many formal aspects are clear. Under the philosophy of $3d$ miror symmetry, the $2$-category $\mathrm{Fuet}_M$ is conjectured to be equivalent to the Kapustin--Rozansky--Saulinas $2$-category of the mirror hyperk\"ahler manifold $M^\vee$. The motivation for this conjecture comes from the physics literature, where it is postulated that $3d$ $N=4$ theories come in pairs, and the $A$-twist of one model is equivalent with the $B$-twist of the mirror model. Some initial work to model the Fueter $2$-category via perverse sheaves of categories has been pursued in \cite{gammage-hilburn-mazel-gee-2022}.  In the case of $M = T^*(\C^*)^n$, the results presented in this paper seem to be connected to an enhancement of homological mirror symmetry for toric varieties.

We provide much additional context for the conjectural categorical structure of the $3d$ topological field theory, as well for some of its connections to other topics in mathematical physics, in \autoref{Sec_CategoricalAspects}. While \autoref{Sec_CategoricalAspects} does not contain any proofs, it is the core of the paper in the sense that it motivates the results proved later. In \autoref{Sec_FueterEquation} we proceed with the analytical study of the Fueter equation; in particular, this section contains the proof of \autoref{Thm_EnergyIdentity}. In \autoref{Sec_Convexity} we develop a quaternionic analog of pseudo-convexity theory and prove \autoref{Thm_MaximumPrinciple}. Finally, in \autoref{Sec_CotangentBundles} we apply the general theory to the case of cotangent bundles, proving \autoref{Thm_FloerCorrespondence}.

We hope that the differential-geometric study of the $A$-side of $3d$ mirror symmetry gives rise to an active program of research. Throughout the paper, we suggest problems and research directions for analyts and geometers that may be interested in the subject. 
 
\paragraph{Acknowledgements.}

We thank Justin Hilburn and Ben Gammage for introducing us to the physical aspects of $3d$ mirror symmetry, reviewed in \autoref{sec:3d-mirror-symmetry}, and for discussions regarding algebraic aspects in \autoref{Sec_CategoricalAppendix}. We are grateful to Ahsan Khan, who has independently developed categorical aspects of the Fueter TQFT from the perspective of quantum field theory \cite{PIRSA-khan, ahsan-khan}, for pointing out the potential importance of twisted complexes in the composition in the Fueter $2$-category. We thank Andrew Hanlon for clarifying some errors in our earlier statement of Theorem \autoref{thm:abouzaids-toric-variety-theorem} as well as for interesting discussions on this topic. Finally, we express gratitude to to Mohammed Abouzaid, Denis Auroux, Andriy Haydys,  Yan Soibelman, and Donghao Wang for discussions which contributed to this paper. 

The first author is supported by the National Science Foundation, Simons Society of Fellows, and Trinity College, Cambridge. The second author is supported by the National Science Foundation and by Simons Foundation Collaboration grant “Homological Mirror Symmetry and Applications” (Award \# 385573).  
\section{Formal aspects}
\label{Sec_CategoricalAspects}
We expect that the topological field theory based on the Fueter equation assigns a $2$-category to a hyperkähler manifold $X$. In \autoref{sec:aspects-of-tqft} we provide an introduction to the connection between categorical structures and topological field theories. In this informal section, we outline the expected behavior of the Fueter $2$-category. We hope that it will be useful to those interested in the geometry and algebra of $3d$ mirror symmetry. Throughout this section, as well as in \autoref{Sec_CategoricalAppendix},  we point out problems which we see as essential to the rigorous development of the $3d$ $A$-model.

\subsection{The Fueter $2$-category}
\label{sec:fueter-tqft}
Let $M$ be a hyperkähler manifold with complex structures $I,J,K$. We consider $M$ as a complex manifold with the distinguished complex structure $I$. The remaining part of the hyperkähler structure is encoded in the Riemannian metric and holomorphic symplectic form 
\begin{equation*}
    \Omega = \omega_J + i \omega_K
\end{equation*}
where $\omega_J$, $\omega_K$ are the Kähler forms corresponding to $J$, $K$. Fix also a background angle $\hat\theta \in [0,2\pi)$; the final structure will not depend on that choice up to isomorphism. To a very initial approximation, we expect to define a $2$-category $\mathrm{Fuet}_M$ with the following features.
\begin{itemize}
    \item The objects of $\mathrm{Fuet}_M$ are complex Lagrangians $L \subset M$, i.e. submanifolds which are $I$-holomorphic and Lagrangian with respect to $\Omega$ and thus Lagrangian with respect to all real symplectic forms
    \begin{equation*}
        \omega_\theta = \Re(e^{-i\theta}\Omega) = (\cos\theta)\omega_J + (\sin\theta)\omega_K \quad\text{for } \theta \in [0,2\pi).
    \end{equation*}
    \item A \emph{hom-category} $\mathrm{Fuet}_M(L_0, L_1)$ is, formally, the Fukaya--Seidel category of the \emph{holomorphic symplectic action functional} $\cA_\C$, defined in \eqref{eq:complex-symplectic-action-functional-lagrangian}, on the space of paths from $L_0$ to $L_1$
    \begin{equation*}
    \cP = \cP(M; L_0,L_1) = \{ \gamma \colon [0,1] \to M \ | \ \gamma(0) \in L_0, \ \gamma(1) \in L_1 \}.
    \end{equation*}
    Strictly speaking, what is well-defined is the differential of $\cA_\C$, which is a closed holomorphic $(1,0)$-form on $\cP$ given by
    \begin{equation}
    \label{eq:holomorphic-action-1-form}
        \rd\cA_\C(\gamma) v = \int_0^1 \Omega\left( v(\tau), \frac{\rd \gamma}{\rd \tau} \right) \rd \tau
    \end{equation}
    for $\gamma \in \cP$ and $v \in T_\gamma \cP = \Gamma([0,1], \gamma^*TM)$, with $\tau$ denoting the coordinate on $[0,1]$. 
    As usual \cite{Hofer1995}, this one-form may or may not be the differential of globally defined holomorphic function. We can pass to the covering space $\widetilde \cP \to \cP$ with deck transformation group given by 
    \begin{equation*}
        \frac{\pi_2(M, L_0 \cup L_1)}{\ker( \alpha \mapsto \langle [\Omega], \alpha \rangle) }
    \end{equation*}
    so that  the pullback of $\rd\cA_\C$ to $\widetilde \cP$ is the differential of a function $\cA_\C$. The simplest setting is the \emph{exact} setting, in which the deck transformation group is trivial. In this setting, we require that there is a holomorphic one-form $\Lambda$ such that $\Omega = \rd \Lambda$, and $\Lambda|_{L_i} = \rd H_i$ for some holomorphic functions $H_i: L_i \to \C$. In that case,
    \begin{equation}
    \label{eq:exact-holomorphic-symplectic-action-functional}
    \cA_\C(\gamma) = \int_\gamma \Lambda - H_1(\gamma(1)) + H_0(\gamma(0)). 
    \end{equation}
    The usual definition of the Fukaya--Seidel category using Lefschetz thimbles is not available in this infinite-dimensional setting. Instead, here $\mathrm{FS}(\cA_\C)$ is meant to be defined via an infinite-dimensional version of the complex Morse theory model of the Fukaya-Seidel category, which we review in the next section.
    \item In particular, the objects of $\mathrm{Fuet}_M(L_0, L_1)$, i.e. the \emph{1-morphisms} of $\mathrm{Fuet}_M$, are the critical points of $\cA_\C$, which are in bijection with the constant paths mapping to the intersection $L_0 \cap L_1$. When $L_0$ and $L_1$ intersect transversely, these critical points are nondegenerate, which is a requirement to make sense of the Fukaya--Seidel category. Heuristically, these objects correspond to infinite-dimensional Lefschetz thimbles of the holomorphic Morse function $\cA_\C$.
    \item The morphism cochain complex $\mathrm{Fuet}_M(L_0, L_1)(p_-, p_+)$, for $p_-, q_+ \in L_0 \cap L_1$ is generated by the gradient trajectories of $\Re(e^{-i\theta}\cA_\C)$, where  
    \begin{equation*}
        \theta = \arg(\cA_\C(p_-) - \cA_\C(p_+))
    \end{equation*}
    As in Lagrangian Floer theory, they correspond to $J_{\theta+\pi}$-holomorphic strips, or $J_\theta$-antiholomorphic strips, with $J_\theta$ denoting the rotated complex structure \eqref{Eq_RotatedComplexStructure}.  Explicitly, the cochain complex $\mathrm{Fuet}_M(L_0, L_1)(p_-, p_+)$ is freely generated over $\Z$ by the solutions to 
    \begin{gather}
    u: \R \times [0,1] \to M, \nonumber \\
    \label{eq:2-morphism-J-curve-equation}
        \partial_t u + J_{\theta+\pi}(u)\partial_\tau u = 0, 
    \end{gather}
    with boundary condition $u(t, i) \in L_i$ for $i=0,1$ and asymptotic boundary conditions $\lim_{t \to \pm \infty} u(t,\tau) = p_\pm$. As usual, we only consider solutions with finite Floer energy, which implies exponential converges with derivatives a $t \to \pm\infty$.
    \item The differential on  $\mathrm{Fuet}_M(L_0, L_1)(p_-, p_+)$ counts solutions to the complex gradient flow equation for $e^{-i\theta}\cA_\C$. This is an equation for maps $U \colon \R^2 \to \cP$, which we interpret as maps $U \colon [0,1]\times\R^2 \to M$. As we will see in \autoref{Subsec_FromMorseToFueter}, $U$ is formally a complex gradient trajectory if it solves the Fueter equation
    \begin{equation}
    \label{eq:sec2-fueter-eq}
    \begin{gathered}
        U: [0,1] \times \R^2 \to M, \\
        I(U) \partial_\tau U + J_{\theta}(U) \partial_s U + K_{\theta}(U) \partial_t U = 0, \\
        \end{gathered}
    \end{equation}
    and boundary conditions
    \begin{gather*}
        u(0, s, t) \in L_0, \quad u(1,s,t) \in L_1, \\
        \lim_{s \to \pm \infty} U(\tau, s, t) = u_\pm(t, \tau), \\
        \lim_{t \to \pm \infty} u(\tau, s, t) = p_\pm,
    \end{gather*}
    where again the convergence is assumed to be exponential. Here $J_\theta$ is as in \eqref{Eq_RotatedComplexStructure} and $K_\theta = I J_\theta$,  and $u_\pm$ are  solutions to \eqref{eq:2-morphism-J-curve-equation} corresponding to the gradient trajectories of $\Re(e^{-i\theta}\cA_\C)$. Writing $\mathcal{M}_3(u_-, u_+)$ for the moduli space of solutions to \eqref{eq:sec2-fueter-eq} and $\mathcal{M}_2(p_-, p_+)$ for the moduli space of solutions to \eqref{eq:2-morphism-J-curve-equation},  we have that the differential should formally be given by 
    \begin{equation}
    \label{eq:fueter-differential}
        \rd u_- = \sum_{u_+ \in \cM_2(p_-, p_+)} \#\mathcal{M}_3(u_-, u_+) u_+,
    \end{equation}
    where $\#$ denotes a signed count with respect to an orientation on  $\mathcal{M}_3(u_-, u_+)$ defined by means of index theory. 
    \item The compositions between $1$-morphisms \begin{equation*}
        \mathrm{Ob}(\mathrm{Fuet}_M(L_0, L_1)) \tensor \mathrm{Ob}(\mathrm{Fuet}_M(L_1, L_2)) \to \mathrm{Ob}(\mathrm{Fuet}_M(L_0, L_2))
    \end{equation*}
    are defined by counting $J_{\hat\theta}$-holomorphic triangles in \autoref{fig:fueter-domains-1}. Recall that $\hat\theta$ is the fixed background angle and does not depend on $L_0,L_1,L_2$; the role of $\hat\theta$ is discussed in the next section. 
\begin{figure}[H]
    \centering
    \includegraphics[width=10cm]{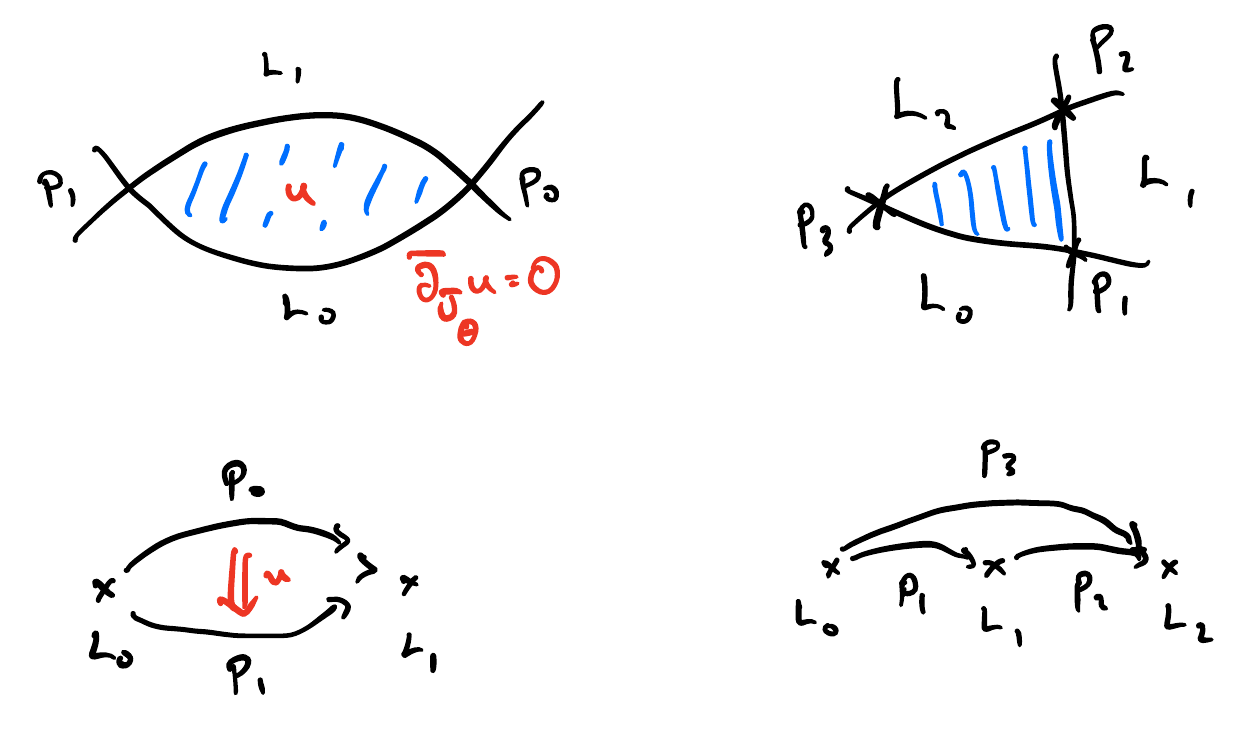}
    \caption{\small The Fueter $2$-category. Left: Objects are holomorpic Lagrangians, $1$-morpisms are intersection points, $2$-morphisms are $J_{\theta+\pi}$-holomorphic strips. Right: Composition of $1$-morphisms is given by the count of $J_{\hat{\theta}}$-holomorphic triangles.  }
    \label{fig:fueter-domains-1}
\end{figure}    
    \item The horizontal compositions and vertical compositions are given by counts of solutions to the Fueter equations with boundary conditions as in  \autoref{fig:fueter-domains-2} and \autoref{fig:fueter-domains-3}, respectively.
\end{itemize}

\begin{figure}[H]
    \centering
    \includegraphics[width=12cm]{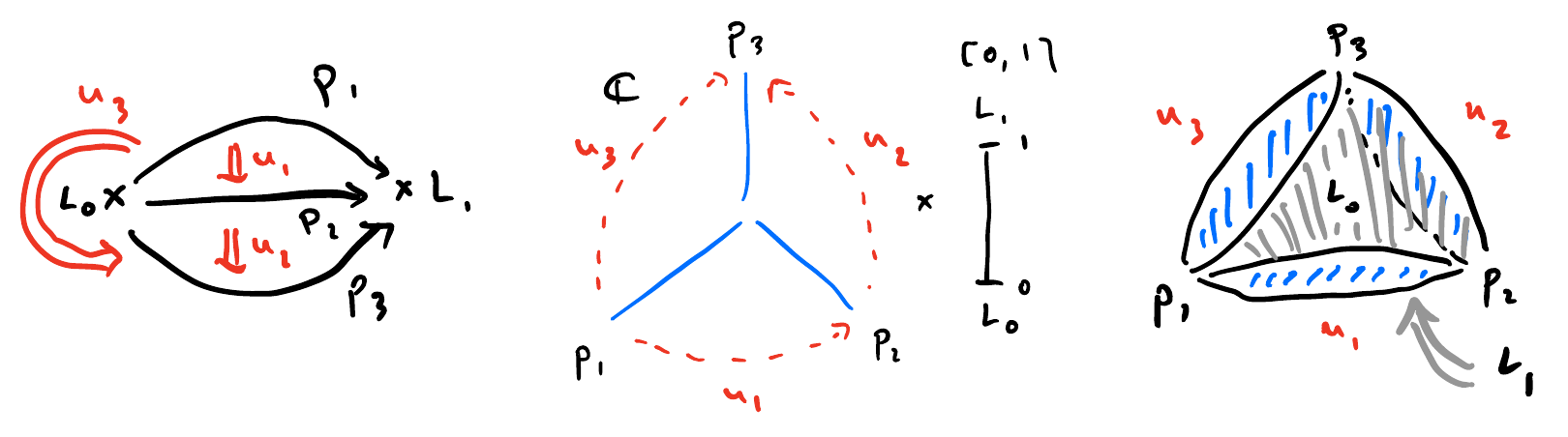}
    \caption{\small Vertical compostion in the Fueter $2$-category. Domain is $\C \times [0,1]$, boundary conditions as on the right. Grey denotes Lagrangian boundary conditions, blue denotes asymptotic pseudoholomorphic strips.  }
    \label{fig:fueter-domains-2}
\end{figure}

\begin{figure}[H]
    \centering
    \includegraphics[width=12cm]{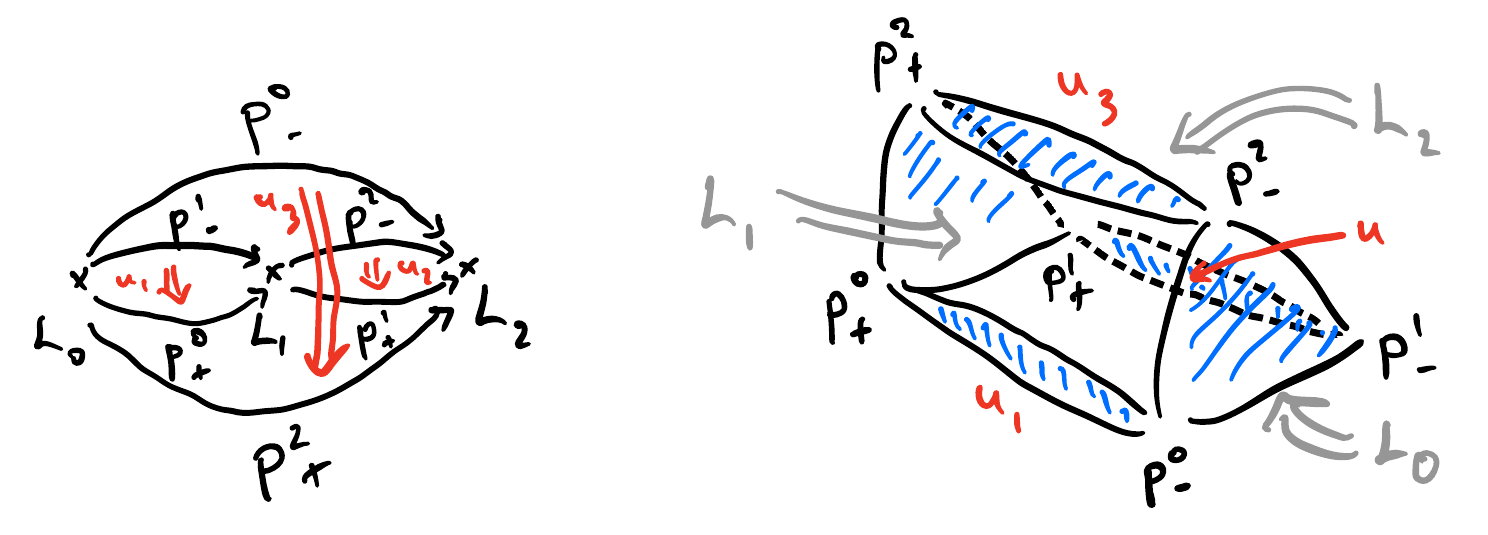}
    \caption{\small Horizontal composition in the Fueter $2$-category. Domain is $\R \times (\mathbb{D}^2 \setminus \{3 \text{ points}\})$. As before, Lagrangians are boundary conditions while pseudo-holomorphic triangles and strips are asymptotic conditions.  }
    \label{fig:fueter-domains-3}
\end{figure}

The most basic problems are then to develop the elementary theory of solutions to the Fueter equation with these types of boundary conditions. One expects the linearization of \eqref{eq:sec2-fueter-eq} to be a Fredholm operator admitting an index formula.
\begin{problem}
Develop the basic Fredholm theory of the Fueter equation  \eqref{eq:sec2-fueter-eq} with the above boundary conditions. Characterize algebro-topological data needed to assign orientations to the moduli spaces of solutions and gradings to $1$-morphisms in $\mathrm{Fuet}_M$, such that the index of the linearization of \eqref{eq:sec2-fueter-eq} at a solution $U$ is the difference of the gradings of $u_-$ and  $u_+$. 
\end{problem}

\begin{problem}
As usual in Floer homology, the above counts of solutions are only correct if the linearizations of the defining equations \eqref{eq:2-morphism-J-curve-equation}, \eqref{eq:sec2-fueter-eq} are surjective on solutions, and only if the resulting solutions are isolated. This may not occur for a fixed hyperk\"ahler triple, so a perturbation scheme, either virtual or geometric, is required to make sense of this prescription in general. Develop such a perturbation scheme.
\end{problem}

To define the compositions of $1$-morphisms as in  \autoref{fig:fueter-domains-2} and \autoref{fig:fueter-domains-3}, one must carefully specify the boundary conditions of the Fueter equation, and one must specify how the angle $\theta$ should vary over the domain because it is different for different strips. More generally, to show that composition is associative, one should compute the counts of solutions to the Fueter equation on domains with boundary conditions labeled by points in the associahedron, so that $\mathrm{Fuet}_M(L_0, L_1)$ is the cohomology category of an $A_\infty$-category. This category is meant to be the complex Morse theory model of the Fukaya-Seidel category of $\cA_\C$. The complex Morse theory model is necessary to make sense of the Fukaya--Seidel category in the infinite-dimensional setting. As such, the problem of the precise definition of the Fueter equation to be solved to define $\mathrm{Fuet}_M(L_0, L_1)$ is to be taken by analogy to a better-understood finite-dimensional construction, which we review below.

\subsection{Complex Morse theory}
\label{sec:zeta-instanton-model}

\paragraph{Fukaya--Seidel category.}

Let $F \colon X \to \C$ be a holomorphic Morse function on a Kähler manifold. We assume that $X$ is an exact symplectic manifold with a pseudo-convex end, and that $F$ has a controlled behavior at infinity. For example, $X$ could be a Stein manifold with the growth of $F$ controlled by an exhausting plurisubharmonic function. Seidel \cite{Seidel2008} defined an $A_\infty$-category $\mathrm{FS}(X,F)$, the Fukaya--Seidel category of the pair $(X,F)$. This category enlarges the Fukaya category of closed Lagrangian submanifolds of $X$ by adding in certain non-compact Lagrangians: the \emph{Lefschetz thimbles} $L_p$ associated to critical points $p$ of $F$. Up to exact Lagrangian isotopy, the thimbles are given by the unstable manifolds of the critical points of $F$ for the gradient flow of $\Re(F)$. It is a remarkable insight of Seidel that this enlargement of the Fukaya category of $X$ is in fact \emph{simpler} than the original category, in the sense that it can be computed entirely from the Floer cohomology of the thimbles themselves \cite{SeidelVanishingCycles}. 

\begin{figure}[H]
    \centering
    \includegraphics{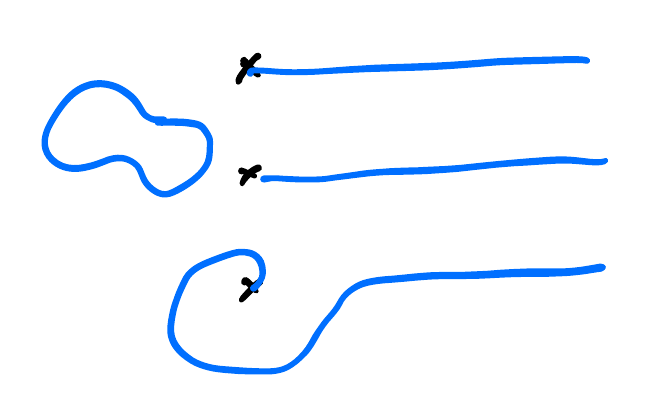}
    \caption{The Fukaya-Seidel category. Objects are closed Lagrangians and non-compact Lagrangians with prescribed asymptotic behavior. All objects can be computed from the distinguished objects corresponding to the Lefschetz thimbles.}
    \label{fig:Fukaya-Seidel-category}
\end{figure}

\paragraph{Morse theory construction.}

It has been proposed, simultaneously in the physics \cite{GMW} and mathematics \cite{Haydys2015} literature, to push this observation of Seidel further in order to construct an alternative model of $\mathrm{FS}(X,F)$, which we call the \emph{complex Morse theory model}, that never refers to any Lagrangian submanifolds of $X$. The motivation for this problem is the study of the Fukaya--Seidel categories of infinite-dimensional holomorphic functionals, especially the holomorphic Chern--Simons functional \cite{Witten-Analytic-Chern-Simons}. 

The basic problem with applying any of the usual models of the Fukaya--Seidel category to the infinite-dimensional setting is that working with infinite-dimensional Lagrangian submanifolds as boundary conditions for certain partial differential equations is typically not amenable to standard analytic methods. Instead, in the complex Morse theory model, one is expected to define the category $\mathrm{FS}(X,F)$ in terms of the gradient flows of $\Re(e^{-i \theta}F)$ and the solutions to the \emph{complex gradient equation}
\begin{equation}
    \label{eq:complex-gradient-sec-2}
    \begin{gathered}
        u: \R^2 \to X, \\
        \partial_s u + I(u) \partial_t u = \Grad \Im(e^{-i \theta} F)(u), 
    \end{gathered}
\end{equation}
where $(s,t)$ are the coordinates on $\R^2$, $I$ is the complex structure on $X$, and we are interested in solutions which converge exponentially to the real gradient trajectories of $\Re(e^{-i\theta} F)$ as $s \to \pm\infty$. Studying the complex gradient equation when $X$ is a certain space of connections then reproduces previous elliptic differential equations of interest, such as the Kapustin--Witten equation \cite{Haydys2015}.

The basic idea of the complex Morse theory model is that Lefschetz thimbles for $F$ are in bijection with the critical points of $F$. To define the thimble $L_p$ associated to a critical point $p \in \mathrm{Crit}(W)$, one fixes a \emph{background angle} $\hat{\theta} \in S^1$ which is not the difference of arguments of any pair of critical values of $F$. For simplicity, we will take $\hat \theta = \pi$. One then defines the thimble $L_p$ to be the unstable manifold of the gradient flow of $\Re(e^{- i\hat{\theta}}F)$. As such, a pair of Lefschetz thimbles $(L_p, L_q)$ will not intersect, as each thimble will project to parallel lines in the $F$ plane. Instead, one defines the intersections of $L_p$ and $L_q$  by rotating the angle at which $L_p$ flows out of $p$ slightly, or alternatively, flowing $L_p$ by the the Hamiltonian flow of a function which is zero on the interior of $X$ and agrees with $\Re(e^{- i\hat{\theta}}F)$ on the ends of $X$. As usual, in the Fukaya category these intersections are the generators of the cochain complex $\mathrm{FS}(X,F)(L_p, L_q)$. 

\begin{figure}[H]
    \centering
    \includegraphics{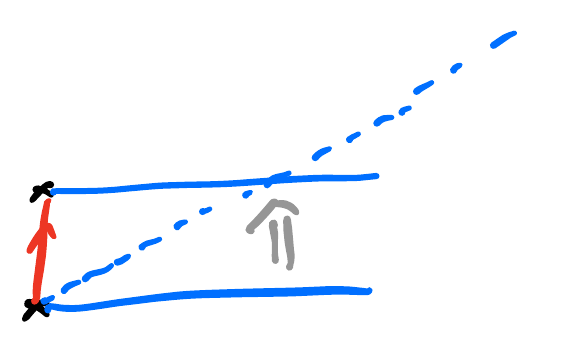}
    \caption{\small The complex Morse theory model $\mathrm{FS}(X,F)$ of the Fukaya-Seidel category does not involve Lagrangian submanifolds of $X$. The generating objects are taken to be critical points of $F$, and the morphisms are generated by gradient flows of $\Re(e^{-i \theta} F)$ between these critical points. In simple cases, these are naturally in bijection with intersection points between the thimbles, as shown above. }
    \label{fig:fukaya-seidel-instanton-comparison}
\end{figure}

In the complex Morse theory model, one requires a definition this complex which does not rely on intersecting Lagrangian submanifolds. To accomplish this, one uses the fact that given two critical points $p, q$, with no critical points $r$ such that 
\begin{equation*}
    \Im(F(p)) < \Im(F(r)) < \Im(F(q)),
\end{equation*} 
the intersection points between the Lefschetz thimbles $L_p, L_q$ of $p$ and $q$ are in bijection with the gradient flow lines of $-\Re(e^{-i \theta}F)$ from $p$ to $q$, where $\theta = \arg(F(p) - F(q))$; one sets these to be the generators of $\mathrm{FS}(X,F)(p, q)$ in the complex Morse theory model. 

\begin{remark}
\label{remark:background-angle-and-standard-vertical-position}
The picture to keep in mind is when $F(p)$ and $F(q)$ are on a vertical line in $\C=\R^2$, with $F(q)$ above $F(p)$. In that case, the gradient flow of $-\Re(e^{- i \theta}F)$ agrees with the Hamiltonian flow of $\Re(e^{-i \hat{\theta}}F)$ where the background angle $\hat{\theta} = \pi$ in our convention. See  \autoref{fig:fukaya-seidel-instanton-morphisms}.
\end{remark}

In the Fukaya--Seidel category, the differential on $\mathrm{FS}(X,F)(L_{p_-}, L_{p_+})$ is given by counting holomorphic strips. These intersections are the critical points of the symplectic action functional $\mathcal{A}_{L_{p_-}, L_{p_+}} : \mathcal{P}_{L_{p_-}, L_{p_+}}\to \R$ on the space of paths from $L_{p_-}$ to $L_{p_+}$, and the holomorphic strips are the gradient flows of this action functional. To write a corresponding holomorphic curve equation for the differential in the complex Morse theory model, one uses the observation that because $F$ is holomorphic, the gradient flows of $-\Re(e^{-i \theta}F)$ from ${p_-}$ to ${p_+}$ also Hamiltonian flows of $-\Im(e^{-i \theta}F)$. Thus, instead of the space of paths $[0,1]\to X$ from $L_{p_-}$ to $L_{p_+}$, one studies the space of \emph{infinite paths} with  exponential decay condition near the critical points:
\begin{equation}
    \mathcal{P}_{p_-, p_+} = \{ \gamma: \R \to X ; \lim_{t \to \pm\infty} \gamma(t) = p_\pm \text{ exponentially}\}
\end{equation}
and defines the symplectic action functional $\mathcal{A}_{p_-,p_+}$ perturbed by this Hamiltonian:
\begin{equation}
\label{eq:symplectic-action-functional-on-chords}
    \mathcal{A}_{p_-, p_+}(\gamma) = \int_\gamma \lambda + \int_\R \Im(e^{-i\theta} (F(\gamma(t)) - F(p_\pm) ) ) \rd t.
\end{equation}
where $\lambda$ is a primitive for the symplectic form $\omega$. 
Equation \eqref{eq:complex-gradient-sec-2} is then the formal gradient flow of $\mathcal{A}_{p_-, p_+}$, if we impose the conditions 
\begin{equation}
\label{eq:asymptotic-conditions-complex-gradient}
    \lim_{t \to \pm \infty} u(s,t) = p_\pm, \quad\text{and}\quad \lim_{s \to \pm \infty} = \gamma_\pm
\end{equation}
where $\gamma_\pm$ are gradient trajectories of $-\Re(e^{-i \theta F})$ from $p_-$ to $p_+$. 
The differential on $\mathrm{FS}(X,F)(p_-, p_+)$ is then given by counting solutions to \eqref{eq:complex-gradient-sec-2} with conditions \eqref{eq:asymptotic-conditions-complex-gradient}; writing $\mathcal{M}(p_-, p_+)$ for this space of solutions the differential is formally given by 
\begin{equation}
\label{eq:complex-morse-diferential}
    \rd \gamma_+ = \sum_{\gamma_- \in \mathrm{Crit}(\mathcal{A}_{p_-, p_+})} \# \mathcal{M}(\gamma_-, \gamma_+) \gamma_-. 
\end{equation}

\begin{remark}The equation that we have called the complex gradient flow equation recurs repeatedly in the literature under different names, as the Witten equation \cite{witten1993algebraic, Fan2013}, the $\zeta$-instanton equation \cite{GMW}, and, of course, it is the inhomogeneous pseudo-holomorphic map equation introduced by Floer. We suggest the moniker \emph{complex gradient flow} for \eqref{eq:complex-gradient-sec-2}, and the \emph{complex Morse theory model} for its application to Fukaya--Seidel categories, in order not to privilege any particular application of this basic differential equation. This terminology goes back to \cite{Donaldson1998}.
\end{remark}

Continuing onwards, the Fukaya--Seidel category has compositions given by counts of pseudo-holomorphic triangles, and more generally  $A_\infty$-operations  given by counts of pseudo-holomorphic polygons, with boundaries on the thimbles and corners on the intersections of the thimbles, see \autoref{fig:Fukaya-Seidel-morphisms}. 
\begin{figure}[H]
    \centering
    \includegraphics{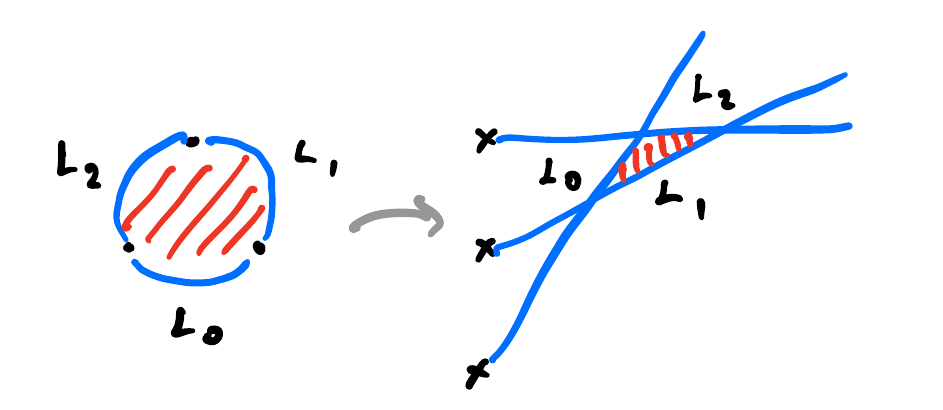}
    \caption{\small Composition in the Fukaya--Seidel category is given by counts of pseudoholomorphic triangles.}
    \label{fig:Fukaya-Seidel-morphisms}
\end{figure}

The basic corresponding ansatz in the complex Morse theory model is to count solutions to the complex gradient equation with asymptotics given by critical points and gradient flows of $F$, as in  \autoref{fig:fukaya-seidel-instanton-morphisms}. 

\begin{figure}[H]
    \centering
    \includegraphics{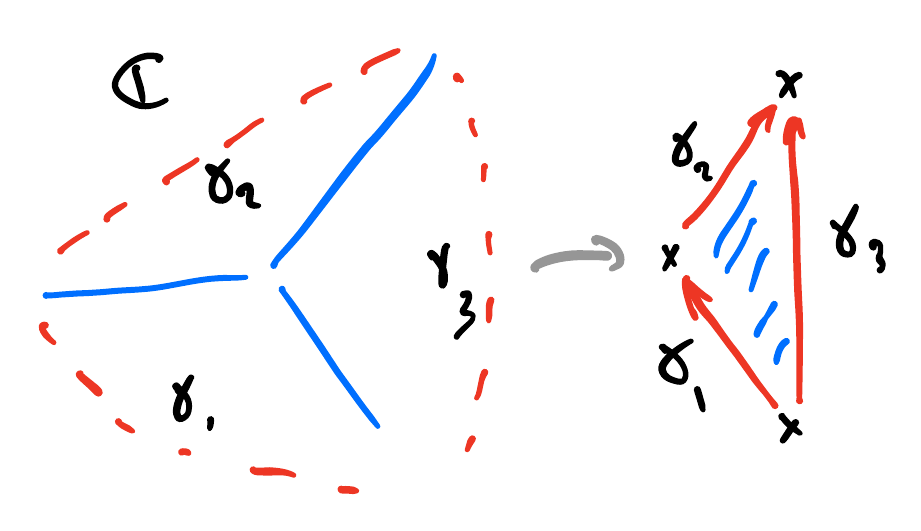}
    \caption{\small Composition in the complex Morse theory model of the Fukaya-Seidel category is given by solutions to the complex gradient equation \eqref{eq:complex-gradient-sec-2} with prescribed asymptotics. Formally, the above complex gradient trajectory corresponds to the pseudo-holomorphic triangle of \autoref{fig:Fukaya-Seidel-morphisms}.}
    \label{fig:fukaya-seidel-instanton-morphisms}
\end{figure}

\begin{figure}[H]
    \centering
    \includegraphics[width=11cm]{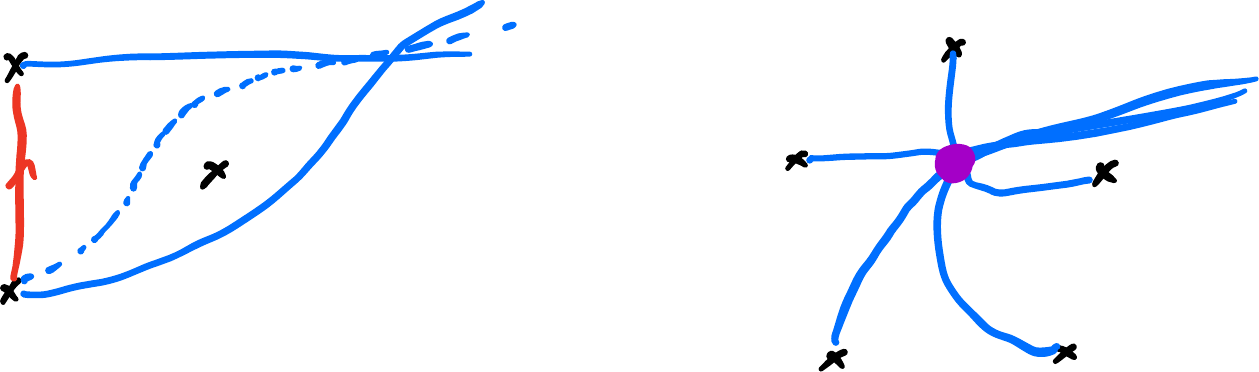}
    \caption{\small Left: Morphisms in the Fukaya--Seidel category cannot be identified with gradient flows between corresponding critical points when an intervening critical point exists. As such there is no natural identification between objects in the complex Morse model, defined naively, and thimbles in the Fukaya--Seidel category. Right: When critical points are in convex position, one can straight-line gradient flows with intersection points of thimbles which are not given by the standard unstable manifolds of $\Re(e^{i \hat{\theta}}F)$. }
    \label{fig:critical-point-problem}
\end{figure}

Beyond this, the existing approaches become less precise. The essential challenge is that while there is a natural bijection between the gradient flow lines from $p_-$ to $p_+$ and the intersection of the corresponding thimbles $(L_{p_-}, L_{p_+})$ when there are no critical values in between the images of the thimbles, this identification fails for general pairs of critical points. Instead, the number of gradient trajectories  from $p_-$ to $p_+$ is in bijection with the intersection points of a pair of $L_{p_-}$ and a different Lagrangian  $\tilde{L}_{p_+}$, which is related to $\tilde{L}_{p_+}$ by a series of \emph{Hurwitz moves}; see \autoref{fig:critical-point-problem}. Unfortunately, this Lagrangian $\tilde{L}_{p_+}$ depends on the choice of the background angle $\hat\theta$ in a strong sense: these Lagrangians are not isomorphic in $\mathrm{FS}(X,F)$ as one varies $\hat{\theta}$. Thus, the naive definition of the morphisms in the complex Morse theory model cannot be correct, as this precludes any natural map from the complex Morse theory model to the usual model of the Fukaya--Seidel category, which must send the critical points to well-defined objects of $\mathrm{FS}(X,F)$. 

There are two proposed approaches to resolve this problem, which are schematically compared in  \autoref{fig:haydys-witten}. The first, proposed by Haydys  \cite{Haydys2015} suggests to count broken Morse trajectories and broken complex gradient trajectories which first flow out from $p_-$ to a point with $-\Re(e^{-i\hat{\theta}}F)$ very large and then back to $p_-$, in order to define morphisms in the complex Morse theory model. As such, these have are always in bijection with the intersection points of the corresponding thimbles. This approach has recently been developed further by \cite{Wang2022}, and shown to produce a category equivalent to the Fukaya--Seidel category. Crucially, the combinatorics of the $A_\infty$ operations are the familiar ones from the Fukaya category.  However, this approach has the downside of introducing many nontrivial perturbations to the complex and real gradient flow equation, which should make computations more difficult, especially when applying the complex Morse theory model to the infinite-dimensional setting; see \autoref{remark:model-comparison}. 

The work of Gaiotto--Moore--Witten \cite{GMW} takes a different approach, and provides a combinatorial formula for defining an $A_\infty$-category in terms of standard, straight-line gradient trajectories between critical points together with solutions to the unperturbed complex gradient flow equation \eqref{eq:complex-gradient-sec-2}. The price of this advantageous geometric setup is a different proposed compactification of solutions to the complex gradient flow equation, which is not based on $A_\infty$ combinatorics but instead produces the $A_\infty$ category $\mathrm{FS}(X,F)$ via a highly involved process. A description of this formula in more traditional mathematical language was given in \cite{Kapranov2016}.

\begin{figure}[H]
    \centering
    \includegraphics[width=12cm]{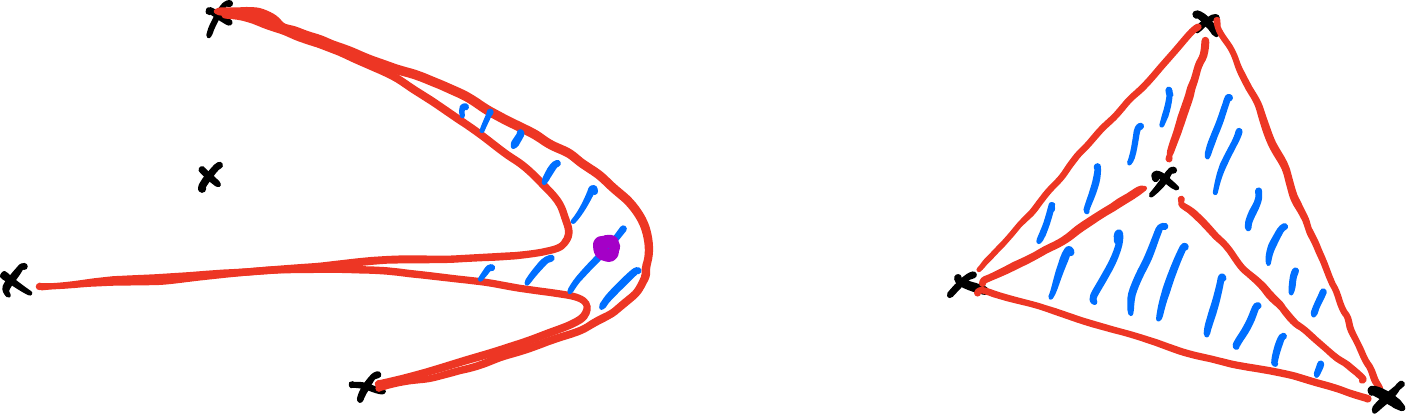}
    \caption{\small Two natural solutions to the problems posed by intervening critical points in \autoref{fig:critical-point-problem}). Left: Proposal of Haydys \cite{Haydys2015}, made rigorous by \cite{Wang2022}. Instead of straight-line gradient flows, one can use time-dependent gradient flows of $\Re(e^{-i \theta(t)}F)$. This simplifies many issues by may make computations in the infinite-dimensional context of \autoref{sec:fueter-tqft} significantly more difficult.  Right: Proposal of Gaiotto--Moore--Witten \cite{GMW}. An alternative compactification of the moduli space of solutions to the complex gradient flow equation should make it possible to use straight-line gradient flows, at the cost of novel analysis and novel algebraic constructions. }
    \label{fig:haydys-witten}
\end{figure}

Most aspects of the analytical setup underlying the Gaiotto--Moore--Witten (GMW) approach remain entirely open. In particular, the basic compactness result conjectured in \cite{GMW} has not been established.
\begin{problem}
\label{problem:gmw-compactification}
Define an analytical setup for the complex gradient flow equation and prove the correctness of the combinatorial compactifications of the spaces of solutions to \eqref{eq:complex-gradient-sec-2} as proposed in \cite{GMW}. 
\end{problem}

In order to make sense of this problem, the precise conditions and perturbation scheme for the complex gradient equation in the Gaiotto--Moore--Witten approach remain to be specified. 

\begin{remark}
Another nontrivial aspect of the compactness theory for the complex gradient flow equation is that the the energy identities of Haydys \cite[Section~2.4]{Haydys2015} (and the corresponding formulae in \cite{Kapranov2016}) are difficult to make sense of, as the values of the action functional, as written in these references, are \emph{infinite}. For the purpose of defining the differential, this can be fixed by renormalizing the action functional as in \eqref{eq:symplectic-action-functional-on-chords}. For defining higher $A_\infty$-operations, a nontrivial renormalization procedure must be performed in order to extract a finite energy identity controlling the $L^2$ energy of solutions to the complex gradient equation \cite{Wang2022}). 
\end{remark}

\begin{remark}
In the case where the critical values of $F$ are the vertices of a convex polytope, there is a natural identification between thimbles and critical points such that the intersection points of the thimbles are in bijection with the counts of gradient flows; see \autoref{fig:critical-point-problem}. 
It is a nontrivial fact that the combinatorics of Gaiotto--Moore--Witten then simplify dramatically in this context and reduce to the typical $A_\infty$ combinatorics of the Fukaya category; this is a consequence of the fact that the secondary polytope of a convex polytope is an associahedron, and can be extracted from \cite{Kapranov2016}. 
\end{remark}

\paragraph{Hochschild homology.}
There is a natural way to produce a vector space from an $A_\infty$-category $\mathcal{F}$, namely, its \emph{Hochschild homology} $HH_*(\mathcal{F})$. We will not go into this construction in detail, except to say that there is an explicit chain complex computing $HH_*(\mathcal{F})$ written in terms of the $A_\infty$-operations on $\mathcal{F}$ called the \emph{cyclic bar complex}, and that when $\mathcal{F}$ is a Fukaya category of a symplectic manifold $X$, there is an \emph{open-closed map} \cite{SeidelSymplecticHomology}
\begin{equation}
    \mathcal{OC}: HH_*(\mathcal{F}) \to HF^*(X,H)
\end{equation}
for some appropriate Hamiltonian $H: X \to \R$, which depends on the variant of the Fukaya category being considered. The Hamiltonian Floer cohomology group $HF^*(X,H)$ is object assigned to a circle by the open-closed field theory  \autoref{sec:aspects-of-tqft}) which assigns $\mathcal{F}(L_0, L_1)$ to the interval with endpoints labeled by $(L_0, L_1)$ , and the open-closed map is a chain-level version of all the cobordism maps associated to genus-zero cobordisms with one circle boundary and many interval boundaries. In many cases, including the case of the Fukaya--Seidel category, the open-closed map is known to be an isomorphism \cite{AbouzaidGeneration, GPS1}. 

Recall that to define the thimbles $L_p$ generating the Fukaya--Seidel category, we had to choose a background angle $\hat{\theta}$. Denote by $\mathrm{FS}_{\hat{\theta}}(X,F)$ the category defined for $\hat\theta$. The appropriate Hamiltonian to take for $\cF = \mathrm{FS}_{\hat{\theta}}(X,F)$  is $H = -\Re(e^{-i \hat{\theta}} F)$, or rather a small real rescaling of this Hamiltonian. As we take this Hamiltonian to zero (on some compact subdomain of $X$ containing all critical points, which arises in the technical setup for the Fukaya--Seidel category), $HF^*(X,H)$ is identified with the Morse homology group $HM_*(-\Re^{-i \hat \theta}F)$. This,  in turn, agrees with the homology of $X$ relative to the subset $\{x \in X | -\Re^{-i \hat \theta}F(x) \ll 0)$. For the conventional choice $\hat{\theta} = \pi$, this is simply $H_*(X, \{\Re(F) \gg 0\})$. 

In the construction of $\mathrm{FS}_{\hat{\theta}}(X,F)$ of $F$, which a priori depends on the choice of $\hat{\theta}$, one uses $\hat{\theta}$ to define an ordering $p_1 < \ldots < p_r$ on the critical points $\{p_1, \ldots, p_r\}$ of $F$. This is used to define first the \emph{directed Fukaya--Seidel category} $\mathrm{FS}^{\to}_{\hat{\theta}}(X,F)$, which only has morphisms from $L_{p_j}$ to $L_{p_k}$ if $j \leq k$, with the only endomorphism of $L_p$ being the identity map. 
The full category $\mathrm{FS}_{\hat{\theta}}(X,F)$ is then the category of \emph{twisted complexes} on $\mathrm{FS}^{\to}_{\hat{\theta}}(X,F)$. By algebraic properties of Hochschild homology, this implies that $\mathrm{FS}_{\hat{\theta}}(X,F)$  has the same Hochschild homology as $\mathrm{FS}_{\hat{\theta}}^\to(X,F)$, which, because of the directedness, is easily seen to be a vector space freely generated by $\{L_{p_1}, \ldots, L_{p_r}\}$.  Geometrically, these cycles give a basis for the relative homology group $H_*(X, \{\Re(F) \gg0\})$ (with corresponding modification when $\hat{\theta} \neq \pi$). When $F$ is a Morsification of a non-Morse holomorphic function $F_0$ with a single singular fiber, this would be the group of \emph{vanishing cycles} of the singular fiber.



\paragraph{Background angle and monodromy.}
As we see in the description above, the Fukaya--Seidel category $\mathrm{FS}_{\hat{\theta}}(X,F)$ a priori depends on the choice of an angle $\hat{\theta}$, and the directed subcategory $\mathrm{FS}^{\to}_{\hat{\theta}}(X,F)$ manifestly depends on $\hat{\theta}$ because $\hat{\theta}$ determines the ordering on the critical points. 

However, using the topological interpretation of the Hochschild homology group $HH_*(\mathrm{FS}_{\hat{\theta}}(X,F))$ as $H_*(X, \{ \Re(e^{-i\hat\theta} F) \gg 0 \})$, one sees that these groups form a local system of groups over the space of $\hat{\theta} \in S^1$. On the topological level, the dependence of the categories on $\hat{\theta}$ corresponds to the fact that the thimbles $L_{p_i}$, which implicitly depend on $\hat{\theta}$, do not define a trivialization of the local system of relative homology groups. Now, as $\hat{\theta}$ varies through angles avoiding the exceptional values 
\begin{equation*}
    \hat{\theta}^* = \arg(F(p)) - \arg(F(q))
\end{equation*}
for pairs $p, q \in \mathrm{Crit}(F)$, the ordering on the critical points does not change, the thimbles $L_p$ stays the same up to exact Lagrangian isotopy, and the bases of $HH_*(\mathrm{FS}_{\hat{\theta}}(X,F))$ given by the open-closed map form a trivialization of the local system.

At the exceptional angles, a theorem of Seidel \cite{SeidelVanishingCycles} tell us that the difference between the directed subcategories $\mathrm{FS}^{\to}_{\hat{\theta}^*-\epsilon}(X,F)$ and $\mathrm{FS}^{\to}_{\hat{\theta}^*+\epsilon}(X,F)$ for a small $\epsilon > 0$ is given by an algebraic operation called a \emph{mutation}, inducing an equivalence 
\begin{equation}
    \label{eq:mutation-equivalence}
    F_{\hat{\theta}^*}: \mathrm{FS}_{\hat{\theta}^*-\epsilon}(X,F) \to \mathrm{FS}_{\hat{\theta}^*+\epsilon}(X,F). 
\end{equation}

Thus, in the end, the full Fukaya--Seidel category $\mathrm{FS}_{\hat{\theta}}(X,F)$ up to equivalence does not depend on the choice of background angle $\hat{\theta}$. However, it is more precise to think of the categories $\mathrm{FS}_{\hat{\theta}}(X,F)$ as forming a \emph{local system of categories} over $S^1$. Indeed, the content of Picard--Lefschetz theory is that the bases of $H_*(X, \{ \Re(e^{-i\hat\theta} F) \gg 0 \})$ given by the thimbles $L_{p_i}$ are related at the exceptional angles $\hat{\theta}^*$ by an elementary integral change-of-basis matrix $\tau_{\hat{\theta}^*}$ referred to as a Picard--Lefschetz transformation. 
It turns out that the equivalences $F_{\hat{\theta}^*}$ categorify the Picard--Lefschetz transformations on relative homology in the sense that
\begin{equation}
\label{eq:picard-Lefschetz-identity}
    \mathcal{OC}(1_{L_p^{\hat{\theta}^* + \epsilon}}) = \tau_{\hat{\theta}^*}(\mathcal{OC}(1_{L_p^{\hat{\theta}^*-\epsilon}})). 
\end{equation}
where $1_O$ denotes the unit of the object $O$ and $L_p^{\hat{\theta}} \in \mathrm{FS}_{\hat{\theta}}$ denotes the thimble defined using the background angle $\hat{\theta}$. In other words, as we change the background angles, when the homology classes of thimbles change by a Picard-Lefshetz transformations, the corresponding objects change by a mutation -- the object associated to the new thimble is algebraically a \emph{cone} on a morphism from the previous thimble to a thimble associated to the end of an exceptional gradient trajectory at angle $\hat{\theta}^*$.  For many natural choices of function $F$, the total monodromy of the local system around $S^1$ is not trivial. On the level of Fukaya--Seidel categories, this corresponds to the unpublished theorem  \cite{Abouzaid-Ganatra} conjectured by Seidel \cite{SeidelSymplecticHomology} that the composition of the functors $F_{\hat{\theta}}$ is the Serre functor of $\mathrm{FS}_{\hat{\theta}}(X,F)$, which tends to be nontrivial as well.  

\subsection{The Fueter $2$-category from complex Morse theory}
\label{Subsec_FromMorseToFueter}

As explained in \autoref{sec:fueter-tqft}, the hom-category $\mathrm{Fuet}_M(L_0, L_1)$ in the Fueter $2$-category is meant to be based off of the complex Morse theory model of the holomorphic symplectic action functional $\cA_\C$. We now explain the correspondence, focusing on the exact setting as in \eqref{eq:exact-holomorphic-symplectic-action-functional}. By \eqref{eq:holomorphic-action-1-form} and the nondegeneracy of $\Omega$, the critical points of $\cA_\C$ are the constant paths with values in $L_0 \cap L_1$. We can view $M$ as a real symplectic manifold with respect to any of the symplectic forms
\begin{equation}
    \omega_\theta = \Re(e^{-i \theta} \Omega) = (\cos\theta)\omega_J + (\sin\theta)\omega_K. 
\end{equation}
For each $\theta \in S^1$, we have that $\omega_\theta$ is compatible with $J_\theta = \cos \theta J + \sin \theta K$. In the exact setting these forms have primitives given by $\lambda_\theta = \Re(e^{-i\theta} \Lambda)$, and $\lambda_{\theta}|_{L_i}$ has a primitive given by $H_i^\theta = \Re(e^{-i\theta}H_i)$. The holomorphic symplectic action functional $\cA_\C$ thus gives rise to a family of real symplectic action functionals 
\begin{equation}
    \mathcal{A}_\theta(\gamma) := \Re(e^{- i \theta} \cA_\C(\gamma)) = \int_\gamma \lambda_\theta - H_1^\theta(\gamma(1)) + H_0^\theta(\gamma(0)), 
\end{equation}
which have differentials
\begin{equation}
    \rd\mathcal{A}_\theta(\gamma) v  = Re(e^{- i \theta} \rd \cA_\C(\gamma) v) = \int_0^1 \omega_\theta \left(v(\tau), \frac{\rd\gamma}{\rd\tau} \right) \rd\tau. 
\end{equation}
In particular, $\mathcal{A}_J :=\mathcal{A}_0$ is the symplectic action functional when $M$ is thought of as symplectic with respect to  $\omega_J$, and $\mathcal{A}_K := \mathcal{A}_{\pi/2}$ is the symplectic action functional when $M$ is thought of as symplectic with respect to $\omega_K$. In these conventions, we can write $\cA_\C = \mathcal{A}_J + i \mathcal{A}_K$. 

By the standard computation of the Hessian of the real symplectic action functional \cite{floer-infinite-dimensional-morse-theory}, we see that the critical points of $\mathcal{A}^\C$ are nondegenerate exactly when $L_0$ intersects $L_1$ transversely. Moreover, in our conventions, summarized in \autoref{sec:conventions_lagrangian_floer_theory}, the gradient flow equation for  $\mathcal{A}_\theta$ is the $J_\theta$-holomorphic curve equation
\begin{equation}
\label{eq:symplectic-gradient-flow-section-2}
\begin{gathered}
    u: \R \to \mathcal{P}, \\
    \partial_t u = \Grad \mathcal{A}_\theta(u(t)) = - J_\theta (u)\partial_\tau u. 
\end{gathered} 
\end{equation}
In our conventions for the complex Morse theory model, the morphisms from $p$ to $q$ should be generated by gradient flows of $-\Re(e^{- i \theta}\cA_\C)$ where $\theta = \arg(p-q)$, i.e. gradient flows of $-\mathcal{A}_\theta = \mathcal{A}_{\theta + \pi}$, that is: $J_{\theta+\pi}$-holomorphic strips. 

Finally, the differential, and more generally the $A_\infty$-operations, in the complex Morse theory model should be given by solutions to the complex gradient flow equation \eqref{eq:complex-gradient-sec-2}. Plugging in $F= \cA_\C$ into \eqref{eq:complex-gradient-sec-2} and using \eqref{eq:symplectic-gradient-flow-section-2} to compute the gradient of the action functional, we get that complex gradient trajectories $U: \C \to \mathcal{P}$ are equivalent to solutions of 
\begin{gather*}
    U: \R^2 \times [0,1] \to M, \\
    \partial_s U + I(U)\partial_t U  = \Grad \Re(e^{-i (\theta+\pi/2)}\cA_\C) = - J_{\theta + \pi/2}(U) \partial_\tau U = - K_\theta(U) \partial_\tau U,
\end{gather*}
where 
\begin{equation}
    K_\theta = I J_\theta = \cos \theta K - \sin \theta J
\end{equation}
is the complex structure such that $(I, J_\theta, K_\theta)$ is a quaternionic triple. Moving all terms to the left hand side, and multiplying the equation by $J_\theta$ we get that the above solutions solve the Fueter-type equation 
\begin{equation}
    I(U) \partial_\tau U  + J_\theta (U) \partial_s U - K_\theta(U) \partial_t U = 0.
\end{equation}
In order to recover the more standard Fueter equation \eqref{Eq_FueterIntroduction}, we can instead define $\mathrm{FS}(\cP, \cA_\C)$ via solutions to the \emph{anti-complex gradient flow equation}
\begin{gather*}
    U: \C \to \mathcal{P}, \\
    \partial_s U - I (U) \partial_t U = \Grad \Im(-e^{- i \theta}\cA_\C).
\end{gather*}
Solutions to the above equation are in bijection with solutions to \eqref{eq:sec2-fueter-eq} as desired. The $s$-invariant asymptotics are the $J_\theta$-\emph{anti}-holomophic strips, i.e. the $J_{\theta+\pi}$-holomorphic strips, that give generate the morphism complexes $\mathrm{Fuet}_M(L_0, L_1)(p, q)$. The coordinate change $(s,t) \leftrightarrow (-s,t)$ induces a bijection between complex gradient trajectories and complex anti-gradient trajectories, and this explains why the opposing conventions in the differentials \eqref{eq:fueter-differential} and \eqref{eq:complex-morse-diferential}. We have thus reproduced the description of the Fueter hom-category of \autoref{sec:fueter-tqft}. We discuss further aspects of the formalism of the Fueter $2$-category in Appendix \ref{app:more-on-fueter}.

\subsection{Decategorifications of the Fueter $2$-category}
\label{Sec_RecoveringFukaya}
The Fukaya category of a symplectic manifold $M$ categorifies the topological invariants of $M$, such as the intersection form. In the same way, we expect that if $M$ is hyperkähler, the Fueter $2$-category of $M$ categorifies the symplectic invariants of $M$. Because $\mathrm{Fuet}(M)$ is a $2$-category it has two natural decategorifications: these turn out to be the Fukaya category $\mathrm{Fuk}(M)$, and the complex Morse category of the loop space $LM$, respectively.

As should be clear from  \autoref{sec:fueter-tqft}, the Fueter $2$-category takes into account the many nontrivial $J_\theta$-holomorphic strips stretching between pairs of complex Lagrangians. By analogy with the finite-dimensional case, discussed in \autoref{sec:zeta-instanton-model}, the Fueter category defined using background angle $\hat\theta$ should satisfy
\begin{equation}
\label{eq:decategorification-1}
    HH_*(\mathrm{Fuet}^{\hat\theta}_M(L_0, L_1))) \simeq   HM_*(\mathcal{A}_{\hat \theta + \pi}) = HF^*(L_0, L_1; \omega_{\hat{\theta}}, J_{\hat{\theta}}). 
\end{equation}
On other words, \emph{the Fueter $2$-category should categorify the subcategory of the Fukaya category containing complex Lagrangians}. In particular, \emph{assuming that all complex Lagrangians under consideration intersect transversely}, applying Hochschild homology to the $A_\infty$-bifunctors (see \cite{Seidel2008} for algebraic background),  
\begin{equation}
    \mathrm{Fuet}_M(L_0, L_1) \tensor \mathrm{Fuet}_M(L_1, L_2) \to \mathrm{Fuet}_M(L_0, L_2)
\end{equation}
should recover the composition in the Fukaya category. This identify follows from the definition of composition of $1$-morphisms in  \autoref{sec:fueter-tqft}. We symbolically write this naive decategorification of the Fukaya category as
\begin{equation}
\label{eq:naive-decategorification}
    HH_*(\mathrm{Fuet}_M^{\hat{\theta}}) \simeq \mathrm{Fuk}^h(M, \omega_\theta),
\end{equation}
where $\mathrm{Fuk}^h(M, \omega_\theta) \subset \mathrm{Fuk}(M, \omega_\theta)$ is the full subcategory with objects the complex Lagrangian submanifolds.

In the case when the holomorphic Lagrangians do not intersect transversely, we expect that the above identity is replaced with 
\begin{equation}
\label{eq:naive-decategorification-2}
    HP_*(\mathrm{Fuet}_M^{\hat{\theta}}) \simeq \mathrm{Fuk}^h(M, \omega_{\hat{\theta}})[u, u^{-1}]
\end{equation}
where $HP$ is the \emph{periodic cyclic homology} \cite{loday2013cyclic}, which we apply to every hom-category $Fuet_M^{\hat{\theta}}(L_0, L_1)$ individually, and on the right hand side we have tensored the Fukaya category with a ring $k[u, u^{-1}]$ where $u$ is a formal variable in degree $2$.

Let us motivate the expectation \eqref{eq:naive-decategorification}. The basic observation is that if two complex Lagrangian submanifolds intersect transversely, then the gradings of their intersection points are all the same.  As such, one has that
\begin{equation}
\label{eq:floer-homology-of-transverse-holomorphic-lagrangians}
    HF^*(L_0, L_1) \simeq \Z(L_0 \cap L_1) \text{ when } L_0 \text{ is transverse to }L_1.  
\end{equation}
In the case of non-transverse intersections, Solomon--Verbitsky have proven 
\begin{proposition}[{\cite{Solomon2019}}]
\label{prop:solomon-verbitsky}
For generic values of $\theta \in S^1$, all $J_\theta$-holomorphic polygons with boundary on a finite collection of complex Lagrangians are constant.
\end{proposition}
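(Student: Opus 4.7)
\emph{Proof proposal.} The strategy is to exploit the holomorphic symplectic form $\Omega = \omega_J + i\omega_K$ and its special interaction with $J_\theta$-holomorphic curves in a hyperkähler manifold. The key pointwise identity is that for any $J_\theta$-holomorphic map $u \colon \Sigma \to M$ from a Riemann surface, working in a local conformal coordinate $z = s + it$ on $\Sigma$,
\begin{equation*}
u^*\Omega \;=\; e^{i\theta}\,|\partial_s u|^2 \, \ds \wedge \dt.
\end{equation*}
This would follow from a direct computation using the hyperkähler compatibility $\omega_\alpha(X,Y) = g(J_\alpha X, Y)$ for $\alpha \in \{I,J,K\}$ together with the orthogonality $g(JX,KX) = 0$, which itself reduces to the vanishing of $g(X, IX)$ for the $g$-antisymmetric isometry $I$. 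The real part recovers the familiar energy formula $u^*\omega_\theta = |\partial_s u|^2 \, \ds \wedge \dt$, while the imaginary part records the stronger statement $u^*\omega_{\theta + \pi/2} \equiv 0$ along any $J_\theta$-holomorphic map.

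Next I would integrate this identity over a $J_\theta$-holomorphic polygon $u \colon \Sigma \to M$ with boundary lying on complex Lagrangians $L_0, \ldots, L_{k-1}$ in cyclic order and corners $p_0, \ldots, p_{k-1}$ at their pairwise intersections. Because each $L_i$ is a complex Lagrangian, $\Omega$ restricts to zero on $L_i$; combined with $d\Omega = 0$, Stokes' theorem gives
\begin{equation*}
\int_\Sigma u^*\Omega \;=\; Z(p_0, \ldots, p_{k-1}) \;\in\; \C,
\end{equation*}
a complex number depending only on the ordered sequence of corners and the relative homotopy class of $u$. In the exact setting of \eqref{eq:exact-holomorphic-symplectic-action-functional} one has the explicit formula $Z = \sum_j \bigl(H_j(p_{j+1}) - H_j(p_j)\bigr)$; in the non-exact setting $Z$ is well-defined modulo the countable group of periods of $\Omega$ on $\pi_2(M, L_0 \cup \cdots \cup L_{k-1})$. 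Combining with the pointwise identity yields
\begin{equation*}
Z \;=\; e^{i\theta}\int_\Sigma |\partial_s u|^2 \, \ds\wedge \dt \;\in\; e^{i\theta}\, \R_{\geq 0}.
\end{equation*}

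The conclusion is then immediate: either $Z = 0$, in which case $\int |\partial_s u|^2 = 0$ and $u$ is constant, or $Z \neq 0$ and $\theta \equiv \arg Z \pmod{2\pi}$. For a finite collection of complex Lagrangians whose pairwise intersections are finite, the set of realizable values of $Z$---ranging over all $k$, cyclic sequences of Lagrangians, corner choices, and relative homotopy classes modulo periods---is at most countable, so the set of bad angles $\{\arg Z : Z \neq 0\} \subset S^1$ is countable. For $\theta$ outside this set, every $J_\theta$-holomorphic polygon with boundary on the given Lagrangians must be constant. No single step will be difficult; the main bookkeeping issue is verifying in the non-exact case that periods contribute only a countable shift to $Z$, which is routine since the periods form a finitely generated abelian subgroup of $\C$.
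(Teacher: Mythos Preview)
Your proposal is correct and matches the paper's own explanation, which appears in the paragraph immediately following the proposition rather than as a formal proof. The paper phrases the same mechanism in terms of the holomorphic action functional $\cA_\C$: a $J_\theta$-holomorphic strip or polygon projects under $\cA_\C$ to a straight line of argument $\theta$, so for $\theta$ avoiding the countably many arguments of differences of critical values there are no non-constant such curves. Your identity $u^*\Omega = e^{i\theta}|\partial_s u|^2\,\ds\wedge\dt$ is precisely the pointwise statement underlying this, and your Stokes-theorem computation of $\int_\Sigma u^*\Omega$ as a boundary/corner invariant $Z$ is exactly the ``difference of critical values'' appearing in the paper's sketch; you simply supply the details the paper omits, including the countability argument in the non-exact case.
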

The basic reason for both of these phenomena is explained by the fact that the Floer cohomology of $L_0, L_1$ is the gradient flow of the real part of a holomorphic Morse function. Indeed, when we compute $HF^*(L_0, L_1, \omega_\theta)$, we are allowed to use any complex structure \emph{tamed} \cite[Chapter~1]{McDuff2012} by $\omega_\theta$, which is an open condition. Thus, instead of using $J_\theta$ to define the holomorphic curve equation, we can use $J_{\theta+\epsilon}$ for a small generic $\epsilon$ to compute $HF^*(L_0, L_1; \omega_\theta)$. The image of a $J_{\theta+\epsilon}$-holomorphic curve under $\cA_\C$ will project to a straight line with argument $\theta+\epsilon$ in the complex plane; as such, for generic $\epsilon$, there will be no such lines between a pair of critical values, and thus no such non-constant holomorphic curves between intersection points, and in the case of transverse intersection, and no differential on the resulting Floer complex. We discuss expectations around non-transversely intersecting holomorphic Lagrangians in \autoref{sec:more-on-decategorification}.

Thus, when $L_0$ and $L_1$ intersect \emph{cleanly}, the action functional $\mathcal{A}^\C_{L_0, L_1}$ is Morse--Bott, and so $FS(\mathcal{A}^\C_{L_0, L_1})$ should have a semi-orthogonal decomposition with pieces given by the Fukaya categories of the connected components of $L_0 \cap L_1$ (see \autoref{sec:morse-bott-action-functionals}, \eqref{eq:wrapped-complex-action-functional}). But \autoref{prop:solomon-verbitsky} implies in this case that $HF(L_0, L_1) = H^*(L_0 \cap L_1)$; meanwhile, known  properties of nondegenerate Fukaya categories \cite{AbouzaidGeneration,SeidelVanishingCycles, ganatra2019cyclic} should identify periodic cyclic homology of $FS(\mathcal{A}^\C_{L_0, L_1})$ with a $2$-periodic version of $H^*(L_0 \cap L_1)$. This motivates \eqref{eq:naive-decategorification-2} in the case of clean intersections; when the Lagrangians intersect transversely,  \eqref{eq:naive-decategorification-2} essentially reduces to \eqref{eq:naive-decategorification}.

Now, instead of applying Hochschild homology to morphisms, another way decategorifying a TQFT is to consider it on closed manifolds instead of on manifolds with boundary or corners. For example, as we saw in \autoref{sec:aspects-of-tqft}, an open-closed 2d TQFT assigns a vector space not just to an interval but also to a circle; in the symplectic setting, the circle corresponds to the Hamiltonian Floer cohomology of $M$, and is defined by computing the Morse homology of the action functional on the \emph{loop space} instead of on the space of paths. Algebraically, the Hamiltonian Floer cohomology of $M$ is meant to be recovered by taking the Hochschild homology of the Fukaya category.

In the setting of the Fueter equation, we can formally do the same, and formally define a $1$-category 
\begin{equation*}
        \mathrm{Fuet}_M(S^1) = \mathrm{FS}(\mathcal{A}^\C_{S^1})
\end{equation*}
where $\cA^\C_{S^1}$ is the holomorphic symplectic action functional on the loop space
\begin{equation}
\label{eq:loop-space-complex-functional}
\begin{gathered}
  \mathcal{A}^\C_{S^1}: LX \to \C, \\
  \rd\mathcal{A}^\C_{S^1}(\gamma) v = \int_{S^1} \Omega\left( v(\tau), \frac{\rd\gamma}{\rd \tau} \right)\rd \tau.
 \end{gathered}
\end{equation}
On a formal level, $\mathrm{Fuet}_M(S^1)$ has objects given by critical points of $\mathcal{A}^\C_{S^1}$, which are the constant loops into $X$. The $1$-morphisms would then be given by gradient flows of $\Re(e^{i \theta} \mathcal{A}^\C_{S^1})$ between these points, which are the \emph{$J_\theta$-holomorphic spheres in $X$}, for all $\theta \in S^1$.  Correspondingly, the the differential on the morphism complexes in this category should be given by counts of Fueter maps
\begin{equation}
\begin{gathered}
    U: \R \times S^2 \to M, \\ 
    \partial_s U - I(U)(\bar{\partial}^{J_\theta}_z U) = 0, 
\end{gathered}
\end{equation}
with $s$ denoting the coordinate on $\R$, $z$ denoting the holomorphic coordinate on $S^2$, and with asymptotics given by $J_\theta$-holomorphic spheres in $M$. Similarly, the higher $A_\infty$-operations should count solutions Fueter maps from non-compact $3$-manifolds without boundary with ends modeled on $\R \times S^2$.  

Unfortunately, $\mathcal{A}^\C_{S^1}$ is only Morse--Bott rather than Morse, so the complex Morse theory model of the Fukaya-Seidel category is not available; see \autoref{sec:morse-bott-action-functionals} for more on this problem. However, Ansatz \eqref{eq:A-side-knorrer} suggests that $Fuet_M(S^1)$ is a category built out of the Fukaya categories of the critical loci of $\mathcal{A}^\C_{S^1}$, e.g. 
\begin{equation}
    \label{eq:loop-space-category-simple}
    FS(e^{-i\hat{\theta}}\mathcal{A}^\C_{S^1}) \simeq \mathrm{Fuk}(M, \omega_{\hat{\theta}}).
\end{equation} 
Thus, the two decategorifications would actually be the same, although if we consider \emph{wrapped} Fueter categories (see \autoref{sec:morse-bott-action-functionals}, equation \eqref{eq:wrapped-complex-action-functional}), this equivalence of decategorifications may no longer hold. 

In any case, applying the finite dimensional relation between Hochschild homology and Morse homology again, we should expect that
\begin{equation}
\label{eq:hoschild-homology-of-closed-string-sector}
    HH_*(\mathrm{Fuet}_M(S^1)) \simeq HM_*(\Re(e^{- i \hat{\theta}}\mathcal{A}^\C_{S^1})) = HF^*(X, \omega_{\hat{\theta}}),
\end{equation}
with the right-hand side denoting the Hamiltonian Floer cohomology of $(X,\omega_{\hat\theta})$. We expect that there is a categorification of the isomorphism above to a map
\begin{equation}
\label{eq:categorical-open-closed-map}
    \overline{HH}_*(\mathrm{Fuet}_M) \to \mathrm{Fuet}_M(S^1). 
\end{equation}
which  (assuming that this map is an isomorphism) recovers \eqref{eq:hoschild-homology-of-closed-string-sector} by taking $HH_*$ of both sides \emph{again}. Here, $\overline{HH}_*$ is an invariant of a $2$-category described in  \autoref{sec:more-on-decategorification} (see \autoref{eq:categorical-hochscild-homology-def}) which is distinct from the $1$-category \eqref{eq:naive-decategorification} produced by taking Hochschild homology of all hom-categories. We note that an isomorphism like \eqref{eq:loop-space-category-simple} implies \eqref{eq:hoschild-homology-of-closed-string-sector} for nondegenerate symplectic manifolds $M$.

\subsection{Cotangent bundles of Kähler manifolds}
\label{sec:cotangent-bundles-conjecture}
As the previous discussions show, there are significant analytical and combinatorial challenges that arise in the definition of the Fueter $2$-category. We now come to the first examples where we have explicit expectations as to the behavior of the Fueter $2$-category. These examples are cotangent bundles of Kähler manifolds. As we explain in the introduction, the following conjecture is a quaternionic version of Floer's result. 

\begin{conjecture}
\label{conj:categorical-floer-theorem}
Let $X$ be a Kähler manifold with an exact symplectic form and pseudo-convex end (for example, a Stein manifold). Let $F: X \to \C$ be a holomorphic Morse function. Then, given certain geometric assumptions on $F$ near the ends of $X$, 
\begin{equation}
\mathrm{Fuet}_{T^*X}(L_0, L_1) \simeq \mathrm{FS}(X,F),
\end{equation}
where $L_0$ is the zero-section of $T^*X$ and $L_1 = \Gamma(\rd F)$ is the graph of $\rd F$.
\end{conjecture}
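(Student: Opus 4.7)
The plan is to lift \autoref{Thm_FloerCorrespondence} from a correspondence of PDE solutions to an equivalence of $A_\infty$-categories, once both $\mathrm{Fuet}_{T^*X}(L_0,L_1)$ and $\mathrm{FS}(X,F)$ have been rigorously constructed via their complex Morse theory models as sketched in \autoref{Sec_CategoricalAspects}. Throughout I would work with the $\tau$-dependent almost quaternionic structure $(I_\tau,J_\tau,K_\tau)$ on $T^*X$ provided by \autoref{Thm_FloerCorrespondence}, and invoke \autoref{Thm_MaximumPrinciple} together with the appropriate geometric hypotheses on $F$ at infinity to confine all relevant Fueter maps to a fixed compact subset of $T^*X$ on which the correspondence is valid. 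A preliminary rescaling of $F$ reduces the general case to the $C^2$-small regime of \autoref{Thm_FloerCorrespondence}.

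At the level of objects the identification is immediate: critical points of the holomorphic symplectic action functional $\cA_\C$ on $\cP(T^*X;L_0,L_1)$ are constant paths with image in $L_0\cap L_1$, and these intersection points are in canonical bijection with $\mathrm{Crit}(F)$. For $1$-morphisms between critical points $p_\pm$, the Fueter hom-complex is generated by $J_{\theta+\pi}$-holomorphic strips in $T^*X$ with boundary on $(L_0,L_1)$ asymptotic to $p_\pm$, while the Fukaya--Seidel side is generated by gradient trajectories of $\Re(e^{-i(\theta+\pi)}F)$ in $X$ from $p_-$ to $p_+$. Their identification is precisely Floer's classical cotangent bundle theorem, which is recovered from \autoref{Thm_FloerCorrespondence} by restricting to $s$-invariant configurations.

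The differential and higher $A_\infty$-operations require an extension of \autoref{Thm_FloerCorrespondence} to multi-ended domains. On the Fueter side, the relevant moduli spaces consist of solutions of the Fueter equation on $[0,1]\times\Sigma$ for $\Sigma$ a genus-zero Riemann surface with boundary and punctures, with Lagrangian boundary on $(L_0,L_1)$, asymptotic to $J_{\theta+\pi}$-holomorphic strips at the $\partial\Sigma$-punctures and to intersection points at boundary punctures. I expect the proof of \autoref{Thm_FloerCorrespondence} to extend essentially verbatim to these domains, yielding a bijection with solutions of the complex gradient flow equation on $\Sigma$ with corresponding asymptotics in $X$. Combined with a matching of Fredholm indices, orientations, and energy bounds using \autoref{Thm_EnergyIdentity}, this should give the desired $A_\infty$-equivalence.

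The main obstacle is not technical within this outline but foundational: neither $\mathrm{Fuet}_{T^*X}(L_0,L_1)$ nor the complex Morse theory model of $\mathrm{FS}(X,F)$ has a fully rigorous definition at the generality required. Progress on the conjecture therefore hinges on the analytical problems enumerated earlier in the paper — Fredholm theory, transversality, compactness, and orientations for the Fueter equation on multi-punctured domains — and on reconciling the compactification scheme chosen on the Fueter side with the one chosen on the Fukaya--Seidel side (the Haydys--Wang scheme of \cite{Wang2022} versus the Gaiotto--Moore--Witten scheme of \cite{Gaiotto2015}, which disagree beyond the convex critical-values case, as discussed around \autoref{fig:haydys-witten}). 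Once these foundations are in place, the heart of the argument reduces to the PDE correspondence of \autoref{Thm_FloerCorrespondence} together with the confinement provided by \autoref{Thm_MaximumPrinciple}.
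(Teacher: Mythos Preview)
The statement is a \emph{conjecture}, and the paper does not prove it; the paper's only commentary is the sentence ``This conjecture is supported by our \autoref{Thm_FloerCorrespondence}. As a definition of the complex Morse theory model of $\mathrm{FS}(X,F)$ in the Haydys version is now available \cite{Wang2022}, this conjecture should be accessible by adapting that model to the infinite-dimensional setting and generalizing the analysis performed in the current paper.'' Your proposal is not a proof either, and you correctly flag this: you explicitly identify the foundational gap (neither category is rigorously defined in the needed generality) as the main obstacle. In that sense your write-up is appropriate for the status of the statement and is broadly aligned with the paper's own suggested line of attack.

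Where your outline goes somewhat beyond the paper is in sketching the object/morphism/operation matching and in invoking \autoref{Thm_MaximumPrinciple} for confinement. A few cautions on those extrapolations. First, \autoref{Thm_FloerCorrespondence} is proved only for strips $[0,1]\times\R^2$; the extension to multi-punctured $[0,1]\times\Sigma$ you ``expect \ldots\ to extend essentially verbatim'' is not in the paper and would require reworking the Weitzenb\"ock argument of \autoref{Thm_FloerCorrespondencePerturbed} on domains with corners and multiple asymptotic ends, which is genuinely new analysis rather than a formality. Second, the $\tau$-dependent triple $(I_\tau,J_\tau,K_\tau)$ produced by \autoref{Thm_FloerCorrespondence} is not shown to satisfy the $IJK$-conical hypotheses of \autoref{Thm_MaximumPrinciple}, so invoking the maximum principle for confinement in this perturbed setting is not justified by the results actually proved; the paper instead obtains compactness a posteriori via the correspondence with complex gradient trajectories (see \autoref{remark:exclude-fueter-bubbling}). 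Third, the rescaling of $F$ to reach the $C^2$-small regime changes the Fukaya--Seidel category only up to equivalence, but you would need to track how this interacts with the angle-dependent compactification schemes you mention. These are not fatal to the outline, but they are places where ``essentially verbatim'' hides real work.
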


This conjecture is supported by our \autoref{Thm_FloerCorrespondence}. As a definition of the complex Morse theory model of $\mathrm{FS}(X,F)$ in the Haydys version is now available \cite{Wang2022}, this conjecture should be accessible by adapting that model to the infinite-dimensional setting and generalizing the analysis performed in the current paper. 

This conjecture, together with the existence of the Fueter $2$-category, would have remarkable implications. Indeed, just as Lagrangian Floer cohomology groups are invariant under Hamiltonian flows, the Fueter equation and the associated categorical invariants enjoy natural invariance properties under the flows of \emph{holomorphic Hamiltonians}. We explore this phenomenon further in \autoref{sec:further-analytic-aspects}; for now, it suffices to note that a holomorphic function 
\begin{equation*}
    H: M \to \C
\end{equation*}
determines a \emph{holomorphic Hamiltonian vector field} $X_H$ on $M$ by
\begin{equation}
\label{eq:holomorphic-hamiltonian-flow}
  \iota_{X_H}\Omega = -dH, 
\end{equation}
such that $X_H$ is the Hamiltonian vector field of $\Re(e^{-i \theta} H)$ with respect to $\omega_\theta$ for all $\theta \in S^1$ simultaneously. As such, given a pair of holomorphic functions $F_1, F_2: X \to \C$, we get a triple of Lagrangians $L_1 = \Gamma(\rd F_1)$, $L_2 = \Gamma(\rd F_2)$, and $L_{12} = \Gamma(\rd(F_1 + F_2))$. Writing $\pi: T^*X \to X$ for the projection, the time one holomorphic Hamiltonian flow of $H = -\pi^*F_1$ takes $L_1$ to the zero section $L_0$, and $L_{12}$ to $L_2$.  Thus, the existence of of the composition operation 
\begin{equation}
    \mathrm{Fuet}_{T^*X}(L_0, L_1) \tensor \mathrm{Fuet}_{T^*X}(L_1, L_{12}) \to \mathrm{Fuet}_{T^*X}(L_0, L_{12})
\end{equation}
implies, after applying the time one flow of $X_H$ to the second hom-category above and using  \autoref{conj:categorical-floer-theorem}, the surprising 
\begin{conjecture}
\label{conj:composition-in-Lefschetz-fibrations}
Let $X$ be as above and let $F_1, F_2: X \to \C$ be holomorphic Morse functions. Then, given certain geometric assumptions on $F_1$ and $F_2$ near the ends of $X$, there exists a monoidal functor 
\begin{equation}
    \mathrm{FS}(X, F_1) \tensor \mathrm{FS}(X, W_2) \to \mathrm{FS}(X, F_1 + F_2).
\end{equation}
Given a third holomorphic Morse function $F_3$, under appropriate assumptions on the geometry at infinity, the monoidal products for the pairs $(F_1, F_2)$, $(F_2, F_3)$, $(F_1+F_2, F_3)$, and $(F_1, F_2+F_3)$ should commute. 
\end{conjecture}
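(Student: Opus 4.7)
The plan is to read off the desired monoidal functor directly from the $1$-morphism composition in $\mathrm{Fuet}_{T^*X}$, after transporting one of the complex Lagrangians by a holomorphic Hamiltonian flow. Write $L_0$ for the zero section and $L_F := \Gamma(\rd F)$ for the graph of $\rd F$, and fix the triple of complex Lagrangians $L_0, L_{F_1}, L_{F_1+F_2}$ in $T^*X$. As described in \autoref{sec:fueter-tqft}, the Fueter $2$-category supplies an $A_\infty$-bifunctor
\begin{equation*}
\mathrm{Fuet}_{T^*X}(L_0, L_{F_1}) \tensor \mathrm{Fuet}_{T^*X}(L_{F_1}, L_{F_1+F_2}) \to \mathrm{Fuet}_{T^*X}(L_0, L_{F_1+F_2})
\end{equation*}
defined by counts of $J_{\hat\theta}$-holomorphic triangles labelled by the three Lagrangians, together with the associated Fueter higher operations. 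By \autoref{conj:categorical-floer-theorem}, the outer two hom-categories are identified with $\mathrm{FS}(X,F_1)$ and $\mathrm{FS}(X,F_1+F_2)$ respectively.

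To identify the middle hom-category, I would invoke invariance of the Fueter $2$-category under the flow of the holomorphic Hamiltonian $H = -\pi^* F_1$ on $T^*X$, where $\pi : T^*X \to X$ is the bundle projection. By \eqref{eq:holomorphic-hamiltonian-flow}, the time-one flow $\phi^1_{X_H}$ sends a covector $\xi_x$ to $\xi_x - \rd F_1(x)$, so that it simultaneously carries $L_{F_1} \to L_0$ and $L_{F_1+F_2} \to L_{F_2}$. Invariance of $\mathrm{Fuet}_{T^*X}$ under $\phi^1_{X_H}$ then yields an equivalence
\begin{equation*}
\mathrm{Fuet}_{T^*X}(L_{F_1}, L_{F_1+F_2}) \simeq \mathrm{Fuet}_{T^*X}(L_0, L_{F_2}) \simeq \mathrm{FS}(X, F_2),
\end{equation*}
where the second equivalence is another instance of \autoref{conj:categorical-floer-theorem}. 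Assembling these identifications produces the sought-after monoidal functor $\mathrm{FS}(X, F_1) \tensor \mathrm{FS}(X, F_2) \to \mathrm{FS}(X, F_1+F_2)$. For the compatibility with a third function $F_3$, I would repeat the construction on the quadruple $L_0, L_{F_1}, L_{F_1+F_2}, L_{F_1+F_2+F_3}$, transporting the middle hom-categories by the Hamiltonian flows of $-\pi^* F_1$ and $-\pi^*(F_1+F_2)$. The commutativity of the bracketings $(F_1\tensor F_2)\tensor F_3$ and $F_1 \tensor (F_2 \tensor F_3)$ then reduces to coherence of horizontal composition in the $(A_\infty,2)$-category $\mathrm{Fuet}_{T^*X}$, combined with naturality of the Hamiltonian-flow equivalence, in the same way that $A_\infty$-associativity of the triangle product yields associativity of composition in a Fukaya category.

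The genuine difficulties lie upstream of this formal manipulation. First, one must rigorously construct $\mathrm{Fuet}_{T^*X}$ in the non-compact setting of a cotangent bundle, with coherent horizontal and vertical compositions built from Fueter moduli spaces with the intricate asymptotic boundary conditions of \autoref{fig:fueter-domains-1}--\autoref{fig:fueter-domains-3}; this is still an open program and demands Fredholm, transversality, and compactness theory beyond the results of the present paper. Second, the full statement of \autoref{conj:categorical-floer-theorem} must be established: our \autoref{Thm_FloerCorrespondence} only matches strip moduli spaces, whereas the functor above needs compatibility at the level of triangles and higher polygons, which would require porting the complex Morse-theoretic model of \cite{Wang2022} into the infinite-dimensional setting. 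I expect the most delicate step to be the proof of invariance of the Fueter moduli under the flow of $X_H$: a continuation-map argument would pull the quaternionic triple $(I, J_\theta, K_\theta)$ back along $\phi^s_{X_H}$ to produce an $s$-dependent family interpolating between the two Fueter problems, but both the energy identity of \autoref{Thm_EnergyIdentity} and the maximum principle of \autoref{Thm_MaximumPrinciple} would have to be re-derived for this time-dependent, non-compactly-supported Hamiltonian, whose $\omega_J$- and $\omega_K$-Hamiltonian flows fail to be compactly supported even under strong growth control on $F_1$.
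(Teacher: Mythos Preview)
Your proposal follows precisely the heuristic derivation given in the paper: use the horizontal composition bifunctor of $\mathrm{Fuet}_{T^*X}$ on the triple $(L_0,L_{F_1},L_{F_1+F_2})$, identify the outer hom-categories via \autoref{conj:categorical-floer-theorem}, and transport the middle one by the holomorphic Hamiltonian flow of $-\pi^*F_1$. You also correctly flag that this is a conjecture rather than a theorem, and the obstacles you list (construction of $\mathrm{Fuet}_{T^*X}$, full proof of \autoref{conj:categorical-floer-theorem}, invariance under non-compactly-supported holomorphic Hamiltonians) match the caveats the paper itself raises.
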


This is surprising, as there is no straightforward way to compose Lefschetz thimbles associated to multiple symplectic Lefschetz fibrations on the same symplectic manifold using purely symplectic methods. Searching for the definitions of such functors using symplectic geometry is a significant problem; we caution that the functors of \autoref{conj:composition-in-Lefschetz-fibrations} should only exist as stated under yet-uncertain tameness assumptions on the geometry of the $F_i$ at infinity, together with similarly uncertain geometric conditions ruling out the contribution of essential singularities of Fueter maps to the categorical structures. The most likely setting where such functors may exist are when $X$ is flat, as in that case all bubbling of Fueter maps can be ruled out a priori. It is possible that in the general case, the bubbling of Fueter maps contributes in an interesting way to the above algebraic structure. We explore a possible interpretation of  \autoref{conj:composition-in-Lefschetz-fibrations} in \autoref{sec:mirror-symmetry-for-toric-varieties} below.

\subsection{$3d$ mirror symmetry}
\label{sec:3d-mirror-symmetry}

This section is strongly influenced by our discussions with Justin Hilburn. Thus far, we have discussed the Fueter TQFT from a purely mathematical perspective, by complexifying and categorifying the Fukaya category of symplectic topology. A different way to arrive at the same story is through the perspective of quantum field theory. 

\paragraph{The $A$-type twist.} The Fueter equation can be seen to arise by by studying the $3d\;N=4\;\sigma$-model, which is a certain supersymmetric field theory where the classical fields are maps from $3$-manifolds to a hyperk\"ahler $4n$-manifold, augmented with supersymmetric data. This model is the dimensional reduction of the $4d\;N=2$ $\sigma$-model, which is a corresponding theory for maps from a $4$-manifold with a hyperk\"ahler target. The identification of the $4$-dimensional Fueter equation as the partial differential equation governing the $A$-type twist of the $4d\;N=2$ model dates back to \cite{anselmi-fre}; one can derive from that work that the $A$-type twist of the $3d\;N=4$ $\sigma$-model is governed by the Fueter equation \eqref{Eq_FueterIntroduction} , using the fact that the dimensional reduction of the $4d\;N=2$ model is the $3d\;N=4$ model. Dimensionally reducing further, we have that the reduction of the $3d\;N=4$ model to $2d$ is the $2d\;N=(2,2)$-model, the $\sigma$-model responsible for mirror symmetry of Calabi-Yau manifolds; this corresponds to the fact that the dimensional reduction of the Fueter equations is the pseudo-holomorphic curve equation.

The $3d\;N=4$ and $4d\;N=2$ theories are objects of active study. Most commonly in mathematics, they arise in their \emph{gauged variants}. Namely, there are natural partial differential equations on maps from a $3$ or $4$-manifold to a hyperk\"ahler manifold $M$ equipped with a hyperhamiltonian action \cite{hitchin1987self} of a compact Lie group $G$. We do not write down these equations in full generality, but we note that in the case of $M = T^*V$ for $V$ a complex representation of a compact Lie group, these equations reproduce many standard equations of gauge theory; for example, taking $V = \C$ and $M = U(1)$ recovers, after a minor modification, the usual Seiberg-Witten equations \cite[Remark~6.2]{nakajima-coloumb-1}. Thus, gauge theorists should think of this paper as studying the \emph{nonlinear, un-gauged variant of the Seiberg-Witten equations}. 

\paragraph{The $B$-type twist.}

All of the field theories mentioned above, besides admitting an $A$-type twist, also admit a $B$-type twist. Just as the $A$-type twist of the $3d$ $N=4$ $\sigma$-model has a $2$-category of boundary conditions $\mathrm{Fuet}_M$, which we explore in this paper, the $B$-type twist also has a $2$-category of boundary conditions, for which the initial attempts at a definition were given by Kapustin--Rozansky--Saulinas \cite{kapustin-rozansky-saulinas, kapustin-rozansky}. We thus call this $2$-category of boundary conditions for the $B$-type twist the \emph{KRS} $2$-category $KRS_M$. This category, just like the category $Coh_X$ of boundary conditions of the $B$-type twist of the $2D$ $N=(2,2)$ $\sigma$-model into $X$, is of purely algebraic nature. The original papers propose that for a cotangent bundle $T^*X$, the objects $2$-category $KRS_{T^*X}$ should contain the complex Lagrangians in $X$, just like the Fueter $2$-category. The model case for morphisms in the KRS $2$-category is the analog of  \autoref{conj:categorical-floer-theorem} for the $B$ model: when $L_0 \subset T^*X$ is the zero section and $L_1 \subset T^*X$ is the graph of the differential of $F:X \to \C$, one has 
\begin{equation}
    KRS_{T^*X}(L_0, L_1) \simeq D_{\Z/2}(X, F)
\end{equation}
where $D_{\Z/2}(X, F)$ is a small modification of the category $\mathrm{MF}(F)$ of \emph{matrix factorizations} \cite{Orlov04}, the $B$-twist analog of the Fukaya-Seidel category. . Similarly, in the KRS $2$-category, the analog of  \autoref{conj:composition-in-Lefschetz-fibrations} holds with Fukaya-Seidel categories replaced by the categories $D_{\Z/2}(X, F)$. 

More generally, $KRS_M$ should make sense for any complex symplectic $M$, and should contain complex Lagrangians as objects. The morphism categories between complex Lagrangians $KRS_{M}(L_0, L_1)$ should the categories of global sections of a sheaf of categories supported on $L_0 \cap L_1$ which near a point $p \in L_0 \cap L_1$ is meant to be a shift of the category $D_{\Z/2}(U, F)$ for some holomorphic $F: L_0 \supset U \to \C$, where $L_1$ is viewed as the graph of the differential of a function $F$ defined on a neighborhood $U$ of $p$.  Beyond this, Kapustin--Rozansky--Saulinas propose that in some sense, there should be more objects of the KRS $2$-category beyond the complex Lagrangians; in particular, a holomorphic generating function should specify an object in $KRS(T^*X)$, and more generally, certain sheaves of categories should define objects in the $KRS$ $2$-category. Partial foundational work for this idea can be found in \cite{gaitsgory-sheaves-of-categories, arinkin-singular-support}.

\paragraph{3d mirror symmetry.}

Most commonly, the $3d\;N=4$ theory is studied in its gauged linear variant via the geometry and representation theory associated to its \emph{Higgs and Coloumb branches}, which are certain non-compact complete hyperk\"ahler manifolds \cite{Huybrechts2011}. The $3d \;N=4$ theories are supposed to come in pairs, and the correspondence between paired theories goes by the name of \emph{3d mirror symmetry} \cite{Intriligator1996}. Given a pair of dual theories $\mathcal{X}, \mathcal{X}^\vee$, the correspondence is expected to pair
\begin{itemize}
    \item The Coloumb branch of $\mathcal{X}$ with the Higgs branch of $\mathcal{X}^\vee$
    \item The $A$-type twist of $\mathcal{X}$ with the $B$-type twist of $\mathcal{X}^\vee$. In particular, there should be an equivalence of $2$-categories of boundary conditions. 
\end{itemize}

In certain cases, dual pairs of $3d$ $N=4$ theories are associated to pairs of hyperk\"ahler manifolds, or stacks, $X, X^{\vee}$, and one expects that the Fueter $2$-category of $X$ is exchanged with the KRS $2$-category of $X^{\vee}$. This is the statement of 2-categorical 3d mirror symmetry. Some initial progress towards this idea has been achieved in \cite{gammage-hilburn-mazel-gee-2022}, who develop a conjectural sheaf theoretic model, by analogy to \cite{GPS1},  of a \emph{wrapped} variant of the Fueter $2$-category (which we discuss in Appendix \ref{sec:further-analytic-aspects}) and relate this to a KRS $2$-category of a 3d mirror geometry.

\section{The Fueter equation}
\label{Sec_FueterEquation}

In this section we introduce the Fueter equation, discuss the asymptotic boundary condtions for Fueter strips, and prove \autoref{Thm_EnergyIdentity} about the energy of Fueter strips.

\subsection{Quaternion-antilinear maps}
\label{Subsec_LinearAlgebra}

A map $f \colon \C \to \C$ is holomorphic if and only if its differential is complex-linear. The obvious quaternionic analog of this notion is not interesting as any map $f \colon \H \to \H$ with quaternionic-linear derivative is linear \cite{Fueter1935}. There is, however, an interesting theory of maps whose derivative is, in an appropriate sense, quaternion-antilinear.

We begin with some basic linear algebra over quaternions. Let $V$ be a quaternionic vector space, i.e. a real vector space equipped with a triple of complex structures $(I,J,K)$ satisfying the quaternionic relation $IJ = K$. This quaternionic triple makes $V$ into a module over $\H$. Given another quaternionic vector space $(U,i,j,k)$, the space of real homomorphisms from $U$ to $V$ decomposes into two $\H$-invariant subspaces:
\begin{equation}
  \label{Eq_DecompositionQuaternionLinearAntiLinear}
  \Hom(U,V) = \Hom_\H(U,V) \oplus \bHom_\H(U,V),
\end{equation}
where $\Hom_\H(U,V)$ is the space of $\H$-linear endomorphisms and $\bHom_\H(U,V)$ is the direct sum of the subspaces
\begin{gather*}
   \{ A \in \Hom(U,V) : A i = I A, \ A j = - JA, \ A k = - K A \}, \\
   \{ A \in \Hom(U,V) : A j = J A, \ A k = - KA, \ A i = - I A \}, \\
   \{ A \in \Hom(U,V) : A k = K A, \ A i = - IA, \ A j = - J A \}.
\end{gather*}
While none of the three subspaces is $\H$-invariant,  their direct sum is.  By analogy with complex vector spaces, we will call endomorphisms in $\bHom_\H(U,V)$ \emph{quaternion-antilinear} or \emph{$IJK$-antilinear}. The projection $\Hom(U,V) \to \Hom_\H(U,V)$ is given by
\begin{equation*}
  A \mapsto \frac14(A-IAi-JAj-KAk),
\end{equation*}
so $A$ is quaternion-antilinear if and only if
\begin{equation*}
 A-IAi-JAj-KAk = 0.
\end{equation*}

Our chief interest is in the case $U = \R\oplus \mathbb{E}$, where $\mathbb{E}$ is a three-dimensional vector space. A basis $(e_1,e_2,e_3)$ of $\mathbb{E}$ induces an identification $U = \H$ by
\begin{equation*}
    1\mapsto 1, \quad e_1 \mapsto i, \quad e_2 \mapsto j, \quad e_3 \mapsto k.
\end{equation*} 
Given such a basis, we say that a linear map in $\mathbb{E}\to V$ is quaternion-antilinear if the induced map $U \to V$ is quaternion-antilinear with respect to this identification. Explicitly, we have an isomorphism
\begin{gather*}
    \Hom_\H(U,V) \cong V \\
    A \mapsto A(1)
\end{gather*}
and $A \colon \mathbb{E} \to V$ is quaternion-antilinear if and only if it belongs to the kernel of the composition 
\begin{equation*}
    \Hom(\mathbb{E},V) \hookrightarrow \Hom(U,V) \to \Hom_\H(U,V) \cong V, \\
\end{equation*}
which we denote by $A \mapsto A_\H$ and which is given up to a constant by
\begin{equation*}
      A_\H = I A e_1 + J A e_2 + K A e_3.
\end{equation*}
Denote by $\overline{\Hom}_\H(\mathbb{E},V)$ the space of quaternion-antilinear maps. We have  
\begin{equation}
    \label{Eq_AntilinearDecomposition}
    \begin{split}
        \overline{\Hom}_\H(\mathbb{E},V) &=    \overline{\Hom}_I(\mathbb{E},V)\oplus\overline{\Hom}_J(\mathbb{E},V) \\
        &= \overline{\Hom}_J(\mathbb{E},V) \oplus \overline{\Hom}_K(\mathbb{E},V) \\
        &= \overline{\Hom}_K(\mathbb{E},V) \oplus   \overline{\Hom}_I(\mathbb{E},V),
    \end{split}
\end{equation}
where  $\overline{\Hom}_I(\mathbb{E},V)$ is the space of \emph{$I$-antilinear maps}, or, more precisely, $I$-antilinear maps from the orthogonal complement of $e_1$ in $\mathbb{E}$ to $V$ (and similarly for $J$ and $K$):
\begin{gather*}
    \overline{\Hom}_I(\mathbb{E},V) = \{ A \in \overline{\Hom}_\H(\mathbb{E},V) \ | \ Ae_1 = 0 \}, \\
      \overline{\Hom}_J(\mathbb{E},V) = \{ A \in \overline{\Hom}_\H(\mathbb{E},V) \ | \ Ae_2 = 0 \}, \\
        \overline{\Hom}_K(\mathbb{E},V) = \{ A \in \overline{\Hom}_\H(\mathbb{E},V) \ | \ Ae_3 = 0 \}.
\end{gather*}

\subsection{Fueter maps}
\label{Subsec_FueterMaps}

An \emph{almost quaternionic manifold} is a manifold $M$ of dimension $4n$ equipped with a triple of almost complex structures $(I,J,K)$ satisfying the quaternionic relation $IJ = K$; equivalently, with a reduction of the structure group of $TM$ from $\GL(4n,\R)$ to $\GL(n,\H)$. Since $\GL(n,\H)$ can be retracted to $\Sp(n) \subset \GL(n,\H)$, the structure group can be further reduced to $\Sp(n)$, yielding a Riemannian metric with respect to which $I,J,K$ are orthogonal. Such a metric is hyperkähler if and only if the two-forms induced by the metric and $I,J,K$ are closed, in which case it follows that $I,J,K$ are integrable, see \autoref{Lem_IntegrabilityCondition}.

A \emph{framed manifold} consists of a manifold together with a framing, i.e. trivialization of its tangent bundle. A framing induces an orientation and a Riemannian metric making the framing orthonormal. A classical theorem of topology says that every orientable three-manifold admits a framing. 

Let $(E,e_1,e_2,e_3)$ be a framed three-manifold and let $(M,I,J,K)$ be an almost quaternionic manifold. A map $U \colon E \to M$ is called a \emph{Fueter map} if for every $x\in E$, the derivative
\begin{equation*}
    \rd U(x) \colon T_x E \to T_{U(x)} M
\end{equation*}
is quaternion-antilinear. It is convenient to introduce the \emph{Fueter operator}
\begin{equation*}
    \sF(U) = I(U)\rd U(e_1) + J(U)\rd U(e_2) + K(U)\rd U(e_3),
\end{equation*}
which maps a function $U \colon E \to M$ to a section of $U^*TM$, so that $U$ is a Fueter map if and only if it satisfies the nonlinear first order partial differential equation called the \emph{Fueter equation}:
\begin{equation*}
    \sF(U) = 0.
\end{equation*}

\begin{example}
    If $E$ is a subset of $\R^3$, with the standard coordinates $(\tau,s,t)$, then the Fueter operator is
    \begin{equation*}
        \sF(U) = I(U) \partial_\tau U + J(U)\partial_s U + K(U) \partial_t U.
    \end{equation*}
    Observe that
    \begin{itemize}
        \item $\tau$-independent solutions are $I$-antiholomorphic maps from the $st$-plane,
        \item $s$-independent solutions are $J$-antiholomorphic maps from the $t\tau$-plane,
        \item $t$-independent solutions are $K$-antiholomorphic maps from the $\tau s$-plane.
    \end{itemize}
    In this paper, we are interested in the case
    \begin{equation*}
        E = [0,1] \times \R^2.
    \end{equation*}
\end{example}

\begin{remark}
  As in pseudo-holomorphic map theory, it is natural to consider a domain-dependent version of the Fueter equation. Let $E$ be a Riemannian three-manifold and suppose that we have a bundle map $\sI$ from the bundle $TE$ over $E\times M$ to $\End(TM)$  such that for every orthonormal frame $(e_1,e_2,e_3)$ of $TE$, the endomorphisms $\sI(e_1), \sI(e_2), \sI(e_3)$ form a quaternionic triple on $M$. The domain-dependent Fueter equation for maps $U \colon E \to M$ is
  \begin{equation*}
      \sum_{i=1}^3 \sI(e_i)\rd U(e_i) = 0
  \end{equation*}
  for any local orthonormal frame $e_1,e_2,e_3$ of $TE$. The equation does not depend on the choice of the local frame. For more details, see \cite{Hohloch2009, Haydys2012, Walpuski2017a}.
\end{remark}

Recall that the Cauchy--Riemann equation for antiholomorphic maps from a Riemann surface $u \colon (\Sigma,i) \to (M,I)$ can be succinctly written as 
\begin{equation*}
    \rd u \circ i + I \circ \rd u =0.
\end{equation*}
There is a similar way to write the Fueter equation, with the complex structure $i$ on $\Sigma$ replaced by the Hodge star (equivalently, the cross product) on the three-manifold $E$. Given a map $U \colon E \to M$, its differential $\rd U$ is a $TM$-valued one-form whose Hodge dual $\ast\rd U$ is a $TM$-valued two-form. Define an $\End(TM)$-valued one-form 
\begin{equation*}
    \cI = I \otimes e^1 + J \otimes e^2 + K \otimes e^3,
\end{equation*}
where $(e^1, e^2, e^3)$ is the dual frame of $T^*E$. Strictly speaking, $\cI$ is a one-form on $E\times M$, which, using $\id \times U \colon E \to E \times M$, we pull-back to get an element of $\Omega^1(E,U^*\End(TM))$. 

\begin{lemma}
  \label{Lem_FueterEqForms}
  The Fueter equation for $U \colon E \to M$ is equivalent to
\begin{equation}
    \ast \rd U + \cI \wedge \rd U =0,
\end{equation}
where the right-hand side combines the wedge product of differential forms with the action of $\End(TM)$ on $TM$. 
\end{lemma}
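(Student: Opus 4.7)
The plan is to verify the identity pointwise by expanding both sides in a local orthonormal coframe and matching coefficients; no analysis is required, the claim is purely linear-algebraic.

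First I would pick an orthonormal coframe $(e^1, e^2, e^3)$ of $E$ adapted to the framing, so that $\ast e^i = e^j \wedge e^k$ for cyclic $(i,j,k)$, and introduce the shorthand $\partial_i := \rd U(e_i) \in \Gamma(U^*TM)$. Expanding $\ast \rd U$ in the basis $\{e^2\wedge e^3,\, e^3 \wedge e^1,\, e^1 \wedge e^2\}$ of $\Lambda^2 T^*E$ immediately yields the three coefficients $\partial_1,\, \partial_2,\, \partial_3$. Next I would expand $\cI \wedge \rd U$ in the same basis, where the wedge is understood as combining the wedge of scalar forms with the $\End(TM)$-action on $TM$, so that $(\cI \wedge \rd U)(e_i, e_j) = \cI(e_i)\partial_j - \cI(e_j)\partial_i$. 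This produces the three cross-terms $J\partial_3 - K\partial_2,\, K\partial_1 - I\partial_3,\, I\partial_2 - J\partial_1$, respectively.

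Reading off coefficients, the equation $\ast \rd U + \cI \wedge \rd U = 0$ becomes the three $TM$-valued equations
\[
\partial_1 + J\partial_3 - K\partial_2 = 0, \quad \partial_2 + K\partial_1 - I\partial_3 = 0, \quad \partial_3 + I\partial_2 - J\partial_1 = 0.
\]
The key final step is to multiply the $i$-th equation on the left by $I_i \in \{I,J,K\}$ and apply the quaternion relations $IJ = K$, $JK = I$, $KI = J$ together with their anti-cyclic counterparts $IK = -J$ etc.; in each case the three resulting terms combine to give precisely $\sF(U) = I\partial_1 + J\partial_2 + K\partial_3$. Since $I, J, K$ are pointwise automorphisms of $TM$, each coordinate equation is individually equivalent to $\sF(U) = 0$, so the three together are equivalent to the Fueter equation, proving the lemma.

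There is no substantive obstacle: the only thing to watch is the bookkeeping of signs arising from antisymmetry of the wedge and from the noncommutativity of $I, J, K$. The conceptual content is that on a three-manifold the Hodge star gives an isomorphism $\Omega^1 \to \Omega^2$, and under this isomorphism the Fueter operator is what implements the quaternion-antilinearity constraint described in \autoref{Subsec_LinearAlgebra}.
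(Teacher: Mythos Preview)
Your proof is correct and is essentially identical to the paper's own argument: both expand $\ast\rd U$ and $\cI\wedge\rd U$ in the orthonormal coframe, read off the three coefficient equations $U_i + I_jU_k - I_kU_j = 0$ (your $\partial_1 + J\partial_3 - K\partial_2 = 0$, etc.), and then left-multiply each by the corresponding $I_i$ to recover $\sF(U)=0$. The only difference is notational.
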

\begin{proof}
  Let $U_i = \rd U(e_i)$ and $(I_1,I_2,I_3) = (I,J,K)$, so that
  \begin{equation*}
        \label{Eq_FueterEquationForms} 
      \rd U = \sum_{i=1}^3 U_i \otimes e_i \quad\text{and}\quad \ast\rd U = \sum_{\mathrm{cyclic}} U_i \otimes e_j \wedge e_k.
  \end{equation*}
  We compute
  \begin{equation*}
      \cI \wedge \rd U = \sum_{j,k=1}^3 I_j U_k \otimes e_j \wedge e_k = \sum_{j < k}(I_j U_k - I_k U_j)\otimes e_j\wedge e_k.
  \end{equation*}
  Therefore, \eqref{Eq_FueterEquationForms} is equivalent to
  \begin{equation*}
      U_i + I_j U_k - I_k U_j = 0
  \end{equation*}
  for every cyclic permutation $(i,j,k)$ of $(1,2,3)$. After multiplying both sides by $I_i$ each of these equations is the Fueter equation $\sF(U)=0$.
\end{proof}

\subsection{Boundary conditions}
\label{Sec_BoundaryConditions}

Let $(M,I,J,K)$ be an almost quaternionic manifold and let $L_0,L_1 \subset M$ be two submanifolds which are totally real with respect to $J$ and $K$.  A \emph{Fueter strip} between $L_0$ and $L_1$ is a map
\begin{equation*}
    U \colon [0,1]\times \R^2 \to M
\end{equation*}
which satisfies the Fueter equation 
\begin{equation*}
    I(U)\partial_\tau U + J(U)\partial_s U + K(U)\partial_t U = 0,
\end{equation*}
together with the boundary conditions
\begin{equation}
    \label{Eq_LagrangianBoundaryConditions}
    U(0, s,t) \in L_0, \quad U(1, s,t) \in L_1 \quad\quad\text{for all } (s,t) \in \R^2,
\end{equation}
and the asymptotic boundary conditions
\begin{gather*}
    \lim_{s\to\pm\infty} U(\tau, s,t) = u_\pm(t,\tau) \quad\quad\text{for all } (t,\tau) \in \R\times[0,1], \\
    \lim_{t\to\pm\infty} U(\tau, s,t) = p_\pm \quad\quad\text{for all } (\tau,s) \in [0,1]\times\R.
\end{gather*}
Here $p_-$ and $p_+$ are points in $L_0\cap L_1$, and $u_-, u_+$ are $J$-antiholomorphic strips with boundary on $L_0$, $L_1$, that is: maps $u \colon \R\times[0,1] \to M$ satisfying
\begin{equation*}
    \partial_t u - J(u)\partial_\tau u =0,
\end{equation*} 
with boundary on $L_0$ and $L_1$ and converging to $p_\pm \in L_0\cap L_1$ as $t\to\pm\infty$. 

More precisely, throughout the paper we consider Fueter strips which satisfy the following conditions:
\begin{equation*}
    U \in C^2([0,1]\times\R^2,M), \quad \partial_s U(\tau,\cdot,\cdot) \in L^1(\R^2,U^*TM) \cap L^2(\R^2,U^*TM)
\end{equation*}
and
\begin{gather*}
   \lim_{s\to\pm\infty} U(\tau,s,t) = u_\pm(t,\tau) \quad\text{in } C^1(\R\times[0,1]), \\
   \lim_{t\to\pm\infty} U(\tau,s,t) = p_\pm \quad\text{in } C^1_\loc([0,1]\times\R).
\end{gather*}

\begin{remark}
    While the Fueter equation for maps from a closed, framed three-manifold is elliptic \cite{Hohloch2009}, a Fredholm theory for Fueter strips has not yet been developed and will be treated in subsequent papers. By analogy with complex gradient trajectories \cite{Haydys2015,Wang2022}, one expects that the technical assumptions listed above, as way as the exponential decay of Fueter strips along infinite ends, are automatically satisfied for Fueter strips of appropriate Sobolev class and with finite energy. Somewhat similar boundary value problems appear in the study of the Kapustin--Witten equations \cite{Mazzeo2020} and the Atiyah--Floer conjecture \cite{Daemi2021}.
\end{remark}

\subsection{Taming triples}
\label{sec:taming-triples}

The area of a pseudo-holomorphic disc in a symplectic manifold is bounded by a topological constant depending on the relative homology class of the disc and the cohomology class of the symplectic form. We wish to establish an analogous result for Fueter strips. For that purpose, it is convenient to introduce the notion of a \emph{taming triple}, which is a quaternionic analog of a two-form taming an almost complex structure. This notion is inspired by Donaldson and Segal's discussion of \emph{taming forms} in special holonomy geometry \cite{Donaldson2011}. 

We return to the linear algebra setup from  \autoref{Subsec_LinearAlgebra}. 
Let $(\mathbb{E},e_1,e_2,e_3)$ be a three-dimensional vector space with a basis and let $(V,I,J,K)$ be a quaternionic vector space. Let $(e^1,e^2,e^3)$ be the dual basis of $\mathbb{E}^*$. An element of $\mathbb{E}^*\otimes \Lambda^2 V^*$ can be seen as a triple of two-forms $\eta = (\eta_1,\eta_2,\eta_3)$ on $V$, or a three-form $\eta = \sum e^i \otimes \eta_i$ on $\mathbb{E}\oplus V$ using 
\begin{equation*}
    \mathbb{E}^* \otimes \Lambda^2 V^* \subset \Lambda^3(\mathbb{E}\oplus V)^*.
\end{equation*}

\begin{definition}
  \label{Def_TamingTriple}
  We say that $\eta \in \mathbb{E}^*\otimes\Lambda^2 V^*$ is an \emph{$IJK$-taming triple} if for every non-zero $IJK$-antilinear map $A \colon \mathbb{E} \to V$, the three-form  
\begin{equation*}
     A^*\eta := \sum_{i=1}^3 e^i \wedge A^*\eta_i   \in \Lambda^3 \mathbb{E}^*
\end{equation*} 
is a negative multiple of the volume form
\begin{equation*}
    \vol = e^1\wedge e^2\wedge e^3.
\end{equation*}
\end{definition}

Equip $\mathbb{E}$ and $V$ with Euclidean inner products making $(e_1,e_2,e_3)$ into an orthonormal basis and $I,J,K$ into orthogonal endomorphisms. We will often implicitly identify $\Lambda^3V^*$ with $\R$ using the volume form and write $\alpha \leq \beta$, and so on, for  $\alpha, \beta \in \Lambda^3 V^*.$

\begin{lemma} 
    \label{Lem_TamingInequality}
    A triple of two-forms $\eta\in \mathbb{E}^*\otimes\Lambda^2V^*$ is $IJK$-taming if and only if there is $c>0$ such that for every $IJK$-antilinear map $A \colon \mathbb{E}\to V$
    \begin{equation}
        \label{Eq_TamingInequality}
        |A|^2 \vol  \leq - c A^*\eta.
    \end{equation}
\end{lemma}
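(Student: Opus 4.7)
The plan is a standard compactness-by-homogeneity argument. The ``if'' direction is immediate: if $|A|^2\vol \le -c A^*\eta$ holds for some $c>0$ and all $IJK$-antilinear $A$, then for any such nonzero $A$ the form $A^*\eta$ is a strictly negative multiple of $\vol$, which is exactly the condition in \autoref{Def_TamingTriple}.

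For the nontrivial ``only if'' direction, I would exploit homogeneity. Both sides of \eqref{Eq_TamingInequality} are quadratic in $A$: the left-hand side $|A|^2\vol$ manifestly so, and $A^*\eta=\sum_{i}e^i\wedge A^*\eta_i$ is also quadratic because each $\eta_i$ is a $2$-form, whence $A^*\eta_i$ depends quadratically on the entries of $A$. Consequently it suffices to establish the inequality on the unit sphere
\begin{equation*}
    S=\{A\in\bHom_\H(\mathbb{E},V):|A|=1\},
\end{equation*}
and then extend to all of $\bHom_\H(\mathbb{E},V)$ by scaling.

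To handle the unit sphere, identify $\Lambda^3\mathbb{E}^*\cong\R$ via $\vol$ and define
\begin{equation*}
    f\colon \bHom_\H(\mathbb{E},V)\to\R,\qquad f(A) = -A^*\eta/\vol.
\end{equation*}
The map $f$ is a quadratic polynomial in $A$, hence continuous, and by hypothesis $f(A)>0$ for every nonzero $A\in\bHom_\H(\mathbb{E},V)$. Since $\bHom_\H(\mathbb{E},V)$ is a finite-dimensional real vector space, $S$ is compact, so $f$ attains a positive minimum $m>0$ on $S$. Setting $c=m^{-1}$, for any nonzero $A$ we write $A=|A|\tilde A$ with $\tilde A\in S$ and use the quadratic scaling of both sides to get
\begin{equation*}
    -A^*\eta = |A|^2\bigl(-\tilde A^*\eta\bigr) \ge |A|^2\, m\,\vol,
\end{equation*}
which, after dividing by $m$, rearranges to \eqref{Eq_TamingInequality}.

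There is essentially no obstacle in this argument; the only mild point to check is that the quadratic polynomial $A\mapsto A^*\eta/\vol$ is defined on the entire finite-dimensional space $\bHom_\H(\mathbb{E},V)$ so that its restriction to $S$ is continuous, which is immediate from the formula $A^*\eta = \sum_i e^i\wedge A^*\eta_i$ and the finite-dimensionality of $V$ and $\mathbb{E}$.
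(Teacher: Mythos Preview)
Your proof is correct and follows essentially the same approach as the paper: both argue the ``if'' direction is immediate, and for the converse use compactness of the unit sphere in $\bHom_\H(\mathbb{E},V)$ together with the positivity and continuity of $A\mapsto -A^*\eta$ to extract a positive lower bound, then extend by quadratic scaling. The paper's version is more terse but the logic is identical.
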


\begin{proof}   
    If the above inequality holds, then $\eta$ is clearly $IJK$-taming. Conversely, let $\eta$ be an $IJK$-taming triple. Denote by $F \subset \Hom(\mathbb{E},V)$  the set of $IJK$-antilinear maps with unit norm. Since $F$ is compact and the function 
    \begin{equation*}
        \eta \mapsto -A^*\eta  
    \end{equation*} 
    is positive on $F$, there is $c>0$ such that $-A^*\eta \geq 1/c$ for every $A \in F$. Since $A^*\eta$ and $|A|^2$ scale quadratically with $A$, the inequality follows.
\end{proof} 

The taming condition is open in the following sense. 

\begin{lemma}
If $\eta$ is an $IJK$-taming triple, then there are open neighborhoods of $\eta$ in $\mathbb{E}^*\otimes\Lambda^2V^*$ and of $(I,J,K)$ in the set of quaternionic triples on $V$ such that for every $\eta'$ and $(I',J',K')$ from these neighborhoods, $\eta'$ is an $I'J'K'$-taming triple.
\end{lemma}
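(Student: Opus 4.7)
The plan is to upgrade \autoref{Lem_TamingInequality} from a pointwise statement to a uniform one on a compact sphere bundle, and then use continuity of the relevant objects in $(\eta', I', J', K')$.

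First, I would fix the constant $c > 0$ produced by \autoref{Lem_TamingInequality}, so that $|A|^2\vol \leq -c\,A^*\eta$ for every $IJK$-antilinear map $A \colon \mathbb{E}\to V$. The quantity we want to keep positive for nearby data is
\[
\phi(A, \eta', I', J', K') := -\frac{A^*\eta'}{\vol} \in \R,
\]
evaluated on $I'J'K'$-antilinear maps $A$. For the original data, $\phi(A,\eta,I,J,K) \geq \tfrac{1}{c}|A|^2 > 0$.

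Next I would set up the sphere bundle of antilinear maps. The space $\cQ$ of quaternionic triples on $V$ is a smooth manifold, and the projection formula
\[
\Pi_{I',J',K'}(A) = \tfrac14\bigl(A - I' A e_1 - J' A e_2 - K' A e_3\bigr)
\]
shows that $\overline{\Hom}_{I'J'K'}(\mathbb{E},V) = \ker \Pi_{I',J',K'}$ has constant dimension and varies smoothly in $(I',J',K')$, forming a subbundle $\mathscr{H} \to \cQ$. Pick a precompact open neighborhood $\cU$ of $(I,J,K)$ in $\cQ$ and let $S \subset \mathscr{H}|_{\overline\cU}$ be the unit sphere bundle with respect to the fixed inner product on $\Hom(\mathbb{E},V)$; then $S$ is compact.

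Finally, I would run a continuity-and-compactness argument. The map
\[
\Phi \colon S \times \bigl(\mathbb{E}^*\otimes\Lambda^2 V^*\bigr) \to \R, \qquad \Phi(A, I', J', K', \eta') = -\frac{A^*\eta'}{\vol},
\]
is continuous, and by the choice of $c$ satisfies $\Phi \geq 1/c$ on the compact slice $S_{(I,J,K)}\times\{\eta\}$. By compactness of $S_{(I,J,K)}$ and continuity of $\Phi$, there exist open neighborhoods $\cV$ of $\eta$ in $\mathbb{E}^*\otimes\Lambda^2 V^*$ and $\cU' \subset \cU$ of $(I,J,K)$ in $\cQ$ such that $\Phi > 0$ on the compact set $S|_{\overline{\cU'}} \times \overline{\cV}$ (one chooses them small enough so that $\Phi \geq 1/(2c)$, say). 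Because $A^*\eta'$ is quadratic in $A$, homogeneity promotes this positivity on the unit sphere to positivity on all non-zero $I'J'K'$-antilinear $A$, which is precisely the statement that $\eta'$ is $I'J'K'$-taming.

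The only non-routine step is verifying that $\mathscr{H}$ is a genuine continuous subbundle of constant rank over $\cQ$, i.e.\ that $\dim \overline{\Hom}_{I'J'K'}(\mathbb{E},V)$ is locally constant; this follows immediately from the explicit projection $\Pi_{I',J',K'}$ above and the fact that nearby quaternionic triples are conjugate under $\Sp(V)$, so there is no real obstacle.
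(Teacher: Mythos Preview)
Your proof is correct and follows essentially the same approach as the paper: both assemble the unit spheres of $I'J'K'$-antilinear maps into a fiber bundle over the space $\cQ$ of quaternionic triples, observe that the taming quantity $-A^*\eta'$ is continuous on this bundle crossed with $\mathbb{E}^*\otimes\Lambda^2 V^*$, and invoke compactness of the fibers to conclude openness. Your write-up is somewhat more explicit (restricting to a precompact $\cU$ to guarantee compactness of $S$, and checking via the projection $\Pi_{I',J',K'}$ that the antilinear maps form a genuine subbundle), but the argument is the same.
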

\begin{proof}
  Let $\sQ$ be the space of quaternionic triples on $V$. Given  $(I,J,K) \in \sQ$, let 
  \begin{equation*}
      F(I,J,K) \subset \Hom(\mathbb{E},V)
  \end{equation*}
  be the space of $IJK$-antilinear maps $\mathbb{E} \to V$ with unit norm. The spaces $F(I,J,K)$ form a fiber bundle $\cF \to \sQ$ with compact fibers. A triple $\eta \in \mathbb{E}^*\otimes \Lambda^2 V^*$ is $IJK$-taming if and only if 
  \begin{equation*}
      - A^*\eta > 0 \quad\text{for all } A\in F(I,J,K).
  \end{equation*}
  Therefore, the lemma follows from the continuity of the map
  \begin{gather*}
      \cF \times (\mathbb{E}^*\otimes\Lambda^2 V^*) \to \R \\
      (A,\eta) \mapsto - A^*\eta.  \qedhere
  \end{gather*} 
\end{proof}

The following computation, which is an analogue of Wirtinger's inequality in K\"ahler geometry, shows that the set of $IJK$-taming triples is nonempty.

\begin{proposition}[\cite{Hohloch2009}]
  \label{Prop_TamingTriple}
  If $\omega_I,\omega_J,\omega_K \in \Lambda^2 V^*$ are the two-forms defined by $\omega_I(v,w) = \langle Iv, w\rangle$, and so on,  then $\eta = (\omega_I,\omega_J,\omega_K)$ is an $IJK$-taming triple. In fact, for every $A \in \Hom(\mathbb{E},V)$, 
  \begin{equation*}
      |A|^2 \vol = |A_\H|^2 \vol - 2 A^*\eta.
  \end{equation*} 
\end{proposition}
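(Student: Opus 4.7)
The strategy is a direct computation. Set $u_i = A(e_i) \in V$ for $i=1,2,3$, so that $|A|^2 = |u_1|^2 + |u_2|^2 + |u_3|^2$ and $A_\H = Iu_1 + Ju_2 + Ku_3$. The goal is to expand $|A_\H|^2$ and $A^*\eta$ separately in terms of the $u_i$, and observe that the cross terms match up to a factor of $-2$.

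For $|A_\H|^2$, I expand the square and note that $|Iu_1|^2 = |u_1|^2$ (and similarly for $J,K$) because $I,J,K$ are orthogonal. So
\[
 |A_\H|^2 = |A|^2 + 2\langle Iu_1, Ju_2\rangle + 2\langle Ju_2, Ku_3\rangle + 2\langle Ku_3, Iu_1\rangle.
\]
Using $I^\top = -I$ (and likewise for $J,K$) together with the quaternion relations $IJ=K$, $JK=I$, $KI=J$, each inner product simplifies; for instance, $\langle Iu_1, Ju_2\rangle = -\langle u_1, IJ u_2\rangle = -\langle u_1, Ku_2\rangle = -\omega_K(u_1,u_2)$. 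The other two are analogous, yielding
\[
 |A_\H|^2 = |A|^2 - 2\bigl[\omega_I(u_2,u_3) + \omega_J(u_3,u_1) + \omega_K(u_1,u_2)\bigr].
\]

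For $A^*\eta = e^1\wedge A^*\omega_I + e^2\wedge A^*\omega_J + e^3\wedge A^*\omega_K$, I compute $A^*\omega_I = \sum_{i<j}\omega_I(u_i,u_j)\,e^i\wedge e^j$ and similarly for $\omega_J,\omega_K$. When wedged with the appropriate $e^\ell$, only the component of $A^*\omega_I$ not involving $e^1$ survives (and likewise for the others), so
\[
 A^*\eta = \bigl[\omega_I(u_2,u_3) + \omega_J(u_3,u_1) + \omega_K(u_1,u_2)\bigr]\,\vol.
\]
Combining the two computations gives the identity $|A_\H|^2\vol = |A|^2\vol - 2A^*\eta$ at once. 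The taming conclusion is then immediate: if $A$ is $IJK$-antilinear, then $A_\H = 0$ and hence $A^*\eta = -\tfrac{1}{2}|A|^2\vol$, which is a negative multiple of $\vol$ whenever $A\neq 0$.

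There is no real obstacle here beyond bookkeeping; the only place to be careful is the sign conventions in the two computations, in particular the minus signs from transposing $I,J,K$ and from permuting the $e^i$ factors to produce $\vol$. These must be tracked consistently so that both computations yield the same cyclic combination $\omega_I(u_2,u_3) + \omega_J(u_3,u_1) + \omega_K(u_1,u_2)$.
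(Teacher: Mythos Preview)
Your approach is exactly the paper's: expand $|A_\H|^2$ and $A^*\eta$ directly and compare cross terms. However, there is a sign slip in your computation of $|A_\H|^2$. In the step
\[
\langle Iu_1, Ju_2\rangle = -\langle u_1, IJ u_2\rangle = -\langle u_1, Ku_2\rangle = -\omega_K(u_1,u_2),
\]
the last equality is wrong: by definition $\omega_K(u_1,u_2) = \langle Ku_1, u_2\rangle$, and since $K$ is skew-adjoint one has $\langle u_1, Ku_2\rangle = -\langle Ku_1, u_2\rangle = -\omega_K(u_1,u_2)$. Hence $-\langle u_1, Ku_2\rangle = +\omega_K(u_1,u_2)$, and the correct expansion is
\[
|A_\H|^2 = |A|^2 + 2\bigl[\omega_I(u_2,u_3) + \omega_J(u_3,u_1) + \omega_K(u_1,u_2)\bigr],
\]
which combined with your (correct) formula for $A^*\eta$ gives the stated identity $|A|^2\vol = |A_\H|^2\vol - 2A^*\eta$. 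With your sign, the identity you write down is $|A_\H|^2\vol = |A|^2\vol - 2A^*\eta$, from which setting $A_\H = 0$ would yield $A^*\eta = +\tfrac{1}{2}|A|^2\vol$, contradicting taming rather than proving it; your stated final conclusion does not actually follow from the identity you derived. Once this sign is fixed, the argument is complete and matches the paper's proof.
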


\begin{proof}
  Let $(I,J,K) = (I_1,I_2,I_3)$ and $a_i = A e_i$. We compute
  \begin{align*}
      |A_\H|^2 &= |I_1 a_1 + I_2 a_2 + I_3 a_3|^2 \\
      &= |a_1|^2 + |a_2|^2 + |a_3|^2 + \sum_{ij} \langle I_i a_i, I_j a_j \rangle \\
      &= |A|^2 + 2\sum_{\text{cyclic}} \langle I_i a_j , a_k \rangle  \\
      &= |A|^2 +  2\sum_{\text{cyclic}}\omega_i (a_j, a_k).
  \end{align*}
  Therefore,
  \begin{equation*}
      |A|^2 e^1 \wedge e^2 \wedge e^3 = |A_\H|^2 e^1 \wedge e^2 \wedge e^3 - 2 \sum_{i=1}^3 e^i \wedge A^*\omega_i. \qedhere 
  \end{equation*}
\end{proof}

Recall that $\omega \in \Lambda^2 V^*$ is said to tame  $I$ if $\omega(v,Iv) > 0$ for every non-zero $v\in V$. The space of quaternion-antilinear maps $\mathbb{E} \to V$ contains the space of $I$-antilinear maps from the orthogonal complement of $e_1$ in $\mathbb{E}$, and similarly for $J$ and $K$. This implies the following. 
 
\begin{proposition}
  \label{Prop_ComplexTaming}
  If $(\eta_1,\eta_2,\eta_3)$ is an $IJK$-taming triple, then 
  \begin{itemize}
      \item $\eta_1$ tames $I$,
      \item $\eta_2$ tames $J$,
      \item $\eta_3$ tames $K$.
  \end{itemize} 
\end{proposition}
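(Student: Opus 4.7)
The strategy is to test the $IJK$-taming condition on carefully chosen quaternion-antilinear maps that lie in the subspaces $\overline{\Hom}_I(\mathbb{E},V)$, $\overline{\Hom}_J(\mathbb{E},V)$, $\overline{\Hom}_K(\mathbb{E},V)$ appearing in the decomposition \eqref{Eq_AntilinearDecomposition}. By symmetry it suffices to prove the first bullet, namely that $\eta_1$ tames $I$; the statements for $\eta_2$ and $\eta_3$ are obtained by cyclically relabeling $(e_1,e_2,e_3)$ and $(I,J,K)$.

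Fix a nonzero vector $v \in V$. I will define a linear map $A\colon \mathbb{E} \to V$ by the values
\begin{equation*}
    A e_1 = 0, \qquad A e_2 = v, \qquad A e_3 = -Iv.
\end{equation*}
Then $A_\H = I\cdot 0 + J v + K(-Iv) = Jv - KIv = Jv - Jv = 0$, so $A$ is $IJK$-antilinear (and it lies in $\overline{\Hom}_I(\mathbb{E},V)$ since $Ae_1=0$); moreover $A\neq 0$ because $v\neq 0$.

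Since $Ae_1 = 0$, the pullback $A^*\eta_i \in \Lambda^2\mathbb{E}^*$ is a multiple of $e^2\wedge e^3$ for each $i$; specifically
\begin{equation*}
    A^*\eta_i = \eta_i(Ae_2, Ae_3)\, e^2 \wedge e^3 = -\eta_i(v, Iv)\, e^2\wedge e^3.
\end{equation*}
Assembling the three-form $A^*\eta = \sum_i e^i \wedge A^*\eta_i$, the terms with $i=2,3$ vanish because $e^j\wedge e^2\wedge e^3 = 0$ for $j\in\{2,3\}$, leaving
\begin{equation*}
    A^*\eta = -\eta_1(v, Iv)\, e^1\wedge e^2 \wedge e^3 = -\eta_1(v, Iv)\,\vol.
\end{equation*}
Because $A$ is a nonzero $IJK$-antilinear map and $\eta$ is $IJK$-taming, Definition \ref{Def_TamingTriple} forces $A^*\eta$ to be a negative multiple of $\vol$, hence $\eta_1(v,Iv) > 0$, which is exactly the statement that $\eta_1$ tames $I$. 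Applying the same argument with $Ae_2=0$ (test map $Ae_1=v$, $Ae_3 = Jv$) and with $Ae_3=0$ (test map $Ae_1 = v$, $Ae_2 = -Kv$) gives the remaining two claims. No step of this argument presents a serious obstacle: the only care required is to check that the test maps chosen really do lie in $\overline{\Hom}_\H(\mathbb{E},V)$, which is a direct computation using $A_\H = IAe_1 + JAe_2 + KAe_3$ and the quaternionic relations.
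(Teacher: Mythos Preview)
Your proof is correct and follows exactly the approach the paper indicates in the sentence preceding the proposition: the taming condition restricted to the subspace $\overline{\Hom}_I(\mathbb{E},V)\subset\overline{\Hom}_\H(\mathbb{E},V)$ (i.e.\ maps with $Ae_1=0$) collapses to the statement that $\eta_1$ tames $I$. Your explicit test maps simply unpack this observation, and the computations of $A_\H$ and $A^*\eta$ are all correct.
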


\begin{remark}
  \label{Rem_ComplexTaming}
  The converse is not true. For example, for every $\lambda >0$ the triple $\eta_\lambda = ((2+\lambda) \omega_I,\omega_J,\omega_K)$ consists of two-forms taming $I,J,K$ respectively. In fact, this triple has the property that every complex structure from the twistor sphere
  \begin{equation*}
    aI + bJ + cK \quad\text{with } a^2+b^2+c^2 =1
  \end{equation*}
  is tamed by the corresponding linear combination of the two-forms from the triple $\eta_\lambda$. However, for $\lambda \gg 1$, this triple is not $IJK$-taming.
  To see this, observe that, by \autoref{Prop_TamingTriple},  for every quaternion-antilinear map $A \colon \mathbb{E} \to V$,
  \begin{equation*}
      - A^*\eta_\lambda = \left( |A|^2 + \lambda \omega_I(Ae_2, Ae_3) \right) \vol.
  \end{equation*}
  Choose $A$ so that $\omega_I(A e_2, A e_3) < 0$. This is possible since we can prescribe $A e_2$ and $A e_3$ and choose $A e_1$ so that $A$ is quaternion-antilinear. Given such $A$ and sufficiently large $\lambda$, the right-hand side is negative, which shows that $\eta_\lambda$ is not a taming triple.
\end{remark}
 
There is another notion of taming, adapted to the study of Fueter strips with cylindrical ends, introduced in \autoref{Subsec_FueterMaps}. In this approach, we break the symmetry between $I,J,K$, and distinguish one of the complex structures, say $J$.

\begin{definition}
  A \emph{$KI$-taming form} is a complex two-form $\eta \in \Lambda^2V^*\otimes\C$ such that
  \begin{itemize}
      \item $\eta$ is $J$-linear, or type $(2,0)$ with respect to $J$, in the sense that 
      \begin{equation*}
          \eta(Jv,w) = \eta(v,Jw) = i\eta(v,w),
      \end{equation*}
      \item $\Re(\eta)$ tames $K$,
      \item $\Im(\eta)$ tames $I$.
  \end{itemize}
  We define similarly $KI$-taming and $IJ$-taming forms. 
\end{definition}

\begin{remark}
  If $\eta$ is $J$-linear, then $\Re(\eta)$ tames $K$ if and only if $\Im(\eta)$ tames $I$.
\end{remark}

\begin{remark}
  Unlike the $IJK$-taming condition, the $KI$-taming condition is not open in the space of quaternionic triples because the requirement that $\eta$ is $J$-linear is a closed condition. However, if $J$ is fixed and $\eta$ is a $KI$-taming form, then it is also a $K'I'$-taming form for all nearby quaternionic triples $(I',J,K')$,
\end{remark}

\begin{definition}
For $A \in \Hom(\mathbb{E},V)$, define the \emph{$J$-norm} of $A$ by
\begin{equation*}
    |A|_J^2 = |Ae_2|^2 + |Ae_1 - JAe_3|^2
\end{equation*}
If $A$ is $IJK$-antilinear, this is the norm of the projection of $A$ on the orthogonal complement of the space of $J$-antilinear maps. We define similarly $|A|_K$ and $|A|_I$. 
\end{definition}

The following is an analog of \autoref{Lem_TamingInequality} for $KI$-taming forms.

\begin{lemma}
    A $J$-linear complex two-form $\eta$ is $KI$-taming if and only if there is $c>0$ such that for every $IJK$-antilinear map $A \colon \mathbb{E}\to V$ 
  \begin{equation*}
      |A|_J^2 \vol \leq - c A^*\eta_{KI},
  \end{equation*}
  where
  \begin{equation*}
      \eta_{KI} = \Re(\eta)\otimes e^3 + \Im(\eta)\otimes e^1 \in \Lambda^2V^*\otimes\mathbb{E}^*;
  \end{equation*}
  and similarly for $KI$- and $IJ$-taming pairs. 
\end{lemma}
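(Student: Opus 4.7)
The plan is to model the argument on the proof of \autoref{Lem_TamingInequality}, with the added ingredient of using the $J$-linearity of $\eta$ together with the decomposition \eqref{Eq_AntilinearDecomposition} to compute both sides of the inequality explicitly. Quadratic scaling in $A$ and positivity coming from the taming hypothesis on a compact slice then give the result.

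The key algebraic step is to write a generic $A \in \overline{\Hom}_\H(\mathbb{E},V)$ as $A = A_J + A_K$ via $\overline{\Hom}_\H = \overline{\Hom}_J \oplus \overline{\Hom}_K$ and set $u = A_J e_1$, $v = A_K e_1$. Unpacking the quaternion-antilinear condition $IAe_1 + JAe_2 + KAe_3 = 0$ forces $A_J e_2 = 0$, $A_J e_3 = Ju$, $A_K e_3 = 0$ and $A_K e_2 = -Kv$, so $A$ is determined by $(u,v) \in V \oplus V$. Expanding
\begin{equation*}
A^*\eta_{KI} = \bigl[\Re\eta(Ae_1, Ae_2) + \Im\eta(Ae_2, Ae_3)\bigr]\vol
\end{equation*}
and substituting, the identities $\Im\eta(x, Jy) = \Re\eta(x,y)$ and $\Re\eta(Jx, y) = -\Im\eta(x,y)$ coming from $J$-linearity, together with the antisymmetry of $\Re\eta$, cause the $u$-dependent cross terms to cancel and one obtains
\begin{equation*}
-A^*\eta_{KI} = \Re\eta(v, Kv)\,\vol.
\end{equation*}
A similar direct substitution simplifies $|A|_J^2$ to a positive multiple of $|v|^2$, so both sides of the desired inequality depend only on the $\overline{\Hom}_K$-component of $A$.

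The forward direction ($KI$-taming implies the inequality) then follows immediately: $K$-taming of $\Re\eta$ together with compactness of the unit sphere in $V$ yields a constant $c' > 0$ with $\Re\eta(v, Kv) \geq c'|v|^2$ for all $v \in V$, giving the claim with $c$ proportional to $1/c'$. For the converse, specializing the inequality to $A \in \overline{\Hom}_K$ (i.e.\ $u = 0$) directly forces $\Re\eta(v, Kv) > 0$ for $v \neq 0$, so $\Re\eta$ tames $K$. Since $Iv = JKv$ by the quaternion relation $JK = I$, the $J$-linearity identity $\Im\eta(x, Jy) = \Re\eta(x,y)$ gives $\Im\eta(v, Iv) = \Re\eta(v, Kv) > 0$, so $\Im\eta$ also tames $I$, completing the verification of $KI$-taming.

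The main obstacle is the algebraic cancellation of $u$-terms in the pullback computation: keeping careful track of the signs from antisymmetry and $J$-linearity is essential to see that $A^*\eta_{KI}$ and $|A|_J^2$ both collapse to expressions in $v$ alone. Once this identity is in hand, the rest is a straightforward analog of the compactness argument used for \autoref{Lem_TamingInequality}. The parallel statements for $JK$- and $IJ$-taming forms are obtained by cyclically permuting the roles of $I, J, K$.
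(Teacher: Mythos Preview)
Your argument is correct and follows the same strategy as the paper: reduce both $|A|_J^2$ and $-A^*\eta_{KI}$ to expressions in a single vector, then invoke the taming condition. The only difference is in bookkeeping. You parametrize $A$ via the splitting $\overline{\Hom}_\H=\overline{\Hom}_J\oplus\overline{\Hom}_K$ with coordinates $(u,v)$ and must then verify that all $u$-terms cancel; the paper instead sets $v=Ae_2$ and $w=Ae_3-JAe_1$, notes that the antilinearity condition reads $v=Iw$, and observes that both quantities of interest are already expressed in $(v,w)$ alone. This is a bit more economical since the extra degree of freedom you call $u$ never enters and no cancellation needs to be checked, but the content of the two computations is identical (your $\Re\eta(v,Kv)$ is the paper's $\Im\eta(v,Iv)$ after a change of variable).
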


\begin{proof}
  Set
  \begin{equation*}
      v = Ae_2 \quad\text{and}\quad w = Ae_3 - JAe_1,
  \end{equation*}
  so that the $IJK$-antilinearity condition is $v=Iw$. Therefore,
  \begin{equation*}
    |A|_J^2 = 2|v|^2
  \end{equation*}
  and, using that $\eta$ is $J$-linear, 
  \begin{equation*}
      -A^*\eta_{KI} = -\Re(\eta)(Ae_1,Ae_2) - \Im(\eta)(Ae_2,Ae_3) = - \Im(\eta)(v,w) = \Im(\eta)(v,Iv).
  \end{equation*}
  Therefore, $\Im(\eta)$ tames $I$ (equivalently, $\Re(\eta)$ tames $K$) if and only if for every $IJK$-antilinear map we have $|A|_J \vol \leq -c A^*\eta_{KI}$ for some $c>0$.
\end{proof}

The following is proved in the same way as \autoref{Prop_TamingTriple}. 

\begin{proposition}
  In the situation of \autoref{Prop_TamingTriple}, let
  \begin{align*}
     \eta_{JK} &= e^2\otimes\omega_J + e^3\otimes\omega_K, \\
     \eta_{KI} &= e^3\otimes\omega_K + e^1\otimes\omega_I, \\
     \eta_{IJ} &= e^1\otimes\omega_I + e^2\otimes\omega_J.
  \end{align*}
  For every $A \in \Hom(\mathbb{E},V)$, 
  \begin{align*}
      |A|_I^2 \vol = |A_\H|^2 \vol - A^*\eta_{JK}, \\
      |A|_J^2 \vol = |A_\H|^2 \vol - A^*\eta_{KI}, \\
      |A|_K^2 \vol = |A_\H|^2 \vol - A^*\eta_{IJ}.
  \end{align*}
  In particular,
  \begin{itemize}
      \item $\omega_J + i \omega_K$ is $JK$-taming,
     \item $\omega_K + i \omega_I$ is $KI$-taming,
     \item $\omega_I + i \omega_J$ is $IJ$-taming. 
  \end{itemize} 
\end{proposition}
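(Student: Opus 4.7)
The plan is a direct linear-algebraic computation, structured as a refinement of the proof of \autoref{Prop_TamingTriple}. Setting $a_i = Ae_i$, I would begin by recalling the expansion from that proposition,
\begin{equation*}
|A_\H|^2 = |Ia_1 + Ja_2 + Ka_3|^2 = |A|^2 + 2\bigl[\omega_I(a_2, a_3) + \omega_J(a_3, a_1) + \omega_K(a_1, a_2)\bigr],
\end{equation*}
which follows from orthogonality of $I,J,K$ together with $\omega_X(v,w) = \langle Xv, w\rangle$. Next, I would compute each pullback explicitly; e.g.\ since only $e^j\wedge e^k$ with $\{j,k\}$ complementary to $e^i$ survives wedging with $e^i$, one has
\begin{equation*}
A^*\eta_{KI} = e^3 \wedge A^*\omega_K + e^1 \wedge A^*\omega_I = \bigl[\omega_K(a_1, a_2) + \omega_I(a_2, a_3)\bigr]\vol,
\end{equation*}
and analogously for $A^*\eta_{JK}$ and $A^*\eta_{IJ}$.

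Next I would expand the three norms using their definitions. For example, $|a_1 - Ja_3|^2 = |a_1|^2 + |a_3|^2 + 2\omega_J(a_1,a_3)$ (using $\langle a_1, Ja_3\rangle = -\omega_J(a_1, a_3)$, which follows from $J^\top = -J$), so the definition of $|A|_J^2$ yields an expression of the form $|A|^2 + c\,\omega_J(a_1, a_3)$. Substituting into the asserted identity $|A|_J^2\vol = |A_\H|^2\vol - A^*\eta_{KI}$ and matching quadratic and cross-terms collapses everything to a tautology. The identities for $|A|_I^2$ and $|A|_K^2$ follow by cyclic permutation of the quaternionic triple $(I,J,K)$ and the basis $(e_1,e_2,e_3)$, so one case suffices.

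For the taming statements, I would first verify that $\omega_J + i\omega_K$ is of type $(2,0)$ with respect to $I$, i.e.\ $(\omega_J + i\omega_K)(Iv,w) = i\,(\omega_J + i\omega_K)(v,w)$; this is a one-line computation using $JI = -K$ and $KI = J$. Since $\omega_X(v,Xv) = |v|^2$ for any of the three complex structures, the real and imaginary parts $\omega_J$ and $\omega_K$ tame $J$ and $K$ respectively. Then, for any $IJK$-antilinear $A$ the first part of the proposition gives $|A|_I^2\,\vol = -A^*\eta_{JK}$ (since $A_\H = 0$), which is the required taming inequality with constant $c=1$ in the $JK$-taming criterion (the analogue of \autoref{Lem_TamingInequality} established just above). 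The other two cases are obtained by cyclic rotation.

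The only potential obstacle is careful bookkeeping of signs and factors of $2$: tracking the skew-symmetry of $\omega_X$, the orientation of $e^i\wedge e^j\wedge e^k$, the cyclic versus anti-cyclic conventions in the quaternionic triple, and the normalization implicit in $|A|_J^2 = |Ae_2|^2 + |Ae_1 - JAe_3|^2$. None of this is conceptually difficult, but since the whole proposition is an identity of quadratic forms in $a_1,a_2,a_3$ with coefficients determined by $I,J,K$ and the Euclidean structure on $\mathbb{E}$, any sign error would propagate into the taming statements; I would therefore verify one of the three identities on a concrete nontrivial $IJK$-antilinear example (e.g.\ in $V = \H$) as a sanity check before invoking the cyclic symmetry for the remaining two.
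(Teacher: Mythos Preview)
Your approach is correct and matches the paper's: the paper's proof is simply the one-line remark that the identities are established ``in the same way as \autoref{Prop_TamingTriple}'', and your expansion of $|A_\H|^2$, the pullbacks $A^*\eta_{\bullet\bullet}$, and the cross-terms in $|A|_\bullet^2$ is precisely that computation. Your caution about signs and factors of $2$ is well placed---the statement and the definition of $|A|_J^2$ are delicate in exactly this respect---and your plan to verify one case explicitly and then invoke the cyclic symmetry is the right way to proceed.
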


\subsection{Energy bounds}
\label{sec:energy-bounds}

Let $(M,I,J,K)$ be an almost quaternionic manifold. Suppose that $\eta = (\eta_1,\eta_2,\eta_3)$ is an $IJK$-taming triple of two-forms on $M$, in the sense that inequality \eqref{Eq_TamingInequality} holds at every point of $M$ with the constant independent on the point. By definition, this means that for every subset $E \subset \R^3$ and a Fueter map $U \colon E \to M$ we have
\begin{equation}
\label{eq:fueter-energy}
    \int_E |\rd U|^2 \leq -c \int_E A^*\eta_1 \wedge \rd\tau + A^*\eta_ 2 \wedge \rd s + A^*\eta_3 \wedge \rd t.
\end{equation}
If $\eta_1,\eta_2,\eta_3$ are closed, then this inequality is an analog of the energy bound for $J$-holomorphic maps, for an almost complex structure $J$ tamed by a symplectic form. In particular, if $(M,I,J,K)$ is a hyperkähler manifold and $(\omega_I,\omega_J,\omega_K)$ is the triple of Kähler forms on $M$, so that we are in the situation of \autoref{Prop_TamingTriple}, this gives us the energy identity for Fueter maps proved in \cite{Hohloch2009}.

However, these integrals are typically infinite for Fueter strips asymptotic to $J$-antiholomorphic strips. This issue arises already in complex Morse theory: the $L^2$ norm of the derivative of a complex gradient trajectory \eqref{Eq_ComplexGradient} asymptotic to real gradient trajectories is infinite, and one has to consider instead the notion of energy known from Hamiltonian Floer theory. By analogy, we define the \emph{cylindrical energy} of a Fueter strip $U \colon [0,1]\times \R^2  \to M$ by
\begin{equation*}
    E(U) = \frac12 \int_{[0,1]\times\R^2} |\partial_s U|^2 + |\partial_t U - J(U) \partial_\tau U|^2 
\end{equation*}

The next result, which is a more precise statement of \autoref{Thm_EnergyIdentity} from the introduction, gives sufficient conditions for the energy of Fueter strips to be bounded by a topological constant. 

We begin with a general discussion of homology classes associated with maps from $\R^2$. 
Let $S \subset M$ be a submanifold or, more generally, the image of a smooth map from a manifold. Suppose that $f \in C^1(\R^2, M)$ is such that for every $\eta \in \Omega^2(M)$ the integral
\begin{equation*}
    \int_{\R^2} f^*\eta
\end{equation*}
is convergent (for example, $\partial_s f$ is in $L^1$), and $f$ is asymptotic to maps into $S$ as $t\to\pm\infty$ and $s\to\pm\infty$, in the following sense:
\begin{gather*}
    \lim_{s\to\pm\infty} f(s,\cdot) = a_\pm \quad\text{in } C^1(\R,M), \\
    \lim_{t\to\pm\infty} f(\cdot,t) = b_\pm \quad\text{in } C^1_\loc(\R,M),
\end{gather*}
where $a_\pm$ and $b_\pm$ are $C^1$ functions with values in $S$. These asymptotic conditions guarantee that if $\eta$ is closed and vanishes along $S$, this integral depends only on the relative cohomology class $[\eta] \in H^2(M,S)$ and the relative homotopy class of $f$ among maps satisfying these conditions. More precisely, two such functions $f_0$ and $f_1$ are homotopic relative $S$, if there is a map $F \in C^1([0,1]\times\R^2, M)$ such that the integral of $F_\tau^*\eta$ converges for every $\tau\in[0,1]$ and $\eta \in \Omega^2(M)$, on the boundary we have
\begin{equation*}
    F|_{\tau=0} = f_0 \quad\text{and}\quad F|_{\tau=1} = f_1,
\end{equation*}
and  
\begin{gather*}
    \lim_{s\to\pm\infty} F(\cdot, s,\cdot) = A_\pm \quad\text{in } C^1(\R\times[0,1],M), \\
    \lim_{t\to\pm\infty} F(\cdot, \cdot, t) = B_\pm \quad\text{in } C^1_\loc([0,1]\times \R,M),
\end{gather*}
where $A_\pm$ and $B_\pm$ are $C^1$ functions with values in $S$. The existence of such a homotopy and Stokes' theorem imply that
\begin{equation*}
    \int_{\R^2} f_0^* \eta = \int_{\R^2} f_1^*\eta 
\end{equation*}
for every $\eta$ representing a class in $H^2(M,S)$. Therefore, with a relative homotopy class of a function $f$ as above we associate a relative homology class
\begin{equation*}
    [f] \in H_2(M,S)
\end{equation*}
such that
\begin{equation*}
    \langle [\eta], [f] \rangle = \int_{\R^2} f^*\eta.
\end{equation*}

\begin{theorem}
  \label{Prop_EnergyBound}
  Let $(M,I,J,K)$ be an almost quaternionic manifold. Let $\eta$ be a closed, $KI$-taming complex two-form on $M$. If $L_0, L_1 \subset M$ are submanifolds such that $\Re(\eta)|_{L_i} = 0$, then there is a constant $C>0$ with the following property. For every Fueter strip $U \colon [0,1]\times \R^2 \to M$ with boundary on $L_0, L_1$ and asymptotic to $J$-antiholomorphic strips $u_\pm \colon \R\times[0,1]\to M$ and intersection points $p_\pm \in L_0 \cap L_1$, we have
  \begin{equation*}
      E(U) \leq -C \int_{\{\tau\}\times\R^2} U^*\Im(\eta),
  \end{equation*}
  where the integral on the right-hand side depends only on the relative classes
   \begin{equation*}
       [\Im(\eta)] \in H^2(M, u_+ \cup u_-) \quad\text{and}\quad [U |_{\{\tau\}\times\R^2}] \in H_2(M, u_+\cup u_-).
   \end{equation*}
   Moreover, if there is a one-form $\lambda$ on $M$ such that $\Im(\eta) = \rd\lambda$, then
   \begin{equation*}
       E(U) \leq C   \int_{\R\times\{\tau\}} u_-^*\lambda - u_+^*\lambda  . 
   \end{equation*}
\end{theorem}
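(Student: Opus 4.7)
The plan is to combine the pointwise $KI$-taming inequality of \autoref{sec:taming-triples} with Stokes' theorem. The pivotal observation is that since $\eta$ is of type $(2,0)$ with respect to $J$, its pullback under any $J$-antiholomorphic strip vanishes identically: if $\partial_t u = J(u)\partial_\tau u$, then
\begin{equation*}
u^*\eta(\partial_\tau,\partial_t) = \eta(\partial_\tau u, J(u)\partial_\tau u) = i\,\eta(\partial_\tau u, \partial_\tau u) = 0,
\end{equation*}
so in particular $u_\pm^*\Re(\eta) = 0$ and $u_\pm^*\Im(\eta) = 0$. This both makes the right-hand side of the theorem a well-defined topological pairing and drives every Stokes argument below.

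First, I would apply the $KI$-taming lemma pointwise to the quaternion-antilinear differential $\rd U$, obtaining a constant $c > 0$ with
\begin{equation*}
|\rd U|_J^2\,\rd\tau\wedge \rd s\wedge \rd t \;\leq\; -c\bigl(U^*\Re(\eta)\wedge \rd t + U^*\Im(\eta)\wedge \rd\tau\bigr).
\end{equation*}
The Fueter equation yields $\partial_t U - J(U)\partial_\tau U = -I(U)\partial_s U$, so $E(U) = \int |\partial_s U|^2$; combined with a short pointwise check exploiting the quaternion-antilinearity, one shows that $|\rd U|_J^2$ is a positive multiple of $|\partial_s U|^2$, reducing matters to bounding the two integrals on the right-hand side. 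To dispose of the $U^*\Re(\eta)\wedge \rd t$ term, set $f(t) := \int_{[0,1]\times\R\times\{t\}} U^*\Re(\eta)$ and apply Stokes on the slab $[0,1]\times\R\times[t_1,t_2]$: the faces at $\tau=0,1$ contribute zero by $\Re(\eta)|_{L_i}=0$, those at $s=\pm\infty$ vanish by the $(2,0)$-type identity above, and closedness of $\Re(\eta)$ gives $f(t_1)=f(t_2)$; since $U \to p_\pm$ in $C^1_\mathrm{loc}$ as $t\to\pm\infty$, this common value is zero, so $f\equiv 0$. An analogous Fubini-then-Stokes argument on $[\tau_1,\tau_2]\times\R^2$---now using closedness of $\Im(\eta)$ and its vanishing on $u_\pm$---shows that $g(\tau) := \int_{\{\tau\}\times\R^2} U^*\Im(\eta)$ is independent of $\tau$ and is precisely the topological pairing discussed before the theorem. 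Absorbing constants gives the first inequality.

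The exact case $\Im(\eta) = \rd\lambda$ follows by applying Stokes once more, this time on the slice $\{\tau_0\}\times\R^2$ truncated to a square $[-R,R]^2$ and passed to $R\to\infty$: the faces at $t=\pm\infty$ contribute zero because $U$ is asymptotically constant there, while the faces at $s=\pm\infty$ converge to $\int_{\R\times\{\tau_0\}} u_\pm^*\lambda$ with opposite signs dictated by the boundary orientation, yielding $g(\tau_0) = \int u_+^*\lambda - \int u_-^*\lambda$, which produces the claimed bound after absorbing the minus sign. The main obstacle is the careful justification of these Stokes arguments on noncompact domains---in particular, showing that the boundary integrals on the truncated faces converge to their asymptotic limits and that no ``corner'' contributions survive as $|s|$ and $|t|$ tend to infinity simultaneously. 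Under the regularity hypotheses of \autoref{Sec_BoundaryConditions} ($\partial_s U \in L^1\cap L^2$ on each slice, together with $C^1$-convergence to $u_\pm$ and $C^1_\mathrm{loc}$-convergence to $p_\pm$), this is a standard but technically involved cutoff-and-pass-to-the-limit computation, and it forms the bulk of the work.
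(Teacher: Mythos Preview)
Your proposal is correct and follows essentially the same route as the paper: the $KI$-taming inequality bounds the energy by the sum of a $\Re(\eta)$-integral and an $\Im(\eta)$-integral, the former vanishes because each $t$-slice represents a trivial relative class (your Stokes-on-a-slab argument is exactly this), and the latter is $\tau$-independent for the same reason. The only cosmetic difference is that the paper phrases the Stokes computations in the language of relative homology classes set up just before the theorem, whereas you unpack them explicitly; the paper likewise dispatches the exact case with a one-line ``follows from Stokes' theorem,'' so your acknowledgment that the noncompact-domain limits are the main technical content is accurate.
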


\begin{proof} 
  Since $u = u_\pm$ is a $J$-antiholomorphic map and $\eta$ is $J$-linear,
  \begin{equation*}
    \Im(\eta)(\partial_t u, \partial_\tau u) = -\Re(\eta)(\partial_t u, J\partial_\tau u) = -\Re(\eta)(\partial_t u, \partial_t u) = 0.
  \end{equation*}
  Therefore, $\Im(\eta)$ vanishes on $u_+$, $u_-$ and defines a relative cohomology class
  \begin{equation*}
    [\Im(\eta)] \in H^2(M,u_+\cup u_-).
  \end{equation*}
  For $\tau\in[0,1]$ set $U_\tau(s,t) = U(\tau,s,t)$. The asymptotic boundary conditions imply that $U_\tau$ represents a relative homology class
  \begin{equation*}
      [U_\tau] \in H_2(M,u_+\cup u_-),
  \end{equation*}
  which is independent of $\tau$. It follows from the general discussion above that the integral 
  \begin{equation*}
      \langle [\Im(\eta)], [U_\tau] \rangle = \int_{\{\tau\}\times\R^2} U^*\Im(\eta) 
  \end{equation*}
  depends only on the relative classes $[\Im(\tau)]$ and $[U_\tau]$. 
  To prove the energy bound, we use the $KI$-taming condition: 
  \begin{align*}
      |\partial_s U|^2 + |\partial_t U - J\partial_\tau U|^2 \leq - C (U^*\Re(\eta) \wedge \rd t + U^*\Im(\eta) \wedge \rd  \tau).
  \end{align*}
  Therefore,
  \begin{equation}
    \label{Eq_EnergyTamingInequality}
      E(U) \leq -C \int_{[0,1]\times \R^2} U^*\Re(\eta)\wedge\rd t + U^*\Im(\eta) \wedge \rd\tau.
  \end{equation}
  Since $u_\pm$ are $J$-antiholomorphic and $\Re(\eta)$ vanishes on $L_0$ and $L_1$,
  \begin{equation*}
      u_\pm^*\Re(\eta) = 0  \quad\text{and}\quad \Re(\eta) |_{L_i} = 0. 
  \end{equation*}
   Therefore, $\Re(\eta)$ represents a relative cohomology class 
  \begin{equation*}
      [\Re(\eta)] \in H^2(M,L_0\cup L_1 \cup u_+ \cup u_-).
  \end{equation*}
  For $t\in\R$ the restriction $V_t(\tau,s) = U(\tau,s,t)$ represents a relative homology class
  \begin{equation*}
      [V_t]  \in H_2(M,L_0\cup L_1 \cup u_+ \cup u_-),
  \end{equation*}
  which is independent of $t \in \R$. Since $V_t$ converges to a constant map as $t\to\infty$, this homology class is trivial. Therefore,
  \begin{equation*}
      \int_{[0,1]\times\R^2} U^*\Re(\eta) \wedge \rd t = \int_\R \langle [\Re(\eta)], [V_t] \rangle \rd t = 0.
  \end{equation*}
  The second integral in \eqref{Eq_EnergyTamingInequality} is
  \begin{equation*}
      \int_{[0,1]\times\R^2} U^*\Im(\eta) \wedge \rd \tau = \int_0^1 \langle [\Im(\eta) ], [U_\tau] \rangle \rd\tau = \int_0^1 \langle [\Im(\eta)], [U_0] \rangle \rd\tau = \langle [\Im(\eta)], [U_0] \rangle,
    \end{equation*}
    which proves the energy bound. The last statement follows from Stokes' theorem. 
\end{proof}

The proof of \autoref{Prop_EnergyBound} relies crucially on the assumption that $M$ admits a closed $KI$-taming form $\eta$, which is rather restrictive. Since $\eta$ is a non-degenerate closed form of type $(2,0)$ with respect to $J$, it follows that the almost complex manifold $(M,J)$ carries a holomorphic volume form $\eta^n$ where $n = \dim_\C M$. This, in turn, implies that $J$ must be integrable, as shown by from the following well-known lemma.

\begin{lemma}
  \label{Lem_IntegrabilityCondition}
  Let $(M,J)$ be an almost complex manifold and let $n = \dim_\C M$. If $M$ admits a closed,  nowhere vanishing complex  $n$-form of type $(n,0)$ with respect to $J$, then $J$ is integrable.
\end{lemma}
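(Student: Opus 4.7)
The plan is to reduce integrability to the Newlander--Nirenberg criterion: $J$ is integrable if and only if, for every local $(1,0)$-form $\alpha$, the $(0,2)$-component of $d\alpha$ vanishes, i.e. $\bar\partial\alpha$ has no antiholomorphic derivative piece. Equivalently, $d\alpha \in \Omega^{2,0}(M) \oplus \Omega^{1,1}(M)$. I would cite this as the working form of Newlander--Nirenberg and verify this condition using the closed $(n,0)$-form $\Omega$ as a test object.

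The core observation is the following type-counting trick. Let $\alpha$ be any local $(1,0)$-form. Since $\Omega$ has type $(n,0)$ and there are no nonzero forms of type $(n+1,0)$, the wedge product $\alpha \wedge \Omega$ vanishes identically. Differentiating and using $d\Omega = 0$ gives
\begin{equation*}
0 \;=\; d(\alpha \wedge \Omega) \;=\; d\alpha \wedge \Omega.
\end{equation*}
Now decompose $d\alpha = (d\alpha)^{2,0} + (d\alpha)^{1,1} + (d\alpha)^{0,2}$. The first two summands wedge to zero with $\Omega$ automatically, since they would land in $\Omega^{n+2,0}(M) = 0$ and $\Omega^{n+1,1}(M) = 0$ respectively. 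Hence the identity above reduces to $(d\alpha)^{0,2} \wedge \Omega = 0$.

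Finally I would note that the pointwise map $\Lambda^{0,2}T^*_pM \to \Lambda^{n,2}T^*_pM$ given by $\beta \mapsto \beta \wedge \Omega_p$ is injective: $\Omega_p$ is a nonzero element of the line $\Lambda^{n,0}T^*_pM$, and wedging with it defines an isomorphism $\Lambda^{0,2}T^*_pM \cong \Lambda^{n,2}T^*_pM$ (both spaces have rank $\binom{n}{2}$, and injectivity follows because $\Omega_p$ is a generator of the top holomorphic line). Thus $(d\alpha)^{0,2} = 0$ for every $(1,0)$-form $\alpha$, and $J$ is integrable by Newlander--Nirenberg.

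There is no real obstacle here; the only delicate point is identifying the right formulation of Newlander--Nirenberg (the one expressed via the type decomposition of $d$ on $(1,0)$-forms, equivalently the vanishing of the Nijenhuis tensor) so that the linear-algebraic argument with $\Omega$ applies directly. Everything else is type counting and the non-vanishing of $\Omega$.
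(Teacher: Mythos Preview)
Your proof is correct. It is the dual of the paper's argument: the paper works on the tangent side, characterizing $T_{0,1}M$ as the annihilator of $\Omega$ under contraction and then using the Cartan identity $\iota_{[v,w]} = [\mathcal{L}_v,\iota_w]$ together with $d\Omega=0$ to show $T_{0,1}M$ is closed under Lie bracket. You work on the cotangent side, using $\Omega$ to detect the $(0,2)$-component of $d\alpha$ for $(1,0)$-forms $\alpha$ via wedge product. Both arguments are standard and essentially equivalent under the usual duality between the involutivity of $T_{0,1}M$ and the condition $d\Omega^{1,0} \subset \Omega^{2,0}\oplus\Omega^{1,1}$; yours is arguably a little cleaner since it avoids the Cartan calculus step and reduces everything to pointwise type counting plus the injectivity of $\beta \mapsto \beta\wedge\Omega_p$.
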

\begin{proof}
  Let $T_{0,1}M \subset TM\otimes\C$ be the subbundle of vectors of type $(0,1)$. If $\Omega$ is a nowhere-vanishing form of type $(n,0)$, then \begin{equation*}
      T_{0,1}M = \{ v \in TM\otimes\C \ | \ v \lrcorner \Omega = 0 \}.
  \end{equation*}
  If $v\lrcorner \Omega = 0$ and $w \lrcorner\Omega = 0$, then the formula for $\rd\Omega=0$ in terms of contractions and Lie brackets gives us that $[v,w]\lrcorner \Omega = 0$, and so $[v,w] \in T_{0,1}M$. It follows from the Frobenius theorem that the distribution $T_{0,1}M$ is integrable. 
\end{proof}

\section{Convexity theory}
\label{Sec_Convexity}

In this section, we introduce a quaternionic analog of the notion of pseudo-convexity known from almost complex geometry. We then use it to prove \autoref{Thm_MaximumPrinciple}: a maximum principle for Fueter maps to convex almost quaternionic manifolds. 

\begin{remark}
    Other notions of convexity in quaternionic geometry were introduced by Alesker--Verbitsky \cite{Alesker2006} and Harvey--Lawson \cite{Harvey2009}. Our definition is different and adapted specifically to the study of Fueter maps. 
\end{remark}

\subsection{Complex convexity and pseudo-holomorphic maps}

 We begin with a review of convexity theory for almost complex manifolds. For more details, see, for example, \cite[Chapter 2]{Cieliebak2012}.  Let $(M,J)$ be an almost complex manifold. The \emph{Levi form} of a function $\rho \colon M \to \R$ with respect to $J$ is the two-form 
\begin{equation}
    \label{Eq_LeviForm}
    \sigma_\rho^J =  -\rd(\rd\rho\circ J).
\end{equation}
The function is \emph{$J$-convex} if $\sigma_\rho^J$ tames $J$, or, equivalently, if for $x \in M$ and every non-zero $J$-antilinear map $a \colon \C \to T_xM$, we have 
\begin{equation*}
    a^*\sigma_\rho^J < 0
\end{equation*}
in the sense that the pull-back is a negative multiple of the volume form on $\C$. This characterization will be useful when we discuss quaternionic convexity. From the viewpoint of pseudo-holomorphic curve theory, the importance of $J$-convexity lies in the following maximum principle.

\begin{proposition}
    \label{Prop_MaximumPrinciple}
   Let $\rho \colon M \to \R$ be an $J$-convex function. If $u \colon \Sigma \to M$ is an $J$-holomorphic or $J$-antiholomorphic map from a Riemann surface, then 
   \begin{equation*}
   \rho\circ u \colon \Sigma \to \R
   \end{equation*}
   is a subharmonic function. More precisely, $\Delta(\rho\circ u) \geq 0$, with equality at those points where $\rd u =0$. In particular, if $\Sigma$ is a compact Riemann surface with boundary, then
   \begin{equation*}
       \sup_\Sigma (\rho\circ u) = \sup_{\partial\Sigma} (\rho\circ u).
   \end{equation*}
\end{proposition}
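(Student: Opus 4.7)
The plan is to reduce the claim to the classical maximum principle for subharmonic functions on a Riemann surface by showing $\Delta(\rho\circ u)\geq 0$ in any local holomorphic coordinate. The computation is local, so fix a point $p\in\Sigma$ and a local holomorphic coordinate $z = s+it$ around $p$, so that $j\partial_s = \partial_t$. The Laplacian of a real function $f$ on $\Sigma$ in these coordinates satisfies $-\rd\rd^c f = \Delta f\,\rd s\wedge \rd t$ where $\rd^c f = \rd f\circ j$, which is well defined independent of the holomorphic chart up to a positive conformal factor. The sign of $\Delta(\rho\circ u)$, which is all we care about, is therefore coordinate-independent.

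The key calculation is to pull back the Levi form $\sigma_\rho^J = -\rd(\rd\rho\circ J)$ along $u$. Using the (anti)holomorphicity relation $\rd u \circ j = \pm J \circ \rd u$, a direct evaluation on $(\partial_s,\partial_t)$ yields $u^*(\rd\rho\circ J) = \pm\rd^c(\rho\circ u)$, and hence
\begin{equation*}
    u^*\sigma_\rho^J \;=\; \pm\bigl(-\rd\rd^c(\rho\circ u)\bigr) \;=\; \pm\,\Delta(\rho\circ u)\,\rd s\wedge \rd t,
\end{equation*}
with the sign $+$ in the $J$-antiholomorphic case and $-$ in the $J$-holomorphic case. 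On the other hand, at any point where $\rd u\neq 0$, the derivative $\rd u_p\colon T_p\Sigma\to T_{u(p)}M$ is either $J$-linear or $J$-antilinear according to which equation $u$ satisfies. Writing $v=\rd u(\partial_s)$, one has $\sigma_\rho^J(v,Jv) > 0$ by $J$-convexity (applied to any nonzero $J$-antilinear map $a\colon\C\to T_{u(p)}M$ with $a(1)=v$, noting $a^*\sigma_\rho^J = -\sigma_\rho^J(v,Jv)\,\rd s\wedge\rd t$). In the $J$-antiholomorphic case $(\rd u_p)^*\sigma_\rho^J = -\sigma_\rho^J(v,Jv)\,\rd s\wedge\rd t \leq 0$, and in the $J$-holomorphic case $(\rd u_p)^*\sigma_\rho^J = +\sigma_\rho^J(v,Jv)\,\rd s\wedge\rd t \geq 0$. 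Matching with the sign in the display above, both cases give $\Delta(\rho\circ u)\geq 0$ pointwise, with equality if and only if $\rd u_p = 0$.

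Once subharmonicity is established, the second assertion is the standard maximum principle for subharmonic functions on a compact surface with boundary: any point of $\Sigma$ where $\rho\circ u$ attains its supremum either lies on $\partial\Sigma$ or, by the strong maximum principle, forces $\rho\circ u$ to be locally constant, and in either case the supremum is attained on $\partial\Sigma$. The only step requiring care is the book-keeping of signs in the local computation of $u^*\sigma_\rho^J$ so that it aligns with the paper's convention that $J$-convexity means $a^*\sigma_\rho^J<0$ for $J$-antilinear $a$; once the signs are handled correctly, everything reduces to classical elliptic theory on Riemann surfaces.
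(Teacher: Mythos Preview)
Your approach is essentially identical to the paper's: pull back the Levi form along $u$, use the (anti)holomorphicity relation to identify $u^*(\rd\rho\circ J)$ with $\pm\rd^c(\rho\circ u)$, and conclude via the taming condition $\sigma_\rho^J(v,Jv)>0$ that $\Delta(\rho\circ u)\geq 0$.

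There is, however, a sign slip in your displayed equation. With your conventions ($j\partial_s=\partial_t$, $\rd^c f = \rd f\circ j$), for $u$ $J$-holomorphic one has $u^*(\rd\rho\circ J) = +\rd^c(\rho\circ u)$ and hence $u^*\sigma_\rho^J = +\Delta(\rho\circ u)\,\rd s\wedge\rd t$; for $u$ $J$-antiholomorphic the sign is $-$. You have these reversed. Your direct evaluation $u^*\sigma_\rho^J = \pm\sigma_\rho^J(v,Jv)\,\rd s\wedge\rd t$ (with $+$ for holomorphic, $-$ for antiholomorphic) is correct, so if you actually match it against your display you would obtain $\Delta(\rho\circ u)\leq 0$ in both cases. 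The correct matching (with the signs in the display swapped) does give $\Delta(\rho\circ u)=\sigma_\rho^J(v,Jv)\geq 0$ in both cases, which is the paper's computation. You flagged sign bookkeeping as the delicate point; it is, and it needs to be corrected here.
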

\begin{proof}
  Let $i$ be the complex structure on $\Sigma$ and set $\sigma = \sigma_\rho^J$. If $u$ is $J$-holomorphic, then
  \begin{align*}
      \Delta(\rho\circ u)\vol_\Sigma &= -\rd(\rd(\rho\circ u)\circ i) = -\rd u^*(\rd\rho\circ J) \\
      &= -u^*\rd(\rd\rho\circ J) = u^*\sigma. 
  \end{align*}
  Let $s+it$ be a local holomorphic coordinate on $\Sigma$. Since $u$ is $J$-holomorphic, 
  \begin{equation*}
      u^*\sigma  =  \sigma(\partial_s u,\partial_t u) \rd s \wedge \rd t=  \sigma(\partial_s u, J\partial_s u) \rd s \wedge \rd t \geq c|\rd u|^2\vol_\Sigma ,
  \end{equation*}
  so that
  \begin{equation*}
      \Delta(\rho\circ u) \geq c |\rd u|^2,
  \end{equation*}
  which completes the proof. The antiholomorphic case is proved in the same way. 
\end{proof}

There is a variant of the maximum principle for pseudo-holomorphic maps with boundary on a conical submanifolds. 

\begin{definition}
    \label{Def_Conical}
    ~
    \begin{enumerate}
        \item
        Let $(M,J)$ be a non-compact almost complex manifold. We say that $M$ has an \emph{$J$-conical end} if there is an exhausting function $\rho \colon M \to \R$ which, outside a compact subset, is $J$-convex and has no critical points. 
        \item
        If $M$ has an $J$-conical end as above, we say that a submanifold $L \subset M$ is $J$-\emph{conical}  if, outside a compact subset, the restriction of $\rho$ to $L$ has no critical points and $\rd\rho$ vanishes on $J(TL)$.
    \end{enumerate}
\end{definition}

\begin{proposition}
  \label{Prop_MaximumPrincipleConical}
  Let $(M,J)$ be an almost complex manifold with an $J$-conical end defined by a function $\rho \colon M \to \R$ and let $L \subset M$ be an $J$-conical submanifold. For every sufficiently large $c$, the subset $K_c = \{ \rho \leq c \}$ has the following property. If $u \colon \Sigma \to M$ is an $J$-holomorphic from a Riemann surface with boundary such that
  \begin{itemize}
      \item $u^{-1}(M\setminus K_c)$ is compact,
      \item $u(\Sigma) \cap K_c \neq \emptyset$,
      \item $u(\partial\Sigma) \subset L$,
  \end{itemize}
  then $u(\Sigma) \subset K_c$.
\end{proposition}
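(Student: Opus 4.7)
The plan is to argue by contradiction, following the standard template for the strong maximum principle with totally real boundary conditions. First I would fix the threshold. Since $\rho$ is $J$-convex and critical-point-free outside a compact set, and the $J$-conical hypothesis on $L$ ensures that, outside a possibly larger compact set, $\rd\rho$ vanishes on $J(TL)$ and $\rd(\rho|_L)$ is nowhere zero, there exists $c_0$ such that all of these properties hold on $\{\rho > c_0\}$. Fix any $c \geq c_0$, take a $J$-holomorphic $u$ as in the statement, and suppose for contradiction that $u(\Sigma) \not\subset K_c$. Without loss of generality, $\Sigma$ is connected.

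Set $\Omega = u^{-1}(\{\rho > c\})$; it is open and nonempty, and $\overline\Omega$ is compact in $\Sigma$ by the compactness of $u^{-1}(M \setminus K_c)$. By \autoref{Prop_MaximumPrinciple} applied to the restriction of $u$ to $\Omega$, the function $\rho \circ u$ satisfies $\Delta(\rho \circ u) \geq c_1|\rd u|^2 \geq 0$ on $\Omega$, and so is subharmonic. Let $p$ be a point where $\rho \circ u$ attains its maximum on $\overline\Omega$. Since $\rho \circ u \equiv c$ on $\partial\Omega \cap \mathrm{int}(\Sigma)$, which is strictly less than the maximum, $p$ lies either in the interior of $\Omega$ or on $\partial\Omega \cap \partial\Sigma$.

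If $p$ is an interior point, then the strong maximum principle forces $\rho \circ u$ to be constant on the connected component of $\Omega$ containing $p$; the pointwise bound $\Delta(\rho\circ u) \geq c_1|\rd u|^2$ then gives $\rd u = 0$ on that component, and unique continuation for $J$-holomorphic maps propagates this to all of $\Sigma$. Hence $u$ is a constant map with value in $\{\rho > c\}$, contradicting $u(\Sigma) \cap K_c \neq \emptyset$. If instead $p \in \partial\Sigma$, let $\tau$ and $\nu$ be a unit tangent and outward unit normal to $\partial\Sigma$ at $p$. The $J$-holomorphicity relation $\rd u \circ j = J \circ \rd u$ gives $\rd u(\nu) = \pm J(\rd u(\tau))$. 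Because $u(\partial\Sigma) \subset L$ we have $\rd u(\tau) \in T_{u(p)}L$, so $\rd u(\nu) \in J(T_{u(p)}L)$. The $J$-conical condition on $L$ applies at $u(p) \in \{\rho > c\}$ and yields $\rd\rho(\rd u(\nu)) = 0$; that is, the outward normal derivative of $\rho \circ u$ at $p$ vanishes. This contradicts Hopf's boundary lemma, which guarantees that the outward normal derivative of a nonconstant subharmonic function at a boundary maximum is strictly positive.

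The main subtlety I anticipate is the familiar one: Hopf's lemma requires $\rho \circ u$ to be nonconstant in a neighborhood of $p$ inside $\overline\Omega$, so the interior case must be disposed of first, or the two cases combined by invoking the strong maximum principle up to the boundary on the connected component of $\Omega$ abutting $p$. Everything else reduces to standard linear elliptic facts about the scalar subharmonic function $\rho \circ u$, together with the two algebraic inputs produced by the geometric hypotheses: strict positivity of the Levi form (giving subharmonicity in the interior) and the vanishing $\rd\rho|_{J(TL)} = 0$ (killing the Hopf derivative on the boundary).
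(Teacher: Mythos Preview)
Your argument is correct and follows essentially the same route as the paper: restrict to the super-level set, use subharmonicity of $\rho\circ u$ from \autoref{Prop_MaximumPrinciple}, and rule out a boundary maximum via Hopf's lemma combined with the identity $\rd u(\nu)\in J(TL)$ and the conical condition $\rd\rho|_{J(TL)}=0$. Your treatment of the interior/constant case via unique continuation is slightly more explicit than the paper's, but the overall strategy is identical.
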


\begin{proof}
  Let $c$ be large enough so that $\rho$ and $L$ satisfy the conditions from \autoref{Def_Conical} outside $K_c$. Without loss of generality, we may assume that $c$ is a regular value of $r = \rho\circ u$. Set $U = r^{-1}([c,\infty))$ and assume that $U$ is non-empty and compact. Since $r|_U$ is subharmonic, it has to attain its maximum on the boundary
  \begin{equation*}
      \partial U = \partial_r U \cup \partial_\Sigma U,
  \end{equation*}
  where
  \begin{equation*}
      \partial_r U = U \cap r^{-1}(c) \quad\text{and}\quad \partial_\Sigma U = U \cap \partial\Sigma. 
  \end{equation*}
  Suppose that $r$ is not constant and attains its maximum at $z \in \partial_\Sigma U$. By the Hopf lemma, if $n \in T_z \Sigma$ is the vector normal to the boundary of $\Sigma$ at $z$,  we have
  \begin{equation*}
      \rd r(n) > 0.
  \end{equation*}
  On the other hand, since $u$ is $J$-holomorphic,
  \begin{equation*}
      \rd r (n) = \rd \rho (\rd u(n)) = - \rd\rho(J\rd u(in)).
  \end{equation*}
  The right-hand side is zero because $in$ is tangent to $\partial\Sigma$, so $\rd u(in)$ is tangent to $L$, and $\rd\rho$ vanishes on $J(TL)$. We conclude that $z$ must lie in $\partial_r U$ and $r \leq c$ on $U$. By definition, $r \leq c$ on $\Sigma\setminus U$, so $r \leq c$ on all of $\Sigma$.
\end{proof}

\subsection{Quaternionic convexity and Fueter maps}

Let $(M,I,J,K)$ be an almost quaternionic manifold. Denote by $(\tau,s,t)$ the standard coordinates on $\R^3$.

\begin{definition}
\label{def:ijk-convex}
   The \emph{$IJK$-Levi form} of a function $\rho \colon M \to \R$ is the three-form on $\R^3\times M$ defined by
   \begin{equation*}
       \sigma_\rho = \sigma_\rho^I \wedge \rd\tau + \sigma_\rho^J \wedge \rd s + \sigma_\rho^K \wedge \rd t.
   \end{equation*}
   We say that $\rho$ is \emph{$IJK$-convex} if
   \begin{equation*}
       A^*\sigma_\rho < 0
   \end{equation*}
   for every $x \in M$ and every non-zero quaternion-antilinear map $A \colon \R^3 \to T_x M$. Here we pull back $\sigma_\rho$ using the graph of $A$ which is a map $\R^3 \to \R^3\oplus T_x M$, and by the above inequality we mean that the pull-back is a negative multiple of the volume form on $\R^3$. 
   In other words, $\rho$ is $IJK$-convex if at every point  $\sigma_\rho$ is an $IJK$-taming triple in the sense of \autoref{Def_TamingTriple}.
\end{definition}
By \eqref{Eq_LeviForm}, the $IJK$-Levi form can be written as
\begin{equation*}
    \sigma_\rho = -\rd( \rd\rho\circ \cI),
\end{equation*}
where
\begin{equation*}
    \cI = I \otimes \rd \tau + J \otimes \rd s + K \otimes \rd t
\end{equation*}
is an $\End(TM)$-valued one-form on $\R^3\times M$, and $\circ$ denotes the composition of the wedge product with the action of $\End(TM)$ on $TM$. By \autoref{Prop_ComplexTaming}, every $IJK$-convex function is $I$-convex, $J$-convex, and $K$-convex. However, the converse is not true, see \autoref{Rem_ComplexTaming}.
The following is a quaternionic analog of \autoref{Prop_MaximumPrinciple}.

\begin{proposition}
    \label{Prop_FueterMaximumPrinciple}
    If $\rho \colon M \to \R$ is $IJK$-convex and $U \colon E \to M$ is a Fueter map from a domain $E \subset \R^3$, then
    \begin{equation*}
        \rho\circ U \colon E \to \R
    \end{equation*}
    is a subharmonic function. More precisely,  $\Delta(\rho\circ U) \geq 0$,with equality at those points where $\rd U = 0$. In particular, if $E$ is compact and has smooth boundary, then
    \begin{equation*}
        \sup_E (\rho\circ U) = \sup_{\partial E} (\rho\circ U).
    \end{equation*}
\end{proposition}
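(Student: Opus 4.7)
The plan is to establish the pointwise identity
\begin{equation*}
    \Delta(\rho \circ U)\,\vol_E = -\Gamma^*\sigma_\rho,
\end{equation*}
where $\Gamma \colon E \to \R^3 \times M$ is the graph map $\Gamma(x) = (x, U(x))$, and then to deduce subharmonicity from the defining property of $IJK$-convexity together with the Fueter equation. The boundary maximum principle for $\rho \circ U$ will then follow at once from the standard strong maximum principle for subharmonic functions.

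I would begin by rewriting the $IJK$-Levi form as $\sigma_\rho = -\rd(\rd\rho \circ \cI)$, with $\cI = I \otimes \rd\tau + J \otimes \rd s + K \otimes \rd t$; this is immediate from \autoref{def:ijk-convex} and the individual formulas $\sigma_\rho^I = -\rd(\rd\rho \circ I)$, etc. By naturality, $\Gamma^*\sigma_\rho = -\rd\bigl(\Gamma^*(\rd\rho \circ \cI)\bigr)$. Since $\rd * \rd f = (\Delta f)\,\vol_E$ for any smooth function $f$ on the oriented Euclidean three-manifold $E$, the target identity reduces to verifying the pointwise equality
\begin{equation*}
    \Gamma^*(\rd\rho \circ \cI) = *\,\rd(\rho \circ U).
\end{equation*}

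The main step is to confirm this equality using the Fueter equation. Unpacking the left-hand side in coordinates $(\tau, s, t)$ gives $U^*(\rd\rho \circ I) \wedge \rd\tau + U^*(\rd\rho \circ J) \wedge \rd s + U^*(\rd\rho \circ K) \wedge \rd t$, whose coefficients in the ordered basis $(\rd\tau\wedge\rd s,\,\rd\tau\wedge\rd t,\,\rd s\wedge\rd t)$ are $\rd\rho(J\partial_\tau U - I\partial_s U)$, $\rd\rho(K\partial_\tau U - I\partial_t U)$, and $\rd\rho(K\partial_s U - J\partial_t U)$; meanwhile $*\rd(\rho\circ U)$ has coefficients $\rd\rho(\partial_t U),\, -\rd\rho(\partial_s U),\, \rd\rho(\partial_\tau U)$ in the same basis. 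Matching coefficients reduces the identity to the three vector equations
\begin{gather*}
    J\partial_\tau U - I\partial_s U = \partial_t U, \\
    K\partial_\tau U - I\partial_t U = -\partial_s U, \\
    K\partial_s U - J\partial_t U = \partial_\tau U,
\end{gather*}
each of which is equivalent to $I\partial_\tau U + J\partial_s U + K\partial_t U = 0$ upon multiplying by $-K$, $-J$, or $-I$ respectively and invoking $IJ = K$, $JK = I$, $KI = J$.

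With the pointwise identity in hand, the conclusion is essentially automatic. The Fueter equation asserts precisely that at every $x \in E$ the differential $\rd U(x) \colon T_x\R^3 \to T_{U(x)} M$ is quaternion-antilinear, so by \autoref{def:ijk-convex} the three-form $\Gamma^*\sigma_\rho|_x = (\rd U(x))^*\sigma_\rho$ is a non-positive multiple of $\vol_E$, with strict negativity whenever $\rd U(x) \neq 0$. Hence $\Delta(\rho \circ U) \geq 0$ with equality precisely at the points where $\rd U$ vanishes. I expect the only real obstacle to be the coordinate bookkeeping in the verification of the central identity, which is routine but sign-sensitive; once it is dispatched, the rest of the argument is purely formal.
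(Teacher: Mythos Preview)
Your proposal is correct and follows essentially the same route as the paper: both establish the identity $\Delta(\rho\circ U)\,\vol_E = -U^*\sigma_\rho$ via $\ast\rd(\rho\circ U) = U^*(\rd\rho\circ\cI)$ and then invoke $IJK$-convexity. The only difference is cosmetic: the paper obtains the key identity in one line by quoting \autoref{Lem_FueterEqForms} (the reformulation $\ast\rd U + \cI\wedge\rd U = 0$), whereas you verify the same thing by a direct coordinate check of the three equivalent forms of the Fueter equation.
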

\begin{proof}
    Let $r = \rho\circ U$ and denote by $\ast$ the Hodge start on $E$. By \autoref{Lem_FueterEqForms}, 
    \begin{align*}
        \ast \rd r &= \ast( \rd \rho\circ \rd U) = \rd\rho \circ (\ast \rd U ) \\
        &= -\rd\rho \circ ( \cI \wedge \rd U) =  U^*(\rd\rho\circ\cI).
    \end{align*}
    The minus sign in the second line comes from anticommuting $\wedge$. Using \autoref{Lem_TamingInequality} and $IJK$-convexity, we arrive at 
    \begin{equation*}
        \Delta r \, \vol_E = \rd(\ast\rd r) =  U^*\rd(\rd\rho\circ\cI) = - U^*\sigma_\rho \geq c |\rd U|^2 \vol_E. \qedhere
    \end{equation*}
\end{proof}

In order to obtain a maximum principle for Fueter maps with Lagrangian boundary conditions, we introduce the notion of an $IJK$-conical end.

\begin{definition}
    \label{Def_QuaternionConical}
   ~
   \begin{enumerate}
       \item A non-compact almost quaternionic manifold $(M,I,J,K)$ has an \emph{$IJK$-conical end} if there is an exhausting function $\rho \colon M \to \R$ which, outside a compact subset, is $IJK$-convex and has no critical points.
       \item If $M$ has an $IJK$-conical end as above, a submanifold $L \subset M$ is \emph{$JK$-conical}  if, outside a compat subset, $\rd\rho$ is non-zero on $TL$ and vanishes on $J(TL)$ and $K(TL)$.
   \end{enumerate}
\end{definition}

\begin{theorem}
    Let $(M,I,J,K)$ be an almost quaternionic manifold with an $IJK$-conical end defined by a function $\rho \colon M \to \R$. Let $L \subset M$ be a $JK$-conical submanifold. For every sufficiently large $c$, the subset $K_c = \{ \rho \leq c \}$ has the following property. If $E \subset \R^3$ is a domain with smooth boundary and $U \colon E \to M$ is a Fueter map such that
    \begin{itemize}
        \item $U^{-1}(M\setminus K_c)$ is compact,
        \item $U(E) \cap K_c \neq \emptyset$,
        \item $U(\partial E) \subset L$,
    \end{itemize}
    then $U(E) \subset K_c$.
\end{theorem}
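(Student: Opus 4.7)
The plan is to adapt the argument of \autoref{Prop_MaximumPrincipleConical} to the quaternionic setting, using \autoref{Prop_FueterMaximumPrinciple} as the interior maximum principle and a Hopf-type argument at $\partial E$ to exploit the $JK$-conical condition on $L$. First I would enlarge $c$ so that on $\{\rho \geq c\}$ the function $\rho$ is $IJK$-convex without critical points, $L$ is $JK$-conical, and so that $c$ is a regular value of $r := \rho\circ U$.

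Set $V^{\circ} := \{z \in E : r(z) > c\}$, an open subset of $E$ whose closure is compact by hypothesis; the goal is to show $V^{\circ}=\emptyset$. Assume otherwise. By \autoref{Prop_FueterMaximumPrinciple}, $r$ is subharmonic on $V^{\circ}$ and continuous on the compact set $\overline{V^{\circ}}$, so its maximum is attained on $\partial V^{\circ} \subset r^{-1}(c) \cup (\partial E \cap \overline{V^{\circ}})$. Since $r \equiv c$ on the first piece while $r > c$ throughout $V^{\circ}$, the maximum must be attained at some $z \in \partial E \cap \overline{V^{\circ}}$ with $r(z) > c$. After a standard reduction using the strong maximum principle, we may assume $r$ is nonconstant near $z$, so the Hopf boundary lemma gives
$$\rd r(n)(z) = \rd\rho_{U(z)}(\rd U(n)) > 0,$$
with $n$ the outward unit normal to $\partial E$ at $z$.

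The contradiction comes from combining the Fueter equation with the boundary data. In the model case $E = [0,1]\times \R^2$ of \autoref{Thm_MaximumPrinciple}, the outward normal is $\pm\partial_\tau$ and the Fueter equation rearranges to
$$\rd U(\partial_\tau) = K(U)\,\rd U(\partial_s) - J(U)\,\rd U(\partial_t).$$
Since $U(\partial E)\subset L$ forces $\rd U(\partial_s), \rd U(\partial_t) \in T_{U(z)}L$, and the $JK$-conical condition forces $\rd\rho$ to vanish on $J(TL)\cup K(TL)$, we conclude $\rd r(n)(z) = 0$, contradicting the Hopf inequality.

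For an arbitrary smooth domain $E\subset \R^3$ one invokes the $SO(3)$-equivariance of the Fueter operator: in a positively oriented orthonormal frame $(n, f_1, f_2)$ of $\R^3$ at $z$, the equation becomes $I_n\rd U(n) + I_1\rd U(f_1) + I_2\rd U(f_2) = 0$, where $(I_n, I_1, I_2)$ is the quaternionic triple on $M$ obtained by applying the same rotation to $(I,J,K)$; consequently $\rd U(n) = I_2\rd U(f_1) - I_1\rd U(f_2)$. The main obstacle in this generality is that $(I_1, I_2)$ typically contains an $I$-component of the original triple, so the bare $JK$-conical condition does not directly force $\rd\rho(\rd U(n)) = 0$; one therefore needs either a rotation-invariant strengthening of conicity or a restriction to boundaries whose normal lies along $\partial_\tau$, the latter being automatic for $E = [0,1]\times\R^2$ and sufficient for the applications in this paper.
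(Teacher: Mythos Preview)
Your argument for the model case $E=[0,1]\times\R^2$ is exactly the paper's: one applies \autoref{Prop_FueterMaximumPrinciple} on $r^{-1}([c,\infty))$, invokes the Hopf lemma at a boundary maximum, and then uses the Fueter relation $\partial_\tau U = K\partial_s U - J\partial_t U$ together with the $JK$-conical condition to kill the normal derivative. The paper's proof is in fact written entirely in this model case; it explicitly takes $n=\pm\partial_\tau$ as the normal to $[0,1]\times\R^2$ and does not address general domains.

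Your observation about arbitrary $E$ is therefore well taken, and it is not a defect of your proposal but rather a mismatch between the theorem's statement and its proof in the paper. After an $\SO(3)$-rotation the tangent complex structures $I_1,I_2$ acquire an $I$-component whenever the boundary normal is not $\pm\partial_\tau$, and the $JK$-conical hypothesis alone says nothing about $\rd\rho$ on $I(TL)$. The paper does not supply an argument covering this, and the application it cares about, \autoref{Thm_MaximumPrinciple} for Fueter strips, only requires the slab $[0,1]\times\R^2$, where your (and the paper's) argument goes through verbatim.
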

\begin{proof}
    This is an obvious generalization of \autoref{Prop_MaximumPrincipleConical} and the proof is similar. Let $c$ be sufficiently large so that $\rho$ is   $L$ satisfies the conditions from \autoref{Def_QuaternionConical} outside $K_c$. Set $r = \rho\circ U$ and  $E_c = r^{-1}([c,\infty))$. By \autoref{Prop_FueterMaximumPrinciple}, $r$ is subharmonic on $E_c$. Suppose that it attains a maximum at a point $x \in \partial E_c$ such that $r(x) > c$. If $r$ is not constant, then the Hopf lemma implies that
    \begin{equation*}
        \rd r(n) > 0,
    \end{equation*}
    where $n = \pm\partial_\tau$ is a unit vector normal to the boundary of $[0,1]\times \R^2$. However,
    \begin{equation*}
        \pm \rd r(n) = \rd\rho( \partial_\tau U(x)) =   \rd\rho( K\partial_s U(x) - J\partial_t U(x)) = 0,
    \end{equation*}
    since $\partial_s U, \partial_t U$ are tangent to $L$ and $L$ is $JK$-conical.
\end{proof}

Applied to Fueter strips, i.e. solutions to the Fueter equation with boundary conditions from \autoref{Sec_BoundaryConditions}, the above theorem implies \autoref{Thm_MaximumPrinciple} stated in the introduction.

\subsection{Conical completions}

Conical manifolds and conical submanifolds can be constructed by completing compact manifolds with convex boundary. We begin with the complex case.  

\begin{definition}
~
\begin{enumerate}
    \item 
    Let $M$ be a compact manifold with boundary. A \emph{boundary-defining function} is a smooth function $\rho \colon M \to \R$ such that $\rho < 0$ in the interior of $M$ and $\partial M = \rho^{-1}(0)$ is a regular level set. If $J$ is an almost complex structure on $M$, we say that $\partial M$ is $J$-convex if there exists a boundary-defining function which is $J$-convex in a neighborhood of $\partial M$.
    \item In the situation above, we say that a submanifold $L \subset M$ has an \emph{$J$-conical boundary} if $\partial L = L \cap \partial M$ is a transverse intersection and $L$ is $J$-conical in a neighborhood of $\partial L$.
\end{enumerate}
\end{definition}
 
\begin{proposition}
  \label{Prop_ComplexCompletion}
  Let $(M,J)$ be a compact almost complex manifold and let $\rho \colon M \to \R$ be a boundary-defining function which is $J$-convex near $\partial M$. The almost complex structure $J$ can be extended to an almost complex structure on
  \begin{equation*}
      \widehat M = M \cup_{\partial M} \partial M\times [0,\infty)
  \end{equation*}
  in such a way that the function
  \begin{gather*}
      \partial M\times [0,\infty) \to \R \\
      \rho(x,r) = e^r-1
  \end{gather*} 
  is an $J$-convex extension of $\rho$. Moreover, any submanifold $L \subset M$ with an $J$-conical boundary can be extended to a submanifold
  \begin{equation*}
      \widehat L = L\cup_{\partial L} \partial L \times [0,\infty) \subset \widehat M
  \end{equation*}
  which is $J$-conical with respect to $\rho$ in $\partial M\times [0,\infty)$.
\end{proposition}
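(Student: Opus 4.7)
The plan is to extract a contact structure on $\partial M$ from the $J$-convexity of $\rho$, define a translation-invariant model almost complex structure $J_{\mathrm{std}}$ on the cylindrical end from this contact data, and glue it smoothly to $J$ on $M$ via a suitable collar.

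First I would verify that $\alpha = -\rd\rho\circ J|_{\partial M}$ is a contact form on $\partial M$, with contact distribution $\xi = T\partial M \cap J(T\partial M) = \ker\alpha$ and with $\rd\alpha|_\xi$ taming $J|_\xi$; both are direct consequences of $J$-convexity via \eqref{Eq_LeviForm}. Let $R$ be the Reeb vector field of $\alpha$. On $\partial M \times [0, \infty)$ I would then set $\rho(x, r) = e^r - 1$ and define the translation-invariant $J_{\mathrm{std}}$ by $J_{\mathrm{std}}|_\xi = J|_\xi$ and $J_{\mathrm{std}}\partial_r = R$ (so that $J_{\mathrm{std}}R = -\partial_r$ by $J_{\mathrm{std}}^2 = -\mathrm{id}$). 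A direct computation gives $\sigma_\rho = e^r(\rd r\wedge\alpha + \rd\alpha)$; evaluating on $V = b\partial_r + aR + w$ with $w\in\xi$ yields
\begin{equation*}
    \sigma_\rho(V, J_{\mathrm{std}} V) = e^r\bigl(a^2 + b^2 + \rd\alpha(w, J|_\xi w)\bigr) > 0
\end{equation*}
for $V\neq 0$, so $\rho$ is $J_{\mathrm{std}}$-convex on the end.

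Next I would construct an adapted collar by integrating a vector field $Y$ on a neighborhood of $\partial M$ in $M$ with $Y(\rho) = 1 + \rho$, $Y = -JR$ along $\partial M$ (which is transverse since $\rd\rho(-JR) = \alpha(R) = 1$), and tangent to $L$ along $\partial L$. The tangency uses the $J$-conical condition on $L$: decomposing any transverse-to-$\partial L$-in-$L$ direction $V$ at $\partial L$ in collar coordinates, $J$-conicality forces $V \in \xi\oplus\R\partial_r$, and a small isotopy of $L$ within the class of $J$-conical submanifolds straightens $V$ to lie along $-JR$ itself. Integrating $Y$ yields a collar $\Phi\colon \partial M\times(-\delta,0]\to M$ with $\rho\circ\Phi = e^r-1$, $L = \Phi(\partial L\times(-\delta,0])$ in product form, and a short computation using $Y=-JR$, $J(-JR)=R$, and $J|_\xi$-invariance gives $\Phi^*J(x,0) = J_{\mathrm{std}}(x,0)$.

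To glue smoothly, use a cutoff $\chi(r)$ vanishing for $r\leq -\delta/2$ and equal to $1$ for $r\geq -\delta/4$, and interpolate between $\Phi^*J$ and $J_{\mathrm{std}}$ along a smooth path in the (contractible) space of almost complex structures; openness of $J$-convexity in the $C^0$ topology, combined with taking $\delta$ small, keeps $\rho$ $J$-convex throughout. The resulting $\widehat J$ on $\widehat M$ extends $J$, the function $\rho = e^r - 1$ is $\widehat J$-convex on the cylindrical end, and $\widehat L = L\cup_{\partial L}\partial L\times[0,\infty)$ is smooth by the product form of $L$ in the collar. Its $J$-conicality on the cylindrical end follows directly: any $W \in T\partial L$ lies in $T\partial M$ and satisfies $JW \in T\partial M$ (from $J$-conicality of $L$), hence $W\in\xi$, which is exactly the condition ensuring $J_{\mathrm{std}}(T\widehat L)\subset T\partial M$. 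The hardest step is the simultaneous interpolation: deforming $\Phi^*J$ to $J_{\mathrm{std}}$ while preserving $J$-convexity of $\rho$ \emph{and} the product structure of $L$ in the collar. Both are open conditions and the space of almost complex structures is contractible, but combining the two constraints through a single homotopy requires some care.
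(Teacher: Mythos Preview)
Your strategy—contact form, Reeb field, translation-invariant $J_{\mathrm{std}}$, collar, interpolation—is sound in outline, and the Levi-form computation for $J_{\mathrm{std}}$ on the cylinder is correct. But the step where you reconcile $Y=-JR$ on $\partial M$ with $Y$ tangent to $L$ has a real gap.

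First, the claim that $J$-conicality forces the transverse direction $V$ of $L$ at $\partial L$ to lie in $\xi\oplus\R\partial_r$ is off: what $J$-conicality gives is $J(TL)\subset T\partial M$, hence $TL\subset\xi\oplus\R(JR)$, and $JR$ is in general not a multiple of $\partial_r$. Second, and more seriously, once you see that $-JR$ need not lie in $TL$ along $\partial L$, isotoping $L$ to fix this changes the submanifold: the resulting $\widehat{L'}$ extends the isotoped $L'$, not the original $L$. So you cannot simultaneously arrange $\Phi_*\partial_r=-JR$ and $L$ in product form by your method. (A smaller point: $J$-convexity depends on first derivatives of $J$ through $d(d\rho\circ J)$, so it is open in $C^1$, not $C^0$; this is fixable, but only after the collar/$L$ issue is resolved.)

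The paper dissolves this tension by not insisting on any particular $J_{\mathrm{std}}$. It chooses a $J$-Hermitian metric $g$ with $|\nabla\rho|=1$ and $\nabla\rho$ tangent to $L$ near $\partial M$; this is possible precisely because $J$-conicality supplies $v\in TL$ with $d\rho(v)=1$ and $Jv\notin TL$, so $g$ can be built locally with $\nabla\rho=v$ and $Jv\perp TL$. The flow of $\nabla\rho$ then yields a collar in which $L$ is already a product, and one simply extends the given $J$ $r$-invariantly to $\partial M\times[0,\infty)$. Since $g$ is $J$-Hermitian, $J\partial_r$ automatically lies in $T\partial M$, so $d\rho\circ J=e^r\lambda$ for $\lambda\in\Omega^1(\partial M)$, the Levi form scales as $e^r$ times its boundary value, and both $J$-convexity and $J$-conicality of $\widehat L$ propagate for free. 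No interpolation or isotopy is needed.
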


\begin{proof}
  Let $\rho \colon M \to \R$ be an $J$-convex boundary-defining function. Let $g$ be a Riemannian metric compatible with $J$ and such that $|\nabla \rho|=1$ in a neighborhood of $\partial M$ and $\nabla\rho$ is tangent to $L$. The existence of such a metric follows from the following observation.  Around every point of $L$ there is a vector field $v$ tangent to $L$ and such that $\rd \rho(v)=1$. Since $\rd\rho$ vanishes on $J(TL)$, $Jv$ is nowhere tangent to $L$. Therefore, we can locally construct $g$ compatible with $J$ and such that $|v|=1$, $\nabla\rho = v$ and $J\nabla\rho = Jv$ is orthogonal to $TL$. Using partitions of unity, we patch together these local constructions to define $g$ in a neighborhood of $L$ and extend to all of $M$. Given such $g$, the flow of $\nabla \rho$ gives us a diffeomorphism of a neighborhood of of $\partial M$ with
  \begin{equation*}
      \partial M \times (-\epsilon,0]
  \end{equation*}
  such that $\rho$ agrees with $(x,r) \mapsto e^r-1$,  $\nabla\rho = e^{-r}\partial_r$, and $L$ agrees with $\partial L \cap (-\epsilon,0]$.    Extend $J$ to $\partial M\times [0,\infty)$ so that it is $r$-invariant.  Define $\lambda \in \Omega^1(\partial M)$ to be the restriction of $\rd\rho\circ J$ to $\partial M$. By construction, $\rd r \circ J = \lambda$, so that on $\partial M \times[0,\infty)$ we have
  \begin{equation*}
      \rd\rho\circ J = e^r\rd r\circ J = e^r\lambda = e^r (\rd\rho\circ J ) |_{r=0} .
  \end{equation*}
  and the Levi form of $\rho$ is
  \begin{equation*}
    \rd(\rd\rho\circ J) = \rd(e^r\lambda) = e^r( \rd r \wedge \lambda + \rd \lambda) = e^r (\rd(\rd\rho\circ J))|_{r=0}.
  \end{equation*} 
  At $r=0$, $L$ satisfies the cylindrical condition and the Levi form of $\rho$ tames $J$. It follows from the above formulae that the same is true for any $r>0$.
\end{proof}

Next, we discuss a quaternionic analog of the above construction.

\begin{definition}
~
\begin{enumerate}
    \item 
    Let $(M,I,J,K)$ be a compact almost quaternionic manifold with boundary. We say that $\partial M$ is $IJK$-convex if there exists a boundary-defining function which is $IJK$-convex in a neighborhood of $\partial M$.
    \item In the above situation,  we say that a submanifold $L \subset M$ has an \emph{$JK$-conical boundary} if $\partial L = L \cap \partial M$ is a transverse intersection and $L$ is $JK$-conical in a neighborhood of $\partial M$.
\end{enumerate}
\end{definition}
 
\begin{proposition}
  Let $(M,I,J,K)$ be a compact almost complex manifold and let $\rho \colon M \to \R$ be a boundary-defining function which is $IJK$-convex near $\partial M$. The almost quaternionic structure $(I,J,K)$ can be extended to 
  \begin{equation*}
      \widehat M = M \cup_{\partial M} \partial M\times [0,\infty)
  \end{equation*}
  in such a way that the function
  \begin{gather*}
      \partial M\times [0,\infty) \to \R \\
      \rho(x,r) = e^r-1
  \end{gather*} 
  is an $IJK$-convex extension of $\rho$. Moreover, any submanifold $L \subset M$ with a $JK$-conical boundary can be extended to a submanifold
  \begin{equation*}
      \widehat L = L\cup_{\partial L} \partial L \times [0,\infty) \subset \widehat M
  \end{equation*}
  which is $JK$-conical with respect to $\rho$ in $\partial M\times [0,\infty)$ and, if $L$ is $I$-complex, so is $\widehat L$. 
\end{proposition}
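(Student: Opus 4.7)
The proof follows the same outline as the complex case in \autoref{Prop_ComplexCompletion}, but with the additional requirement that the auxiliary Riemannian metric be compatible with the full quaternionic triple. First I would construct a Riemannian metric $g$ on a neighborhood of $\partial M$ for which $I, J, K$ are all orthogonal (i.e.\ a hyperhermitian metric), with $|\nabla_g \rho|_g = 1$ and $\nabla_g \rho$ tangent to $L$. To build such a metric locally at a point $x \in \partial L$, pick a vector $v \in T_x L$ with $\rd\rho(v) = 1$. The $JK$-conical assumption gives $\rd\rho(J w) = \rd\rho(K w) = 0$ for all $w \in T_x L$; if $Jv$ were tangent to $L$, then $\rd\rho(v) = -\rd\rho(J(Jv)) = 0$, contradicting $\rd\rho(v) = 1$, and similarly $Kv \notin T_xL$. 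The quadruple $\{v, Iv, Jv, Kv\}$ is linearly independent as it spans a quaternionic line, and so fits into a local orthonormal $\mathrm{Sp}(n)$-frame in which $v$ (and, when $L$ is $I$-complex, also $Iv$) spans the prescribed directions in $T_xL$ while $Jv, Kv$ are normal to $L$. These local hyperhermitian models patch via partition of unity, since the fiberwise space of hyperhermitian metrics is an affine convex subspace of the metric bundle.

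Next, I would use the flow of $\nabla_g \rho$ to identify a collar neighborhood of $\partial M$ with $\partial M \times (-\epsilon, 0]$ such that $\rho(x, r) = e^r - 1$, $\nabla_g \rho = e^{-r} \partial_r$, and $L$ corresponds to $\partial L \times (-\epsilon, 0]$. Pull back $(I, J, K)$ to this collar and extend them to $\partial M \times [0, \infty)$ to be $r$-invariant, giving an almost quaternionic structure $(\widehat I, \widehat J, \widehat K)$ on $\widehat M$. The $IJK$-convexity of $\widehat \rho = e^r - 1$ on the cylinder then follows from the computation of \autoref{Prop_ComplexCompletion} applied individually to each of $I, J, K$: writing $\lambda_a = (\rd\rho \circ I_a)|_{r=0}$ for $I_a \in \{I, J, K\}$, one obtains $\rd\rho \circ I_a = e^r \lambda_a$ and hence $\sigma_{\widehat\rho}^{I_a} = e^r \sigma_\rho^{I_a}|_{r=0}$. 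Summing, the full $IJK$-Levi form satisfies $\sigma_{\widehat\rho}|_{(x,r)} = e^r \, \sigma_\rho|_{(x,0)}$, and since the $IJK$-taming condition of \autoref{Def_TamingTriple} is preserved under multiplication by positive functions, $\widehat \rho$ is $IJK$-convex throughout the cylinder.

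For the extended submanifold $\widehat L = L \cup_{\partial L} \partial L \times [0, \infty)$, note that $T\widehat L|_{(x,r)} = T_x \partial L \oplus \R \partial_r$ and every relevant piece of data is $r$-invariant, so the $JK$-conical property for $\widehat L$ reduces to a check at $r = 0$. There $\partial_r = \nabla_g \rho$ and the condition $\rd\widehat\rho(\widehat J(T\widehat L)) = \rd\widehat\rho(\widehat K(T\widehat L)) = 0$ holds by the $JK$-conical hypothesis on $L$ together with $\nabla_g \rho \in TL$. When $L$ is $I$-complex, $I(T_xL) = T_xL$ means that $I$ preserves $T_x \partial L \oplus \R \nabla_g\rho$ at $r = 0$; by $r$-invariance $\widehat I$ then preserves $T\widehat L|_{(x, r)} = T_x\partial L \oplus \R \partial_r$ for all $r$, so $\widehat L$ is $\widehat I$-complex.

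The main obstacle is the construction of the hyperhermitian metric with the required gradient properties: the triple $(I, J, K)$ imposes strictly more constraints than a single complex structure, and compatibility with the tangency $\nabla_g \rho \in TL$ relies on the transversality argument above, which in turn uses both the $JK$-conical condition and (in the $I$-complex case) the $I$-invariance of $T_xL$. Once the metric is in place, the remaining steps parallel \autoref{Prop_ComplexCompletion} essentially verbatim.
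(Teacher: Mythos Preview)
Your proposal is correct and follows essentially the same approach as the paper: construct a hyperhermitian metric with $|\nabla_g\rho|=1$ and $\nabla_g\rho$ tangent to $L$, use its gradient flow to build the collar, extend $(I,J,K)$ $r$-invariantly, and verify that the Levi forms scale by $e^r$ so that $IJK$-convexity and the $JK$-conical condition persist. The paper's own proof is considerably terser---it simply lists the metric conditions (adding that $J\nabla\rho,K\nabla\rho\perp L$, and $I\nabla\rho\in TL$ when $L$ is $I$-complex, all of which, as you effectively observe, follow automatically once $g$ is hyperhermitian and $\nabla_g\rho\in TL$) and then defers entirely to the complex case; you have filled in precisely the verification steps the paper leaves implicit.
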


\begin{proof}
    As in the proof of \autoref{Prop_ComplexCompletion}, we can choose a Riemannian metric on $g$ which is compatible with $I$, $J$, $K$, and such that in a neighborhood of $\partial M$,
    \begin{itemize}
        \item $|\nabla\rho|=1$,
        \item $\nabla\rho$ is tangent to $L$,
        \item $J\nabla\rho$ and $K\nabla\rho$ are orthogonal to $L$.
    \end{itemize}
    If $L$ is $I$-complex, we can also guarantee that $I\nabla\rho$ is tangent to $L$. Given such $g$, we extend $(I,J,K)$ and $L$ to the conical end in a $\rho$-invariant way as in the proof of \autoref{Prop_ComplexCompletion}. 
\end{proof}

\section{Cotangent bundles of almost complex manifolds}
\label{Sec_CotangentBundles}

The cotangent bundle $M=T^*X$ of an almost complex manifold $X$ is in a natural way an almost quaternionic manifold. The goal of this section to describe Fueter maps to $M$ in terms of maps to $X$ and  to prove a quaternionic analog of Floer's theorem discussed in the introduction. 

\subsection{Almost complex and quaternionic structures}

Let $(X,g, I)$ be an almost Hermitian manifold, that is a manifold equipped with a Riemannian metric and a compatible almost complex structure. Denote by $\omega(\cdot,\cdot) = g(I \cdot, \cdot)$ the induced two-form. Let $\nabla$ be the connection on $TX$ which preserves $I$ and $g$ and whose torsion $N \in \Omega^2(X,TX)$ consists of the Nijenhuis tensor of $I$ and $\rd\omega$. For example, if $\rd\omega=0$ and $\nabla^0$ is the Levi--Civita connection, then
\begin{equation*}
    \nabla = \nabla^0 - \frac12 I \nabla^0 I,
\end{equation*}
see \cite[Appendix C.7]{McDuff2012}. In general, $\nabla$ agrees with the Levi--Civita connection if and only if $\rd\omega=0$ and $I$ is integrable, so that $(X,g,I)$ is a Kähler manifold. Using $\nabla$, we decompose the tangent space to $M = T^*X$ into the horizontal and vertical subspaces
\begin{equation}
  \label{Eq_HorizontalVertical}
    TM = H\oplus V \cong \pi^* TX \oplus \pi^* T^*X \cong \pi^*TX \oplus \pi^*TX 
\end{equation}
where $T^*X$ with $TX$ are identified using the metric. With respect to this decomposition, the canonical quaternionic triple $(I,J,K)$ on $M$ is defined by
\begin{equation*}
    I = \begin{pmatrix} I & 0 \\ 0 & -I \end{pmatrix},
    \quad
    J = \begin{pmatrix} 0 & -\mathrm{id} \\ \mathrm{id} & 0 \end{pmatrix}, 
    \quad 
    K = \begin{pmatrix} 0 & -I \\ -I & 0 \end{pmatrix}.
\end{equation*}
(Since the first almost complex structure is the canonical almost complex structure on $T^*X$ induced by one on $X$, when it is not likely to cause confusion we will denote both of them by $I$.)

\subsection{Convexity}
\label{sec:cotangent-convexity}

A basic fact about the almost complex geometry of cotangent bundles is that they are pseudo-convex. More precisely, the fiberwise norm function
\begin{gather*}
    r \colon T^*X \to \R \\
    r(\xi) = \frac12 |\xi|^2
\end{gather*}
is $J$-convex in a neighborhood of the zero section with respect to the almost complex structure $J$ introduced above. 

\begin{proposition}
    \label{Prop_CotangentConvexity}
    If $\varphi \colon X \to \R$ is an $I$-convex function, then there are positive functions $\epsilon$ and $\delta$ on $X$ such that the function
    \begin{equation*}
        \rho = r + \epsilon\varphi
    \end{equation*}
    is $IJK$-convex in the neighborhood of the zero section given by
    \begin{equation*}
        T_\delta^*X = \{ \xi \in T^*X \ | \ |\xi| \leq \delta(\pi(\xi)) \},
    \end{equation*}
    and the zero section is $JK$-conical with respect to $\rho$. 
    
    If $\varphi$ is uniformly $I$-convex and the first and second derivatives of $\varphi$, the Riemannian metric, and $I$ are uniformly bounded, then the functions $\epsilon$ and $\delta$ can be taken to be constant and $\rho$ is uniformly $IJK$-convex.
\end{proposition}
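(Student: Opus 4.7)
\emph{Proof proposal.} My plan is to work in the horizontal--vertical splitting $TM = H \oplus V$ induced by $\nabla$, under which both summands are canonically identified with $\pi^*TX$ and the quaternionic triple has the block form displayed before the proposition. I first verify the pointwise taming inequality of \autoref{def:ijk-convex} at each point of the zero section, and then extend it to a tubular neighborhood by a perturbation argument. Writing a tangent vector at $(x,\xi)$ as $X = (u, v) \in H \oplus V$, the identity $dr(X) = \langle \xi, v\rangle$ shows that $dr\circ J = \lambda$ is the Liouville form, while $dr\circ I$ and $dr\circ K$ vanish on the zero section. A direct Cartan-formula calculation at $\xi = 0$, using $\nabla I = 0$, yields
\begin{align*}
    \sigma_r^J(X,Y) &= \langle v_Y, u_X\rangle - \langle v_X, u_Y\rangle, \\
    \sigma_r^I(X,Y) &= 2\langle v_X, I v_Y\rangle, \\
    \sigma_r^K(X,Y) &= \langle v_X, I u_Y\rangle - \langle v_Y, I u_X\rangle.
\end{align*}

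Parameterizing quaternion-antilinear $A \colon \R^3 \to T_{(x,0)}M$ by the free data $(u_1, v_1)$ and $(u_3, v_3)$, with forced values $u_2 = I(v_1 + u_3)$ and $v_2 = I(u_1 - v_3)$, substitution into
\begin{equation*}
    A^*\sigma_r(e_1, e_2, e_3) = \sigma_r^I(X_2, X_3) - \sigma_r^J(X_1, X_3) + \sigma_r^K(X_1, X_2)
\end{equation*}
causes the cross terms in $u$ and $v$ to telescope; using $2\langle u_1, v_3\rangle - |u_1|^2 - |v_3|^2 = -|u_1 - v_3|^2 = -|v_2|^2$ one obtains the clean identity $A^*\sigma_r|_{\xi = 0} = -|A_V|^2\,\vol_{\R^3}$, where $A_V$ is the vertical component of $A$. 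Hence $\sigma_r$ is strictly $IJK$-taming on quaternion-antilinear maps with nonzero vertical part, but degenerate on the purely horizontal ones.

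For the contribution of $\varphi \circ \pi$, an analogous computation gives $\sigma_{\varphi\circ\pi}^I(X,Y) = \sigma_\varphi^I(u_X, u_Y)$, depending only on horizontal components, while $\sigma_{\varphi\circ\pi}^J$ and $\sigma_{\varphi\circ\pi}^K$ involve $\Hess_\varphi$ on mixed pairs. The purely horizontal quaternion-antilinear maps are exactly those with $X_1 = 0$, $X_3 = (u_3, 0)$, $X_2 = (Iu_3, 0)$, and on these only the $I$-term survives, giving
\begin{equation*}
    A^*\sigma_{\varphi\circ\pi} = \sigma_\varphi^I(Iu_3, u_3) = -\bigl(\Hess_\varphi(u_3, u_3) + \Hess_\varphi(Iu_3, Iu_3)\bigr) \leq -\kappa |u_3|^2,
\end{equation*}
where $\kappa > 0$ is the $I$-convexity constant of $\varphi$. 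Thus on the unit sphere of quaternion-antilinear maps, $A \mapsto A^*\sigma_{\varphi\circ\pi}$ is strictly negative precisely on the locus where $A^*\sigma_r$ vanishes. A standard compactness split then produces $\mu_0, \epsilon_0 > 0$ such that $A^*\sigma_\rho \leq -\mu_0 |A|^2 \vol_{\R^3}$ for every $\epsilon \in (0, \epsilon_0]$ and every quaternion-antilinear $A$: in a neighborhood $U$ of the purely horizontal locus, $A^*\sigma_{\varphi\circ\pi}$ does the work while $A^*\sigma_r \leq 0$, and off $U$ one has $A^*\sigma_r \leq -\mu_1 < 0$ dominating the bounded perturbation $\epsilon A^*\sigma_{\varphi\circ\pi}$.

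To extend to the tubular neighborhood $T^*_\delta X$, I observe that each Levi form depends smoothly on $\xi$, with $\xi$-derivatives controlled by pointwise bounds on the curvature of $\nabla$, the Nijenhuis tensor of $I_X$, the three-form $d\omega$, and the second derivatives of $\varphi$. For $\delta(x)$ small enough relative to the strict margin $\mu_0(x)$, the taming inequality persists throughout $T^*_\delta X$; in the uniform setting $\epsilon$ and $\delta$ may be taken as constants. The $JK$-conical condition on the zero section is immediate: $T_\xi L$ is horizontal, so $J(T_\xi L), K(T_\xi L) \subset V$, and since $dr$ vanishes at $\xi = 0$ and $d(\varphi\circ\pi)$ annihilates vertical vectors, $d\rho$ vanishes on $J(T_\xi L)$ and $K(T_\xi L)$. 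The main obstacle is the interpolation step: although the purely vertical and purely horizontal regimes give clean negativity (from $\sigma_r$ and $\sigma_\varphi^I$ respectively), the mixed horizontal--vertical pieces in $\sigma_{\varphi\circ\pi}^J$ and $\sigma_{\varphi\circ\pi}^K$ are indefinite and must be absorbed via the compactness estimate rather than by a direct sign calculation; the non-integrability of $I_X$ and the non-flatness of $\nabla$ produce further correction terms, but these vanish on the zero section and enter only the choice of $\delta$.
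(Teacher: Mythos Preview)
Your argument is correct and reaches the same conclusion as the paper, but the combination step is organized differently. Both proofs work in the horizontal--vertical splitting and identify the same structural fact: on quaternion-antilinear maps, $A^*\sigma_r$ controls the vertical component (indeed $A^*\sigma_r|_{\xi=0}=-|A_V|^2$), while the $\varphi$-term supplies negativity on the purely horizontal locus via the constraint that such maps have $A_1=0$ and $A_2=IA_3$. Where you use a compactness split on the unit sphere at $\xi=0$ (near the horizontal locus $\sigma_{\varphi\circ\pi}$ is strictly negative and $\sigma_r\le 0$; away from it $\sigma_r\le-\mu_1$ dominates the bounded $\varphi$-contribution), followed by a perturbation to $|\xi|\le\delta$, the paper instead derives an explicit pointwise inequality valid for all $|\xi|\le\delta$ at once. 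It uses the taming-form estimate $\sigma(v,w)\le -c(|v|^2+|w|^2)+C|v-Iw|(|v|+|w|)$ together with the Fueter relation $|a_1|\le C|b|$ to get $A^*\sigma_\varphi\le -c|a|^2+C(|a||b|+|b|^2)$ and $A^*\sigma_r\le -|b|^2+C|\xi|(|a|^2+|b|^2)$, then combines these algebraically. The paper's route yields explicit dependence of the convexity margin on $\epsilon$, $\delta$, and the geometric bounds; your route is softer but slightly quicker, and your explicit verification of the $JK$-conical condition on the zero section is a point the paper's proof leaves implicit. One cosmetic remark: your phrasing ``$\mu_0,\epsilon_0>0$ such that $A^*\sigma_\rho\le -\mu_0|A|^2$ for every $\epsilon\in(0,\epsilon_0]$'' should allow $\mu_0$ to depend on $\epsilon$, since on the neighborhood $U$ your bound is $-\epsilon\mu_2$.
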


The proof is preceded by a general fact about taming two-forms. 

\begin{lemma}
    \label{Lem_TamingFormEstimate}
    Let $V$ be a vector space with a complex structure $I$ and a Hermitian metric.  
    If $\sigma \in \Lambda^2 V^*$ tames $I$, then there are constants $C>0$ and $c>0$ such that  
    \begin{equation*}
      \sigma(v,w) \leq -c(|v|^2+|w|^2) + C|v-I w|(|v|+|w|)\quad\text{for all } v,w \in V.
    \end{equation*}
\end{lemma}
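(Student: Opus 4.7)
The plan is to derive the inequality from the taming condition by decomposing $w$ as a sum of $-Iv$ plus an "error" controlled by $|v - Iw|$.

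First, I would note that the taming hypothesis $\sigma(v,Iv) > 0$ for $v \neq 0$, combined with homogeneity of both sides and compactness of the unit sphere in $V$, yields a constant $c_0 > 0$ such that $\sigma(v, Iv) \geq 2c_0 |v|^2$ for every $v \in V$. I would also record the trivial bound $|\sigma(x,y)| \leq \|\sigma\|\,|x|\,|y|$, where $\|\sigma\|$ is the operator norm of $\sigma$.

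Next, I would write $w = -Iv + u$ with $u := w + Iv$, so that using antisymmetry of $\sigma$ and bilinearity one has
\begin{equation*}
    \sigma(v,w) = -\sigma(v,Iv) + \sigma(v,u) \leq -2c_0 |v|^2 + \|\sigma\|\,|v|\,|u|.
\end{equation*}
Since $I^2 = -\mathrm{id}$, a direct computation gives $|u| = |w + Iv| = |v - Iw|$, so the error term is of the desired form.

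The one remaining point is the asymmetric appearance of $|v|^2$ instead of $|v|^2 + |w|^2$ on the right-hand side. Here I would invoke $|w|^2 \leq 2|v|^2 + 2|u|^2$, which gives $-|v|^2 \leq -\tfrac12 |w|^2 + |u|^2$, and split the coefficient $-2c_0|v|^2 = -c_0|v|^2 - c_0 |v|^2$ to produce a $-\tfrac{c_0}{2}|w|^2$ contribution at the cost of an additive $c_0 |u|^2$. Finally, I would absorb this stray quadratic error into the $|v - Iw|(|v|+|w|)$ bound using the elementary inequality $|u|^2 = |u|\cdot |u| \leq |u|(|v|+|w|)$, which follows from $|u| = |w + Iv| \leq |v|+|w|$. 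Setting $c = c_0/2$ and $C = c_0 + \|\sigma\|$ concludes the argument.

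There is no real obstacle here; this is a straightforward linear-algebra bookkeeping estimate. The only subtle point is recognizing that one must convert the natural bound $-c|v|^2$ into the symmetric $-c(|v|^2+|w|^2)$ form, and that the $|u|^2$ leftover is harmless because it is dominated by the linear-in-$|u|$ error term.
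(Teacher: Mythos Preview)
Your proof is correct. The approach differs from the paper's in one small but pleasant way: the paper uses \emph{both} substitutions at once. Writing $u = v - Iw$ (so $Iu = Iv + w$), plugging the first identity into $\sigma(\cdot,w)$ gives $\sigma(v,w) = -\sigma(w,Iw) + \sigma(u,w)$, while plugging the second into $\sigma(v,\cdot)$ gives $\sigma(v,w) = -\sigma(v,Iv) + \sigma(v,Iu)$. Averaging these two expressions produces the symmetric term $-\tfrac{c}{2}(|v|^2+|w|^2)$ directly, with an error already linear in $|u|$. Your route uses only the first substitution and then recovers the $|w|^2$ contribution via the triangle inequality $|w|^2 \le 2|v|^2 + 2|u|^2$, which costs you an extra $|u|^2$ that you then re-absorb using $|u| \le |v|+|w|$. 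Both arguments are equally elementary; the paper's symmetric trick avoids the detour through $|u|^2$ and gives slightly tighter constants, while yours shows that a single substitution already suffices.
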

\begin{proof}
    Set $u = v-I w$, or equivalently $I u = I v+w$. Applying $\sigma(\cdot, w)$ to the first equation and $\sigma(v,\cdot)$ to the second yields
    \begin{align*}
        \sigma(v,w) &= -\sigma(w,Iw) + \sigma(u,w), \text{and} \\
        \sigma(v,w) &= -\sigma(v,I v) + \sigma(Iu,v).
    \end{align*}
    The taming condition gives us $\sigma(v,Iv) \geq c|v|^2$ and similarly for $w$, so that
    \begin{equation*}
        \sigma(v,w) \leq - c(|v|^2+|w|^2) + C|u|(|v|+|w|). \qedhere
    \end{equation*}
\end{proof}

\begin{proof}[Proof of \autoref{Prop_CotangentConvexity}]
    To simplify the notation, we will consider the case when $\varphi$ is uniformly $I$-convex and the first and second derivatives of $\varphi$, $g$, and $I$ are uniformly bounded.
    The proof in the general case is the same except that the constants in the estimates that follow have to be replaced by functions on $X$.
    
    We want to show that the pull-back of the $IJK$-Levi form of $\rho$ by any quaternion-antilinear map from $\R^3$ to the tangent bundle to $T^*X$ is negative. The first step is to estimate the Levi form of $r$ with respect to $I,J,K$:
    \begin{align*}
          \sigma^I_r &= -\rd(\rd r\circ I), \\
          \sigma^J_r &= -\rd(\rd r\circ J), \\
          \sigma^K_r &= -\rd(\rd r\circ K).
    \end{align*}
    Let $\omega_I,\omega_J,\omega_K$ be the two-forms on $T^*X$ induced by the Riemannian metric and quaternionic triple: 
    \begin{itemize}
        \item $\omega_J$ is the canonical symplectic form on $T^*X$,
        \item $\omega_I = \omega \oplus -\omega$ with respect to decomposition \eqref{Eq_HorizontalVertical},
        \item $\omega_K(\cdot,\cdot) = - \omega_J(I\cdot,\cdot)$.
    \end{itemize} 
    Note that while $\omega_J$ is closed, this is not necessarily true for $\omega_I$ and $\omega_K$. By \eqref{Eq_HorizontalVertical}, the space of two-forms on $T^*X$ decomposes into
    \begin{equation*}
        \Lambda^2 T^*(T^*X) = \Lambda^2 H \oplus (\Lambda^1 H\otimes\Lambda^1 V) \oplus \Lambda^2 V,
    \end{equation*}
    which we will refer to as subspaces of forms of type $(2,0)$, $(1,1)$, and $(0,2)$ correspondingly. If $(q_i)$ are local coordinates on $X$ with the Riemannian metric given by $g_{ij}$,  and $(q_i,p_i)$ are the induced coordinates on $T^*X$, then
    \begin{equation*}
        \rd r = \sum_{ij} g_{ij}p_i\rd q_j + \sum_{ijk}\frac{\partial g_{ij}}{\partial q_k} p_ip_j \rd q_k. 
    \end{equation*}
    A straightforward computation in coordinates, using this formula and the definitions of $I,J,K$, shows that
    \begin{align*}
           \sigma^I_r &=  2\omega_I^{0,2} + \alpha_I\\
          \sigma^J_r &= \omega_J + \alpha_J, \\
          \sigma^K_r &= \omega_K + \alpha_K, 
    \end{align*}
    where $\alpha_I,\alpha_J,\alpha_K$ satisfy
    \begin{equation*}
        |\alpha_\bullet(\xi)| \leq C |\xi| \quad\text{for } \bullet = I,J,K, \text{ and }\xi \in T^*X,
    \end{equation*}
    where the constant depends on the derivatives of the metric and $I$. 
    
    Let $A = (A_1,A_2,A_3)$ be a triple of vectors tangent defining a quaternion-antilinear map $\R^3 \to T^*_\xi X$, that is:
    \begin{equation*}
        I A_1 + J A_2 + K A_3 = 0.
    \end{equation*}
    We want to estimate
    \begin{equation*}
        A^*\sigma_r = \sigma^I_r(A_2,A_3) + \sigma^J_r(A_3,A_1) + \sigma^K_r(A_1,A_2),
    \end{equation*}
    where, as usual, we identify three-forms on $\R^3$ with scalars using the volume form.
   Decomposing $A_i = a_i + b_i$ into the horizontal and vertical part yields
    \begin{gather}
        a_1 - I(b_2  + Ib_3) = 0, \nonumber \\
        b_1 - I(a_2-Ia_3) = 0. \label{Eq_FueterHorizontalVertical}
    \end{gather}
    Using these equations, we compute in the same way as in the proof of \autoref{Lem_TamingFormEstimate}:
    \begin{align*}
        2 \omega^{0,2}_I(A_2,A_3) +
        \omega_J(A_3,A_3) +
        \omega_K(A_1,A_2) = - |b|^2.
    \end{align*}
    Therefore
    \begin{equation*}
        A^*\sigma_r \leq -|b|^2 + C|\xi|(|a|^2+|b|^2)
    \end{equation*}
    
    Next, we estimate the $IJK$-Levi form of $\varphi \colon T^*X \to \R$. For convenience we use the same notation for $\varphi\colon X\to \R$ and its composition with the projection $T^*X\to X$. Let $\sigma^I_\varphi, \sigma^J_\varphi, \sigma^K_\varphi$ be the Levi forms of $\varphi \colon T^*X \to \R$ with respect to $I$, $J$, and $K$. By \autoref{Lem_TamingFormEstimate}, 
    \begin{equation*}
        \sigma_\varphi^I(A_2,A_3) \leq \sigma_\varphi^{I}(a_2,a_3) + C(|a|b|+|b|^2) \leq - c(|a_2|^2 + |a_3|^2) + C(|a||b|+|b|^2).
    \end{equation*}
    We also have
    \begin{gather*}
        \sigma_\varphi^J(A_3,A_1) \leq C(|a||b|+|b|^2),  \\
        \sigma_\varphi^K(A_1,A_2) \leq C(|a||b| + |b|^2),
    \end{gather*}
    since both terms contain $A_1 = a_1 + b_1$ and $|a_1| \leq |b|$ by \eqref{Eq_FueterHorizontalVertical}. We conclude that the term
    \begin{equation*}
        A^*\sigma_\varphi = \sigma^I_\varphi(A_2,A_3) + \sigma^J_\varphi(A_3,A_1) + \sigma^K_\varphi(A_1,A_2)
    \end{equation*}
    is bounded above by
    \begin{equation*}
        A^*\sigma_\varphi \leq - c|a|^2 + C(|a|b|+|b|^2). 
    \end{equation*}
    Finally, we have
    \begin{align*}
        A^*\sigma_\rho = A^*\sigma_r + \epsilon A^*\sigma_\varphi.
    \end{align*}
    Putting the estimates for $r$ and $\varphi$ together and assuming that $|\xi|\leq \delta$, we get
    \begin{equation*}
        A^*\sigma_\rho \leq -|b|^2 - c\epsilon|a|^2 + C\epsilon(|a|b|+|b|^2) + C\delta (|a|^2+|b|^2).
    \end{equation*}
    Therefore, for $\epsilon$ and $\delta$ sufficiently small,
    \begin{equation*}
        A^*\sigma \leq -c\epsilon |A|^2
    \end{equation*}
    for every quaternion-antilinear map $A \colon \R^3 \to T(T^*X)$.
\end{proof}

\subsection{Cauchy--Riemann operators}

Let $\partial_I$, $\delbar_I$ be the Cauchy--Riemann operators corresponding to the almost complex structure $I$ on $X$ and acting on the spaces of maps $v \colon \C \to X$:
\begin{gather*}
    \partial_I = \partial_s - I \partial_t, \\
    \delbar_I = \partial_s + I \partial_t. 
\end{gather*}
These operators take values in $(1,0)$ and $(0,1)$ forms on $\C$ with values in $TX$ with complex structure $I$. To simplify the notation we trivialize the spaces of forms using $\rd z = \rd s + i \rd t$ and $\rd \bar z = \rd s - i \rd t$, which allows us to identify
\begin{equation*}
    \Lambda^{1,0}\otimes TX = T_{1,0}X, \quad \Lambda^{0,1}\otimes TX = T_{0,1}X,
\end{equation*}
both of which are isomorphic as complex vector bundles to $TX$, the first with complex structure $I$ and the second with $-I$. 

These operators extend to the Cauchy--Riemann operators the spaces of maps $U \colon \C \to M$, defined by the induced almost complex structure $I$ on $M$. Every such map $U$ corresponds to a pair $(v,\xi)$ where $v \colon \C \to X$ and $\xi \in \Gamma(\C, v^*TX)$. We use here the anti-linear isomorphism $TX \cong T^*X$ given by the metric. We have 
\begin{equation}
    \label{Eq_DelOperator}
    \partial_I U = (\partial_I v, (\nabla\xi)^{0,1}).
\end{equation}
In the upcoming discussion, we will consider a perturbed version of the Cauchy--Riemann operator \eqref{Eq_DelOperator}. Denote by 
\begin{equation*}
    L_v \colon \Gamma(v^*TX) \to \Omega^{0,1}(v^*TX) \cong \Gamma(v^*TX)
\end{equation*}
the linearization of \eqref{Eq_DelOperator} at $v$. Let $L_v^*$ be its formal $L^2$ adjoint. Consider a perturbation of $\partial_I$ which, with respect to the splitting $U = (v,\xi)$, is given by
\begin{equation}
    D_I U = (\partial_I v, -L_v^*\xi). 
\end{equation}

\begin{lemma}
    \label{Lem_NijenhuisPerturbation}
    There is a section $\fn \in \Gamma(M, \End TM)$ such that
    \begin{equation*}
        D_I U = \partial_I U + \fn(\delbar_I U)
    \end{equation*}
    and 
    \begin{equation*}
        |\fn(\xi)| \leq |N(\pi(\xi))|\xi| \quad\text{for } \xi \in M= T^*X,
    \end{equation*}
    where $N$ is the torsion of $\nabla$, proportional to the Nijenhuis tensor of $I$ and $\rd\omega$. In particular, $D_I = \partial_I$ if and only if $I$ is integrable and $\rd\omega=0$.
\end{lemma}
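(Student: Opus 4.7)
Both sides of the claimed identity agree in their horizontal component $\partial_I v$, so the task reduces to identifying the vertical components. The plan is to compute $L_v$ as a Cauchy--Riemann operator plus a zeroth-order torsion correction, take its formal $L^2$-adjoint, and then package the resulting discrepancy with $(\nabla \xi)^{0,1}$ as a section of $\End TM$ evaluated on $\delbar_I U$.

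The first step is to derive the formula for $L_v$. Because $\nabla$ preserves $I$ and $g$ with torsion $N$ (the Nijenhuis tensor plus the $d\omega$ contribution), a standard linearization computation in almost Hermitian geometry, analogous to the one carried out for $\delbar_J$ in McDuff--Salamon, yields
\begin{equation*}
L_v \eta = \nabla_s \eta - I \nabla_t \eta + T_v(\eta),
\end{equation*}
where $T_v$ is a bundle endomorphism bilinear in $N$ and $\delbar_I v$; the torsion term arises from commuting covariant derivatives against the Nijenhuis--$d\omega$ defect of $\nabla$. Taking the formal $L^2$-adjoint using that $\nabla$ is metric and $\nabla I = 0$ with $I$ skew-symmetric (so that $(\nabla_s - I\nabla_t)^* = -\nabla_s - I\nabla_t$), one obtains
\begin{equation*}
-L_v^*\xi \;=\; \nabla_s \xi + I\nabla_t\xi - T_v^*(\xi) \;=\; (\nabla\xi)^{0,1} - T_v^*(\xi),
\end{equation*}
under the standard trivialization of $\Omega^{0,1}(v^*TX)$. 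Hence $D_I U - \partial_I U = (0,\, -T_v^*(\xi))$, a purely vertical correction which is trilinear in $N(\pi(\xi))$, the vertical coordinate $\xi$, and $\delbar_I v$.

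Since $\delbar_I v$ is the horizontal component of $\delbar_I U$ in the splitting \eqref{Eq_HorizontalVertical}, I can define $\fn \in \Gamma(M, \End TM)$ pointwise as follows: at $\xi_0 \in T_x^*X \subset M$, the endomorphism $\fn(\xi_0)$ annihilates vertical vectors and sends a horizontal vector $h$ to the vertical vector obtained by plugging $(N(x), \xi_0, h)$ into the universal trilinear form underlying $-T^*$. By construction $\fn(\delbar_I U)$ reproduces the correction above, and the estimate $|\fn(\xi_0)| \leq |N(x)|\,|\xi_0|$ is immediate from the trilinearity. In the Kähler case, $I$ is integrable and $d\omega = 0$, so the characterization of $\nabla$ forces $N \equiv 0$, hence $\fn \equiv 0$ and $D_I = \partial_I$; conversely, $\fn \equiv 0$ forces the universal trilinear expression to vanish, which implies $N \equiv 0$.

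The main obstacle is the explicit derivation of $T_v(\eta)$ in Step 1 and the verification that it has precisely the bilinear form in $N$ and $\delbar_I v$ needed for the adjoint correction to package as a genuine section of $\End TM$ with the stated pointwise bound. Once this formula is in hand, the integration by parts for the adjoint and the bookkeeping against the horizontal--vertical splitting are routine.
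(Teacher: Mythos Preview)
Your approach is essentially the same as the paper's: compute $L_v$ as a Cauchy--Riemann operator plus a zeroth-order torsion correction, take the formal $L^2$-adjoint, and read off the vertical discrepancy as $\fn(\delbar_I U)$. The paper simply fills in what you call ``the main obstacle'' by citing the explicit formula $L_v\xi = (\nabla\xi)^{1,0} - N(\xi,\delbar_I v)$ from \cite[Lemma~C.7.3]{McDuff2012}, then writes the adjoint as $L_v^*\xi = -(\nabla\xi)^{0,1} - N^\dagger(\xi,\delbar_I v)$ with $N^\dagger$ defined by $\langle N(v,w),z\rangle = \langle v, N^\dagger(w,z)\rangle$, and packages the result as the block matrix $\fn = \begin{pmatrix} 0 & 0 \\ N^\dagger(\xi,\cdot) & 0 \end{pmatrix}$ with respect to the horizontal--vertical splitting.
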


\begin{proof}
    The linearization of $\partial_I$ at $v$ is
    \begin{gather*} 
        L_v \xi = (\nabla \xi)^{1,0} - N(\xi,\delbar_I v), 
    \end{gather*}
    where $N$ is the torsion of $\nabla$, proportional to the Nijenhuis tensor of $I$ and $\rd\omega$, see \cite[Lemma C.7.3]{McDuff2012}. With the identifications above, $L_v$ is an operator acting on the space of sections of $v^*TX$. The formal $L^2$ adjoint of $L_v$ is
    \begin{equation*}
        L_v^* \xi = -(\nabla\xi)^{0,1} - N^\dagger(\xi,\delbar_I v),
    \end{equation*}
    where $N^\dagger$ is the bilinear, $I$-antilinear operator defined by
    \begin{equation*}
        \langle N(v,w), z \rangle = \langle v,  N^\dagger(w,z)\rangle. 
    \end{equation*} 
    Therefore,
    \begin{equation*}
        D_I U = \partial_I U + \fn(\delbar_I U),
    \end{equation*}
    where, at $\xi \in T^*X \cong TX$ and with respect to the splitting of $TM$ into horizontal and vertical vectors \eqref{Eq_HorizontalVertical},
    \begin{equation*}
        \fn =
        \begin{pmatrix}
            0 & 0 \\
            N^\dagger(\xi,\cdot) & 0 
        \end{pmatrix}
    \end{equation*}
\end{proof}

\subsection{Complexified Floer correspondence}

Let $(X,g,I)$ be a compact almost Hermitian manifold with boundary and $M=T^*X$, and let $F \colon X \to \C$ be an $I$-holomorphic function. Endow $M$ with the quaternionic triple $(I,J,K)$ described previously. We have two $I$-complex Lagrangians $L_0 = X$ and $L_1 = \mathrm{graph}(\rd f)$ where $f=\Re(F)$. We are interested in the solutions of the Fueter equation with boundary on $L_0$ and $L_1$. Instead, consider the Fueter equation for maps $U \colon [0,1]\times\R^2 \to M$ perturbed by $F$:
\begin{equation}
    \label{Eq_FueterGradientPert}
    I(U)\partial_\tau U + J(U)\partial_s U + K(U)(\partial_t U +  \nabla f(U)) = 0,
\end{equation}
where $f = \Re(F)$ and $(\tau,s,t)$ are coordinates on $[0,1]\times\R^2$. As is the case for non-homogeneous perturbations of the pseudo-holomorphic map equation, solutions to  \eqref{Eq_FueterGradientPert} can be identified with solutions to a time-dependent Fueter equation.

\begin{lemma}
    Denote by $\varphi_\tau \colon T^*X \to T^*X$ the Hamiltonian flow of $f \colon T^*M \to \R$ with respect to the canonical symplectic form on the cotangent bundle. Define
    \begin{equation*}
        I_\tau = (\varphi_\tau)_* I(\varphi_\tau)_*^{-1}, \quad  J_\tau = (\varphi_\tau)_* J(\varphi_\tau)_*^{-1}, \quad  K_\tau = (\varphi_\tau)_* I(\varphi_\tau)_*^{-1}.
    \end{equation*}
    and
    \begin{equation*}
        \tilde U(\tau,s,t) = \varphi_\tau(U(\tau,s,t)).
    \end{equation*} 
    Then $\tilde U$ satisfies the homogeneous $\tau$-dependent Fueter equation
    \begin{equation*}
        I_\tau(\tilde U)\partial_\tau \tilde U + J_\tau(\tilde U)\partial_s U + K_\tau(\tilde U)\partial_t \tilde U = 0
    \end{equation*}
    if and only if $U$ satisfies the non-homogeneous equation  \eqref{Eq_FueterGradientPert}.
    Moreover, $\tilde U$ has boundary values on $L_0$ and $L_1$ if and only if $U$ has boundary values on $L_0$.
\end{lemma}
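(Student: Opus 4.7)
The plan is to prove the lemma by direct change of variables, using the defining property that $\varphi_\tau$ is the flow of the Hamiltonian vector field $X_f$ of $\pi^* f$ on $M = T^*X$ with respect to $\omega_J$. The equivalence is essentially formal; the work lies in carefully applying the chain rule and tracking the identification between $X_f$ and $\nabla f$ coming from the Hamiltonian convention \eqref{eq:holomorphic-hamiltonian-flow}.

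First I would apply the chain rule to $\tilde U(\tau, s, t) = \varphi_\tau(U(\tau, s, t))$. Since $\varphi_\tau$ depends only on $\tau$ among the three variables,
\begin{align*}
\partial_s \tilde U &= \rd\varphi_\tau(\partial_s U), \\
\partial_t \tilde U &= \rd\varphi_\tau(\partial_t U), \\
\partial_\tau \tilde U &= \rd\varphi_\tau(\partial_\tau U) + X_f(\tilde U).
\end{align*}
Substituting these into the $\tau$-dependent homogeneous Fueter equation and invoking the definitions $I_\tau = (\varphi_\tau)_* I (\varphi_\tau)_*^{-1}$, $J_\tau = (\varphi_\tau)_* J (\varphi_\tau)_*^{-1}$, $K_\tau = (\varphi_\tau)_* K (\varphi_\tau)_*^{-1}$ lets me factor $\rd\varphi_\tau$ out of the first three terms. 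Because $X_f$ is invariant under its own flow, $(\rd\varphi_\tau)^{-1} X_f(\tilde U) = X_f(U)$, so after applying $(\rd\varphi_\tau)^{-1}$ the equation becomes
\[
I(U)\partial_\tau U + J(U)\partial_s U + K(U)\partial_t U + I(U) X_f(U) = 0.
\]

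To match the perturbed equation \eqref{Eq_FueterGradientPert}, it remains to identify $I(U) X_f(U)$ with $K(U) \nabla f(U)$. With the convention $\iota_{X_f} \omega_J = -\rd f$ (as in \eqref{eq:holomorphic-hamiltonian-flow}) and the compatibility $\omega_J(v, w) = g(Jv, w)$, one finds $X_f = J \nabla f$ on $M$, where $\nabla f$ denotes the metric gradient of $\pi^* f$ (purely horizontal by pull-back). The quaternionic relation $IJ = K$ then yields $I X_f = K \nabla f$, reproducing \eqref{Eq_FueterGradientPert}. Reversing the chain of equivalences gives the converse direction. For the boundary conditions, $\varphi_0 = \id$ gives $\tilde U(0, \cdot, \cdot) = U(0, \cdot, \cdot)$, so the condition at $\tau = 0$ is unchanged, while the standard calculation that the time-one Hamiltonian flow of $\pi^* f$ sends the zero section of $T^*X$ to the graph of $\rd f$ (i.e.\ $\varphi_1(L_0) = L_1$) translates the condition $U(1, s, t) \in L_0$ into $\tilde U(1, s, t) \in L_1$. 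The entire argument is bookkeeping; the only potentially delicate step is aligning the Hamiltonian sign convention with the quaternionic structure, but nothing analytically subtle occurs.
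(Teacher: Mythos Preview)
Your proposal is correct and follows essentially the same route as the paper: both apply the chain rule to $\tilde U = \varphi_\tau \circ U$, use the conjugation relations $I_\tau(\varphi_\tau)_* = (\varphi_\tau)_* I$ etc., and the identity $\frac{\partial\varphi_\tau}{\partial\tau} = (\varphi_\tau)_* J\nabla f$ (equivalently your flow-invariance of $X_f$) to reduce the homogeneous equation for $\tilde U$ to the inhomogeneous equation \eqref{Eq_FueterGradientPert} for $U$. Your write-up is in fact slightly more complete than the paper's, since you spell out the identification $I X_f = IJ\nabla f = K\nabla f$ and the boundary-condition argument via $\varphi_1(L_0)=L_1$, which the paper leaves implicit.
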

\begin{proof}
    We have
    \begin{gather*}
        \partial_\tau \tilde U = (\varphi_\tau)_* \partial_\tau U + \frac{\partial\varphi_\tau}{\partial\tau}\circ U, \\
        \partial_s \tilde U = (\varphi_\tau)_* \partial_s U,\\
        \partial_t \tilde U = (\varphi_\tau)_* \partial_t U.
    \end{gather*}
    Since $\varphi_\tau$ is the Hamiltonian flow of $f$, 
    \begin{equation*}
        \frac{\partial\varphi_\tau}{\partial \tau} = (\varphi_\tau)_* J \nabla f.
    \end{equation*}
    Multiplying each of the three equations by the corresponding almost complex structure and adding them together we obtain the result.
\end{proof}

We can rewrite equation \eqref{Eq_FueterGradientPert} using $I$ and $J$ only:
\begin{equation}
  \label{Eq_FueterUnperturbed}
  I(U)\partial_I U - J(U)\partial_\tau U + \nabla f(U) = 0.
\end{equation}
Moreover, we can consider the following modification of the equation, in which $\del_I$ is replaced by the operator $D_I$ introduced earlier: 
\begin{equation}
  \label{Eq_FueterPerturbed}
  I(U) D_I U - J(U)\partial_\tau U + \nabla f(U) = 0.
\end{equation}
Equations \eqref{Eq_FueterUnperturbed} and \eqref{Eq_FueterPerturbed} agree if and only if the almost complex structure $I$ on $X$ is integrable and $\rd\omega=0$, that is if $(X,g,I)$ is a Kähler manifold.

\begin{theorem}
  \label{Thm_FloerCorrespondencePerturbed}
  Let $X$ be a compact almost Hermitian manifold with boundary. If $F$ is $C^2$ small, then every solution $U \colon [0,1]\times\R^2 \to T^*X$ to equation \eqref{Eq_FueterPerturbed} with boundary on the zero section and asymptotic boundary conditions described in \autoref{Sec_BoundaryConditions}  satisfies $\partial_\tau U =0$ and corresponds to a complex gradient trajectory of $F$ in $X$, that is a map $v \colon \R^2 \to X$ satisfying
  \begin{equation*}
      \del_I v + i \nabla \bar F(v) = 0. 
   \end{equation*}
\end{theorem}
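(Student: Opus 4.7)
The plan is to decompose $U$ according to the horizontal-vertical splitting $TM = H \oplus V \cong \pi^*TX \oplus \pi^*TX$, extract from \eqref{Eq_FueterPerturbed} a coupled system for the base map $v \colon [0,1]\times\R^2 \to X$ and the fiber component $\xi \in \Gamma(v^*TX)$, and then show that for $F$ sufficiently $C^2$-small the only solutions have $\xi \equiv 0$ and $\partial_\tau v \equiv 0$. Writing $U = (v,\xi)$, applying the matrix forms of $(I,J,K)$ in this splitting, and using the identity $D_I U = (\partial_I v, -L_v^*\xi)$, a direct computation reduces \eqref{Eq_FueterPerturbed} to the coupled system
\begin{equation*}
\partial_I v = I(\nabla_\tau \xi + \nabla f) \qquad \text{and} \qquad L_v^* \xi = -I\,\partial_\tau v.
\end{equation*}
The condition $U(\tau,\cdot,\cdot)\in L_0$ for $\tau\in\{0,1\}$ becomes the Dirichlet condition $\xi|_{\tau=0}=\xi|_{\tau=1}=0$, and the asymptotic boundary conditions of \autoref{Sec_BoundaryConditions} impose exponential convergence to analogous lower-dimensional solutions at $s,t\to\pm\infty$.

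First I would establish a preliminary $C^0$-bound. By \autoref{Prop_CotangentConvexity}, applied with $\varphi$ a fixed $I$-convex exhausting function on $X$, there is a uniformly $IJK$-convex function $\rho = r + \epsilon\varphi$ on a tubular neighborhood $T^*_\delta X$ of the zero section. Converting $U$ to $\tilde U$ solving the unperturbed $\tau$-dependent Fueter equation via the Hamiltonian-flow change of frame from the lemma preceding the theorem, I apply \autoref{Prop_FueterMaximumPrinciple} to $\rho\circ\tilde U$. On the boundary $\tau\in\{0,1\}$ we have $\rho\circ\tilde U = \epsilon\varphi$, while as $s,t\to\pm\infty$ the limits are controlled by the asymptotic $J$-antiholomorphic strips $u_\pm$ and intersection points $p_\pm$, which themselves are close to the real gradient trajectories of $f$ and to the critical points of $f$ respectively by Floer's classical result. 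This yields the pointwise bound $|\xi|\le C\|F\|_{C^2}$ and confines the image of $U$ to the region where $\rho$ is convex and the decomposition is well-behaved.

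To upgrade smallness to exact vanishing, I would derive a Weitzenböck-type identity for $\xi$. Differentiating the vertical equation along $\partial_I$, differentiating the horizontal equation along $\partial_\tau$, commuting mixed derivatives modulo curvature of $v^*TX$, and eliminating $\partial_\tau v$ via the vertical equation, yields an identity schematically of the form
\begin{equation*}
\nabla_\tau^2 \xi + L_v L_v^*\,\xi = \mathcal R(v)\,\xi + \mathcal H(F)\,\xi,
\end{equation*}
where $\mathcal R(v)$ bundles the curvature of $v^*TX$ with the torsion $N$ of $\nabla$ and $\mathcal H(F)$ is a first-order operator whose coefficients are pointwise of order $\|F\|_{C^2}$. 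Pairing with $\xi$ and integrating over $[0,1]\times\R^2$, the Dirichlet condition annihilates the $\tau$-boundary contributions and the asymptotic exponential convergence controls the terms as $s,t\to\pm\infty$. Combining the resulting estimate with the one-dimensional Poincar\'e inequality $\int|\xi|^2 \le \pi^{-2}\int|\nabla_\tau \xi|^2$---valid because $\xi$ vanishes at $\tau\in\{0,1\}$---and with the previously established $C^0$-bound on $\partial v$, I expect to obtain an inequality of the form
\begin{equation*}
\int_{[0,1]\times\R^2}\!\bigl(|\nabla_\tau \xi|^2 + |L_v^*\xi|^2\bigr) \le C\,\|F\|_{C^2}\int_{[0,1]\times\R^2}|\xi|^2,
\end{equation*}
which forces $\xi\equiv 0$ as soon as $\|F\|_{C^2}$ lies below an explicit threshold. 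Once $\xi\equiv 0$, the vertical equation yields $\partial_\tau v\equiv 0$, so $U$ is $\tau$-independent and lies on the zero section; substituting into the horizontal equation gives $\partial_I v = I\nabla f$, which is the complex gradient equation for $F$.

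The main obstacle is making the Weitzenb\"ock identity close without uncontrolled torsion contributions in the purely almost Hermitian setting. This is precisely the reason for replacing $\partial_I$ by the modified operator $D_I$ in \eqref{Eq_FueterPerturbed}: by \autoref{Lem_NijenhuisPerturbation} this substitution arranges for the linearization $L_v$ of the horizontal Cauchy--Riemann operator and its formal $L^2$ adjoint $L_v^*$ to appear symmetrically in the horizontal and vertical components of the coupled system, so that the composition $L_v L_v^*$ is automatically self-adjoint and the integration by parts closes cleanly even when $\rd\omega\ne 0$ or $I$ is non-integrable. In the K\"ahler case the Nijenhuis correction vanishes, $L_v^* = -\overline\partial_I$, and the Weitzenb\"ock reduces to the classical Bochner formula on the pulled-back bundle, recovering \autoref{Thm_FloerCorrespondence} as stated in the introduction.
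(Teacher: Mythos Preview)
Your overall strategy---decompose $U=(v,\xi)$, derive a Weitzenb\"ock identity for $\xi$, integrate by parts against $\xi$ using the Dirichlet condition at $\tau\in\{0,1\}$, and absorb the $F$-dependent term via Poincar\'e---is exactly the paper's. But the execution of the Weitzenb\"ock step differs in a way that matters. The paper applies $\nabla_\tau$ only to the \emph{horizontal} equation and uses the tautological identity $\nabla_\tau(\partial_I v)=L_v(\partial_\tau v)$, which holds \emph{exactly} because $L_v$ is by definition the linearization of $v\mapsto\partial_I v$. Substituting the vertical equation then yields
\[
\nabla_\tau^*\nabla_\tau\,\xi + L_vL_v^*\xi - I\,\nabla^2 f(v)\,L_v^*\xi = 0,
\]
with \emph{no} curvature or torsion remainder. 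Pairing with $\xi$ and using $\|\xi\|\le C\|\nabla_\tau\xi\|$ on the single cross term gives $(1-C\|f\|_{C^2})\int(|\nabla_\tau\xi|^2+|L_v^*\xi|^2)\le 0$.

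Your route---differentiating both equations in complementary directions and commuting covariant derivatives---would genuinely introduce a term $\mathcal R(v)\xi$ built from the curvature of $v^*TX$. That term depends on the geometry of $X$ and is \emph{not} of order $\|F\|_{C^2}$, so the inequality $\int(|\nabla_\tau\xi|^2+|L_v^*\xi|^2)\le C\|F\|_{C^2}\int|\xi|^2$ you write down would not follow; at best you would get $\xi\equiv 0$ only under a smallness hypothesis on the curvature of $X$, which is not what the theorem asserts. The preliminary $C^0$-bound via the maximum principle is also unnecessary for this theorem (it is not used in the paper's proof), and it cannot repair the curvature issue since $\mathcal R(v)$ is still $O(1)$ even on a small neighborhood of the zero section. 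In short: the missing observation is that the linearization identity $\nabla_\tau(\partial_I v)=L_v(\partial_\tau v)$ already absorbs all the curvature, so the Weitzenb\"ock closes cleanly with only the $\nabla^2 f$ term on the right.
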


\begin{remark}
\label{remark:exclude-fueter-bubbling}
    It is straightforward that in the situation described by the theorem the cylindrical Fueter energy of $U$ and the Floer energy of $v$ agree, which gives us an a priori bound on the Fueter energy of $U$. 
    (Note that since the almost complex structures $J$ and $K$ on $T^*X$ are not integrable, we are not in the tamed situation of \autoref{Prop_EnergyBound}.)
    
    Furthermore, assuming that $(X,\omega,I)$ is such that $\rd\omega=0$ and we have a $C^0$ bound on the complex gradient trajectories of $F$, standard analysis of the Floer equation gives us compactness for the moduli space of solutions complex gradient trajectories and, therefore, for the moduli space of Fueter strips \cite{Haydys2015}, \cite{Wang2022}.
    While general conditions for the existence of a-priori $C^0$ estimates for complex gradient trajectories remain to be understood, one setting where such estimates are available can be found in \cite[Definition~2.1]{Wang2022}, following \cite{fan2018fukaya}.

    In particular, in such a situation we can exclude the possibility of the bubbling and singularity formation phenomena as described in \cite{Walpuski2017c} for a sequence of Fueter strips. These phenomena consist of the formation of interior pseudo-holomorphic sphere bubbles, and of essential singularities of the Fueter maps. This is interesting because the compactness problem for Fueter maps with values in a general hyperkähler is at the moment not very well understood. 
    Since $T^*X$ is not even hyperk\"ahler, and the the metric associated to an $IJK$-convex function on $T^*X$ satisfies no natural curvature inequalities, it seems difficult to use the general methods of \cite{Hohloch2009, Walpuski2017c} to exclude Fueter bubbling.
    Here we exclude it a posteriori using the correspondence with complex gradient trajectories.
    
    It is an interesting problem in geometric analysis to investigate whether such bubbling is excluded for sequences of Fueter maps mapping into all \emph{quaternionic Weinstein domains}, i.e. almost quaternionic manifolds with an exhausting $IJK$-convex function. Towards this question, we note that the existence of an $IJK$-convex function on a quaternionic Weinstein domain $M$ excludes the existence of pseudo-holomorphic spheres in $M$ for all complex structures in the twistor sphere of $M$ due to the existence of an exact symplectic form taming each $J_\tau$. Thus, only essential singularities of such Fueter maps remain to be excluded. 
    
    
\end{remark}

\begin{proof}
Using identification $T^*X = TX$, write $U = (v,\xi)$ as a pair consisting of a map $v \colon [0,1]\times\R^2 \to X$ and $\xi \in \Gamma(v^*TX)$. Decomposing \eqref{Eq_FueterPerturbed} into horizontal and vertical components, we obtain
\begin{equation}
    \label{Eq_FloerPlanesCotangent}
    \begin{split}
        \nabla_\tau \xi + I \partial_I v + \nabla f(v) &= 0, \\
         \partial_\tau v - I L_v^*\xi  &= 0.
    \end{split}
\end{equation}
In particular, solutions with $\xi=0$ satisfy $\partial_\tau v=0$, and in that case $v \colon \R^2 \to X$ is a complex gradient trajectory of $F$.

Since $\nabla$ preserves the metric, we have  $\nabla_\tau^* = -\nabla_\tau$. Apply this operator to the first equation. By definition, the linearization operator $L_v$ satisfies 
\begin{equation*}
    \nabla_\tau \partial_I v = L_v\partial_\tau v.
\end{equation*}
Using this, the fact that $\nabla$ preserves $I$, and the second equation, we arrive at a Weitzenb\"ock type formula
\begin{equation}
  \label{Eq_Weitzenbock}
  \nabla_\tau^*\nabla_\tau \xi + L_v L_v^*\xi - I \nabla^2 f(v)L_v^*\xi =0.
\end{equation}
Since $\xi = 0$ for $\tau=0,1$ and $\xi$ decays as $t\to\pm\infty$ and $s\to\pm\infty$, we can take the $L^2$ inner product with $\xi$ and integrate by parts to get
\begin{equation}
    \label{Eq_WeitzenbockIntegral}
    \int_{[0,1]\times\R^2} |\nabla_\tau\xi|^2 + |L_v^*\xi|^2 -   \langle I\nabla^2 f(v)L_v^*\xi, \xi \rangle = 0. 
\end{equation}
Since $\nabla^2 f$ is bounded and $\xi = 0$ for $\tau=0,1$, the last term satisfies
\begin{equation*}
    \int_0^1 |I \langle\nabla^2 f(v)L_v^*\xi, \xi \rangle| \rd \tau \leq C \int_0^1 |L_v^*\xi||\xi| \rd \tau \leq C \int_0^1 |L_v^*\xi||\nabla_\tau \xi| \rd \tau
\end{equation*}
Therefore, 
\begin{equation*}
     0 \geq \int_{[0,1]\times\R^2} (1-C\| f \|_{C^2})(|\nabla_\tau\xi|^2 + |L_v^*\xi|^2)
\end{equation*}
which for  $\| f \|_{C^2}$ sufficiently small implies $\nabla_\tau\xi = 0$ and so $\xi=0$.
\end{proof}

\subsection{Remarks on the modified equation}

If $(X,g,I)$ is not a Kähler manifold, then, unlike \eqref{Eq_FueterUnperturbed}, the modified equation \eqref{Eq_FueterPerturbed} considered in \autoref{Thm_FloerCorrespondencePerturbed} cannot be interpreted, at least not in an obvious way, as a genuine Fueter equation for a quaternionic triple on $M=T^*X$. However, since the two equations differ by a term proportional to the distance from the zero section, as in \autoref{Lem_NijenhuisPerturbation}, it is reasonable to expect that there is a bijection between solutions to \eqref{Eq_FueterPerturbed} and those solutions to \eqref{Eq_FueterUnperturbed} which stay sufficiently close to the zero section.  To be more precise, in the situation of \autoref{Thm_FloerCorrespondencePerturbed} we ask if there is a neighborhood of the zero section such that if $F$ is $C^2$ small, then all Fueter strips $U \colon [0,1]\times\R^2 \to X$ solving \eqref{Eq_FueterUnperturbed} and contained in that neighborhood satisfy $\partial_\tau U =0$. 

The proof of \autoref{Thm_FloerCorrespondencePerturbed} can be adapted to obtain the following estimate on the $L^2$ norm of $\partial_\tau U$.

\begin{lemma}
    \label{Lem_L2Estimate}
    In the situation of \autoref{Thm_FloerCorrespondencePerturbed}, there are constants $C>0$ and $\delta>0$ such that if $f$ is $C^2$ small, then every Fueter strip $U \colon [0,1]\times\R^2 \to X$ solving \eqref{Eq_FueterUnperturbed} with $\| \xi \|_{L^\infty} \leq \delta$ satisfies
   \begin{equation*}
       \|\partial_\tau U\|_{L^2} \leq C \delta^{1/2}.
   \end{equation*}
\end{lemma}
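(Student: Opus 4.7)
My plan is to adapt the Weitzenböck argument used in the proof of \autoref{Thm_FloerCorrespondencePerturbed} to the unperturbed equation \eqref{Eq_FueterUnperturbed}, carefully tracking the extra term coming from the discrepancy between \eqref{Eq_FueterUnperturbed} and \eqref{Eq_FueterPerturbed}. By \autoref{Lem_NijenhuisPerturbation}, this discrepancy equals $I(U)\fn(\delbar_I U)$; under the splitting \eqref{Eq_HorizontalVertical} it is purely vertical with pointwise norm controlled by $C|\xi||\delbar_I v|$. Writing $U = (v,\xi)$, the horizontal and vertical components of \eqref{Eq_FueterUnperturbed} will therefore read
\begin{equation*}
\nabla_\tau \xi + I\partial_I v + \nabla f(v) = 0, \qquad \partial_\tau v - I L_v^*\xi = R,
\end{equation*}
where $R = I N^\dagger(\xi,\delbar_I v)$ and $|R| \leq C\delta|\delbar_I v|$ under the hypothesis $\|\xi\|_{L^\infty}\leq \delta$.

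Next, I would apply $\nabla_\tau^* = -\nabla_\tau$ to the horizontal equation and substitute $\partial_\tau v = IL_v^*\xi + R$ using the identity $\nabla_\tau \partial_I v = L_v \partial_\tau v$. Taking the $L^2$ pairing with $\xi$ and integrating by parts, which is valid because $\xi|_{\tau = 0,1} = 0$ and $\xi$ decays at infinity, I would obtain a modification of \eqref{Eq_WeitzenbockIntegral} of the form
\begin{equation*}
\int_{[0,1]\times\R^2}|\nabla_\tau \xi|^2 + |L_v^*\xi|^2 - \langle I\nabla^2 f(v)L_v^*\xi,\xi\rangle = \int_{[0,1]\times\R^2} \langle L_v^*\xi,\, IR\rangle + \langle \nabla^2 f(v)\,\xi,\, IR\rangle.
\end{equation*}
The $f$-term on the left is absorbed into the left-hand side as in the proof of \autoref{Thm_FloerCorrespondencePerturbed}, using Poincaré in $\tau$ together with the smallness of $\|f\|_{C^2}$.

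To estimate the right-hand side I would combine $|R|\leq C|\xi||\delbar_I v|$ with the trick $|\xi|^2 \leq \delta |\xi|$ and apply Young's inequality, obtaining
\begin{equation*}
\Bigl|\int \langle L_v^*\xi + \nabla^2 f(v)\,\xi,\, IR\rangle\Bigr| \leq \tfrac12\|L_v^*\xi\|_{L^2}^2 + C\delta\int |\xi|\,|\delbar_I v|^2 + C\|f\|_{C^2}\|\xi\|_{L^2}^2.
\end{equation*}
The weighted integral $\int |\xi|\,|\delbar_I v|^2$ is to be bounded by a constant independent of $\delta$, using the exponential decay of $\xi$ at the cylindrical ends together with the uniform bound on $|\delbar_I v|$ coming from $\|\xi\|_{L^\infty}\leq\delta$ and standard elliptic regularity. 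Rearranging would yield $\|\nabla_\tau \xi\|_{L^2}^2 + \|L_v^*\xi\|_{L^2}^2 \leq C\delta$, and combining with $|\partial_\tau v|^2 \leq 2|L_v^*\xi|^2 + 2|R|^2$ and $\|R\|_{L^2}^2 \leq C\delta\int|\xi||\delbar_I v|^2 \leq C\delta$ gives $\|\partial_\tau U\|_{L^2}^2 \leq C\delta$, which is the claimed estimate.

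The hard part will be the uniform bound on the weighted integral $\int |\xi|\,|\delbar_I v|^2$. Since $v$ converges as $s \to \pm\infty$ to gradient trajectories that are generically not $I$-antiholomorphic, the unweighted integral $\int |\delbar_I v|^2$ is infinite, and so the exponential decay of $\xi$ must compensate. Establishing this requires a quantitative description of the asymptotics of $\xi$ near the cylindrical ends beyond what is used in \autoref{Thm_FloerCorrespondencePerturbed}, presumably via an exponential decay estimate extracted from the linearized Fueter equation. A secondary difficulty, as in the original proof, is ensuring that the substitution $\nabla_\tau \partial_I v = L_v \partial_\tau v$ together with the commutator of $\nabla_\tau$ and $L_v$ produces only curvature and Nijenhuis terms that can be absorbed into the $\|f\|_{C^2}$- or $\delta$-small error on the right-hand side.
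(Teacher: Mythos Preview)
Your overall architecture matches the paper: decompose \eqref{Eq_FueterUnperturbed} into horizontal and vertical parts, apply $\nabla_\tau^*$ to the horizontal equation, substitute the vertical equation, and integrate by parts to obtain \eqref{Eq_WeitzenbockIntegral} with an extra error coming from the torsion term $N^\dagger(\xi,\delbar_I v)$. Where you diverge from the paper is in the treatment of this error, and that is exactly the step you flag as ``the hard part.''

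You propose to control $\int |\xi|\,|\delbar_I v|^2$ by invoking exponential decay of $\xi$ at the cylindrical ends to compensate for the divergence of $\int |\delbar_I v|^2$. This is speculative: no such decay estimate is available at this point in the paper, and even if it were, the resulting bound would depend on the decay rate of the particular solution rather than on the universal data in the statement. The paper avoids this entirely by a feedback trick you have not used. One first estimates
\[
|N^\dagger(\xi,\delbar_I v)| \leq C|\xi|\,|\rd v| \leq C|\xi|\,|\rd v - \nabla f\,\rd t| + C\|f\|_{C^1}|\xi|,
\]
and then observes that the \emph{horizontal} equation $\nabla_\tau\xi + I\partial_I v + \nabla f = 0$ together with the standard Floer energy identity gives
\[
\int_{\R^2} |\partial_s v|^2 + |\partial_t v - \nabla f|^2 \leq \int_{\R^2} |\nabla_\tau\xi|^2 + C,
\]
where $C$ is a topological constant (the symplectic action of the limiting trajectories). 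Thus $\|\rd v - \nabla f\,\rd t\|_{L^2}$ is bounded by $\|\nabla_\tau\xi\|_{L^2}$ plus a constant, and together with Poincar\'e $\|\xi\|_{L^2}\leq C\|\nabla_\tau\xi\|_{L^2}$ one gets
\[
\bigl|\langle N^\dagger(\xi,\delbar_I v),\,L_v^*\xi\rangle_{L^2}\bigr| \leq C(\delta + \|f\|_{C^2})\|\partial_\tau U\|_{L^2}^2 + C\delta,
\]
which can be absorbed on the left for $\delta$ and $\|f\|_{C^2}$ small. No exponential decay of $\xi$ is needed; the horizontal equation itself provides the missing a priori control of $\rd v$.
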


\begin{proof}
    We proceed in the same way as in \eqref{Eq_FloerPlanesCotangent} in the proof of \autoref{Thm_FloerCorrespondencePerturbed}, by decomposing the unperturbed equation  \eqref{Eq_FueterUnperturbed} into horizontal and vertical components:
    \begin{equation}
        \label{Eq_FloerPlanesCotangentUnperturbed}
        \begin{split}
            \nabla_\tau \xi + I \partial_I v + \nabla f(v) &= 0, \\
             \partial_\tau v - I (L_v^*\xi + N^\dagger(\xi,\delbar_I v)  &= 0.
        \end{split}
    \end{equation}
    We apply $\nabla_\tau^*$ to the first equation and use the second equation to obtain a Weitzenböck type formula for $\xi$. The torsion term contributes error terms to \eqref{Eq_Weitzenbock} and consequently to \eqref{Eq_WeitzenbockIntegral}. The contribution to the latter is
    \begin{equation*}
        \langle N^\dagger(\xi, \del_I v), L_v^*\xi \rangle_{L^2} +  \langle \nabla^2 f N(\xi,\delbar_I v) , \xi \rangle_{L^2}.
    \end{equation*}
    To control these terms, we estimate the norm of the torsion term:
    \begin{equation*}
        | N^\dagger(\xi, \delbar_I v) | \leq C |\xi| |\rd v| \leq C|\xi||\rd v - \nabla f \rd t| + C\| f \|_{C^1}  |\xi|
    \end{equation*}
    Using asymptotic boundary conditions and the energy identity for the Cauchy--Riemann operator $\del_I$ for maps into $X$, we get
    \begin{equation*}
        \int_\C |\partial_s v|^2 + |\partial_t v - \nabla f|^2  \leq  \int_\C |\partial_s v - I(\partial_t v - \nabla f)|^2 + C = \int_\C |\nabla_\tau\xi|^2 + C,
    \end{equation*}
    where $C$ is the maximum of the symplectic action of the limiting Hamiltonian trajectories of $g$, and therefore depends only on $(X,\omega)$ and $F$. Combining the above with
    \begin{equation*}
        \|\xi\|_{L^2} \leq C \|\nabla_\tau\xi\|_{L^2},
    \end{equation*}
    we obtain
    \begin{align*}
        |\langle N^\dagger(\xi, \delbar_I v) , L^*_v\xi \rangle_{L^2}| &\leq C( \|\xi\|_{L^\infty}+ \| f \|_{C^2})\| \nabla_\tau \xi\|_{L^2}\|L_v^*\xi\|_{L^2} + C \|\xi\|_{L^\infty}\|L_v^*\xi\|_{L^2}    \\
        &\leq C(\delta+\| f \|_{C^2})\|\partial_\tau U\|_{L^2}^2 + C\delta.
    \end{align*}
    Therefore, in the same way as in the proof of  \autoref{Thm_FloerCorrespondencePerturbed} we get
    \begin{equation*}
         C\delta \geq  (1-C(\epsilon+\delta))\|\partial_\tau U\|_{L^2}^2,
    \end{equation*} 
    which proves the statement for $\delta$ and $\epsilon$ sufficiently small.
\end{proof}

Suppose by contradiction that there is a sequence of solutions $U_i = (v_i, \xi_i)$ contained in the $\delta_i$-neighborhood of the zero section with $\| \xi_i \|_{L^2} > 0$ and $\delta_i \to 0$. One can envision using \eqref{Eq_FloerPlanesCotangentUnperturbed} to bootstrap the $L^2$ estimate from \autoref{Lem_L2Estimate} to extract a weak limit $U_i \to U$ contained in the zero section with $\partial_\tau U =0$. Such a limit is a complex gradient trajectory $v \colon \R^2 \to X$. The rescaled sequence $\xi_i / \| \xi_i \|_{L^2}$ would then converge to a non-zero $\xi \in \Gamma(v^*TX)$  satisfying
\begin{equation*}
    \nabla_\tau \xi = 0 \quad\text{and}\quad \nabla^{0,1}\xi=0. 
\end{equation*}
However, any such section is forced to be zero by the boundary conditions at $\tau=0,1$. Therefore, we would conclude by contradiction that for $\delta$ sufficiently small, all solutions contained in the $\delta$-neighborhood of the zero section must satisfy $\partial_\tau U = 0$ and $\xi =0$. We will not attempt to make this argument rigorous in this paper.

We end this section by remarking that the modified Fueter equation \eqref{Eq_FueterPerturbed} appearing in \autoref{Thm_FloerCorrespondencePerturbed} has a natural interpretation from an infinite-dimensional point of view. The idea is to apply Floer's adiabatic limit theorem, which identifies solutions of Floer's equation in the cotangent bundle of a manifold $\cX$, with gradient trajectories of a Morse function on $\cX$, to $\cX$ being the space of paths in a symplectic manifold $X$. The upcoming discussion uses \autoref{Sec_FloerRevisited}, so the reader might find it helpful to read the appendix first.

Let $\cX$ be the space of paths $\gamma \colon \R \to X$ such that $\gamma(s)$ converges exponentially as $s\to\pm\infty$. As this is only a heuristic discussion, we ignore various analytical subtleties such as precise asymptotic conditions at infinity.  Here we think of the domain $\R$ as the second $s$-factor in $[0,1]\times\R^2$ with coordinates $(\tau,s,t)$. Let $F = f+ig$, so that Hamiltonian trajectories of $g$ are anti-gradient trajectories of $f$. Consider the symplectic action functional with respect to the symplectic structure on $X$, perturbed by $g$:
\begin{equation*}
    \cA \colon \cX \to \R.
\end{equation*}
Gradient trajectories of $\cA$ are complex gradient trajectories of $F$:
\begin{gather*}
    v \colon \R^2 \to X, \\
    \partial_s v - I(\partial_t v + \nabla f(v)) = 0.
\end{gather*}
We wish to apply Floer's adiabatic limit theorem to the infinite-dimensional manifold $\cX$ equipped with the function $\cA$. In Floer's original approach, we consider a compatible almost complex structure $J$ on $T^*\cX$ and perturb the equation for $J$-holomorphic strips $U\colon \R\times[0,1]\to T^*\cX$ by the gradient of $\cA$. The resulting equation is \eqref{Eq_StandardFloer}. We would expect that solutions of this equation are invariant in the $\tau$-direction and correspond to gradient trajectories of $\cA$ in $\cX$, i.e. complex gradient trajectories of $F$ in $X$. 

Formally, the cotangent bundle $T^*\cX$ is the space of paths $\gamma \colon \R \to T^*X$, so that a strip $U \colon \R\times[0,1] \to T^*\cX$ can be identified with a map
\begin{equation*}
    U \colon [0,1]\times\R^2 \to T^*X,
\end{equation*}
and Floer's equation \eqref{Eq_StandardFloer} in infinite dimensions translates into a partial differential equation for $U$. However, it is easy to see that this equation is not elliptic. The reason is that the gradient of $\cA$ contains the $s$-derivative $U$ only in the horizontal direction in the splitting of the tangent space to $T^*X$ into horizontal and vertical spaces. 

This issue can be fixed this by considering instead a different Floer equation \eqref{Eq_NonStandardFloer}, in which the $J$-holomorphic strip equation is perturbed by the Hamiltonian vector field of a function $\cH \colon T^*\cX \to \R$ rather than the gradient of $\cA$. Here $\cH$ is chosen so that its Hamiltonian vector field restricts to the gradient vector field of $\cA$ along the zero section in $T^*\cH$. For more details, see  \autoref{Sec_FloerRevisited}.  In the case at hand, the resulting equation \eqref{Eq_NonStandardFloer} translates into a partial differential equation for $U \colon [0,1]\times\R^2 \to T^*X$ which contains the full derivative $\partial_s U$ and, in fact, agrees with the perturbed Fueter equation \eqref{Eq_FueterPerturbed} discussed earlier. This is shown by a straightforward computation, by decomposing the derivative of $U$ into horizontal and vertical components as in the proof of \autoref{Thm_FloerCorrespondencePerturbed}. 

\begin{lemma}
    The Floer equation \eqref{Eq_NonStandardFloer} for strips $U \colon \R\times[0,1] \to T^*\cX$ with a Hamiltonian perturbation corresponding to $\cA$ agrees with the perturbed Fueter equation \eqref{Eq_FueterPerturbed}, if we interpret $U$ as a map $U \colon [0,1]\times\R^2 \to T^*X$.
\end{lemma}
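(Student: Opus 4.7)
The plan is a direct coordinate computation in horizontal and vertical components, parallel to the derivation of \eqref{Eq_FloerPlanesCotangent} in the proof of \autoref{Thm_FloerCorrespondencePerturbed}. First I would make the basic identification precise: a point of $T^*\cX$ is a section $\gamma \mapsto (v(s), \xi(s))$ of $T^*X$ along a path in $X$, so a strip $U \colon \R \times [0,1] \to T^*\cX$ with coordinates $(t,\tau)$ unfolds into a map $U = (v,\xi) \colon [0,1] \times \R^2 \to T^*X$ with coordinates $(\tau,s,t)$. The almost complex structure on $T^*\cX$ used in \eqref{Eq_NonStandardFloer} is the one induced pointwise in $s$ from the canonical almost complex structure on $T^*X$, which under the splitting $T(T^*X)=H \oplus V \cong \pi^*TX \oplus \pi^*TX$ from \eqref{Eq_HorizontalVertical} is $\mathrm{diag}(I,-I)$.

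Next I would unpack the Hamiltonian $\cH$ described in \autoref{Sec_FloerRevisited}: we choose $\cH(\gamma,\eta) = \langle \eta, \Grad \cA(\gamma)\rangle_{L^2}$, the fiberwise-linear extension of the gradient vector field on $\cX$. For such a linear-in-fiber Hamiltonian, the standard local coordinate computation on $T^*\cX$ shows that $X_\cH$ has horizontal component $\Grad\cA(v)$ and vertical component $-L_v^*\xi$, where $L_v$ denotes the linearization at $v$ of the map $\gamma \mapsto \Grad\cA(\gamma)$. Since $\Grad\cA(\gamma)(s) = I\partial_s\gamma + \nabla f(\gamma)$ (with sign conventions as in the discussion preceding the lemma), its linearization $L_v$ is exactly the linearization of the perturbed Cauchy--Riemann operator $\del_I v + \nabla f(v)$ at $v$, and its formal $L^2$-adjoint $L_v^*$ is the operator appearing in the definition of $D_I$ via \autoref{Lem_NijenhuisPerturbation}.

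With these identifications, I would expand the Floer equation
\begin{equation*}
    \partial_\tau U + \mathbf{I}(U)\bigl(\partial_t U - X_\cH(U)\bigr) = 0
\end{equation*}
component by component in $H \oplus V$, using $\partial_\tau U = (\partial_\tau v, \nabla_\tau \xi)$ and $\partial_t U = (\partial_t v, \nabla_t \xi)$. The horizontal projection of the equation, after substituting $\Grad\cA(v) = I\partial_s v + \nabla f(v)$ and multiplying by $I$, produces the equation $\nabla_\tau \xi + I\del_I v + \nabla f(v) = 0$; the vertical projection produces $\partial_\tau v - IL_v^*\xi = 0$. These are exactly the two equations of \eqref{Eq_FloerPlanesCotangent}, which by the derivation of \eqref{Eq_FloerPlanesCotangent} in \autoref{Thm_FloerCorrespondencePerturbed} is equivalent to \eqref{Eq_FueterPerturbed}.

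The main obstacle is not the linear algebra but the infinite-dimensional bookkeeping: ensuring that $\cH$ is well-defined on the relevant function-space model of $T^*\cX$, that the identification $T^*\cX \cong T\cX$ via the $L^2$ pairing is compatible with the connection $\nabla$ on $T^*X$ used to split $T(T^*X)$, and that taking $L_v^*$ fiberwise on $T^*X$ coincides with taking the formal adjoint of the infinite-dimensional Hessian of $\cA$ on $\cX$. Once these identifications are set up carefully (as outlined in \autoref{Sec_FloerRevisited}), the matching of the two PDEs reduces to the component-wise computation above.
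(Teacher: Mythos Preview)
Your overall strategy is correct and is exactly what the paper does: the lemma is established by a direct horizontal/vertical decomposition of \eqref{Eq_NonStandardFloer}, parallel to the derivation of \eqref{Eq_FloerPlanesCotangent} in the proof of \autoref{Thm_FloerCorrespondencePerturbed}. The paper offers no more detail than that single sentence.

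There is, however, a concrete error in your setup. The almost complex structure on $T^*\cX$ appearing in \eqref{Eq_NonStandardFloer} is the one induced by the $L^2$ Riemannian metric on $\cX$ together with the canonical symplectic form on $T^*\cX$ (see the opening paragraph of \autoref{Sec_FloerRevisited}). When a point of $T^*\cX$ is unfolded as a path in $T^*X$, this structure corresponds pointwise in $s$ to the almost complex structure
\[
J = \begin{pmatrix} 0 & -\id \\ \id & 0 \end{pmatrix}
\]
on $T^*X$, \emph{not} to $I = \mathrm{diag}(I,-I)$ as you state. This distinction is essential: it is precisely the off-diagonal form of $J$ that makes $J\partial_\tau u$ contribute $-\nabla_\tau\xi$ to the horizontal component and $\partial_\tau v$ to the vertical component, which is what produces the coupling between $v$ and $\xi$ visible in \eqref{Eq_FloerPlanesCotangent}. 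With your diagonal $\mathbf{I}$, the $\tau$-derivatives in the horizontal and vertical projections of your displayed Floer equation would involve only $\partial_\tau v$ and $\nabla_\tau\xi$ respectively, and the system could not reproduce \eqref{Eq_FloerPlanesCotangent}. Your final claimed equations are the correct target, but they do not follow from the equation and complex structure you actually wrote down; once you replace $\mathbf{I}$ by the off-diagonal $J$ above (and match the form of \eqref{Eq_NonStandardFloer} rather than your rewritten version, which also carries a sign discrepancy on $\partial_t U$), the component computation goes through as you outline.
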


In the finite-dimensional setup, \autoref{Prop_NonStandardFloerCorrespondence} establishes a correspondence between solutions of the Floer equation \eqref{Eq_NonStandardFloer} with values in $T^*\cX$ and gradient trajectories of $\cA$ in $\cX$. In the infinite-dimensional situation at hand, the former are solutions of the perturbed Fueter equation with values in $T^*X$ and the latter are complex gradient trajectories of $F$ in $X$. We conclude that \autoref{Thm_FloerCorrespondencePerturbed} is an infinite-dimensional version of \autoref{Prop_NonStandardFloerCorrespondence} applied to the space of paths in $X$.

\appendix
\section{Topological field theories}
\label{sec:aspects-of-tqft}

This section briefly summarizes the basic philosophy of topological quantum field theories (TQFT) and its connection to Floer homology. This material is completely standard, but may be of interest to those who are not completely familiar with the connection between partial differential equations and algebraic structures. 


\newcommand{\kk}{\mathbf{k}}

\paragraph{Algebraic structure.}
According to Atiyah \cite{atiyah-tqft}, a TQFT is a functor from a cobordism category of manifolds to a monoidal category, e.g. the category of graded vector spaces over a field $\kk$. Concretely, the simplest notion of a $2$-dimensional TQFT $F: \mathrm{Cob}_2 \to \mathrm{Vect}_{\kk}$ assigns in a functorial way
\begin{itemize}
    \item a vector space $V = F(S^1)$ to the circle $S^1$;
    \item the vector space $V^{\tensor n} = F(\bigsqcup_{i=1}^n S^1)$ to $n$ copies of the circle $\bigsqcup_{i=1}^n S^1$; 
    \item a linear map $V^{\tensor n} \to V^{\tensor m}$ to any cobordism $\Sigma$ from $\bigsqcup_{i=1}^n S^1$ to $\bigsqcup_{i=1}^m S^1$.
\end{itemize}
This gives us the following structure:
\begin{itemize}
    \item A commutative associative product $V \tensor V \to V$ and cocommutative coassociative coproduct $V \to V \tensor V$ correspoding to the pair of pants cobordism and its inverse, together with a unit  $\kk \to V$ for the  product and a counit  $V \to \kk$ for the coproduct.
    \item By composing the cocounit with the product one gets a pairing $\langle \cdot, \cdot \rangle: V \tensor V \to \kk$, and dually, by composing the unit and the product, a copairing $\Delta: \kk \to V \tensor V$. Decomposing $S^1 \times I$ (corresponding to the identity $V \to V$) into the composition of a pairing and a copairing implies that 
    \[ V \simeq V \tensor \kk \xrightarrow{1 \tensor \Delta} V \tensor V \tensor V \xrightarrow{\langle \cdot , \cdot \rangle \tensor 1} \kk \tensor V \simeq V \]
    is the identity map, and thus that the pairing is perfect. Writing the product of $a, b \in V$ as $ab$, one verifies that $\langle ab, c \rangle = \langle a, bc \rangle$.
    \item The data of a perfect pairing together with a commutative associative unital product satisfying the identity above is a \emph{Frobenius algebra}, and every $2d$ TQFT determines and is determined by a Frobenius algebra \cite{Kock2003-lj}.
\end{itemize}
In summary, a TQFT is a functor from a cobordism category, which can be specified by collection of algebraic operations satisfying certain identities.  

\paragraph{Floer cohomology.}
Certain examples of TQFTs are naturally associated with physical theories, and indeed that was Atiyah's original motivation for introducing the axioms pf TQFT. Given $M$ in the cobordism category, $F(M)$ is meant to be the space of quantum states of the field theory on $M$, while the map $F(M_1) \to F(M_2)$ associated to a cobordism $N$ from $M_1$ to $M_2$ is meant to be the Hamiltonian evolution of the quantum states through time. The cobordism map is then typically defined by evaluating a path-integral over some infinite-dimensional space of fields on $N$ \cite{freed-cobordism}. While the integration over the infinte-dimensional space of fields is hard to make rigorous, in certain cases, due to supersymmetry \cite{Schwarz1997} the path integral can be computed, or rigorously defined, by counting solutions to some partial differential equations. Many invariants of Floer-theoretic origin  are of this form. 

We now explain the general pattern by the example of Hamiltonian Floer cohomology of a symplectic manifold. A Hamiltonian function $H: M \times S^1 \to \R$ on a symplectic manifold $M$ defines an action functional $\mathcal{A}_H: \mathcal{L}M \to \R$  on the free loop space. Hamiltonian Floer cohomology $HF^*(H)$, which is formally the Morse homology of $\cA_H$, computes the space of states of $2$-dimensional TQFT on $M$ \cite{witten-susy-morse-theory, floer-infinite-dimensional-morse-theory,witten-tqft}. Defining $HF^*(H)$ involves the study of the gradient flow equation of $\mathcal{A}_H$, the Floer equation:
\begin{gather*}
    u \colon \R \times S^1 \to M, \\
    \partial_t u + J(u)\partial_\tau u = \Grad H(u),
\end{gather*} 
with $(t,\tau)$ denoting the coordinates on $\R\times S^1$ and $J$ is an almost complex structure on $M$ compatible with the symplectic form.  This equation admits a natural generalization to surfaces, the inhomogeneous pseudo-holomorpic map equation
\begin{equation}
\label{eq:inhomogeneous-map-equation}
\begin{gathered}
    u: \Sigma  \to M, \nonumber \\
    (\rd u - \alpha(u) )_J^{0,1} = 0, 
    \end{gathered}
\end{equation}
where $\alpha \in \Omega^1(\Sigma\times M, TM)$ is an inhomogenous term generalizing $\nabla H$ above.  Here $\Sigma = \hat{\Sigma} \setminus S$ is the complement of a finite collection of points $S$ in a closed Riemann surface $\hat\Sigma$. Counting solutions to \eqref{eq:inhomogeneous-map-equation} with asymptotic boundary conditions at each point of $S$ defines TQFT operations corresponding to a cobordism $\hat{\Sigma} \setminus U$, where $U$ is a small open neighborhood of $S$ \cite{Seidel2008}. 

Note that in the formulation of the inhomogeneous pseudo-holomorphic map equation, one must fix boundary conditions \emph{at infinity} for $\Sigma$, even though the corresponding cobordism is a compact manifold with boundary. In general, there can be different kinds of boundary conditions for partial differential equations, and these correspond to variations on the definition of a TQFT. It may be that the surface $\Sigma$ needs to be decorated with certain structures in order to make sense of the equations. Moreover, in order to ensure compactness of the set of solutions to the equations, one may need to restrict or modify the associated cobordism category further. As such, the  formal algebraic properties of the TQFT and the geometric and analytical structure of the associated partial differential equation are closely intertwined in a way that is challenging to fully axiomatize.

\paragraph{Category theory.}

It is an old idea, formalized by the Baez--Dolan cobordism hypothesis \cite{Baez1995}, that higher-dimensional TQFT are associated to structures in higher category theory. A cobordism is simply a manifold with boundary, interpreted as a morphism by dividing the boundary into two pieces. As such, one wishes to interpret a cobordism-between-cobordisms, i.e. a manifold with corners modeled on $\R_{>0}^2 \times \R^{n-2}$ with a certain decomposition of its boundary, as a morphism-between-morphisms in the cobordism \emph{2-category}; and more generally to define notions of cobordisms-between-cobordisms-between-cobordisms in terms of general manifolds with corners, viewed as higher morphisms in a cobordism $n$-category. An extended TQFT is then suppposed to be a functor from a higher category of cobordisms to another higher category, prototypically the $n$-category of $\kk$-linear $(n-1)$-categories. 

\begin{figure}[H]
\centering
\includegraphics[width=6cm]{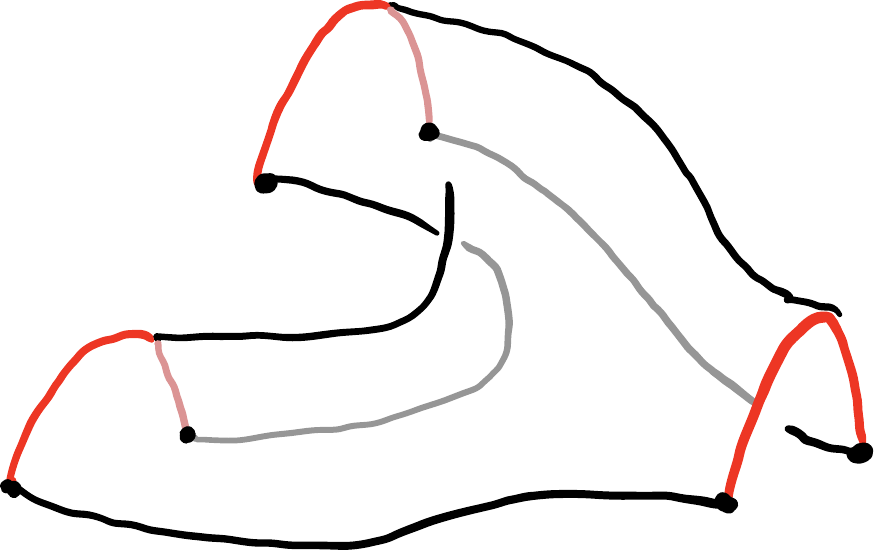}
\caption{\small A two-dimensional cobordism between a pair of $1$-dimensional cobordisms (in red). From the perspective of category theory this can be interpreted as a morphism between morphisms. }
\label{fig:cobordism-with-corners}
\end{figure}

While this general notion can be made precise \cite{lurie-tqft}, when building a TQFT out of solutions to partial differential equations it pays to be explicit. Moreover, the analysis of the given equations  may force us to modify the TQFT formalism in minor ways. For example, while many aspects of the Fukaya category can be formulated in a general extended TQFT framework, typically symplectic topologists work in a formalism that is better described by the language of an \emph{open-closed $2d$ TQFT} \cite{Moore-Segal, BlumbergCohenTeleman}. In this formalism:
\begin{itemize}
    \item We have a category $\mathcal{F}$.
    \item One assigns, to an interval with endpoints labeled by a pair of objects $(L_0, L_1)$ of $\mathcal{F}$, the vector space given by the morphisms $\mathcal{F}(L_0, L_1)$. There is also an assignment of a vector space $\mathcal{F}(S^1)$ to the circle.
    \item To every surface with corners, with certain codimension one boundary components labeled by objects, and the remaining codimension one boundary components partitioned into incoming and outgoing pieces, one defines a cobordism map. These maps compose functorially as we compose the corresponding bordisms.
\end{itemize}

\begin{figure}[H]
    \centering
    \includegraphics[width=9cm]{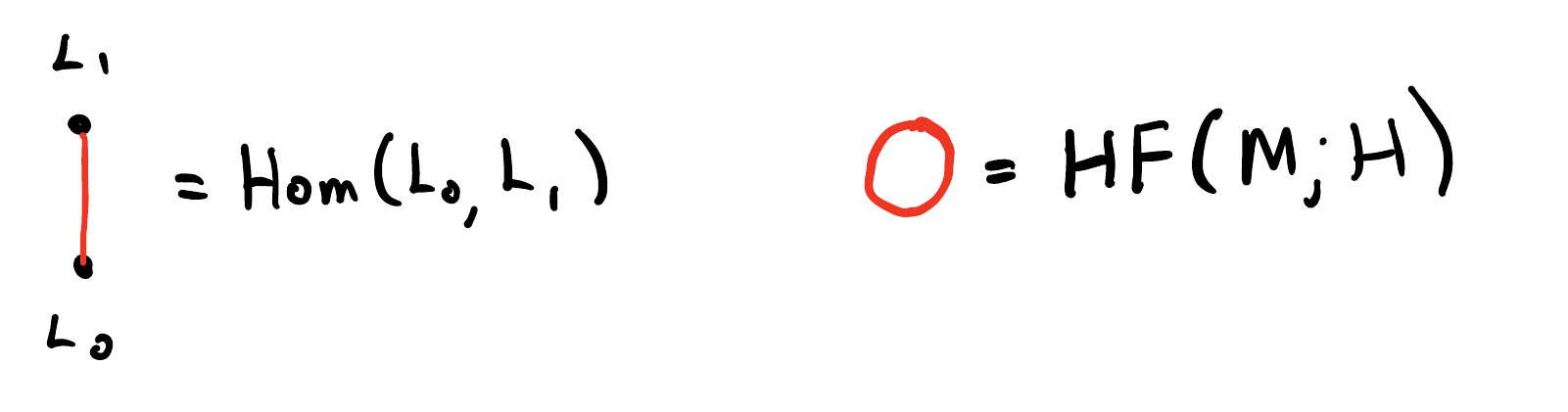}
    \includegraphics[width=8cm]{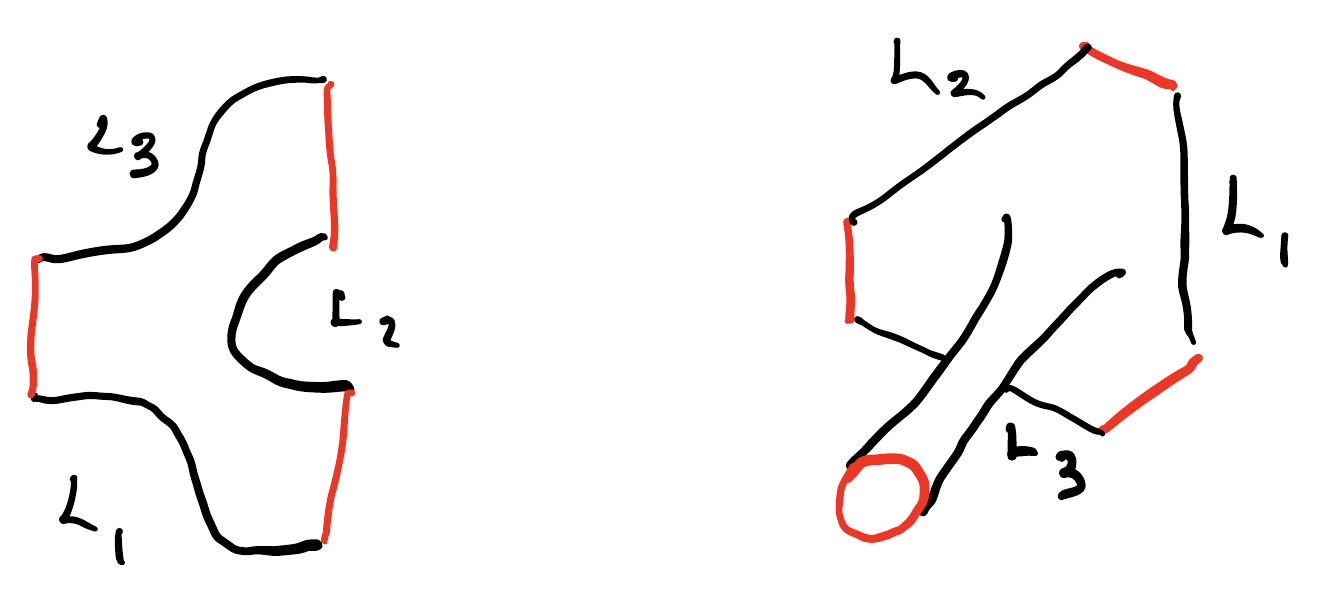}
    \caption{\small The open-closed field theory corresponding to the Fukaya category. Composition in the Fukaya category arises from the cobordism in the bottom left, and the open-closed map, discussed later in \autoref{sec:zeta-instanton-model}, from the cobordism in the bottom right.}
    \label{fig:my_label}
\end{figure}

To a symplectic topologist, $\cF$ is the Fukaya category of a symplectic manifold $M$, for Lagrangian submanifolds $L_0,L_1$ the vector space $\cF(L_0,L_1)$ is the Lagrangian Floer cohomology of $L_0,L_1$, and $\cF(S^1)$ is the Hamiltonian Floer cohomology of $M$.  One defines the cobordism map by counting the number of solutions to \eqref{eq:inhomogeneous-map-equation} over all possible conformal structures on the domain of the cobordism.

In the next section we will give an outline of a similar formalism which should encompass the basic algebraic structure associated to the Fueter TQFT. The most complex object is the object assigned to a square with certain labels, which will be the \emph{Fueter $2$-category}. There are various notions of $2$-categories with various levels of strictness in their composition; we recall the easiest and strictest notion now.  A strict $2$-category $\mathcal{C}$ is a category enriched in the category of categories. Concretely, this is
\begin{itemize}
    \item A set of objects $\mathrm{Ob}(\mathcal{C})$; we will write $L \in \mathcal{C}$ for $L \in \mathrm{Ob}(\mathcal{C})$. 
    \item For every pair of objects $(L_0, L_1)$, a \emph{hom-category}  $\mathcal{C}(L_0, L_1)$. The objects the categories $\mathcal{C}(L_0, L_1)$ are the $1$-morphisms in $\mathcal{C}$.
    \item The morphisms between a pair of $1$-morphisms $p, q \in \mathcal{C}(L_0, L_1)$ are to be thought of as $2$-morphisms of $\mathcal{C}$. Thus, every $2$-morphism has $1$-morphism as its source and its target, and given a triple of morphisms $p, q, r \in \mathcal{C}(L_0, L_1)$ and a pair of $2$-morphisms $u_1: p \to q$, $u_2: q \to r$, we can \emph{vertically compose} $(u_1, u_2)$ to produce $u_2u_1: p \to r$. 
    \item We have composition bifunctors $\mathcal{C}(L_0,L_1) \times \mathcal{C}(L_1, L_2) \to \mathcal{C}(L_0, L_2)$. Thus, $1$-morphisms can be composed along objects, and a pair of $2$-morphisms $u_1: p \to q$, $u_2: s \to t$ for $p, q \in \mathcal{C}(L_0,L_1)$ and $s, t \in \mathcal{C}(L_1, L_2)$ \emph{horizontally compose} to a $2$-morphism $u_2u_1: sp \to tq$ in $\mathcal{C}(L_0, L_2)$.
\end{itemize}

This data is graphically summarized in  \autoref{fig:2-category}. Note that for any $L \in \mathcal{C}$, the category $\mathcal{C}(L, L)$ is necessarily \emph{monoidal}, with the monoidal structure given by composition of $1$-morphisms. 

\begin{figure}[H]
\centering
\includegraphics[width=8cm]{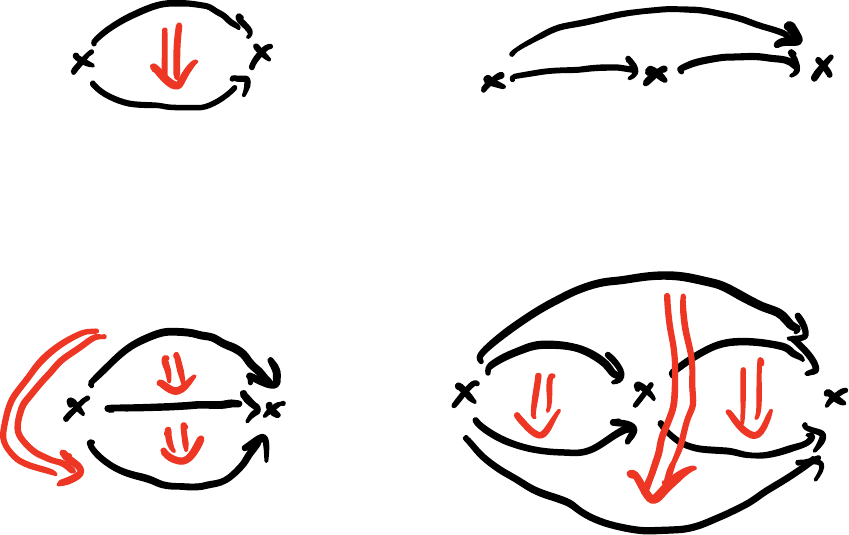}
\caption{\small The data of a $2$-category. Top left: crosses represent objects, black represent $1$-morphisms, red arrow represents a $2$-morphism. Top right: Composition of $1$-morphisms. Bottom left: vertical composition of $2$-morphisms. Bottom right: Horizontal composition of $2$-morphisms. }
\label{fig:2-category}
\end{figure}

\section{Problems and conjectures}
\label{Sec_CategoricalAppendix}

\subsection{More on the Fueter $2$-category}
\label{app:more-on-fueter}

In this section, we provide several additional remarks on the expected behavior of the Fueter $2$-category. 

\paragraph{Complex linearity and anti-linearity.}

Upon first glance, one may find the appearance of the \emph{anti-complex} gradient flow in the derivation of \autoref{Subsec_FromMorseToFueter} strange. However, it is actually rather natural, as the Fueter equation is the condition that the differential of the map is \emph{anti}-quaternion-linear. We discuss the underlying linear algebra further in  \autoref{Subsec_LinearAlgebra}. Given this observation, is no surprise that the dimensional reductions of the Fueter equations \eqref{eq:sec2-fueter-eq} are $J_\theta$-\emph{anti}-holomorphic strips. Moreover, unlike in dimension two, where holomorphic and anti-holomorphic maps are related by a conjugation of the underlying domain, the analog of the Fueter equation where we ask for the differential to be quaternion-linear has essentially no solutions \cite{Fueter1935}, so if we wish to preserve the symmetric form of the Fueter equations \eqref{Eq_FueterIntroduction}, we must incorporate the corresponding preference for anti-gradient trajectories and anti-holomorphic strips into our conventions. Luckily, via the substitution $J_\theta \to J_{\theta + \pi}$ and the reordering of inputs in $A_\infty$-operations, one can set the conventions up such that anti-holomorphic strips never explicitly arise in the Fueter $2$-category, as we saw in \autoref{sec:fueter-tqft}.

\paragraph{Background angles.}
In the complex Morse theory model for the Fukaya--Seidel category, a choice of background angle $\hat{\theta}$ is used all existing proposals to put an ordering on the critical points of the complex Morse function, as in the more standard construction of the Fukaya-Seidel category. In particular, the complex Morse theory model of the Fukaya--Seidel category $\mathrm{FS}_{\hat{\theta}}^\zeta(X,F)$ also contains a directed subcategory $\mathrm{FS}^{\zeta, \to}_{\hat{\theta}}(X,F)$, see \cite{Wang2022}, and the categories $\mathrm{FS}_{\hat{\theta}}^\zeta(X,F)$ also form a local system of categories over $S^1$ as the paragraph on background angles in \autoref{sec:zeta-instanton-model}. 

To define $\mathrm{Fuet}_M$, we should also fix a single background angle $\hat{\theta}$ and then formally define $\mathrm{Fuet}_M(L_0, L_1) = \mathrm{FS}^{\zeta}_{\hat{\theta}}(\cP, \cA_\C)$, using the same $\hat{\theta}$ for all pairs $L_0, L_1$. The horizontal composition of morphisms $\mathrm{Fuet}_M(L_0, L_1) \tensor \mathrm{Fuet}_M(L_1, L_2) \to \mathrm{Fuet}_M(L_0, L_2)$ should be given in terms of a count of regular $J_{\hat{\theta}}$-holomorphic triangles. In between any two intersection points of exact Lagrangians $L_0$ and $L_1$, there are $J_\theta$-holomorphic strips only for isolated values of $\theta$. If we choose a different background angle $\hat{\theta}'$ to define $\mathrm{Fuet}_M$, there should be an equivalence between the $2$-categories built using these choices via analogs of the functors \eqref{eq:mutation-equivalence} above. 

    To recover the Fukaya category upon taking Hochschild homology \eqref{eq:naive-decategorification}, composition of $1$-morphisms in $\mathrm{Fuet}_M$ should be given by the usual counts of $J_{\hat{\theta}}$-holomorphic triangles, as opposed to any kind of parameterized counts over $S^1$. As one varies $J_{\hat{\theta}}$, by bifurcation theory, the counts of $J_{\hat{\theta}}$-holomorphic triangles should jump exactly when a $J_{\hat{\theta}}$-holomorphic strip appears, \emph{which should couple precisely with the nontrivial mutation isomorphisms} \eqref{eq:mutation-equivalence} from $\mathrm{Fuet}_{M, \hat{\theta}^*-\epsilon}(L_i, L_j) \to \mathrm{Fuet}_{M, \hat{\theta}^*+\epsilon}(L_i, L_j)$ as $\hat{\theta}$ varies through an exceptional angle $\hat{\theta}^*$, in order given an equivalence of two categories $\mathrm{Fuet}_{M, \hat{\theta}^*-\epsilon} \simeq \mathrm{Fuet}_{M, \hat{\theta}^*+\epsilon}$ making the choice of background angle \emph{irrelevant}.

\begin{remark}
\label{remark:model-comparison}
In a formal definition of the $A_\infty$-category $\mathrm{Fuet}_M(L_0, L_1)$, one must take either the Gaiotto--Moore--Witten approach  or the Haydys approach to the complex Morse theory model. It is probable that the Haydys approach is more straightforward in the setting of $J_\theta$-holomorphic curves and Fueter maps, as it does not involve solving  \autoref{problem:gmw-compactification} as a first step. However, this has the significant downside that the morphism complexes in the Haydys model for $\mathrm{Fuet}_M(L_0, L_1)$ will be generated by solutions to a perturbation of the $J_{\theta(t)}$-holomorphic strip equation, where $\theta(t)$ is a very particular family of angles $\theta: \R \to S^1$. Finding such $J_\theta(t)$-holomorphic strips in a situation with a concrete geometry may be significantly harder than finding $J_\theta$-holomorphic strips for fixed $\theta$, and thus may make it challenging to compute Fueter hom-categories in any situation where a formal method for computation may not be available. In certain cases, such as the setting of  \autoref{conj:categorical-floer-theorem} below, the Haydys model should suffice to provide a computable definition; but we see both approaches as valid and necessary for the development of $3d$ mirror symmetry.
\end{remark}

\paragraph{Novikov rings.}
\label{sec:novikov-rings}
In many cases, the holomorphic action functional $\mathcal{A}^\C$ defined by its differential \eqref{eq:holomorphic-action-1-form} is multivalued on the path space, and one must pass to the Novikov cover to make it single-valued. As such, defining $\mathrm{Fuet}_M(L_0, L_1) = \mathrm{FS}(\cP, \cA_\C)$ would then give rise to an infinite number of objects, somehow related by deck-transformations. Even in the finite-dimensional case, when $\rd F \in \Omega^{1,0}(X)$ is replaced by  a closed holomorphic one-form on $X$, there is yet no standard formalism for studying the Fukaya--Seidel categories of the pullback of the one-form to the appropriate Novikov cover of $X$.  While it is not clear to the authors if there is any essential challenge in adapting the existing definitions of the Fukaya--Seidel category to this setting, it is important for the foundations of the Fueter $2$-category to give a precise exposition of such a construction.
\begin{problem}
Define the Fukaya--Seidel category of a closed holomorphic one-form on a Kähler manifold, both with a more classical symplectic approach and in the complex Morse theory model. 
\end{problem}
It is advantageous and nontrivial to give a definition where the one-form is not required to be holomorphic. Indeed, Fukaya--Seidel categories make sense for symplectic Lefschetz fibrations, which have a significant amount of flexibility. However, it is not immediate what is the best definition of a correspondingly flexible symplectic of a holomorphic one-form; the essential challenges in defining this notion are about establishing energy bounds for the relevant pseudo-holomorphic curves or corresponding complex Morse trajectories. The flexibility of such a definition would be helpful in the infinite-dimensional setting, where one wishes to perturb the holomorphic symplectic action functional or to work in some \emph{weak} or \emph{almost} hyperk\"ahler setup, see \autoref{sec:further-analytic-aspects}.

\subsection{Fueter Category and Holomorphic Floer Theory}
\label{sec:more-on-decategorification}

There are several existing works which study the enhancements of the Floer theory of complex Lagrangians that arise from the ambient holomorphic symplectic geometry. We argue that all of them can viewed as simplifications of the data associated to the hom-categories of the Fueter $2$-category. 

\paragraph{Connections to perverse sheaves.}
As a consequence of  \autoref{prop:solomon-verbitsky}, one might expect that $HF^*(L_0, L_1)$ is computable in terms of classical topological invariants, even when $L_0$ is not transverse to $L_1$. A proposed definition of the correct topological invariant was given by Brav--Bussi--Dupont--Joyce--Szendroi \cite{Brav2015}, who defined a \emph{perverse sheaf of vanishing cycles} $P_{L_0, L_1}$ supported on $L_0 \cap L_1$. Heuristically, one defines this sheaf locally by writing $L_1$ as the graph of the differential of a holomorphic function $F: L_0 \to \C$, and then defines $P_{L_0, L_1}$ as the \emph{sheaf of vanishing cycles} \cite{MasseyPerverseSheafNotes} of $F$. An isolated intersection of $(L_0, L_1)$ corresponds to an isolated critical point of $F$, and hypercohomology of the summand of $P_{L_0, L_1}$ supported at that intersection point is simply the number of critical points of a Morsification $\tilde{F}$ of $F$.  It is a well-known conjecture that $HF^*(L_0, L_1) \simeq \mathbf{H}(P_{L_0, L_1})$, which generalizes \eqref{eq:floer-homology-of-transverse-holomorphic-lagrangians}. 

The sheaf of vanishing cycles $P_{L_0, L_1}$ admits a \emph{monodromy automorphism} $h:P_{L_0, L_1} \to P_{L_0, L_1}$.  In the case where $L_1$ is the graph of a differential of a function $F: L_0 \to \C$ in $T^*L_0$ and $F$ has a single isolated critical point, this corresponds to considering the monodromy automorphism of $\tilde{F}$. We call the induced automorphism $h: \mathbb{H}(P_{L_0, L_1}) \to \mathbb{H}(P_{L_0, L_1})$ the \emph{local monodromy}. 

On the side of the Lagrangian Floer cohomology group, one can consider the Floer-theoretic continuation map 
\begin{equation}
    HF^*(L_0, L_1, \omega_{\hat{\theta}}) \to HF^*(L_0, L_1, \omega_{\hat{\theta}})
\end{equation}
given by counting solutions to 
\begin{equation}
\label{eq:continuation-equation}
\begin{gathered} 
    u: \R \times [0,1] \to M, \\
    \partial_t u + J_{\theta(t)}(u) \partial_\tau u = 0, \\
    \lim_{s \to \pm \infty} u(s,t) \in L_0 \cap L_1
\end{gathered}
\end{equation}
where $\theta: \R \to S^1$ is equal to $\hat{\theta}$ for $|t| \gg 0$, and such that $\theta$ winds once around the $S^1$ as $t$ increases. We argue that this monodromy map is the map induced by the Serre functor of $\mathrm{Fuet}_M(L_0, L_1)$ under \eqref{eq:decategorification-1}. Indeed, in the finite-dimensional case, the Picard-Lefschetz transformations of \eqref{eq:picard-Lefschetz-identity} are simply the changes of basis induced by the bifurcation analysis of the family of Morse functions $Re(e^{-i \theta}W)$. It is standard \cite{Floer1988a} that instead of constructing isomorphisms of Morse-Floer homology groups using bifurcation analysis, one can construct the same maps using a continuation equation; the continuation equation for the family $\Re(e^{-i\theta}\cA_\C)$ is the equation \eqref{eq:continuation-equation} above. Thus, the global monodromy map, which is an object of real symplectic topology, is the simplest shadow of nontrivial structures associated to the Fueter $2$-category.

\paragraph{Connections to resurgence.}
It is expected, following Kontsevich--Soibelman \cite{Kontsevich2020}, that the difference between the local and the global monodromy can be viewed as a \emph{Stokes datum}. Indeed, the formal path integral
\begin{equation}
    \label{eq:path-integral}
    \int_{\mathcal{P}_{L_0, L_1}} e^{i \hbar \cA_\C(\gamma)} \mathcal{D} \gamma
\end{equation}
should satisfy an irregular differential equation in $\hbar$ by analogy to the finite-dimensional case of exponential integrals. The irregular Riemann--Hilbert correspondence, in the corresponding finite dimensional case 
\begin{equation}
\label{eq:finite-oscillatory-integral}
\int_{\Gamma} e^{i\hbar F(x)} \mathrm{vol}
\end{equation}
where $F: X \to \C$ is a finite dimensional holomorphic Morse function and $\mathrm{vol} \in \Omega^{top}(X)$ is a holomorphic volume form, then allows one to reconstruct the differential equation satisfied by functions of the form \eqref{eq:finite-oscillatory-integral}. We will not exposit the theory here; for details, see for example \cite{vanderPut2003}. One may summarize \cite{Kontsevich2020} by saying that the exceptional $J_\theta$-holomorphic strips between complex Lagrangians may be counted to enhance the $S^1$ family of Fukaya categories $\mathrm{Fuk}(M, \omega_\theta)$ into a holomorphic family of categories over  over $\mathbb{P}^1$ where morphisms between complex Lagrangians are $D$-modules with irregular singularities at $0$ and $\infty$, which formally correspond to the path integrals \eqref{eq:path-integral}. 

In the case of $(L_0, L_1)$ being a pair of complex Lagrangian submanifolds of a complex cotangent bundle $T^*X$, the Stokes data associated to the two integrals \eqref{eq:path-integral}, \eqref{eq:finite-oscillatory-integral} should be identified as a consequence of Floer's theorem \eqref{Eq_MorseEqualsFloer}. The categorification of this statement is our  \autoref{conj:categorical-floer-theorem} below. Thus, we see the structure of the Fueter $2$-category as a categorification of the approach to holomorphic Floer theory suggested in \cite{Kontsevich2020}.

The following problems, while not strictly related to the Fueter $2$-category, would go a long way to giving foundations to the Fueter $2$-category in the setting where the complex Lagrangians involved have bad intersections.
\begin{problem}
Make sense of the global monodromy operator $ HF^*(L_0, L_1; \omega_{\hat{\theta}}) \to HF^*(L_0, L_1; \omega_{\hat{\theta}})$ in cases of interest when $L_0$ intersects $L_1$ in a non-compact set. This involve finding appropriate perturbations of $L_1$ to a family of non-holomorphic submanifolds $\tilde{L}^{\theta}_1$, depending on $\theta$, which are Lagrangian for $\omega_\theta$ and are all transverse to $L_0$. The core of the problem is to understand the appropriate constraints on the perturbations near the ends at infinity of $L_0 \cap L_1$, in order to make the cohomology groups $HF^*(L_0, \tilde{L}^\theta_1; \omega_{\theta})$ isomorphic to $\mathbb{H}(P_{L_0, L_1})$ for every $\theta$. 
\end{problem}

\begin{problem}
Define a $D$-module associated to a pair of complex Lagrangians $(L_0, L_1)$ which intersect non-transversely in a non-compact locus. 
\end{problem}

\begin{remark}
The authors do not know of any cases where the global monodromy operator is known to differ from the local monodromy operator except for the cases of complex Lagrangians in complex cotangent bundles relevant to the work of \cite{Kontsevich2020}, although some such examples should exist in K3 surfaces \cite{Bousseau2022}. Explicit computations of this operator for interesting complex Lagrangians would be significant progress towards novel computations of the Fueter $2$-category.
\end{remark}

\paragraph{Categorical Hochschild homology.} 
In this section, we explain the meaning of the invariant $\overline{HH}_*(\mathrm{Fuet}_M)$, which should be connected to the category $Fuet_M(S^1)$ (\autoref{eq:loop-space-complex-functional}, \autoref{eq:categorical-open-closed-map})  that the Fueter TQFT assigns to a circle. 
Recall that the Fukaya category $\mathrm{Fuk}(X)$ is really an $A_\infty$-category, which, from the perspective of homotopy theory, is a particular model for a stable $(\infty, 1)$-category \cite{LurieHA}. A stable $(\infty, 1)$-category has a monoidal $(\infty,1)$-category of endomorphisms $\mathrm{End}(\mathcal{F}(X))$, and a unit object represented by the identity functor $\mathbf{1}: \mathcal{F}(X) \to \mathcal{F}(X)$. The Hochschild homology $HH_*(\mathcal{F}(X))$ is then simply the homotopy groups of the two-sided self-tensor product of the identity $\mathbf{1}$, thought of as an object of $\mathrm{End}(\mathcal{F}(X))$.

Correspondingly, the Fueter $2$-category described in \autoref{sec:fueter-tqft} should be the cohomology category of a sort of $(A_\infty, 2)$-category defined by counting Fueter maps from various domains. It is not clear what is the precise algebraic model of an $(A_\infty,2)$-category is appropriate for this setting; we discuss this problem in the next section. Nonetheless, whatever the appropriate model is, a basic desideratum is that the data of the Fueter $2$-category should give rise, from the perspective of homotopy theory, to an $(\infty, 2)$-category, which we will also denote by $\mathrm{Fuet}_M$ for convenience. An $(\infty, 2)$-category is meant to be an object where where the morphisms between two objects form an $(\infty, 1)$-category.  Certain foundations for $(\infty, 2)$-categories have been laid by Gaitsgory--Rozenblym \cite{gaitsgory-rozenblym}. In particular, given an $(\infty, 2)$-category $\mathcal{C}$ such as the one associated to the Fueter $2$-category, there is an $(\infty,2)$-category of endomorphisms $\mathrm{End}(\mathcal{C})$, and a unit object $\mathbf{1}_{\mathcal{C}}: \mathcal{C} \to \mathcal{C}$ corresponding to the identity functor. Using this machinery, we get a rather inexplicit definition of categorical Hochschild homology as
\begin{equation}
\label{eq:categorical-hochscild-homology-def}
    \overline{HH}_*(\mathrm{Fuet}_M) := \mathbf{1}_{\mathrm{Fuet}_M} \tensor_{(Fuet_M \tensor Fuet_M^{op})} \mathbf{1}_{\mathrm{Fuet}_M}. 
\end{equation}
A more convenient model for $\overline{HH}_*(\mathrm{Fuet}_M)$, analogous to the cyclic bar complex of an $A_\infty$-category, is needed for the definition of the open-closed map \eqref{eq:categorical-open-closed-map}; we state this as a problem below.

We can take a different perspective to link up with a topic of interest in representation theory. Assume that we have a collection of generating objects $L_0, \ldots, L_k$ of $\mathrm{Fuet}_M$, meaning that $\mathrm{Fuet}_M$ embeds faithfully into the category of modules over the category $\mathrm{End}(L_0 \oplus \ldots \oplus L_k)$ via the Yoneda embedding. Now $End(L_0 \oplus \ldots \oplus L_k)$ is going to be a \emph{monoidal} $A_\infty$ category, where the monoidal structure arises from composition of $1$-morphisms as in \autoref{sec:fueter-tqft}. We will then have that $\overline{HH}_*(\mathrm{Fuet}_M)$ is the \emph{categorical trace} of the identity functor on this monoidal $1$-category \cite[Chapter~3]{gaitsgory-on-traces}. Thus, one can use ansatzes motivated by representation theory to make guesses as to the nature of $\mathrm{Fuet}_M(S^1)$ for $M$ of representation-theoretic interest \cite{gammage2022betti}.

\subsection{Combinatorics of Fueter domains}
\label{sec:concrete-2-category-theory}

While abstract foundations for $(\infty,2)$-category have been systematically explored, they are not sufficient for development of the Fueter $2$-category. Indeed, symplectic topology depends significantly on the identification of the associahedron with a connected component the real locus in the moduli space of stable spheres, which underlies the combinatorics of $A_\infty$-categories and makes it possible to write down many basic operations such as the open-closed map. 

A higher-dimensional generalization of the $A_\infty$ combinatorics is required for the full formulation of the Fueter $2$-category. One basic combinatorial model for an $(\infty, 2)$-category which generalizes  is given by the $2$-associahedra, which underly the theory of $(A_\infty, 2)$-categories \cite{bottman-carmel}.  These were developed with the aim of piecing together the Fukaya $A_\infty$ categories of all sympletic manifolds into an $(\infty, 2)$-categorical object. It is an interesting question to the authors whether the $2$-associahedra can be arranged to control the boundary conditions in such a way that one can give a formal construction of the $(\infty, 2)$ category $\mathrm{Fuet}_M$ given a plausible compactification theory for the Fueter equations. 

A concrete manifestation of this problem is a consequence of the fact that the counts of holomorphic triangles defining composition of $1$-morphisms in the Fueter $2$-category may give a \emph{sum} of $1$-morphisms of $\mathrm{Fuet}_M$ as an output. In particular, it is possible that the composition of $1$-morphisms may only be defined on the total Fukaya Seidel categories
\begin{equation}
    tw \mathrm{FS}^{\zeta, \to}_{\hat{\theta}}(\cA^\C_{L_0,L_1}) \tensor tw \mathrm{FS}^{\zeta, \to}_{\hat{\theta}}(\mathcal{A}^\C_{L_1, L_2}) \to tw \mathrm{FS}^{\zeta, \to}_{\hat{\theta}}(\mathcal{A}^\C_{L_0, L_2}),
\end{equation}
where we recall that $tw \mathrm{FS}^{\zeta, \to}_{\hat{\theta}}(\mathcal{A}^\C_{L_i, L_j})= \mathrm{FS}^\zeta_{\hat{\theta}}(\mathcal{A}^\C_{L_i, L_j}) = \mathrm{Fuet}_M(L_i, L_j)$ by definition, and
 may not be induced by a functor on the directed Fukaya-Seidel categories 
\begin{equation}
    \mathrm{FS}^{\zeta, \to}_{\hat{\theta}}(\cA^\C_{L_0,L_1}) \tensor \mathrm{FS}^{\zeta, \to}_{\hat{\theta}}(\mathcal{A}^\C_{L_1, L_2}) \to \mathrm{FS}^{\zeta, \to}_{\hat{\theta}}(\mathcal{A}^\C_{L_0, L_2}).
\end{equation}

Indeed, the correct definition of composition may take as input a pair $p \in L_0 \cap L_1$, $q \in L_1 \cap L_2$, and produce a \emph{twisted complex of intersection points} (see \cite{Seidel2008} for algebraic background)
\begin{equation*}
    m_2(p, q) \in tw \mathrm{FS}^{\zeta, \to}_{\hat{\theta}}(\mathcal{A}_{L_0, L_2}^\C) = \mathrm{Fuet}_M(L_0, L_2).
\end{equation*} 
This possibility was pointed out to us by Ahsan Khan \cite{ahsan-khan}, and is explored in that forthcoming work. See \autoref{fig:twisted-complexes} for a diagram of the domain of a Fueter map producing the Maurer--Cartan datum defining the twisted complex $m_2(p, q)$, as well as comments as to why such putative Maurer--Cartan data may in fact be forced to be zero. Regardless, clarifying the combinatorial and higher-categorical operadic structures associated to the Fueter equations is an exciting problem, which in large part belongs to topology, geometry, and algebra rather than to geometric analysis, and thus may be interesting and accessible to mathematicians working in other disciplines.

\begin{figure}
    \centering
    \includegraphics[width=8cm]{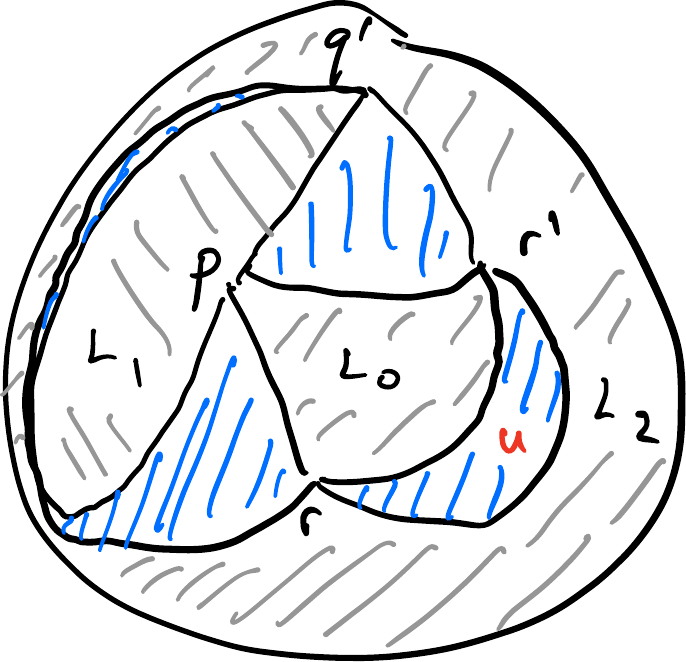}
    \caption{\small This diagram was explained to the authors by Ahsan Khan \cite{ahsan-khan}. It shows asymptotic conditions drawn on $S^2 = \partial B^3$ for a Fueter map which may give rise to a Maurer--Cartan datum given by a count of $2$-morphisms $u \in Fuet(L_0, L_2)(r, r')$ for the twisted complex of intersection points $r \in L_0 \cap L_2$ corresponding to the composition of the $1$-morphisms $p \in Fuet(L_0, L_1)$ and $q' \in Fuet(L_1, L_2)$. The counts of holomorphic triangles $C_{pq'}^r$, as per Section \ref{sec:fueter-tqft}, define the untwisted object $\mathcal{O} = \oplus_r C_{pq'}^r r$, and the above domains define morphisms $r \to r'$ which can be used to deform the differential on $\mathcal{O}$ appropriately. Note however that if each of $(L_0, L_1, L_2)$ intersect transversely then all holomorphic curves (blue) must be constant by Solomon--Verbitsky \cite{Solomon2019}, and then $r=r'$ and the Maurer--Cartan datum is zero. Whether the Maurer--Cartan data associated to such domains are ever nonzero  remains to be be clarified in general and in examples.  }
    \label{fig:twisted-complexes}
\end{figure}

\begin{problem}
Explain the precise cominatorics of surfaces and boundary conditions needed to make sense of the horizontal compositions of $2$-morphisms in the Fueter $2$-category. This likely involves the introduction of certain novel operads which are $2$-categorical counterparts to the $A_\infty$ operad. These combinatorics should be different in Gaiotto--Moore--Witten and Haydys models for the Fueter $2$-category. Can one provide a concrete, combinatorial definition of $(\infty,2)$-categories by following this perspective?
\end{problem}

\begin{problem}
Make sense of the cyclic bar complex of a $2$-category. Define the open-closed map \eqref{eq:categorical-open-closed-map} and use it to compute $\mathrm{Fuet}_M$ for interesting cases.
\end{problem}

\subsection{Examples from algebraic geometry}
\label{sec:mirror-symmetry-for-toric-varieties}

Using the conjectures of  \autoref{sec:cotangent-bundles-conjecture} together with the existing results on $2d$ mirror symmetry, one can derive predictions regarding existing invariants which may be surprising without the perspective of the Fueter TQFT. We thank Denis Auroux for initial inspiration regarding   \autoref{conj:mirror-to-addition-functor} and Justin Hilburn and Benjamin Gammage for further discussion regarding the relevant toric geometry.

For example, let us specialize \autoref{conj:categorical-floer-theorem} to $X = (\C^*)^n$. In this setting, $X$ and $T^*X$ admit flat metrics, and thus bubbling of Fueter maps is a priori excluded \cite{Hohloch2009}, and thus the conjecutres are more likely to hold without modification. The study of the Fukaya--Seidel categories of Laurent polynomials $F: (\C^*)^n \to \C$ is part of the subject of homological mirror symmetry for toric varieties, which identify these categories with categories of coherent sheaves on certain toric varieties. While there are at this point a variety of approaches to the subject \cite{abouzaid2009morse, fang2008t, hanlon2022aspects, shendetoric}, we state an early result below:

\begin{theorem}[Abouzaid \cite{abouzaid2009morse}]
\label{thm:abouzaids-toric-variety-theorem}
Let $Y$ be a smooth projective Fano toric variety, which is determined by a simplicial rational polyhedral fan $\Delta_Q$ on the faces on the convex polytope $Q \subset \R^n$, such that 
\begin{itemize}
    \item the interior of $Q$ contains $0$,
    \item the vertices of $Q$ are primitive vectors in $\Z^n$, and 
    \item the vertices on every codimension $1$-face of $Q$ are a $\Z$-basis of $\Z^n$. 
\end{itemize}   
Then $D^bCoh(Y)) \subset \mathrm{FS}((\C^*)^n,F_Y)$, where $F: (\C^*)^n \to \C$ is any generic Laurent polynomial which consists of monomials with exponents given by the the vertices of $Q$. 
\end{theorem}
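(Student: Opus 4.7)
The plan is to construct an explicit $A_\infty$-functor from a suitable generating subcategory of $D^b\mathrm{Coh}(Y)$ to $\mathrm{FS}((\C^*)^n, F_Y)$ that is fully faithful on morphism complexes and essentially injective on objects. The strategy goes through tropical geometry, following the general philosophy that when $F_Y$ is taken in a large-volume/tropical limit, the geometry of its Lefschetz thimbles is organized by the tropical hypersurface $\mathrm{Trop}(F_Y^{-1}(0))$, which is combinatorially dual to the fan $\Delta_Q$ of $Y$.

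First, I would identify an appropriate set of objects on each side. On the $B$-side, line bundles of the form $\mathcal{O}(D)$ for toric divisors $D$ generate $D^b\mathrm{Coh}(Y)$ when $Y$ is a smooth projective Fano toric variety (indeed, by a theorem of Kawamata, one can produce a tilting object whose summands are line bundles). On the $A$-side, one associates to each toric divisor $D$ a \emph{tropical Lagrangian section} $L_D \subset (\C^*)^n$. Concretely, writing $(\C^*)^n = T^*(\R^n)/2\pi i \Z^n$ via $(z_j) \mapsto (\log |z_j|, \arg z_j)$, the Lagrangian $L_D$ is the graph of the differential of a piecewise-linear function on $\R^n$ whose domains of linearity correspond to the maximal cones of $\Delta_Q$, with slopes chosen to realize the divisor $D$. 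These are Lagrangian submanifolds which, after an admissibility prescription at infinity, are objects of $\mathrm{FS}((\C^*)^n, F_Y)$, being isotopic through admissible Lagrangians to Lefschetz thimbles of a suitable perturbation of $F_Y$.

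Second, I would compute the Floer cohomology $HF^*(L_{D_1}, L_{D_2})$ and compare it to $\mathrm{Ext}^*(\mathcal{O}(D_1), \mathcal{O}(D_2)) = H^*(Y, \mathcal{O}(D_2 - D_1))$. The point is that under the tropical/large-volume limit, holomorphic strips connecting $L_{D_1}$ to $L_{D_2}$ localize near the tropical skeleton dual to $\Delta_Q$, and their counts are governed by a tropical/Morse-theoretic model on the base $\R^n$. The standard toric description of $H^*(Y, \mathcal{O}(D))$ via lattice points in polytopes $P_D$ matches the generators of the Floer complex (intersection points of $L_{D_1}$ and $L_{D_2}$ in a Hamiltonian perturbation, indexed by characters $m \in M = \Z^n$ with Newton polytope conditions), and the differential vanishes on the nose by a tropical positivity/combinatorial argument. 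This step is essentially Abouzaid's tropical Morse theory computation; the cornerstone is constructing an explicit model for the Floer differential and higher $A_\infty$-products that reduces to combinatorial counts on the fan.

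Third, one upgrades the chain-level isomorphism on morphism spaces to an $A_\infty$-functor by matching compositions: tropical pseudo-holomorphic polygons with boundary on $L_{D_0}, \ldots, L_{D_k}$ are matched with the Čech description of cup products in $H^*(Y, \bigoplus \mathcal{O}(D_{i+1} - D_i))$. Finally, using a resolution of the diagonal of $Y$ by sums of line bundles (a toric Beilinson resolution in the Fano case), one deduces that the full subcategory generated by the $L_D$ on the $A$-side contains a tilting object of $D^b\mathrm{Coh}(Y)$, yielding the desired fully faithful embedding.

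The hard part is twofold. The technical core is the Floer-theoretic computation: one must control all holomorphic strips and polygons with boundary on non-compact Lagrangians in a non-exact setting (since $(\C^*)^n$ has nontrivial $\pi_1$ with respect to these Lagrangians), which requires a careful wrapping/admissibility scheme at infinity adapted to the critical values of $F_Y$, and a proof that no extra pseudo-holomorphic discs contribute beyond the tropical ones. The conceptual core is organizing the match between the toric combinatorics (lattice points, fans, polytope characters) and the symplectic combinatorics (intersection points, Maslov indices, admissible isotopy classes) so that the $A_\infty$-structure is transported correctly; this is where the Fano hypothesis and the primitivity conditions on the vertices of $Q$ enter, ensuring that the relevant Lagrangian sections are unobstructed and that their tropical combinatorics genuinely reproduce line bundle cohomology on $Y$ rather than on a partial compactification.
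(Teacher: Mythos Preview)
The paper does not prove this theorem; it is quoted as a known result due to Abouzaid \cite{abouzaid2009morse}, stated in \autoref{sec:mirror-symmetry-for-toric-varieties} solely to provide context for \autoref{conj:mirror-to-addition-functor}. There is therefore no proof in the paper to compare your proposal against.

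That said, your sketch is a reasonable summary of the strategy in Abouzaid's cited paper: associating to each line bundle a tropical Lagrangian section of $T^*\R^n/\Z^n \cong (\C^*)^n$, computing Floer cohomology via a tropical Morse model on the base whose generators are lattice points in the relevant polytopes, and matching the $A_\infty$-products with the Čech description of sheaf cohomology on $Y$. One point of caution: Abouzaid does not work with Lefschetz thimbles of $F_Y$ directly but rather with an admissibility condition on Lagrangian sections near the tropical amoeba of $F_Y^{-1}(0)$; the identification of his category with a Fukaya--Seidel category in the usual sense is part of the later work you would need to invoke. Also, the generation argument in Abouzaid's paper is more delicate than a single Beilinson-type resolution---he shows the sections split-generate by a direct argument involving the combinatorics of the fan. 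But since the paper under review offers no proof of its own, these are remarks about the source reference rather than a discrepancy with the present paper.
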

\begin{remark}
Later papers \cite{hanlon2022aspects, shendetoric}, using a modified definition of $FS((\C^*)^n, F_Y)$ in terms of partially-wrapped Fukaya categories, show that the above containment is an isomorphism.
\end{remark}

One can ask what operation on toric varieties corresponds to taking the sum the sum of two such Laurent polynomials $(F_{Y_1}, F_{Y_2})$. We argue that this corresponds to superimposing the associated fans. Explicitly, let us take the product of the toric varieties $Y_1 \times Y_2$ and then taking a GIT quotient with respect to the diagonal torus action $(\C^*)^n \subset (\C^*)^n \times (\C*)^n \curvearrowright Y_1 \times Y_2$. We take this with respect to an equivariant ample line bundle $\xi = \xi_1 \boxtimes \xi_2$ where $\xi_i$ is an equivariant ample line bundle on $Y_i$. The GIT quotient $Y = Y_1 \times Y_2 //_\xi (\C^*)^n$ will will have a fan given by the cones through zero on the faces of the convex hull of the union of each of the Newton polytopes associated to $F_{Y_1}$ and $F_{Y_2}$. Indeed, this follows from the fact that the dual polytope $(C_1 \cap C_2)^*$ of an intersection of convex polytopes $C_1, C_2$ is the convex hull of $C_1^* \cup C_2^*$, together with the bijection between ample equivariant bundles on a toric variety $Y_\Delta$  and polyhedra with normal fan $\Delta$,   and the description of the GIT quotient of a toric variety by a subtorus \cite{thaddeus1996geometric}. 

Thus, given a pair of smooth proper toric varieties $(Y_1, Y_2)$ with associated Laurent polynomials $(F_{Y_1}, F_{Y_2})$, we have a diagram 
\begin{equation}
    \begin{tikzcd}
     & Y := Y_1 \times Y_2 //_\xi (\C^*)^n \arrow[ld, swap, "\pi_1"] \arrow[rd, "\pi_2"] & \\
     Y_1 & & Y_2
    \end{tikzcd}
\end{equation}
We conjecture that this interesting operation may be mirror to addition of Laurent polynomials in the following sense:

\begin{conjecture}
\label{conj:mirror-to-addition-functor}
Suppose that the toric variety $Y$ as above is smooth and proper. Then the functor of  \autoref{conj:composition-in-Lefschetz-fibrations}, under homological mirror symmetry for $(Y_1, Y_2, Y)$, becomes the functor 
\begin{equation}
\label{eq:mirror-to-addition-functor}
\begin{gathered}
    D^bCoh(Y_1) \tensor D^bCoh(Y_2) \to  D^bCoh(Y) \\
    (\mathcal{L}_1, \mathcal{L}_2) \mapsto \pi_1^* \mathcal{L}_1 \tensor \pi_2^* \mathcal{L}_2 .
\end{gathered}
\end{equation}
\end{conjecture}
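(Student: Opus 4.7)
The plan is to verify the conjectured identity on the generators provided by line bundles associated with toric divisors and then extend by monoidality and by the triangulated structure on both sides. By \autoref{thm:abouzaids-toric-variety-theorem} and its refinements in \cite{hanlon2022aspects, shendetoric}, $D^bCoh(Y_i)$ is generated by a finite collection of equivariant line bundles $\{\cL_i^{(\alpha)}\}$ whose mirror thimbles $L^{(\alpha)}_i \in \mathrm{FS}((\C^*)^n, F_{Y_i})$ admit an explicit combinatorial description in terms of piecewise linear support functions on $\Delta_{Y_i}$. Since the functor \eqref{eq:mirror-to-addition-functor} is tensor-compatible and exact in each variable, the conjecture reduces to comparing the two sides on pairs $(\cL_1^{(\alpha)}, \cL_2^{(\beta)})$ of such generators.

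Next, the composition functor predicted by \autoref{conj:composition-in-Lefschetz-fibrations} arises, by the derivation in \autoref{sec:cotangent-bundles-conjecture}, by composing $1$-morphisms in $\mathrm{Fuet}_{T^*(\C^*)^n}$ and transporting the result by the time-one flow of the holomorphic Hamiltonian $-\pi^*F_{Y_1}$. Since $(\C^*)^n$ admits a flat K\"ahler metric, \autoref{Thm_FloerCorrespondence} applies; moreover, \autoref{remark:exclude-fueter-bubbling} rules out bubbling and essential singularities of Fueter maps a priori in this flat setting. Consequently the Fueter-theoretic composition should be computable by a count of complex gradient \emph{tripods} in $(\C^*)^n$: maps from a thrice-punctured sphere into $(\C^*)^n$ solving the complex gradient equation near each puncture with asymptotic flowlines between critical points of $F_{Y_1}$, $F_{Y_2}$, and $F_{Y_1}+F_{Y_2}$ respectively. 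In this way the conjecture is reduced to a finite-dimensional question about Morse-theoretic counts on a torus.

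The combinatorial core of the argument is then to match these counts to the algebro-geometric formula \eqref{eq:mirror-to-addition-functor}. The critical points of a generic toric Laurent polynomial $F_Y$ are indexed by the maximal cones of $\Delta_Y$, and in the tropical limit complex gradient trajectories concentrate on the one-dimensional skeleta determined by the amoeba of $F_Y$, following the picture pioneered in \cite{abouzaid2009morse}. The Newton polytope of $F_{Y_1}+F_{Y_2}$ is the convex hull of those of $F_{Y_1}$ and $F_{Y_2}$, and the fan of $Y$ is by construction the common refinement of $\Delta_{Y_1}$ and $\Delta_{Y_2}$, so the tripod counts should factor through the combinatorics of this common refinement --- which is the same combinatorics that governs $\pi_1^*\cL_1\otimes \pi_2^*\cL_2$, since the support function of a pullback-tensor bundle on $Y$ is the sum of the pulled-back support functions from $Y_1$ and $Y_2$. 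Matching the two sides up to signs should follow from the combinatorial model of thimbles in \cite{hanlon2022aspects, shendetoric}, combined with a Beilinson-type resolution of the diagonal on $Y$ that reduces to pairs of tautological line bundles, for which both sides admit an explicit presentation.

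The main obstacle is of course that \autoref{conj:composition-in-Lefschetz-fibrations} itself depends on a rigorous construction of $\mathrm{Fuet}_M$ and its composition bifunctor, which at present remains open. Even granting this, the delicate analytic step is to verify that the Fueter triangles in $T^*(\C^*)^n$ detect the common refinement of $\Delta_{Y_1}$ and $\Delta_{Y_2}$, rather than some coarser or finer combinatorial structure; this will require a careful tropical analysis of gradient tripods in the regime where the Newton polytopes of $F_{Y_1}$ and $F_{Y_2}$ have overlapping faces, which is precisely where unexpected correction terms or sign discrepancies are most likely to appear.
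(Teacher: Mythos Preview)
The statement you are attempting to prove is labeled a \emph{Conjecture} in the paper, and the paper offers no proof. The surrounding text gives only heuristic evidence: equivariant line bundles on a toric variety correspond to piecewise-linear functions on the fan, pullback-and-tensor corresponds to addition of such functions, and the sum of two piecewise-linear functions is piecewise-linear on the common refinement of the fans. This is precisely the combinatorial observation you reproduce in your third paragraph, so your sketch is consistent with the paper's motivation.

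That said, what you have written is not a proof but a plausible strategy, and you correctly flag the essential obstruction yourself: the functor whose mirror you are trying to identify is the one predicted by \autoref{conj:composition-in-Lefschetz-fibrations}, which in turn rests on the undefined composition bifunctor in $\mathrm{Fuet}_{T^*(\C^*)^n}$. Until that object exists, there is nothing to compare the algebro-geometric side against. Your invocation of \autoref{Thm_FloerCorrespondence} and \autoref{remark:exclude-fueter-bubbling} is reasonable for controlling Fueter strips, but the composition of $1$-morphisms involves Fueter maps with triangular boundary conditions (\autoref{fig:fueter-domains-3}), and neither the Fredholm theory nor the compactness for those domains has been established; the flatness of $(\C^*)^n$ helps with bubbling but does not by itself supply the missing analytic framework. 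The tropical-tripod picture you sketch is appealing but speculative: no such analysis appears in the paper or in the cited literature, and carrying it out would be a substantial project in its own right. In short, your outline is in the right spirit and matches the paper's own heuristic, but it should be read as a research program rather than a proof.
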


\begin{figure}[H]
    \centering
    \includegraphics[width=11cm]{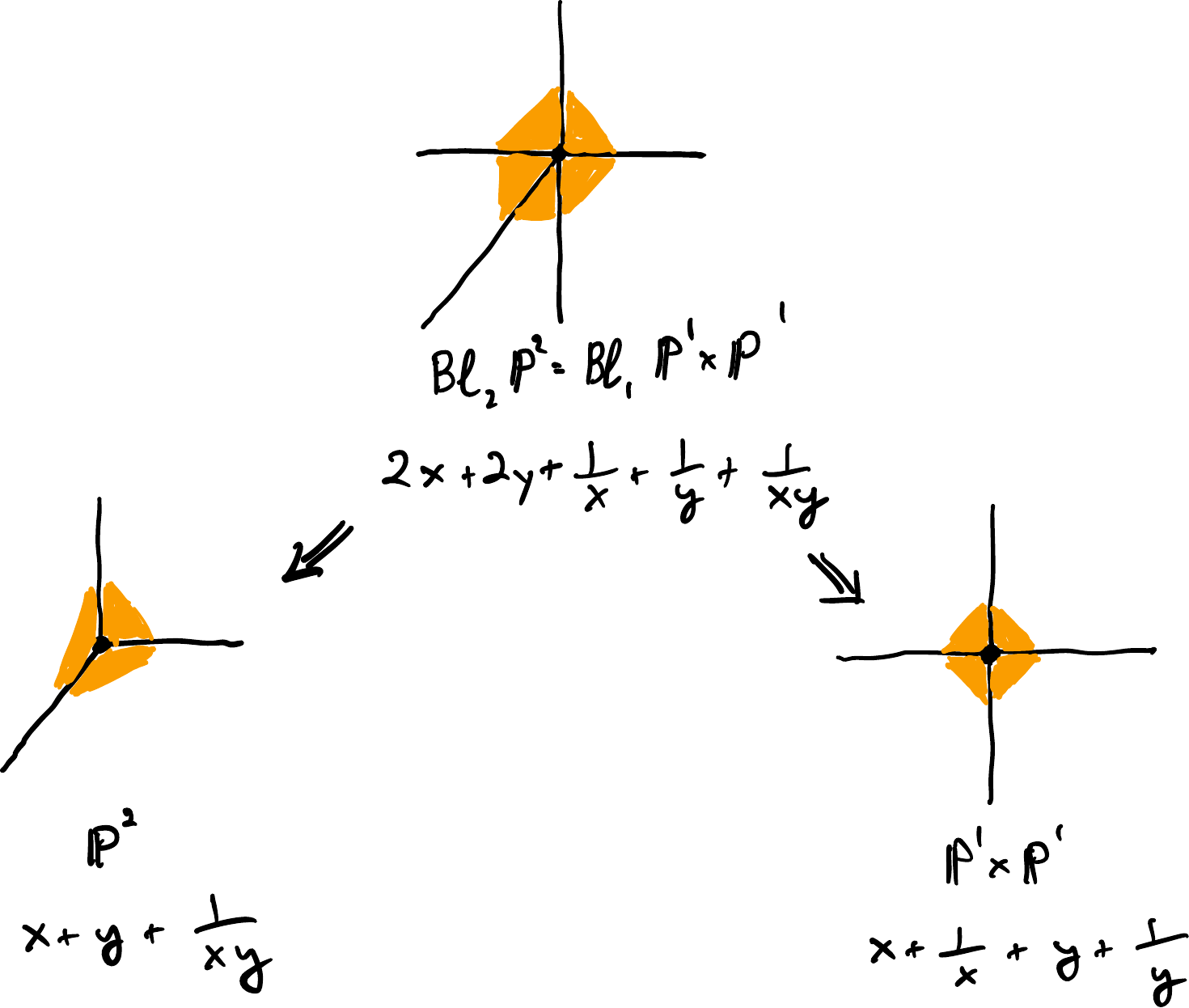}
    \caption{\small A composition functor on toric varieties inspired by Conjecture \ref{conj:composition-in-Lefschetz-fibrations}. Here, the sum of $(x+y+1/xy)$ and $x+y+1/x+1/y$ is mirror to the del Pezzo surface given by the $2$-point blow-up of $\mathbb{P}^2$, which is equal to the $1$-point blow-up of $\mathbb{P}^1 \times \mathbb{P}^1$. Addition of Laurent polynomials corresponds to the convex hull of the union of their Newton polytopes. }
    \label{fig:algebraic-geometry}
\end{figure}

It is a basic test of our understanding of the Fueter $2$-category to fully understand the composition operation on hom-categories in the simplest setting. One should be able to generalize this conjecture to cases where $Y$ is not smooth by taking a resolution of $Y$. It is interesting to consider the case where the $Y_i$ are not proper; unfortunately, while $\mathrm{FS}(Y_i, F_{Y_i})$ can be defined via sheaf-theoretic methods \cite{GPS}, there is currently no known complex Morse theory model of Fukaya--Seidel categories where the wrapping is not fully stopped. 
To get a handle on the functor of \eqref{eq:mirror-to-addition-functor}, recall that equivariant line bundles on $Y_i$ correspond to piecewise integral functions on the fan which are linear on each cone, and to forget the equivariant structure one considers such piecewise-linear functions up to globally linear functions \cite{klyachko1990equivariant}. Thus, the map \eqref{eq:mirror-to-addition-functor} simply corresponds to addition of piecewise-linear functions; the sum of two piecewise-linear functions is piecewise linear on the superimposed cones! Moreover, these line bundles are mirror to explicit Lagrangian submanifolds \cite{abouzaid2009morse}. This example can be worked out completely in the simplest nontrivial case, as in \autoref{fig:algebraic-geometry}.

\paragraph{Singularity categories.}
\label{sec:singularity-categories}
There are a number of other examples in 2-dimensional mirror symmetry beyond the case of toric mirror symmetry of  \autoref{sec:mirror-symmetry-for-toric-varieties} where multiple functions the same manifold are known the be mirror to certain objects of algebraic geometry. A natural case is the setting of $X = \C^n$, which is also flat; this includes the cases of mirror symmetry for $ADE$ singularities \cite{Khovanov2001, ebeling-singularities-survey} and Berglund--Hubsch mirror symmetry \cite{Berglund1993}. It is interesting to explore the consequences of  \autoref{conj:composition-in-Lefschetz-fibrations} on the $B$-side in these cases and to identify constructions for the putative monoidal operations via the methods of algebraic geometry. This may help clarify under what geometric conditions the composition operations of  \autoref{conj:composition-in-Lefschetz-fibrations} make sense, which should suggest the appropriate way to handle the differential geometry of non-compact complex Lagrangians in the Fueter $2$-category. 

\subsection{Further analytic aspects}
\label{sec:further-analytic-aspects}
To establish the existence categorical structures described above, a number of essential analytic aspects of the Fueter equation remain to be clarified. Excluding the well-known open problems regarding the compactness theory of the Fueter equation, we outline what we see as the basic conceptual challenges suggested by this categorical framework. These are the problems of transversality of the critical locus and Fueter trajectories, of wrapping Fueter maps, and the most general problem of the domain of definition of the Fueter $2$-category. 

\paragraph{Morse--Bott theory.}
\label{sec:morse-bott-action-functionals}
An essential challenge in setting up the Fueter $2$-category is that the holomorphic action functional $\mathcal{A}^\C_{L, L}$ is not Morse, but is instead \emph{Morse-Bott}, with critical locus equal to $L$ viewed as the set of constant paths from $L$ to $L$.  As such, one does not have access to the basic structures of the complex Morse theory model of $\mathrm{FS}(\mathcal{A}^\C_{L, L})$, which require that the holomorphic function is Morse, and a Morse--Bott analog requires significant new ideas

\begin{problem}
\label{problem:endomorphism-category}
Make sense of the monoidal endomorphism category $\mathrm{Fuet}_M(L, L)$ where $L$ is a complex Lagrangian. 
\end{problem}

In the finite dimensional case, one way to define the Fukaya--Seidel category of a Morse--Bott symplectic Lefschetz fibration $F:X \to \C$ by perturbing $F$ through maps to a nearby map $\tilde{F}$ which is Morse (if this is possible), and defining $\mathrm{FS}(X,F) := \mathrm{FS}(X,\tilde{F})$. It is crucial, to control the behavior of the perturbation $F \rightsquigarrow \tilde{F}$ at infinity, since otherwise this definition of $\mathrm{FS}(X,F)$ may be highly dependent on perturbation. For example, if the critical locus $\mathrm{Crit}(F)$ of $F$ is compact, one should use a perturbation of $F$ which is constant near infinity. If $\mathrm{Crit}(F)$ is non-compact, then there may be no preferred perturbation, and there may be multiple inequivalent ways of defining the complex Morse theory of $F$.  Indeed, in the real case of the Floer cohomology of a non-compact Lagrangian (which amounts to the real  Morse theory of a real Morse--Bott action functional with non-compact critical locus), there are essentially different variants of Lagrangian Floer cohomology, such as the wrapped \cite{Abbondandolo2005} or infinitesimally wrapped \cite{NadlerZaslow} variants.  We note that  \autoref{Thm_FloerCorrespondence} and the associated \autoref{conj:categorical-floer-theorem} suggest that in the case of non-compact complex Lagrangians $L$, perturbing $L$ through nearby exact complex Lagrangians can drastically change the behavior of the Fueter hom-category, essentially because different holomorphic functions can have very different asymptotic geometry. 

Unfortunately, even the idea of perturbing the Morse--Bott functional $\mathcal{A}^\C_{L, L}$ to a Morse action functional is non-trivial. The basic challenge is the control of the energy \eqref{eq:energy-introduction} of the associated Fueter trajectories, which are needed to have any hope of compactness of resulting moduli spaces.

\begin{problem}
\label{problem:nondeg-perturbations-of-complex-action-functional}
Find a way of perturbing $\mathcal{A}^\C_{L, L}$ through action functionals to one with nondegenerate critical points such there still exist energy bounds for the associated perturbations of Fueter trajectories. 
\end{problem}

In the setting of Lagrangian Floer cohomology, when $L$ is compact, one can perturb $\mathcal{A}_{L, L}$ to a Morse action functional by adding a Hamiltonian term to the symplectic action, or equivalently by perturbing $L$ via an exact deformation to a new Lagrangian $\tilde{L}$ which intersects $L$ transversely. Unfortunately, while it is easy to prove energy bounds for the corresponding perturbations of $\mathcal{A}^\C_{L, L}$ in the complexified setup, there are not enough of these perturbations:
\begin{itemize}
    \item A complex submanifold may not admit a deformation to a nearby transversely intersecting complex submanifold. For example, every curve on a complex symplectic surface is a complex Lagrangian. An easy computation with the adjunction formula shows that any rational curve $C$ on a K3 surface $S$ has normal bundle $N_{C/S} \simeq O(-2)$, and is thus rigid, while rational curves on $K3$ surfaces are plentiful \cite{Li2011}.
    \item Complex symplectic manifolds may have few global holomorphic functions, and the global holomorphic functions that do exist when viewed as complex Hamiltonians may be insufficient to perturb a complex Lagrangian $L$ to a transversely intersecting submanifold. A natural example is the zero-section of $T^*\mathbb{P}^1$.
    \item If such perturbations existed, one would expect that taking their real parts would give perturbations of the real symplectic action functionals $\Re(e^{-i\theta}\mathcal{A}^\C_{L, L})$. But the latter functionals have Morse homology equal to the homology of $L$, and so have critical points of odd or even index and the associated Morse complexes even have differentials. In finite dimensions, the critical points of a holomorphic Morse function are all of the same index with no differentials, and it is not clear how infinite dimensional analogs of this may be reconciled with known facts about the real Morse homology of $\Re(e^{-i\theta}\mathcal{A}^\C_{L, L})$.
\end{itemize}

An alternative approach is to follow the analogy with finite-dimensional Fukaya--Seidel categories, where one leaves the holomorphic category entirely and perturbs $F$ through symplectic rather than holomorphic maps. The difficulty in this case is that energy bounds, and the construction of symplectic perturbations of the holomorphic action functional $\cA_\C$ become more challenging. One may alternatively attempt to perturb $\cA_\C$ to an $S^1$-family of real  functionals  $\widetilde{\mathcal{A}}^{\theta}_{L_0, L_1}$ which are Morsifications of $\Re(e^{-i\theta}, \cA_\C)$; such perturbations are straightforward to construct, but modifying the categorical framework to work with such perturbations is nontrivial. 

Alternatively, it may be possible to define endomorphism hom-categories without resorting to perturbations of $\mathcal{A}^\C_{L, L}$. To this end, we note that there is a well known ansatz for Fukaya--Seidel categories with Morse--Bott critical locus. Namely, the critical locus of a holomorphic function on a K\"ahler manifold is itself a symplectic manifold. Thus, especially in the setting where all manifolds in sight are exact, one makes the ansatz that 
\begin{equation}
    \label{eq:A-side-knorrer}
    \mathrm{FS}(X,F) = \mathrm{Fuk}(\mathrm{Crit}(W))
\end{equation}
for an appropriate version of the Fukaya category of the critical locus. This is meant to be an $A$-side version of a variant of the $B$-side phenomenon of \emph{Knorrer periodicity}, which posits that for certain Morse--Bott algebraic functions $F$, one has a correponding identity for the category of matrix factorizations \cite{TelemanMorseBott}. 
\begin{equation}
\label{eq:knorrer-periodicity}
    \mathrm{MF}(F) \simeq D^b_{coh}(\mathrm{Crit}(F))
\end{equation}
On objects, the map \eqref{eq:A-side-knorrer} is meant to be to given by taking the Lagrangian thimble on a Lagrangian in the critical locus. It is folklore that the topology of the normal bundle to $\mathrm{Crit}(F)$ in $F$ can obstruct the definition of this map for reasons of grading; this is analogous to the $B$-side phenomenon that algebraic triviality of the normal bundle to $\mathrm{Crit}(F)$ can invalidate the putative isomorphism \eqref{eq:knorrer-periodicity} \cite{TelemanMorseBott}. Under a condition on the normal bundle and other geometric conditions, a version of \eqref{eq:A-side-knorrer} was announced in \cite{Abouzaid-Ganatra}. 

It seems unlikely that one can always take $\mathrm{Fuet}_M(L, L) = \mathrm{Fuk}(L)$ in any reasonable sense. Indeed, any K\"ahler manifold $X$ is a complex Lagrangian $T^*X$. Thus, applying this ansatz $\mathrm{Fuk}(X) = \mathrm{Fuet}_{T^*X}(X, X)$, one would conclude by the discussion of \autoref{sec:cotangent-bundles-conjecture} that the Fukaya category of any K\"ahler manifold admits a preferred monoidal structure. But this strongly contradicts expectations from $2d$ mirror symmetry. Indeed, a symplectic manifold $X$ may have many mirror partners, which loosely speaking should correspond to different choices of the SYZ fibration on $X$ \cite{kontsevich2000homological}. Moreover, different SYZ fibrations on $X$ are associated to different choices of monoidal structure on $\mathrm{Fuk}(X)$ via adddition in the fibers \cite{bottman-thesis}. Finally, given $D^bCoh(\check{X})$ as a triangulated category, we may not be able to reconstruct $\check{X}$ due to non-trivial derived equivalences; but specifying a monoidal structure on $D^bCoh(\check{X})$ uniquely determines $\check{X}$ via its Balmer spectrum \cite{Balmer2005}. Thus, the Fukaya category seems to be essentially not monoidal, and different monoidal structures on $\mathrm{Fuk}(X)$ single out preferred 2d mirror partners $\check{X}$ to $X$.

Nonetheless, in some cases an algebraic variety determined by its derived category purely as a triangulated category, without specifying to a monoidal structure \cite{Orlov04}. To that end, we ask:

\begin{problem}
    Under what conditions does the $3d$ $A$-model endow $\mathrm{Fuk}(X)$ with a monoidal structure by viewing $\mathrm{Fuk}(X)$ as $\mathrm{Fuet}_{T^*X}(X,X)$?
\end{problem}

By analogy to the finite-dimensional setting, the basic condition to study may be the topology and index theory of the normal bundle to $X$ inside $\mathcal{P}_{X,X}T^*X$. 

\paragraph{Wrapping.}
In the context of Lagrangian Floer cohomology of non-compact Lagrangians in non-compact symplectic manifolds, there is a well-developed theory of \emph{wrapping} which is essential for computations and examples in symplectic topology. Just as the Morse homology of a Morse function $f: Q \to \R$ on a non-compact manifold $Q$ depends on the geometry of $f$ near the ends of $Q$, the Floer cohomology of the Hamiltonian-perturbed symplectic action functionals
\begin{equation}
     \cA_{L_0, L_1}(\gamma) = \int_\gamma \lambda - h_1(\gamma(1)) + h_0(\gamma(0)) - \int_0^1 H(\gamma(\tau))\rd \tau 
\end{equation}
depends on the dynamics of the perturbing real Hamiltonian $H: M \to \R$ near the ends of $M$, although it is invariant under compact perturbations of $H$. By choosing the perturbing Hamiltonian be the same for all pairs of Lagrangians in the Fukaya category and prescribing certain kinds of dynamics at infinity, one recovers the theory of \emph{partially wrapped Fukaya categories} \cite{Sylvan2019}, which includes the theory of Fukaya-Seidel categories as a special case. There are powerful, sheaf-theoretic computational methods for partially-wrapped Fukaya categories \cite{GPS} which lead to examples of importance in $2d$ mirror symmetry  \cite{gammage-shende}. 

In the case of holomorphic symplectic geometry, a similar complex Hamiltonian deformation of the holomorphic symplectic action functional is possible. Namely, one can modify the holomorphic symplectic action functional to \eqref{eq:exact-holomorphic-symplectic-action-functional} by the addition of a Hamiltonian term $H: M \to \C$ via
\begin{equation}
\label{eq:wrapped-complex-action-functional}
    \mathcal{A}^\C_{L_0, L_1; H}(\gamma) = \int_{\gamma} \Lambda - H_0(\gamma_0) + H_1(\gamma_1) - \int_0^1 H(\gamma(\tau)) \rd\tau.
\end{equation}
The real parts of this functional $\Re(e^{-i\theta}\mathcal{A}^\C_{L_0, L_1; H})$ are then the deformations by the real Hamiltonians $\Re(e^{-i \theta} H)$ of $\mathcal{A}_\theta$. Thus, all these real functionals have the same critical points: indeed, this is because the Hamiltonian flows of $\Re(e^{-i \theta} H)$ with respect to $\omega_{\theta}$ for all $\theta$ agree the holomorphic Hamiltonian flow of $H$. 

Complex gradient trajectories of the perturbed functionals $\mathcal{A}^\C_{L_0, L_1; H}$ give a natural class of perturbations of the Fueter equation which admit topological energy identities.  Indeed the Fueter hom-category $\mathrm{Fuet}_{T^*X}(L_0, L_1)$ of  \autoref{conj:categorical-floer-theorem} is simply the Fukaya--Seidel category of the functional $\mathcal{A}^\C_{L_0, L_0}$, where $L_0 \subset T^*X$ is the zero section, perturbed by the pullback of the complex Hamiltonian $F: X \to \C$ to the cotangent bundle. We implicitly use this fact and the associated perturbations of the Fueter equations repeatedly throughout this paper. 

One should have ``wrapped'' versions of the Fueter $2$-category $\mathrm{Fuet}_M$ where we perturb \emph{all} functionals $\cA_\C$ by some fixed background holomorphic Hamiltonian $H: M \to \C$. Given an appropriate geometry of $H$ at infinity in $M$ (where we note that interesting $H$ will not exist for compact $M$), this wraped Fueter $2$-category should categorify the corresponding Fukaya--Seidel category $\mathrm{FS}(M, H)$. This may produce new examples of tractable Fueter $2$-categories, as such examples are often connected to representation theory. One may even speculate about complexifications of the Abbondondalo--Schwartz isomorphism for complex cotangent bundles \cite{Abbondandolo2005}. We note further that flowing by a holomorphic Hamiltonian tends to make intersections between complex Lagrangians transverse, which may partially alleviate the transversality challenges of the previous section in important examples. 

\paragraph{Transversality and non-integrability.}
Related to the transversality of the critical locus, discussed in \autoref{sec:morse-bott-action-functionals}, one wishes to achieve transversality for solutions the Fueter equation in order to define the the category itself, e.g. the coefficients of the differential in \eqref{eq:fueter-differential}.

\begin{problem}
Develop a basic transversality theory for the Fueter equation by analogy to \cite{McDuff2012}. Find natural classes of perturbations of the Fueter equation that still admit energy bounds. 
\end{problem}

The simplest class of such perturbations is given by wrapping Hamiltonians as above, but these do not often exist on general hyperk\"ahler manifolds; thus, this problem is related to  \autoref{problem:nondeg-perturbations-of-complex-action-functional} above. Another class of perturbations of the Fueter equation, analogous to perturbing an integrable complex structure on a K\"ahler manifold $(X, J, \omega)$ to an $\omega$-tamed almost-complex structure $\tilde{J}$, can be obtained by perturbing the almost-quaternionic triples $(I, J, K)$. 

\begin{problem}
Find weaker conditions besides the condition that $(I, J, K)$ are a part of a hyperk\"ahler structure which allow one to make sense of the Fueter $2$-category $\mathrm{Fuet}_M$. 
\end{problem}

Our paper makes some contribution to this problem. In \autoref{sec:taming-triples}), we show that there is a notion of taming for triples $(I, J, K)$ for which energy bounds for Fueter maps without boundary still exist; and that there is a another, rather restrictive notion, which allows for energy bounds when the boundary conditions are complex Lagrangians.
We also establish a convexity theory for almost quaternionic triples which allows for a flexible construction of tame triples, see  \autoref{Sec_Convexity}). Indeed, our paper focuses on cotangent bundles of K\"ahler manifolds, which are \emph{not} complete hyperk\"ahler; yet, the Fueter equations seem to behave well in this setting, in part because cotangent bundles admit $IJK$-convex functions; see \autoref{sec:cotangent-convexity}. One can think of symplectic topology as what remains of K\"ahler geometry when the integrability condition on $J$ is dropped and one only considers $J$ up to small perturbations. By analogy, it is natural to ask what is the most general context geometric context on which the Fueter $2$-category, or its closed string sector, is sensible. The convexity theory outlines in this paper suggests that such notions may exist.
\begin{problem}
\label{problem:weak-hyperkahler-geometry}
Develop a theory of weak or almost hyperkahler geometry, for which the Fueter equations behave in tractable ways. 
\end{problem}

We suggest that a natural starting point is the following geometric setting. Define a \emph{quaternionic Weinstein domain} to be an almost quaternionic  manifold with boundary $M$ admitting a proper $IJK$-convex function $\rho: M \to (-\infty, 0]$, see  \autoref{def:ijk-convex}), such that $\rho^{-1}(0) = \partial M$ is a regular level set of $\rho$. Such domains are a natural quaternionic analog of the Weinstein domains central to symplectic geometry \cite{GPS}, with both the intrinsic geometry of quaternionic Weinstein domains $M$ and the analytic properties of Fueter maps into such domains deserving of further study.

\subsection{Further geometric aspects}
\label{sec:further-geometric-aspects}

We hope that the previous sections have convinced the reader that the study of categorical aspects of the $3d$ Fueter equation is an exciting problem at the intersection of mathematical physics, differential geometry, algebra, and topology. While the definition of the Fueter $2$-category, not to mention its computation, poses significant challenges, connections to other perspectives on the $3d$ $N=4$ $\sigma$-model should offer many examples, guiding principles, and predictions.

In this final section, we outline how in examples of interest to mathematical physics, the Fueter $2$-category should be connected to computations in more traditional enumerative geometry and representation theory. 

\paragraph{Compact examples.}
\label{sec:compact-examples}
There are not many known examples of compact hyperk\"ahler manifolds. Up to deformation equivalence, there are only two compact hyperk\"ahler $4$-manifolds, $K3$ surfaces and abelian surfaces. All known higher-dimensional compact hyperk\"ahler manifolds are quotients by finite groups of products of manifolds which are   deformation equivalent to: Hilbert schemes of points on $K3$ surfaces or abelian surfaces,  higher-dimensional abelian varieties, or one of two exceptional examples of O'Grady \cite{Huybrechts2011}. As such, it is challenging to find examples of compact hyperk\"ahler manifolds for which one can study the Fueter equation and $3d$ mirror symmetry.

Nonetheless, the case of abelian surfaces $M$ may prove to be tractable and interesting. Indeed, if $M$ is an abelian surface, then the Fukaya category of $M$ is known in fairly explicit form \cite{fukaya2002mirror}, which is a significant first step towards the computation of $\mathrm{Fuet}_M$ due to equation \eqref{eq:decategorification-1}. The computation of these Fukaya categories is possible because the all pseudoholomorphic polygons are related to the linear geometry of these tori, and the corresponding counts of polygons are given by elaborate combinatorial formulae involving theta functions. It may be the case that just like these holomorphic polygons,  the Fueter maps relevant to $\mathrm{Fuet}_M$ are also computed by counting certain polyhedra in the universal covers of these tori. Since in this setting one can exclude all possible bubbling for the Fueter equations \cite{Hohloch2009}, the case of tori may be comparatively tractable.  

The case of $K3$ surfaces is related to Donaldson--Thomas theory and is the subject of upcoming work by Bousseau \cite{Bousseau2022}. It is possible that satisfactory solutions to  \autoref{problem:weak-hyperkahler-geometry} may give rise to a more flexible geometric world of compact weak or almost hyperk\"ahler manifolds.

\paragraph{Physics and representation theory.}

There are many more examples of non-compact hyperk\"ahler manifolds which should have interesting Fueter $2$-categories. The already discussed cotangent bundles of K\"ahler manifolds admit incomplete hyperk\"ahler metrics near the zero section \cite{kaledin-hyperkahler}. This paper can be seen as an exploration of this example, using the convexity theory studied in  \autoref{Sec_Convexity} to define a weak analog of a hyperk\"ahler structure on cotangent bundles, which should suffice to develop the theory discussed in this paper.
 
There is a richly studied set of non-compact complete hyperk\"ahler manifolds arising from mathematical physics which seem to recur in gauge theory and representation theory. Many of these can be fit into the framework of resolutions of symplectic singularities \cite{BPLW}, which give a series of examples of conjecturally $3d$ mirror manifolds. These manifolds have a beautiful three-Sasakian geometry at infinity \cite{Boyer2007}, as well as certain complex Lagrangian skeleta which control their Fukaya categories and are intimately related to categories of interest to representation theory \cite{mcbreen2018homological, gammage2019homological}. Another framework understanding these examples in the context of $3d$ mirror symmetry has been proposed by Teleman \cite{teleman-icm}. 

We cannot do any justice in this review section to the ideas of $3d$ mirror symmetry. We note only that studying wrapped variants of Fueter $2$-categories of these examples should give rise to categorifications of the corresponding partially wrapped Fukaya catgories, which by \cite{GPS} are equivalent to categories of sheaves. These, in turn, are known to be related to problems of representation theory. As such, this is a setting where there are very helpful physical and representation-theoretic predictions for Fueter $2$-categories. A primary challenge in this setting is to relate the existing representation-theoretic examples to Fukaya-Seidel categories of explicit holomorphic functions, as opposed to the more general framework of partially wrapped Fukaya categories and constructible sheaves, where they naturally reside. Studying the Fueter $2$-category with wrapping given by the given holomorphic function as a holomorphic Hamiltonian should then categorify the given representation-theoretic invariant.  A natural and tantalizing question, one of many, is the following.
\begin{problem}
Using the Fueter $2$-category, give a geometric categorification of categories $\mathcal{O}$, or one of its other variants, by making sense of the appropriate wrapping on $T^*G/B$, where $G$ is a reductive algebraic group and $B$ is the Borel such that the naive decategorification of the wrapped variant of $Fuet_{T^*G/B}$ is the category of constructible sheaves on the Bruhat stratification which is equivalent to category $\mathcal{O}^{\mathfrak{b}}$ defined in \cite{beilinson1996koszul}. 
\end{problem}

\paragraph{Complex Atiyah--Floer conjecture. } 
Another source of interesting complex Lagrangian submanifolds is given by holomorphic Chern--Simons theory. Given a $3$ manifold $Y$ and a Heegaard splitting into two handlebodies $Y = H_1 \cup_\Sigma H_2$, the complex flat connections on $\Sigma$ which extend to either $H_1$ or $H_2$ give a pair of intersecting complex Lagrangians $(L_0, L_1)$ in the complex character variety $\cM_\Sigma$ of $\Sigma$. By modifying this construction, one can desingularize the complex character variety such that the resulting Lagrangians support perverse sheaves as in \autoref{Sec_RecoveringFukaya} which have hypercohomology independent of the choice of Heegaard splitting, giving rise to a well defined invariant of $Y$ \cite{abouzaid2020sheaf}. This invariant should agree with the putative Floer homology of the holomorphic Chern-Simons functional, related to recent work of Taubes \cite{Taubes2013}.  By a complex and categorical version of the Atiyah--Floer conjecture, the Fukaya--Seidel category of the holomorphic Chern--Simons functional should be equivalent to $\mathrm{Fuet}_{\cM_\Sigma}(L_0, L_1)$; this is supported by a formal adiabatic limit computation \cite[Remark~4.5]{Haydys2015}.

\paragraph{Geometric Langlands program.} Important examples of non-compact complete hyperk\"ahler manifolds are the Hitchin moduli spaces \cite{hitchin1987self} and their variants. These moduli spaces have interesting singular complex Lagrangians associated to representation theory: the components of the nilpotent cone \cite{ben2018betti}. Since the moduli space of solutions to Hitchin's equations on a surface $\Sigma$ agrees with the character variety $\cM_\Sigma$ discussed in the previous paragraph, the Lagrangians $L_0, L_1$ associated to Heegaard splittings also sit inside these spaces. The  Hitchin moduli space is the base of the Hitchin fibration, whose large part resembles the cotangent bundle of the moduli space of stable bundles on $\Sigma$. Thus, a version of \autoref{Thm_FloerCorrespondence} may apply to this setting, which may be connected to other adiabatic limit problems in gauge theory. In a different direction, there are physical conjectures that the categories $\mathrm{Fuet}_M(S^1)$, (\autoref{eq:loop-space-complex-functional}, \autoref{eq:categorical-open-closed-map})  for certain Coloumb and Higgs branches $M$ should be related to the work of Ben--Zvi--Sakellaridis--Venkatesh \cite{bsv}.

\section{Floer's  theorem revisited}
\label{Sec_FloerRevisited}

Let $\cX$ be a smooth manifold and let $\cA \colon \cX \to \R$ be a Morse function. A Riemannian metric on $\cX$ induces an almost complex structure $J$ on $T^*\cX$ compatible with the canonical symplectic form.  Floer's adiabatic limit theorem concerns solutions of the Floer equation
\begin{gather}
    u \colon \R\times[0,1] \to T^*\cX  \nonumber \\
    \label{Eq_StandardFloer}
    \partial_t u + J(u)\partial_\tau u + \nabla \cA(u) = 0
\end{gather}
with boundary on the zero section and converging to Hamiltonian chords of $\cA \colon T^*\cX\to \R$ as $t\to\pm\infty$. Provided that the $C^2$ norm of $\cA$ is sufficiently small, Floer proved that all such solutions satisfy $\partial_\tau u = 0$ and therefore correspond to gradient trajectories of $f$ contained in the zero section.

There is a different perturbation of the $J$-holomorphic map equation, whose solutions correspond to gradient trajectories of $f$ without the assumption that the perturbation is small. Let $V = \nabla \cA$ be the gradient vector field of $\cA$ on $\cX$. There is a canonical lift of $V$ to a Hamiltonian vector field $X_\cH$ on $T^*\cX$. One way to see this is to consider the embedding
\begin{equation*}
    \Diff(\cX) \to \Symp(T^*\cX)
\end{equation*}
defined by pushing-forward one-forms by diffeomorphisms. The induced homomorphism of Lie algebra associates with every vector field $V$ on $\cX$ a Hamiltonian vector field $X_\cH$ generated by the Hamiltonian
\begin{gather*}
    \cH \colon T^*X \to \R, \\
    \cH (\xi) =  \xi(V).
\end{gather*}
Note that critical points of $g$ are all contained in the zero section and correspond to critical point of $\cA$. We consider a version of the Floer equation
\begin{gather}
    \nonumber
    u \colon \R\times[0,1] \to T^*\cX \\
    \label{Eq_NonStandardFloer}
    \partial_t u + J(u)\partial_\tau u + X_\cH(u) = 0.
\end{gather}
Note that this is not a usual Hamiltonian perturbation of the pseudo-holomorphic map equation, which would be of the form \eqref{Eq_StandardFloer}. There is a version of Floer's theorem for this equation which does not rely on $\cA$ being $C^2$ small.

\begin{proposition}
    \label{Prop_NonStandardFloerCorrespondence}
    Any solution $u$ of the Floer equation \eqref{Eq_NonStandardFloer} with boundary on the zero section and converging to critical points of $\cH$ as $t\to\pm\infty$ satisfies $\partial_\tau u = 0$, is contained in the zero section, and corresponds to a gradient trajectory of $\cA$.
\end{proposition}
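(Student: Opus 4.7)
The plan is to establish the conclusion by a topological energy identity, which replaces the small-perturbation hypothesis in Floer's original theorem by a Stokes-type argument adapted to the modified Floer equation \eqref{Eq_NonStandardFloer}. First I would rewrite the equation as $J(u)\partial_\tau u = -(\partial_t u + X_\cH(u))$ and take the squared norm of both sides to obtain $|\partial_\tau u|^2 = |\partial_t u + X_\cH(u)|^2$. This lets me identify the natural Floer energy
\begin{equation*}
E(u) = \frac{1}{2}\int_{\R\times[0,1]} \bigl(|\partial_t u + X_\cH(u)|^2 + |\partial_\tau u|^2\bigr) \;=\; \int_{\R\times[0,1]} |\partial_\tau u|^2,
\end{equation*}
which is manifestly nonnegative; it therefore suffices to show $E(u) = 0$.

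Next I would evaluate $E(u)$ topologically. Using compatibility $g(X,Y) = \omega(X,JY)$, the Floer equation, and $\iota_{X_\cH}\omega = -\rd\cH$, I would expand
\begin{equation*}
|\partial_\tau u|^2 = \omega(\partial_\tau u, J\partial_\tau u) = -\omega(\partial_\tau u,\partial_t u+X_\cH(u)) = u^*\omega(\partial_t,\partial_\tau) + \partial_\tau(\cH\circ u),
\end{equation*}
so that $E(u) = \int u^*\omega + \int_\R (\cH(u(t,1)) - \cH(u(t,0)))\,\rd t$. The second integral vanishes identically, because $\cH(q,p) = p(V(q))$ is zero on the zero section $L_0 = \{p=0\}$ and $u(t,0), u(t,1) \in L_0$. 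For the first integral I would invoke Stokes with $\omega = \rd\lambda$, $\lambda$ the canonical Liouville form. Along $\tau = 0, 1$ the integrand vanishes because $\lambda|_{L_0}=0$. At the asymptotic ends $t\to\pm\infty$, $u$ converges to a critical point of $\cH$; any such critical point lies on $L_0$ (since $\rd\cH = 0$ forces $V = 0$ in the horizontal component, placing $\pi(u)$ at a critical point $x$ of $\cA$, and the vertical component $p_j\,\partial_i V^j = 0$ together with Morse nondegeneracy of $\cA$ at $x$ forces $p = 0$), so the asymptotic boundary contributions also vanish. Thus $E(u) = 0$ and $\partial_\tau u \equiv 0$.

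Once $\partial_\tau u = 0$, the boundary condition $u(t,0)\in L_0$ forces $u(t)\in L_0$ for all $t$. On $L_0$ the vertical part of $X_\cH$ vanishes and $X_\cH$ restricts to the horizontal lift of $V = \nabla\cA$, so the residual equation $\partial_t u + X_\cH(u) = 0$ is the gradient flow equation for $\cA$ on the zero section, completing the correspondence. The one nontrivial technical point is justifying Stokes' theorem on the non-compact strip, i.e.\ controlling the boundary contributions at $t\to\pm\infty$; this reduces to showing that $u^*\lambda$ on the segments $\{t\}\times[0,1]$ tends to zero as $|t|\to\infty$, which follows from standard exponential-decay estimates at nondegenerate critical points of $\cH$ (all of which lie on $L_0$, as verified above) but should be stated as a hypothesis for clarity. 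This is the step I expect to require the most care; the remainder is algebraic manipulation.
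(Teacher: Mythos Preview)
Your argument is correct and takes a genuinely different route from the paper. You proceed via a first-order energy identity: you rewrite $E(u) = \int |\partial_\tau u|^2$, then use Stokes' theorem together with the observations that both the Liouville form $\lambda$ and the Hamiltonian $\cH(\xi) = \xi(V)$ vanish on the zero section, and that the critical points of $\cH$ lie on the zero section (by the Morse hypothesis on $\cA$). This forces $E(u)=0$. There is a harmless sign slip in your expansion: with the paper's convention $\omega(\cdot,X_H)=\rd H$ one gets $|\partial_\tau u|^2 = u^*\omega(\partial_t,\partial_\tau) - \partial_\tau(\cH\circ u)$, not $+\partial_\tau(\cH\circ u)$; since $\cH$ vanishes at both $\tau=0$ and $\tau=1$ this does not affect the conclusion.

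The paper instead decomposes $u=(v,\xi)$ into horizontal and vertical components via the Levi--Civita connection, derives from the equation a Weitzenb\"ock-type identity $\nabla_\tau^*\nabla_\tau\xi + L_vL_v^*\xi = 0$ with $L_v = \nabla_t + \nabla^2\cA(v)$, and integrates by parts to conclude $\nabla_\tau\xi=0$, hence $\xi=0$. Your approach is more elementary and more purely symplectic: it needs no connection, no splitting, and no explicit form of $X_\cH$ beyond the fact that $\cH$ vanishes on $L_0$. The paper's approach, on the other hand, is chosen because it is the finite-dimensional template for the proof of \autoref{Thm_FloerCorrespondencePerturbed}: there the same Weitzenb\"ock scheme (equation \eqref{Eq_Weitzenbock}) is applied to the path space of $X$, where the operator $L_v$ becomes the linearized Cauchy--Riemann operator and the second-order structure is what survives the passage to infinite dimensions. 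Your Stokes argument is cleaner for the proposition as stated, but the Weitzenb\"ock route is what the paper actually needs downstream.
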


\begin{proof}
    Using the metric, we identify $T\cX$ with $T^*\cX$. Denote by $\pi \colon T^*\cX \to \cX$ the projection map. The Levi--Civita connection on $T\cX$ induces decomposition
    \begin{equation*}
        T(T^*\cX) = \pi^*T\cX \oplus \pi^*T\cX
    \end{equation*}
    with respect to which 
    \begin{equation*}
        X_\cH(\xi) = \nabla \cA \oplus - (\nabla^2 \cA) \xi \quad\text{for } \xi \in T^*\cX
    \end{equation*}
    where $\nabla \cA$, $\nabla^2 \cA$ are the gradient and Hessian of $\cA$ at point $\pi(\zeta)$. Therefore if we write $u$ as a pair $(v, \xi)$ where $v \colon \R\times[0,1]\to \cX$ and $\xi \in \Gamma(\R\times[0,1], v^*T\cX)$, equation \eqref{Eq_NonStandardFloer} becomes
    \begin{gather*}
        \nabla_\tau\xi + \partial_t v + \nabla \cA(v) =0, \\
        \partial_\tau v - \nabla_t \xi + (\nabla^2 \cA)\xi = 0.
    \end{gather*}
    Set $L_v = \nabla_t + \nabla^2 \cA(v)$. Applying $\nabla_\tau$ to the first equation and using the second equation yields
    \begin{equation*}
        \nabla_\tau^*\nabla_\tau\xi + L_vL_v^*\xi = 0,
    \end{equation*}
    which, after integrating by parts, shows that  $\nabla_\tau\xi=0$. The boundary conditions imply that $\xi=0$ and the proposition follows. 
\end{proof}

\begin{remark}
    Floer's original theorem can be easily recovered from \autoref{Prop_NonStandardFloerCorrespondence} using the following two observations. 
    
    First, the proof of \autoref{Prop_NonStandardFloerCorrespondence} shows that, in fact, the correspondence between solutions and gradient trajectories of $\cA$ holds for any perturbation of \eqref{Eq_NonStandardFloer} of the form
    \begin{equation*}
            \partial_t u + J(u)\partial_\tau u + X_\cH(u) + P(u) = 0,
    \end{equation*}
    as long as the perturbation term $P(u)$ is sufficiently small. For example, it suffices that it satisfies $|P(u)| \leq \epsilon\mathrm{dist}(u,\cX)$, where $\epsilon$ is small and $\mathrm{dist}(\cdot,\cX)$ denotes the distance from the zero section in $T^*\cX$.
    
    The second observation is that the difference between Floer's original equation \eqref{Eq_StandardFloer} and \eqref{Eq_NonStandardFloer} is a perturbation term $P(u)$ satisfying 
    \begin{equation*}
        |P(u)| \leq | \nabla^2 \cA(u) |\mathrm{dist}(u,\cX)
    \end{equation*}
    so, by the first observation, \autoref{Prop_NonStandardFloerCorrespondence} applies to \eqref{Eq_StandardFloer} as long as $\cA$ is $C^2$ small. Another way of seeing the relationship between \eqref{Eq_StandardFloer} and \eqref{Eq_NonStandardFloer} is that these equations would agree if and only if the function $\cF = \cH + i\cA$ on $T^*\cX$ were $J$-holomorphic. This is not the case. However, the difference $P(u)$ between these two equations can be identified with $\delbar_J \cF(u)$, whose magnitude is proportional to the distance from the zero section and the $C^2$ norm of $\cA$. 
\end{remark}

\begin{remark}
    From an infinite-dimensional viewpoint, the $J$-holomorphic strip equation is the gradient flow equation for the symplectic action functional on the space of paths $\gamma \colon [0,1] \to T^*\cX$ with boundary on the zero section. This functional is Morse--Bott with critical locus consisting of constant paths in the zero section. Floer's equation \eqref{Eq_StandardFloer} is obtained by perturbing the symplectic action functional by the function
    \begin{equation*}
        \gamma \mapsto \int_0^1 f(\gamma(\tau))\rd\tau,
    \end{equation*}
    which restricts to a Morse function on the critical locus.
    
    The non-standard equation \eqref{Eq_NonStandardFloer} has a similar interpretation, as a non-gradient perturbation of the $J$-holomorphic strip equation by a vector field induced by $X_\cH$. This vector field has the property that it is gradient-like on the space of paths and it restricts to a gradient vector field of $\cA$ along the critical locus. Therefore, it leads to well-defined Morse homology which recovers the usual intersection Floer homology of the zero section in $T^*\cX$ with itself. In general, equation \eqref{Eq_NonStandardFloer} is a well-behaved perturbation of the $J$-holomorphic strip equation with boundary on a Lagrangian $L$ in a symplectic manifold as long as the Hamiltonian function $\cH$ is constant on $L$ and its Hamiltonian vector field is gradient-like along $L$. If this is the case, modulo the usual bubbling issues, counting solutions to \eqref{Eq_NonStandardFloer} leads to an alternative definition of the intersection Floer homology of $L$ with itself. 
\end{remark}

\section{Notation and conventions}

There are many sign conventions in symplectic and hyperkähler geometry. Since signs play a crucial role in our discussion, we summarize below the notation and conventions used throughout this paper. 

\subsection{Coordinates}

The standard coordinates on $[0,1]$ and $\R^2$ will be denoted by $\tau$ and $(s,t)$ respectively, so that with natural orientations $[0,1]\times\R^2$ has coordinates $(\tau,s,t)$ and $\R\times[0,1]$ has coordinates $(t,\tau)$.  We identify $\R^2$ with $\C$ using the holomorphic coordinate $z = s+it$, that is: the complex structure is given by $i\partial_s = \partial_t$. Similarly, we will equip $\R\times[0,1]$ with the complex structure given by $i\partial_t = \partial_\tau$. 

\subsection{Symplectic geometry}
Let $(M,\omega)$ be a symplectic manifold. Given a function $H: M \to \R$, the Hamiltonian vector field $X_H$ is given by 
\begin{equation*}
    \omega(\cdot, X_H) = \rd H
\end{equation*}
A compatible almost complex structure $J$ on $M$ defines a metric $g$ via 
\begin{equation*}
    g(v, w) = \omega(v, Jw)
\end{equation*}
Thus, $X_H = J\nabla H$ where $\nabla H$ is the metric dual of $\rd H$; indeed,
\[ \omega(v, J\nabla H) = g(v, \nabla H) = \rd H(v). \]

Suppose now that $M = T^*X$ is the cotangent bundle of a smooth manifold $X$. Let $\pi \colon T^*X \to X$ be the projection map. The tautological $1$-form $\lambda$ on $M$ is defined by
\begin{equation*}
    \lambda_\xi = \pi^*\xi \quad\text{for } \xi \in T^*X. 
\end{equation*}
If $q_i$ are local coordinates on  $X$ and $p_i$ are the dual coordinates on $T^*X$, then
\begin{equation*}
    \lambda = \sum_i p_i \rd q_i. 
\end{equation*}
The canonical symplectic form is 
\begin{equation*}
    \omega = \rd\lambda = \sum_i \rd p_i \wedge \rd q_i.
\end{equation*}
These conventions reproduce Hamilton's equations from classical mechanics. If $M = T^*\R^n$, then the Hamiltonian flow equation is
\begin{equation*}
    \frac{\rd q_i}{\rd t} = \frac{\partial H}{\partial p_i}, \quad
    \frac{\rd p_i}{\rd t} = -\frac{\partial H}{\partial q_i}.
\end{equation*}
Note that in this example, the standard almost complex structure on $T^*\R^n$ is $J\partial_{p_i} = \partial_{q_i}$, so $T^*\R^n$ is identified with $\C^n = \R^n \oplus i\R^n$ with the opposite complex structure.

\subsection{Lagrangian Floer theory}
\label{sec:conventions_lagrangian_floer_theory}

Let $(M,\omega)$ is a symplectic manifold as before and let $L_0,L_1 \subset M$ be Lagrangian submanifolds. Suppose for simplicity that the symplectic form $\omega = \rd\lambda$ on $M$ is exact and $L_0,L_1$ are exact in the sense that there are functions $h_i \colon L_i \to \R$ such that $\lambda|_{L_i} = \rd h_i$. Let $\cP = \cP(M,L_0,L_1)$ be the space of paths $\gamma \colon [0,1] \to M$ starting on $L_0$ and ending on $L_1$. The symplectic action functional perturbed by a Hamiltonian $H \colon M \to \R$ is given by
\begin{gather*}
    \cA \colon \cP \to \R, \\
    \cA(\gamma) = \int_\gamma \lambda - h_1(\gamma(1)) + h_0(\gamma(0)) - \int_0^1 H(\gamma(\tau))\rd \tau. 
\end{gather*}
The differential of $\cA$ at $\gamma \in \cP$ is
\begin{equation*}
    \rd\cA_\gamma(v) = \int_0^1 \omega\left(v, \frac{\rd\gamma}{\rd\tau} - X_H(\gamma) \right) \rd \tau \quad\text{for } v \in \Gamma([0,1],\gamma^*TM).
\end{equation*}
Therefore, the critical points of $\cA$ are Hamiltonian chords starting on $L_0$ and ending on $L_1$. Let $J$ be an almost complex structure on $M$ compatible with $\omega$. The gradient of $\cA$ with respect to the Riemannian metric induced by $J$ is
\begin{equation*}
    \nabla\cA(\gamma) = -J(\gamma)\frac{\rd \gamma}{\rd\tau} - \nabla H(\gamma).
\end{equation*}
The gradient flow equation for $\cA$ is therefore the Floer equation:
\begin{gather*}
    u \colon \R\times[0,1] \to M, \\
    \partial_t u + J(u)\partial_\tau u + \nabla H(u) = 0, 
\end{gather*}
or, equivalently,
\begin{equation*}
    \partial_t u + J(u)(\partial_\tau u - X_H(u)) = 0.
\end{equation*}

\subsection{Complex gradient flow}

Let $(X,I)$ be an almost complex manifold. Let $F \colon X \to \C$ be an $I$-holomorphic function.  The complex gradient flow equation is
\begin{gather*}
    u \colon \C \to X, \\
    \delbar_I u  - i\nabla F(u) = 0,
\end{gather*}
where $\delbar_I u$ is the $I$-antilinear part of the differential $\rd u$, which, after trivializing the bundle of $(0,1)$ form on $\C$, we can identify with a $(0,1)$ vector field on $X$. Here $\nabla F$ is the complex vector field of type $(0,1)$ dual to the $(1,0)$ form $\rd F$. If $F = f+ig$, with $f$ and $g$ real, then
\begin{equation*}
    \nabla F = \nabla f + i \nabla g.
\end{equation*}
In coordinates $(s,t)$ on $\C$,  interpreting $\delbar_I u$ as a $(0,1)$ vector field:
\begin{equation*}
    \delbar_I u = (\partial_s u + I(u) \partial_t u) + i(I(u) \partial_s u - \partial_t u)
\end{equation*}
so the complex gradient flow equation is
\begin{equation*}
    \partial_s u + I(u)\partial_t u + \nabla g(u) = 0.
\end{equation*}
If $X$ admits a symplectic from compatible with $I$, this is the Floer equation with the Hamiltonian perturbation by $g$. Let $X_g$ be the Hamiltonian vector field. With our conventions, we have $X_g = I \nabla g = -\nabla f$, so the Hamiltonian trajectories of $g$ are the gradient trajectories of $-f$. As $s\to\pm\infty$, we require that $u$ converges to such trajectories. 

Alternatively, we could consider the conjugate equation
\begin{equation*}
    \partial_I u + i\nabla \overline F(u) = 0.
\end{equation*}
In coordinates,  
\begin{equation*}
    \partial_s u - I(u)\partial_t u - \nabla g(u) =0,
\end{equation*}
or, equivalently,
\begin{equation*}
    \partial_s u - I(u)(\partial_t u + \nabla f(u)) = 0.
\end{equation*}
The antiholomorphic map $(s,t) \mapsto (-s,t)$ induces a bijection between the sets of solutions to the gradient flow equation and its conjugate. 

The complex gradient equation has a $\C^*$ symmetry in the following sense. For $\lambda \in \C^*$ define $u_\lambda(z) = u(\lambda z)$; then $u$ is a complex gradient trajectory of $F$ if and only if $u_\lambda$ is a complex gradient trajectory of $F_\lambda = \lambda F$:
\begin{equation*}
    \delbar_I u_\lambda - i\nabla F_\lambda (u_\lambda) = 0.
\end{equation*} 
Similarly, $u$ is an anti-complex gradient trajectory of $F$ if and only if $u_\lambda$ is an anti-complex gradient trajectory of $F_{\bar\lambda}$.

\subsection{Holomorphic Floer theory}

Let $(M,I,J,K)$ be a hyperk\"ahler manifold with the corresponding symplectic forms $\omega_I,\omega_J,\omega_K$. Let $L_0,L_1 \subset M$ be submanifolds which are complex with respect to $I$ and Lagrangian with respect to $\omega_J$ and $\omega_K$. For simplicity, assume that all three symplectic forms are exact and that $L_0, L_1$ are exact Lagrangians.

Let $\cP = \cP(M, L_0,L_1)$ be the space of paths in $M$ starting on $L_0$ and ending on $L_1$. This is an $I$-complex manifold. Let $\cA_J$ and $\cA_K$ be the symplectic action functionals with respect to $\omega_J$ and $\omega_K$. The function
\begin{equation*}
    \sA = \cA_J + i\cA_K \colon \cP \to \C
\end{equation*}
is $I$-holomorphic. The anti-complex gradient flow equation for maps $U \colon \C \to \cP$
\begin{equation*}
    \partial_I U + i\nabla \overline \sA(U) = 0
\end{equation*}
is 
\begin{equation*}
   \partial_s U - I(U)(\partial_t U + \nabla \cA_J(U)) = 0, 
\end{equation*}
which is equivalent to the Fueter equation for maps $ U \colon [0,1]\times\R^2 \to M$, 
\begin{equation*}
    \partial_s U - I(U)(\partial_t U - J(U)\partial_\tau U) =0,
\end{equation*}
or
\begin{equation*}
    I(U)\partial_\tau U + J(U)\partial_s U + K(U)\partial_t U  = 0.
\end{equation*}
As $s\to\pm\infty$ we require that $U$ converges to $J$-antiholomorphic strips from $\R\times[0,1]$ with coordinates $(t,\tau)$ to $M$. 

The Fueter equation for maps $U \colon \R^3 \to M$ has an $\SO(3)$ symmetry in the following sense. For $a \in \SO(3)$ define $U_a = U\circ a^{-1}$ and let $(I_a,J_a,K_a)$ be the quaternionic triple on $M$ obtained by rotating $(I,J,K)$ by $a$. Then the Fueter equation for $U$ with respect to $(I,J,K)$ is equivalent to the Fueter equation for $U_a$ with respect to $(I_a,J_a,K_a)$. For maps with domain $[0,1]\times \R^2$, the same holds for symmetries $a$ in the subgroup $\U(1) \subset \SO(3)$ preserving the $(s,t)$ plane in $\R^3$. Explicitly, for $\theta \in [0,2\pi)$, define $z_\theta = s_\theta + i t_\theta$ in terms of $z = s + it$ by $z_\theta = e^{-i\theta}z$ and let
\begin{gather*}
    J_\theta + i K_\theta = e^{i\theta}(J + iK).
\end{gather*}
Then, the Fueter equation in coordinates $(\tau,s_\theta,t_\theta)$ with respect to the triple $(I,J_\theta,K_\theta)$ is equivalent to the Fueter equation in coordinates $(\tau,s,t)$ with respect to the triple $(I,J,K)$.

We can perturb the holomorphic action functional $\cA_\C$ by an $I$-holomorphic function $F \colon M \to \C$:
\begin{equation*}
    \sA_F (\gamma) = \sA(\gamma) + \int_0^1 F(\gamma(\tau)) \rd\tau. 
\end{equation*}
The resulting perturbed Fueter equation is
\begin{equation*}
    \partial_s U - I(U)(\partial_t U - J(U)\partial_\tau U + \nabla f (U)) = 0,
\end{equation*}
where $f = \Re(F)$. As before, $J$ and $F$ can be replaced by $J_\theta$ and $F_\theta$ for any $\theta \in [0,2\pi)$ by writing the equation in coordinates $z_\theta = e^{-i\theta}z$.

\newpage
\printbibliography

@article {Harvey2009,
    AUTHOR = {Harvey, F. Reese and Lawson, Jr., H. Blaine},
     TITLE = {Duality of positive currents and plurisubharmonic functions in
              calibrated geometry},
   JOURNAL = {Amer. J. Math.},
  FJOURNAL = {American Journal of Mathematics},
    VOLUME = {131},
      YEAR = {2009},
    NUMBER = {5},
     PAGES = {1211--1239},
      ISSN = {0002-9327},
   MRCLASS = {32U40 (32U15 53C38)},
  MRNUMBER = {2555839},
MRREVIEWER = {Romain Dujardin},
       DOI = {10.1353/ajm.0.0074},
       URL = {https://doi.org/10.1353/ajm.0.0074},
}

@article {Alesker2006,
    AUTHOR = {Alesker, Semyon and Verbitsky, Misha},
     TITLE = {Plurisubharmonic functions on hypercomplex manifolds and
              {HKT}-geometry},
   JOURNAL = {J. Geom. Anal.},
  FJOURNAL = {The Journal of Geometric Analysis},
    VOLUME = {16},
      YEAR = {2006},
    NUMBER = {3},
     PAGES = {375--399},
      ISSN = {1050-6926},
   MRCLASS = {32U05 (53C26)},
  MRNUMBER = {2250051},
MRREVIEWER = {Julien Keller},
       DOI = {10.1007/BF02922058},
       URL = {https://doi.org/10.1007/BF02922058},
}

@book {McDuff2012,
    AUTHOR = {McDuff, Dusa and Salamon, Dietmar},
     TITLE = {{$J$}-holomorphic curves and symplectic topology},
    SERIES = {American Mathematical Society Colloquium Publications},
    VOLUME = {52},
   EDITION = {Second},
 PUBLISHER = {American Mathematical Society, Providence, RI},
      YEAR = {2012},
     PAGES = {xiv+726},
      ISBN = {978-0-8218-8746-2},
   MRCLASS = {53D45 (32Q65 53D35)},
  MRNUMBER = {2954391},
MRREVIEWER = {Mark Alan Branson},
}

@article{anselmi-fre,
title = {Topological $\sigma$-models in four dimensions and triholomorphic maps},
journal = {Nuclear Physics B},
volume = {416},
number = {1},
pages = {255-300},
year = {1994},
issn = {0550-3213},
doi = {https://doi.org/10.1016/0550-3213(94)90585-1},
url = {https://www.sciencedirect.com/science/article/pii/0550321394905851},
author = {Damiano Anselmi and Pietro Frè},
%abstract = {It is well known that topological σ-models in two dimensions constitute a path-integral approach to the study of holomorphic maps from a Riemann surface Σ to an almost complex manifold K, the most interesting case being that were K is a Kähler manifold. We show that, in the same way, topological σ-models in four dimensions introduce a path-integral approach to the study of triholomorphic maps q: M → N between a four-dimensional riemannian manifold M and an almost quaternionic manifold N. The most interesting cases are those where M, N are hyper-Kähler or quaternionic Kähler. BRST-cohomology translates into intersection theory in the moduli-space of this new class of instantonic maps, that are named hyperinstantons by us. The definition of triholomorphicity that we propose is expressed by the equation q∗ − Ju ∘ q∗ ∘ ju = 0, where {ju, u = 1,2,3} is an almost quaternionic structure on M and {Ju, u = 1,2,3} is an almost quaternionic structure on N. This is a generalization of the Cauchy-Fueter equations. For M, N hyper-Kähler, this generalization naturally arises by obtaining the topological σ-model as a twisted version of the N = 2 globally supersymmetric σ-model. We discuss various examples of hyperinstantons, in particular on the torus and the K3 surface. We also analyze the coupling of the topological σ-model to topological gravity. The classification of triholomorphic maps and the analysis of their moduli-space is a new and fully open mathematical problem that we believe deserves the attention of both mathematicians and physicists.}
}

@article{beilinson1996koszul,
  title={Koszul duality patterns in representation theory},
  author={Beilinson, Alexander and Ginzburg, Victor and Soergel, Wolfgang},
  journal={Journal of the American Mathematical Society},
  volume={9},
  number={2},
  pages={473--527},
  year={1996}
}

@misc{bsv,
author={D. Ben-Zvi}, 
title={Quantization and Duality for Hyperspherical Varieties},
year=2022,
howpublished={Talk at ISTA. Available at \url{https://ist.ac.at/en/news-events/event/?eid=3323}.}
}

@misc{kaledin-hyperkahler,
Author = {D. Kaledin},
Title = {Hyperkaehler structures on total spaces of holomorphic cotangent bundles},
Year = {1997},
Eprint = {arXiv:alg-geom/9710026},
}

@article{Intriligator1996,
  doi = {10.1016/0370-2693(96)01088-x},
  url = {https://doi.org/10.1016/0370-2693(96)01088-x},
  year = {1996},
  month = oct,
  publisher = {Elsevier {BV}},
  volume = {387},
  number = {3},
  pages = {513--519},
  author = {K. Intriligator and N. Seiberg},
  title = {Mirror symmetry in three dimensional gauge theories},
  journal = {Physics Letters B}
}

@article{fukaya2002mirror,
  title={Mirror symmetry of abelian varieties and multi-theta functions},
  author={Fukaya, Kenji},
  journal={Journal of Algebraic Geometry},
  volume={11},
  number={3},
  pages={393--512},
  year={2002},
  publisher={Citeseer}
}

@article{ben2018betti,
  title={Betti geometric langlands},
  author={Ben-Zvi, David and Nadler, David},
  journal={Algebraic geometry: Salt Lake City 2015},
  volume={97},
  pages={3--41},
  year={2018}
}

@article{hitchin1987self,
  title={The self-duality equations on a Riemann surface},
  author={Hitchin, Nigel J},
  journal={Proceedings of the London Mathematical Society},
  volume={3},
  number={1},
  pages={59--126},
  year={1987},
  publisher={Citeseer}
}

@article{abouzaid2020sheaf,
  title={A sheaf-theoretic model for SL (2, C) Floer homology},
  author={Abouzaid, Mohammed and Manolescu, Ciprian},
  journal={Journal of the European Mathematical Society},
  volume={22},
  number={11},
  pages={3641--3695},
  year={2020}
}

@article{mcbreen2018homological,
  title={Homological mirror symmetry for hypertoric varieties I},
  author={McBreen, Michael and Webster, Ben},
  journal={arXiv preprint arXiv:1804.10646},
  year={2018}
}

@article{gammage2019homological,
  title={Homological mirror symmetry for hypertoric varieties II},
  author={Gammage, Benjamin and McBreen, Michael and Webster, Ben},
  journal={arXiv preprint arXiv:1903.07928},
  year={2019}
}

@incollection{Boyer2007,
  doi = {10.1093/acprof:oso/9780198564959.003.0014},
  url = {https://doi.org/10.1093/acprof:oso/9780198564959.003.0014},
  year = {2007},
  month = oct,
  publisher = {Oxford University Press},
  pages = {473--528},
  author = {Charles P. Boyer and Krzysztof Galicki},
  title = {3-Sasakian Manifolds},
  booktitle = {Sasakian Geometry}
}

@inproceedings{Huybrechts2011,
  doi = {10.1142/9789814324359_0059},
  url = {https://doi.org/10.1142/9789814324359_0059},
  year = {2011},
  month = jun,
  publisher = {Published by Hindustan Book Agency ({HBA}),  India. {WSPC} Distribute for All Markets Except in India},
  author = {Daniel Huybrechts},
  title = {Hyperk\"{a}hler Manifolds and Sheaves},
  booktitle = {Proceedings of the International Congress of Mathematicians 2010 ({ICM} 2010)}
}

@article{Bullimore2016,
  doi = {10.1007/jhep10(2016)108},
  url = {https://doi.org/10.1007/jhep10(2016)108},
  year = {2016},
  month = oct,
  publisher = {Springer Science and Business Media {LLC}},
  volume = {2016},
  number = {10},
  author = {Mathew Bullimore and Tudor Dimofte and Davide Gaiotto and Justin Hilburn},
  title = {Boundaries,  mirror symmetry,  and symplectic duality in 3d N = 4 gauge theory},
  journal = {Journal of High Energy Physics}
}

@misc{GMW,
Author = {Davide Gaiotto and Gregory W. Moore and Edward Witten},
Title = {Algebra of the Infrared: String Field Theoretic Structures in Massive ${\cal N}=(2,2)$ Field Theory In Two Dimensions},
Year = {2015},
Eprint = {arXiv:1506.04087},
}

@misc{bottman-carmel,
Author = {Nathaniel Bottman and Shachar Carmeli},
Title = {$(A_\infty,2)$-categories and relative 2-operads},
Year = {2018},
Eprint = {arXiv:1811.05442},
}

@misc{gammage-hilburn-mazel-gee-2022,
Author = {Benjamin Gammage and Justin Hilburn and Aaron Mazel-Gee},
Title = {Perverse schobers and 3d mirror symmetry},
Year = {2022},
Eprint = {arXiv:2202.06833},
}

@BOOK{Kock2003-lj,
  title     = "London mathematical society student texts: Frobenius algebras
               and {2-D} topological quantum field theories series number 59",
  author    = "Kock, Joachim",
  publisher = "Cambridge University Press",
  series    = "London Mathematical Society student texts",
  month     =  dec,
  year      =  2003,
  address   = "Cambridge, England",
  language  = "en"
}

@article {Fueter1935,
    AUTHOR = {Fueter, Rudolf},
     TITLE = {\"{U}ber die analytische {D}arstellung der regul\"{a}ren {F}unktionen
              einer {Q}uaternionenvariablen},
   JOURNAL = {Comment. Math. Helv.},
  FJOURNAL = {Commentarii Mathematici Helvetici},
    VOLUME = {8},
      YEAR = {1935},
    NUMBER = {1},
     PAGES = {371--378},
      ISSN = {0010-2571},
   MRCLASS = {DML},
  MRNUMBER = {1509533},
       DOI = {10.1007/BF01199562},
       URL = {https://doi.org/10.1007/BF01199562},
}

@article {Walpuski2017c,
    AUTHOR = {Walpuski, Thomas},
     TITLE = {A compactness theorem for {F}ueter sections},
   JOURNAL = {Comment. Math. Helv.},
  FJOURNAL = {Commentarii Mathematici Helvetici. A Journal of the Swiss
              Mathematical Society},
    VOLUME = {92},
      YEAR = {2017},
    NUMBER = {4},
     PAGES = {751--776},
      ISSN = {0010-2571},
   MRCLASS = {58E20 (53C26)},
  MRNUMBER = {3718486},
MRREVIEWER = {Ling He},
       DOI = {10.4171/CMH/423},
       URL = {https://doi.org/10.4171/CMH/423},
}

@article {Hohloch2009,
    AUTHOR = {Hohloch, Sonja and Noetzel, Gregor and Salamon, Dietmar A.},
     TITLE = {Hypercontact structures and {F}loer homology},
   JOURNAL = {Geom. Topol.},
  FJOURNAL = {Geometry \& Topology},
    VOLUME = {13},
      YEAR = {2009},
    NUMBER = {5},
     PAGES = {2543--2617},
      ISSN = {1465-3060},
   MRCLASS = {53D40 (53C26)},
  MRNUMBER = {2529942},
MRREVIEWER = {Hansj\"{o}rg Geiges},
       DOI = {10.2140/gt.2009.13.2543},
       URL = {https://doi.org/10.2140/gt.2009.13.2543},
}

@article {Walpuski2017a,
    AUTHOR = {Walpuski, Thomas},
     TITLE = {{$\rm Spin(7)$}-instantons, {C}ayley submanifolds and {F}ueter
              sections},
   JOURNAL = {Comm. Math. Phys.},
  FJOURNAL = {Communications in Mathematical Physics},
    VOLUME = {352},
      YEAR = {2017},
    NUMBER = {1},
     PAGES = {1--36},
      ISSN = {0010-3616},
   MRCLASS = {53C29 (53C38 70S15)},
  MRNUMBER = {3623252},
MRREVIEWER = {Selman U\u{g}uz},
       DOI = {10.1007/s00220-016-2724-6},
       URL = {https://doi.org/10.1007/s00220-016-2724-6},
}

@article {Walpuski2017b,
    AUTHOR = {Walpuski, Thomas},
     TITLE = {{$G_2$}-instantons, associative submanifolds and {F}ueter
              sections},
   JOURNAL = {Comm. Anal. Geom.},
  FJOURNAL = {Communications in Analysis and Geometry},
    VOLUME = {25},
      YEAR = {2017},
    NUMBER = {4},
     PAGES = {847--893},
      ISSN = {1019-8385},
   MRCLASS = {53C38 (53C29)},
  MRNUMBER = {3731643},
MRREVIEWER = {Kotaro Kawai},
       DOI = {10.4310/CAG.2017.v25.n4.a4},
       URL = {https://doi.org/10.4310/CAG.2017.v25.n4.a4},
}

@incollection {Donaldson2011,
    AUTHOR = {Donaldson, Simon and Segal, Ed},
     TITLE = {Gauge theory in higher dimensions, {II}},
 BOOKTITLE = {Surveys in differential geometry. {V}olume {XVI}. {G}eometry
              of special holonomy and related topics},
    SERIES = {Surv. Differ. Geom.},
    VOLUME = {16},
     PAGES = {1--41},
 PUBLISHER = {Int. Press, Somerville, MA},
      YEAR = {2011},
   MRCLASS = {53C07 (14J32 53C38 53D12 57R58 58D27)},
  MRNUMBER = {2893675},
MRREVIEWER = {Andrew Swann},
       DOI = {10.4310/SDG.2011.v16.n1.a1},
       URL = {https://doi.org/10.4310/SDG.2011.v16.n1.a1},
}

@article {Taubes2013,
    AUTHOR = {Taubes, Clifford Henry},
     TITLE = {{${\rm PSL}(2;\Bbb C)$} connections on 3-manifolds with {${\rm
              L}^2$} bounds on curvature},
   JOURNAL = {Camb. J. Math.},
  FJOURNAL = {Cambridge Journal of Mathematics},
    VOLUME = {1},
      YEAR = {2013},
    NUMBER = {2},
     PAGES = {239--397},
      ISSN = {2168-0930},
   MRCLASS = {53B15 (53C05 53C21)},
  MRNUMBER = {3272050},
MRREVIEWER = {Maria Falcitelli},
       DOI = {10.4310/CJM.2013.v1.n2.a2},
       URL = {https://doi.org/10.4310/CJM.2013.v1.n2.a2},
}

@article {Taubes2015,
    AUTHOR = {Taubes, Clifford Henry},
     TITLE = {Corrigendum to ``{$PSL(2;\Bbb C)$} connections on 3-manifolds
              with {$L^2$} bounds on curvature'' [{C}ambridge {J}ournal of
              {M}athematics 1(2013) 239--397] [ {MR}3272050]},
   JOURNAL = {Camb. J. Math.},
  FJOURNAL = {Cambridge Journal of Mathematics},
    VOLUME = {3},
      YEAR = {2015},
    NUMBER = {4},
     PAGES = {619--631},
      ISSN = {2168-0930},
   MRCLASS = {53B15 (53C05 53C21)},
  MRNUMBER = {3435274},
MRREVIEWER = {Nikolai N. Saveliev},
       DOI = {10.4310/CJM.2015.v3.n4.a2},
       URL = {https://doi.org/10.4310/CJM.2015.v3.n4.a2},
}

@misc{Taubes2016,
  doi = {10.48550/ARXIV.1610.07163},
  url = {https://arxiv.org/abs/1610.07163},
  author = {Taubes, Clifford Henry},
  keywords = {Differential Geometry (math.DG), Geometric Topology (math.GT), FOS: Mathematics, FOS: Mathematics, 53C07, 57R57},
  title = {On the behavior of sequences of solutions to U(1) Seiberg-Witten systems in dimension 4},
  publisher = {arXiv},
  year = {2016},
  copyright = {arXiv.org perpetual, non-exclusive license}
}

@article {Haydys2015a,
    AUTHOR = {Haydys, Andriy and Walpuski, Thomas},
     TITLE = {A compactness theorem for the {S}eiberg-{W}itten equation with
              multiple spinors in dimension three},
   JOURNAL = {Geom. Funct. Anal.},
  FJOURNAL = {Geometric and Functional Analysis},
    VOLUME = {25},
      YEAR = {2015},
    NUMBER = {6},
     PAGES = {1799--1821},
      ISSN = {1016-443X},
   MRCLASS = {53C27 (57R57)},
  MRNUMBER = {3432158},
MRREVIEWER = {Fortun\'{e} Massamba},
       DOI = {10.1007/s00039-015-0346-3},
       URL = {https://doi.org/10.1007/s00039-015-0346-3},
}

@article {Walpuski2021,
    AUTHOR = {Walpuski, Thomas and Zhang, Boyu},
     TITLE = {On the compactness problem for a family of generalized
              {S}eiberg-{W}itten equations in dimension 3},
   JOURNAL = {Duke Math. J.},
  FJOURNAL = {Duke Mathematical Journal},
    VOLUME = {170},
      YEAR = {2021},
    NUMBER = {17},
     PAGES = {3891--3934},
      ISSN = {0012-7094},
   MRCLASS = {53C07 (35J70 57K31)},
  MRNUMBER = {4340726},
       DOI = {10.1215/00127094-2021-0005},
       URL = {https://doi.org/10.1215/00127094-2021-0005},
}

@article{abouzaid2009morse,
  title={Morse homology, tropical geometry, and homological mirror symmetry for toric varieties},
  author={Abouzaid, Mohammed},
  journal={Selecta Mathematica},
  volume={15},
  number={2},
  pages={189--270},
  year={2009},
  publisher={Springer}
}

@article{fang2008t,
  title={T-duality and homological mirror symmetry of toric varieties},
  author={Fang, Bohan and Liu, Chiu-Chu Melissa and Treumann, David and Zaslow, Eric},
  journal={arXiv preprint arXiv:0811.1228},
  year={2008}
}

@misc{shendetoric,
Author = {Vivek Shende},
Title = {Toric mirror symmetry revisited},
Year = {2021},
Eprint = {arXiv:2103.05386},
}

@article{klyachko1990equivariant,
  title={Equivariant bundles on toral varieties},
  author={Klyachko, Alexander A},
  journal={Mathematics of the USSR-Izvestiya},
  volume={35},
  number={2},
  pages={337},
  year={1990},
  publisher={IOP Publishing}
}

@article{thaddeus1996geometric,
  title={Geometric invariant theory and flips},
  author={Thaddeus, Michael},
  journal={Journal of the American Mathematical Society},
  volume={9},
  number={3},
  pages={691--723},
  year={1996}
}

@article{hanlon2022aspects,
  title={Aspects of functoriality in homological mirror symmetry for toric varieties},
  author={Hanlon, A and Hicks, J},
  journal={Advances in Mathematics},
  volume={401},
  pages={108317},
  year={2022},
  publisher={Elsevier}
}

@article{Li2011,
  doi = {10.1007/s00222-011-0359-y},
  url = {https://doi.org/10.1007/s00222-011-0359-y},
  year = {2011},
  month = oct,
  publisher = {Springer Science and Business Media {LLC}},
  volume = {188},
  number = {3},
  pages = {713--727},
  author = {Jun Li and Christian Liedtke},
  title = {Rational curves on K3 surfaces},
  journal = {Inventiones mathematicae}
}

@article {Floer1988,
    AUTHOR = {Floer, Andreas},
     TITLE = {The unregularized gradient flow of the symplectic action},
   JOURNAL = {Comm. Pure Appl. Math.},
  FJOURNAL = {Communications on Pure and Applied Mathematics},
    VOLUME = {41},
      YEAR = {1988},
    NUMBER = {6},
     PAGES = {775--813},
      ISSN = {0010-3640},
   MRCLASS = {58F05},
  MRNUMBER = {948771},
MRREVIEWER = {Holger Kantz},
       DOI = {10.1002/cpa.3160410603},
       URL = {https://doi.org/10.1002/cpa.3160410603},
}

@article {Floer1988a,
    AUTHOR = {Floer, Andreas},
     TITLE = {Morse theory for {L}agrangian intersections},
   JOURNAL = {J. Differential Geom.},
  FJOURNAL = {Journal of Differential Geometry},
    VOLUME = {28},
      YEAR = {1988},
    NUMBER = {3},
     PAGES = {513--547},
      ISSN = {0022-040X},
   MRCLASS = {58F05 (35J65 58E05)},
  MRNUMBER = {965228},
MRREVIEWER = {Jean-Claude Sikorav},
       URL = {http://projecteuclid.org/euclid.jdg/1214442477},
}

@article{witten-susy-morse-theory,
author = {Edward Witten},
title = {{Supersymmetry and Morse theory}},
volume = {17},
journal = {Journal of Differential Geometry},
number = {4},
publisher = {Lehigh University},
pages = {661 -- 692},
year = {1982},
doi = {10.4310/jdg/1214437492},
URL = {https://doi.org/10.4310/jdg/1214437492}
}

@misc{freed-cobordism,
Author = {Daniel S. Freed},
Title = {The cobordism hypothesis},
Year = {2012},
Eprint = {arXiv:1210.5100},
}

@article{Schwarz1997,
  doi = {10.1007/bf02506415},
  url = {https://doi.org/10.1007/bf02506415},
  year = {1997},
  month = jan,
  publisher = {Springer Science and Business Media {LLC}},
  volume = {183},
  number = {2},
  pages = {463--476},
  author = {Albert Schwarz and Oleg Zaboronsky},
  title = {Supersymmetry and localization},
  journal = {Communications in Mathematical Physics}
}

@article{witten-tqft,
author = {Edward Witten},
title = {{Topological quantum field theory}},
volume = {117},
journal = {Communications in Mathematical Physics},
number = {3},
publisher = {Springer},
pages = {353 -- 386},
year = {1988},
doi = {cmp/1104161738},
URL = {https://doi.org/}
}

@article{floer-infinite-dimensional-morse-theory,
author = {Andreas Floer},
title = {{Witten's complex and infinite-dimensional Morse theory}},
volume = {30},
journal = {Journal of Differential Geometry},
number = {1},
publisher = {Lehigh University},
pages = {207 -- 221},
year = {1989},
doi = {10.4310/jdg/1214443291},
URL = {https://doi.org/10.4310/jdg/1214443291}
}

@misc{ahsan-khan,
author= {Ahsan Khan},
title = {Upcoming publication}
}

@article{Baez1995,
  doi = {10.1063/1.531236},
  url = {https://doi.org/10.1063/1.531236},
  year = {1995},
  month = nov,
  publisher = {{AIP} Publishing},
  volume = {36},
  number = {11},
  pages = {6073--6105},
  author = {John C. Baez and James Dolan},
  title = {Higher-dimensional algebra and topological quantum field theory},
  journal = {Journal of Mathematical Physics}
}

@Inbook{Hofer1995,
author="Hofer, Helmut
and Salamon, Dietmar A.",
editor="Hofer, Helmut
and Taubes, Clifford H.
and Weinstein, Alan
and Zehnder, Eduard",
title="Floer homology and Novikov rings",
bookTitle="The Floer Memorial Volume",
year="1995",
publisher="Birkh{\"a}user Basel",
address="Basel",
pages="483--524",
abstract="We prove the Arnold conjecture for compact symplectic manifolds under the assumption that either the first Chern class of the tangent bundle vanishes over $\pi$2(M) or the minimal Chern number is at least half the dimension of the manifold. This includes the important class of Calabi-Yau manifolds. The key observation is that the Floer homology groups of the loop space form a module over Novikov's ring of generalized Laurent series. The main difficulties to overcome are the presence of holomorphic spheres and the fact that the action functional is only well defined on the universal cover of the loop space with a possibly dense set of critical levels.",
isbn="978-3-0348-9217-9",
doi="10.1007/978-3-0348-9217-9_20",
url="https://doi.org/10.1007/978-3-0348-9217-9_20"
}

@misc{Moore-Segal,
Author = {Gregory W. Moore and Graeme Segal},
Title = {D-branes and K-theory in 2D topological field theory},
Year = {2006},
Eprint = {arXiv:hep-th/0609042},
}

@InProceedings{BlumbergCohenTeleman,
  author       = "Andrew J. Blumberg, Ralph L. Cohen, Constantin Teleman",
  title        = "Open-closed field theories, string topology, and Hochschild homology",
  booktitle    = "Alpine Perspectives on Algebraic Topology",
  year         = "2009",
  editor       = "C. Ausoni, K. Hess, and J. Scherer",
  volume       = "504",
  number       = "",
  series       = "Contemporary Mathematics",
  pages        = "",
  month        = "",
  address      = "",
  organization = "",
  publisher    = "American Mathematical Society",
  note         = "",
  annote       = ""
}

@misc{lurie-tqft,
Author = {Jacob Lurie},
Title = {On the Classification of Topological Field Theories},
Year = {2009},
Eprint = {arXiv:0905.0465},
}

@article{atiyah-tqft,
     author = {Atiyah, Michael F.},
     title = {Topological quantum field theory},
     journal = {Publications Math\'ematiques de l'IH\'ES},
     pages = {175--186},
     publisher = {Institut des Hautes \'Etudes Scientifiques},
     volume = {68},
     year = {1988},
     zbl = {0692.53053},
     mrnumber = {90e:57059},
     language = {en},
     url = {http://www.numdam.org/item/PMIHES_1988__68__175_0/}
}

@book {Seidel2008,
    AUTHOR = {Seidel, Paul},
     TITLE = {Fukaya categories and {P}icard-{L}efschetz theory},
    SERIES = {Zurich Lectures in Advanced Mathematics},
 PUBLISHER = {European Mathematical Society (EMS), Z\"{u}rich},
      YEAR = {2008},
     PAGES = {viii+326},
      ISBN = {978-3-03719-063-0},
   MRCLASS = {53D40 (16E45 32Q65 53D12)},
  MRNUMBER = {2441780},
MRREVIEWER = {Timothy Perutz},
       DOI = {10.4171/063},
       URL = {https://doi.org/10.4171/063},
}

@article {Kapranov2016,
    AUTHOR = {Kapranov, M. and Kontsevich, M. and Soibelman, Y.},
     TITLE = {Algebra of the infrared and secondary polytopes},
   JOURNAL = {Adv. Math.},
  FJOURNAL = {Advances in Mathematics},
    VOLUME = {300},
      YEAR = {2016},
     PAGES = {616--671},
      ISSN = {0001-8708},
   MRCLASS = {17B55 (16E45 18E30 52B20 53D37 81R05)},
  MRNUMBER = {3534842},
MRREVIEWER = {Leandro R. Cagliero},
       DOI = {10.1016/j.aim.2016.03.028},
       URL = {https://doi.org/10.1016/j.aim.2016.03.028},
}

@InProceedings{SeidelVanishingCycles,
author="Seidel, Paul",
editor="Casacuberta, Carles
and Mir{\'o}-Roig, Rosa Maria
and Verdera, Joan
and Xamb{\'o}-Descamps, Sebasti{\`a}",
title="Vanishing Cycles and Mutation",
booktitle="European Congress of Mathematics",
year="2001",
publisher="Birkh{\"a}user Basel",
address="Basel",
pages="65--85",
abstract="Using Floer cohomology, we establish a connection between PicardLefschetz theory and the notion of mutation of exceptional collections in homological algebra.",
isbn="978-3-0348-8266-8"
}

@misc{Witten-Analytic-Chern-Simons,
Author = {Edward Witten},
Title = {Analytic Continuation Of Chern-Simons Theory},
Year = {2010},
Eprint = {arXiv:1001.2933},
}

@article{witten1993algebraic,
  title={Algebraic geometry associated with matrix models of two-dimensional gravity},
  author={Witten, Edward},
  journal={Topological methods in modern mathematics (Stony Brook, NY, 1991)},
  volume={235},
  year={1993}
}

@article{Fan2013,
  doi = {10.4007/annals.2013.178.1.1},
  url = {https://doi.org/10.4007/annals.2013.178.1.1},
  year = {2013},
  month = jul,
  publisher = {Annals of Mathematics},
  volume = {178},
  number = {1},
  pages = {1--106},
  author = {Huijun Fan and Tyler Jarvis and Yongbin Ruan},
  title = {The Witten equation,  mirror symmetry,  and quantum singularity theory},
  journal = {Annals of Mathematics}
}

@article{fan2018fukaya,
  title={Fukaya category of Landau-Ginzburg model},
  author={Fan, Huijun and Jiang, Wenfeng and Yang, Dingyu},
  journal={arXiv preprint arXiv:1812.11748},
  year={2018}
}

@article {Haydys2015,
    AUTHOR = {Haydys, Andriy},
     TITLE = {Fukaya-{S}eidel category and gauge theory},
   JOURNAL = {J. Symplectic Geom.},
  FJOURNAL = {The Journal of Symplectic Geometry},
    VOLUME = {13},
      YEAR = {2015},
    NUMBER = {1},
     PAGES = {151--207},
      ISSN = {1527-5256},
   MRCLASS = {53D37},
  MRNUMBER = {3338833},
MRREVIEWER = {Hai-Long Her},
       DOI = {10.4310/JSG.2015.v13.n1.a5},
       URL = {https://doi.org/10.4310/JSG.2015.v13.n1.a5},
}

@misc{Gaiotto2015,
  doi = {10.48550/ARXIV.1506.04087},
  url = {https://arxiv.org/abs/1506.04087},
  author = {Gaiotto, Davide and Moore, Gregory W. and Witten, Edward},
  keywords = {High Energy Physics - Theory (hep-th), Algebraic Geometry (math.AG), Symplectic Geometry (math.SG), FOS: Physical sciences, FOS: Physical sciences, FOS: Mathematics, FOS: Mathematics},
  title = {Algebra of the Infrared: String Field Theoretic Structures in Massive ${\cal N}=(2,2)$ Field Theory In Two Dimensions},
  publisher = {arXiv},
  year = {2015},
  copyright = {arXiv.org perpetual, non-exclusive license}
}

@article{kapustin-rozansky-saulinas,
title = {Three-dimensional topological field theory and symplectic algebraic geometry I},
journal = {Nuclear Physics B},
volume = {816},
number = {3},
pages = {295-355},
year = {2009},
issn = {0550-3213},
doi = {https://doi.org/10.1016/j.nuclphysb.2009.01.027},
url = {https://www.sciencedirect.com/science/article/pii/S0550321309000583},
author = {Anton Kapustin and Lev Rozansky and Natalia Saulina},
abstract = {We study boundary conditions and defects in a three-dimensional topological sigma-model with a complex symplectic target space X (the Rozansky–Witten model). We show that boundary conditions correspond to complex Lagrangian submanifolds in X equipped with complex fibrations. The set of boundary conditions has the structure of a 2-category; morphisms in this 2-category are interpreted physically as one-dimensional defect lines separating parts of the boundary with different boundary conditions. This 2-category is a categorification of the Z2-graded derived category of X; it is also related to categories of matrix factorizations and a categorification of deformation quantization (quantization of symmetric monoidal categories). In Appendix B we describe a deformation of the B-model and the associated category of branes by forms of arbitrary even degree.}
}

@misc{kapustin-rozansky,
Author = {Anton Kapustin and Lev Rozansky},
Title = {Three-dimensional topological field theory and symplectic algebraic geometry II},
Year = {2009},
Eprint = {arXiv:0909.3643},
}

@misc{gaitsgory-sheaves-of-categories,
Author = {Dennis Gaitsgory},
Title = {Sheaves of categories and the notion of 1-affineness},
Year = {2013},
Eprint = {arXiv:1306.4304},
}

@misc{arinkin-singular-support, 
author={D. Arinkin}, 
title={Singular support of categories}, 
year=2020,
howpublished={Talk at M-seminar. Notes/video available at
\url{https://www.math.k-state.edu/research/m-center/past_seminars/2020.html}.}
}

@article{BPLW,
  title={Quantizations of conical symplectic resolutions II: category O  and symplectic duality},
  author={Braden, Tom and Licata, Anthony and Proudfoot, Nicholas and Webster, Ben},
  journal={arXiv preprint arXiv:1407.0964},
  year={2014}
}

@article{NadlerZaslow,
  title={Constructible sheaves and the Fukaya category},
  author={Nadler, David and Zaslow, Eric},
  journal={Journal of the American Mathematical Society},
  volume={22},
  number={1},
  pages={233--286},
  year={2009}
}

@article{kontsevich2000homological,
  title={Homological mirror symmetry and torus fibrations},
  author={Kontsevich, Maxim and Soibelman, Yan},
  journal={arXiv preprint math/0011041},
  year={2000}
}

@phdthesis{bottman-thesis,
  author       = {Nathaniel Bottman}, 
  title        = {A Monoidal Structure for the Fukaya Category},
  school       = {Harvard University},
  year         = 2010,
}

@misc{gammage-shende,
Author = {Benjamin Gammage and Vivek Shende},
Title = {Mirror symmetry for very affine hypersurfaces},
Year = {2017},
Eprint = {arXiv:1707.02959},
}

@article{Abbondandolo2005,
  doi = {10.1002/cpa.20090},
  url = {https://doi.org/10.1002/cpa.20090},
  year = {2005},
  publisher = {Wiley},
  volume = {59},
  number = {2},
  pages = {254--316},
  author = {Alberto Abbondandolo and Matthias Schwarz},
  title = {On the Floer homology of cotangent bundles},
  journal = {Communications on Pure and Applied Mathematics}
}

@article{Balmer2005,
  doi = {10.1515/crll.2005.2005.588.149},
  url = {https://doi.org/10.1515/crll.2005.2005.588.149},
  year = {2005},
  month = nov,
  publisher = {Walter de Gruyter {GmbH}},
  volume = {2005},
  number = {588},
  pages = {149--168},
  author = {Paul Balmer},
  title = {The spectrum of prime ideals in tensor triangulated categories},
  journal = {Journal f\"{u}r die reine und angewandte Mathematik (Crelles Journal)}
}

@article{TelemanMorseBott,
  doi = {10.1215/00127094-2019-0048},
  url = {https://doi.org/10.1215/00127094-2019-0048},
  year = {2020},
  month = feb,
  publisher = {Duke University Press},
  volume = {169},
  number = {3},
  author = {Constantin Teleman},
  title = {Matrix factorization of Morse{\textendash}Bott functions},
  journal = {Duke Mathematical Journal}
}

@misc{teleman-icm,
Author = {Constantin Teleman},
Title = {Gauge theory and mirror symmetry},
Year = {2014},
Eprint = {arXiv:1404.6305},
}

@InCollection{Orlov04,
 Author = {Orlov, D. O.},
 Title = {Triangulated categories of singularities and {D}-branes in {Landau}-{Ginzburg} models},
 BookTitle = {Algebraic geometry. Methods, relations, and applications. Collected papers. Dedicated to the memory of Andrei Nikolaevich Tyurin.},
 Pages = {227--248},
 Year = {2004},
 Publisher = {Moscow: Maik Nauka/Interperiodica},
 Language = {English},
 Keywords = {81T30,81T40,14E15,18E30},
 zbMATH = {5071248},
 Zbl = {1101.81093}
}

@article {Gaiotto2017,
    AUTHOR = {Gaiotto, Davide and Moore, Gregory W. and Witten, Edward},
     TITLE = {An introduction to the web-based formalism},
   JOURNAL = {Confluentes Math.},
  FJOURNAL = {Confluentes Mathematici},
    VOLUME = {9},
      YEAR = {2017},
    NUMBER = {2},
     PAGES = {5--48},
   MRCLASS = {81T30 (37K40 53D37 81Q60 81T40)},
  MRNUMBER = {3745160},
       DOI = {10.5802/cml.40},
       URL = {https://doi.org/10.5802/cml.40},
}

@misc{ebeling-singularities-survey,
Author = {Wolfgang Ebeling},
Title = {Homological mirror symmetry for singularities},
Year = {2016},
Eprint = {arXiv:1601.06027},
}

@article{Khovanov2001,
  doi = {10.1090/s0894-0347-01-00374-5},
  url = {https://doi.org/10.1090/s0894-0347-01-00374-5},
  year = {2001},
  month = sep,
  publisher = {American Mathematical Society ({AMS})},
  volume = {15},
  number = {1},
  pages = {203--271},
  author = {Mikhail Khovanov and Paul Seidel},
  title = {Quivers,  Floer cohomology,  and braid group actions},
  journal = {Journal of the American Mathematical Society}
}

@article{Berglund1993,
  doi = {10.1016/0550-3213(93)90250-s},
  url = {https://doi.org/10.1016/0550-3213(93)90250-s},
  year = {1993},
  month = mar,
  publisher = {Elsevier {BV}},
  volume = {393},
  number = {1-2},
  pages = {377--391},
  author = {Per Berglund and Tristan H\"{u}bsch},
  title = {A generalized construction of mirror manifolds},
  journal = {Nuclear Physics B}
}

@incollection {Donaldson1998,
    AUTHOR = {Donaldson, S. K. and Thomas, R. P.},
     TITLE = {Gauge theory in higher dimensions},
 BOOKTITLE = {The geometric universe ({O}xford, 1996)},
     PAGES = {31--47},
 PUBLISHER = {Oxford Univ. Press, Oxford},
      YEAR = {1998},
   MRCLASS = {57R57 (14J32 32J18 53C07 57R58 58D27)},
  MRNUMBER = {1634503},
MRREVIEWER = {Krzysztof Galicki},
}

@misc{Wang2022,
  doi = {10.48550/ARXIV.2209.02810},
  url = {https://arxiv.org/abs/2209.02810},
  author = {Wang, Donghao},
  keywords = {Symplectic Geometry (math.SG), Geometric Topology (math.GT), FOS: Mathematics, FOS: Mathematics, 57R17, 57R58, 57K31},
  title = {The Complex Gradient Flow Equation and Seidel's Spectral Sequence},
  publisher = {arXiv},
  year = {2022},
  copyright = {Creative Commons Attribution 4.0 International}
}

@misc{nakajima-coloumb-1,
Author = {Hiraku Nakajima},
Title = {Towards a mathematical definition of Coulomb branches of $3$-dimensional $\mathcal N=4$ gauge theories, I},
Year = {2015},
Eprint = {arXiv:1503.03676},
}

@book{LurieHA, 
author={Jacob Lurie},
title={Higher Algebra}, 
year={2017}}

@book{gaitsgory-rozenblym,
    title = {A Study in Derived Algebraic Geometry},
    author = {Dennis Gaitsgory and Nick Rozenblyum},
    year = {2017},
    month = {08},
    day = {30},
    pagecount = {1016},
    isbn = {9781470435684}
}

@misc{gaitsgory-on-traces,
Author = {D. Gaitsgory and D. Kazhdan and N. Rozenblyum and Y. Varshavsky},
Title = {A toy model for the Drinfeld-Lafforgue shtuka construction},
Year = {2019},
Eprint = {arXiv:1908.05420},
}

@Inbook{vanderPut2003,
author="van der Put, Marius
and Singer, Michael F.",
title="Monodromy, the Riemann-Hilbert Problem, and the Differential Galois Group",
bookTitle="Galois Theory of Linear Differential Equations",
year="2003",
publisher="Springer Berlin Heidelberg",
address="Berlin, Heidelberg",
pages="143--155",
abstract="Let U be an open connected subset of the complex sphere P1=C∪{\{}∞{\}} and let Y{\textasciiacutex}=AY be a differential equation on U, with A an n{\texttimes}n matrix with coefficients that are meromorphic functions on U. We assume that the equation is regular at every point p∈U. Thus, for any point p∈U, the equation has n independent solutions y1,{\ldots}, ynconsisting of vectors with coordinates in C({\{}z−p{\}}). It is known ([132], chap. 9; [225], p. 5) that these solutions converge in a disk of radius $\rho$, where $\rho$ is the distance from p to the complement of U. These solutions span an n-dimensional vector space denoted by Vp. If we let Fpbe a matrix whose columns are the n independent solutions y1,{\ldots}, ynthen Fpis a fundamental matrix with entries in C({\{}z−p{\}}). One can normalize Fpsuch that Fp(p) is the identity matrix. The question we are interested in is:Does there exist on all of U, a solution space for the equation having dimension n?",
isbn="978-3-642-55750-7",
doi="10.1007/978-3-642-55750-7_5",
url="https://doi.org/10.1007/978-3-642-55750-7_5"
}

@misc{MasseyPerverseSheafNotes,
Author = {David B. Massey},
Title = {Notes on Perverse Sheaves and Vanishing Cycles},
Year = {1999},
Eprint = {arXiv:math/9908107},
}

@article{Brav2015,
  doi = {10.5427/jsing.2015.11e},
  url = {https://doi.org/10.5427/jsing.2015.11e},
  year = {2015},
  publisher = {Journal of Singularities},
  author = {Christopher Brav and Vittoria Bussi and Delphine Dupont and Dominic Joyce and Bal{\'{a}}zs Szendroi and J\"{o}rg Sch\"{u}rmann},
  title = {Symmetries and stabilization for sheaves of vanishing cycles},
  journal = {Journal of Singularities}
}

@article{Solomon2019,
  doi = {10.1112/s0010437x1900753x},
  url = {https://doi.org/10.1112/s0010437x1900753x},
  year = {2019},
  month = sep,
  publisher = {Wiley},
  volume = {155},
  number = {10},
  pages = {1924--1958},
  author = {Jake P. Solomon and Misha Verbitsky},
  title = {Locality in the Fukaya category of a~hyperk\"{a}hler manifold},
  journal = {Compositio Mathematica}
}

@misc{Abouzaid-Ganatra, 
author={M. Abouzaid and S. Ganatra.}, 
title = {Generating Fukaya categories of Landau-Ginzburg models.}, 
howpublished={Talks at the MIT workshop on Lefschetz fibrations}, 
year={2015} 
}

@misc{PIRSA-khan,
author={Tudor Dimofte and Benjamin Gammage and Justin Hilburn and Ahsan Khan},
title={Discussion on 3d A-model},
howpublished={Videotaped discussion at the Perimiter Institute, \url{https://pirsa.org/22060086}},
year={2022}
}

@article{GPS1,
  doi = {10.1007/s10240-019-00112-x},
  url = {https://doi.org/10.1007/s10240-019-00112-x},
  year = {2019},
  month = aug,
  publisher = {Springer Science and Business Media {LLC}},
  volume = {131},
  number = {1},
  pages = {73--200},
  author = {Sheel Ganatra and John Pardon and Vivek Shende},
  title = {Covariantly functorial wrapped Floer theory on Liouville sectors},
  journal = {Publications math{\'{e}}matiques de l{\textquotesingle}{IH}{\'{E}}S}
}

@article{AbouzaidGeneration,
  doi = {10.1007/s10240-010-0028-5},
  url = {https://doi.org/10.1007/s10240-010-0028-5},
  year = {2010},
  month = oct,
  publisher = {Springer Science and Business Media {LLC}},
  volume = {112},
  number = {1},
  pages = {191--240},
  author = {Mohammed Abouzaid},
  title = {A geometric criterion for generating the Fukaya category},
  journal = {Publications math{\'{e}}matiques de l{\textquotesingle}{IH}{\'{E}}S}
}

@misc{SeidelSymplecticHomology,
Author = {Paul Seidel},
Title = {Symplectic homology as Hochschild homology},
Year = {2006},
Eprint = {arXiv:math/0609037},
}

@misc{Kontsevich2020,
  doi = {10.48550/ARXIV.2005.10651},
  url = {https://arxiv.org/abs/2005.10651},
  author = {Kontsevich, Maxim and Soibelman, Yan},
  keywords = {Algebraic Geometry (math.AG), High Energy Physics - Theory (hep-th), Mathematical Physics (math-ph), Complex Variables (math.CV), Symplectic Geometry (math.SG), FOS: Mathematics, FOS: Mathematics, FOS: Physical sciences, FOS: Physical sciences, 32C99, 14J81, 81Q20},
  title = {Analyticity and resurgence in wall-crossing formulas},
  publisher = {arXiv},
  year = {2020},
  copyright = {arXiv.org perpetual, non-exclusive license}
}

@misc{GPS,
Author = {Sheel Ganatra and John Pardon and Vivek Shende},
Title = {Microlocal Morse theory of wrapped Fukaya categories},
Year = {2018},
Eprint = {arXiv:1809.08807},
}

@article{Sylvan2019,
  doi = {10.1112/topo.12088},
  url = {https://doi.org/10.1112/topo.12088},
  year = {2019},
  month = jan,
  publisher = {Wiley},
  volume = {12},
  number = {2},
  pages = {372--441},
  author = {Zachary Sylvan},
  title = {On partially wrapped Fukaya categories},
  journal = {Journal of Topology}
}

@misc{Esfahani2022,  
  author = {Esfahani, Saman}, 
  title = {Monopole Fueter Floer homology (in preparation)}, 
}

@book{loday2013cyclic,
  title={Cyclic homology},
  author={Loday, Jean-Louis},
  volume={301},
  year={2013},
  publisher={Springer Science \& Business Media}
}

@article{ganatra2019cyclic,
  title={Cyclic homology, $S^1$-equivariant floer cohomology, and calabi-yau structures},
  author={Ganatra, Sheel},
  journal={arXiv preprint arXiv:1912.13510},
  year={2019}
}

@article{gammage2022betti,
  title={Betti Tate's thesis and the trace of perverse schobers},
  author={Gammage, Benjamin and Hilburn, Justin},
  journal={arXiv preprint arXiv:2210.06548},
  year={2022}
}

@misc{Bousseau2022,
Author = {Pierrick Bousseau},
Title = {Holomorphic Floer theory and Donaldson-Thomas invariants},
Year = {2022},
Eprint = {arXiv:2210.17001},
}

@article {Doan2021,
    AUTHOR = {Doan, Aleksander and Walpuski, Thomas},
     TITLE = {On the existence of harmonic {$\rm Z_2$} spinors},
   JOURNAL = {J. Differential Geom.},
  FJOURNAL = {Journal of Differential Geometry},
    VOLUME = {117},
      YEAR = {2021},
    NUMBER = {3},
     PAGES = {395--449},
      ISSN = {0022-040X},
   MRCLASS = {53C27 (57K41)},
  MRNUMBER = {4255067},
MRREVIEWER = {Nicolas Ginoux},
       DOI = {10.4310/jdg/1615487003},
       URL = {https://doi.org/10.4310/jdg/1615487003},
}

@misc{Taubes2014,
  doi = {10.48550/ARXIV.1407.6206},
  url = {https://arxiv.org/abs/1407.6206},
  author = {Taubes, Clifford Henry},
  keywords = {Differential Geometry (math.DG), FOS: Mathematics, FOS: Mathematics, 53C07, 57R57},
  title = {The zero loci of Z/2 harmonic spinors in dimension 2, 3 and 4},
  publisher = {arXiv},
  year = {2014},
  copyright = {arXiv.org perpetual, non-exclusive license}
}

@article {Takahashi2018,
    AUTHOR = {Takahashi, Ryosuke},
     TITLE = {Index theorem for {$\Bbb Z/2$}-harmonic spinors},
   JOURNAL = {Math. Res. Lett.},
  FJOURNAL = {Mathematical Research Letters},
    VOLUME = {25},
      YEAR = {2018},
    NUMBER = {5},
     PAGES = {1645--1671},
      ISSN = {1073-2780},
   MRCLASS = {58J20 (53C27 58D27 70S15)},
  MRNUMBER = {3917743},
MRREVIEWER = {Sebastian Goette},
       DOI = {10.4310/MRL.2018.v25.n5.a13},
       URL = {https://doi.org/10.4310/MRL.2018.v25.n5.a13},
}

@article {Donaldson2021,
    AUTHOR = {Donaldson, Simon},
     TITLE = {Deformations of multivalued harmonic functions},
   JOURNAL = {Q. J. Math.},
  FJOURNAL = {The Quarterly Journal of Mathematics},
    VOLUME = {72},
      YEAR = {2021},
    NUMBER = {1-2},
     PAGES = {199--235},
      ISSN = {0033-5606},
   MRCLASS = {53C43 (31C12 42B10 47F10 58H15)},
  MRNUMBER = {4271385},
MRREVIEWER = {Kamil Niedzia\l omski},
       DOI = {10.1093/qmath/haab018},
       URL = {https://doi.org/10.1093/qmath/haab018},
}

@article {Haydys2012,
    AUTHOR = {Haydys, Andriy},
     TITLE = {Gauge theory, calibrated geometry and harmonic spinors},
   JOURNAL = {J. Lond. Math. Soc. (2)},
  FJOURNAL = {Journal of the London Mathematical Society. Second Series},
    VOLUME = {86},
      YEAR = {2012},
    NUMBER = {2},
     PAGES = {482--498},
      ISSN = {0024-6107},
   MRCLASS = {53C38 (53C07 53C29)},
  MRNUMBER = {2980921},
       DOI = {10.1112/jlms/jds008},
       URL = {https://doi.org/10.1112/jlms/jds008},
}

@article {Haydys2022,
    AUTHOR = {Haydys, Andriy},
     TITLE = {Seiberg-{W}itten monopoles and flat {${\rm PSL}(2, \Bbb
              R)$}-connections},
   JOURNAL = {Adv. Math.},
  FJOURNAL = {Advances in Mathematics},
    VOLUME = {409},
      YEAR = {2022},
     PAGES = {Paper No. 108686},
      ISSN = {0001-8708},
   MRCLASS = {Prelim},
  MRNUMBER = {4483244},
       DOI = {10.1016/j.aim.2022.108686},
       URL = {https://doi.org/10.1016/j.aim.2022.108686},
}

@article {Mazzeo2020,
    AUTHOR = {Mazzeo, Rafe and Witten, Edward},
     TITLE = {The {KW} equations and the {N}ahm pole boundary condition with
              knots},
   JOURNAL = {Comm. Anal. Geom.},
  FJOURNAL = {Communications in Analysis and Geometry},
    VOLUME = {28},
      YEAR = {2020},
    NUMBER = {4},
     PAGES = {871--942},
      ISSN = {1019-8385},
   MRCLASS = {57K41 (53C07 57K14)},
  MRNUMBER = {4165312},
MRREVIEWER = {Derek G. Harland},
       DOI = {10.4310/CAG.2020.v28.n4.a4},
       URL = {https://doi.org/10.4310/CAG.2020.v28.n4.a4},
}

@misc{Daemi2021,
  doi = {10.48550/ARXIV.2109.07038},
  url = {https://arxiv.org/abs/2109.07038},
  author = {Daemi, Aliakbar and Fukaya, Kenji and Lipyanskiy, Maksim},
  keywords = {Differential Geometry (math.DG), Geometric Topology (math.GT), Symplectic Geometry (math.SG), FOS: Mathematics, FOS: Mathematics, 58J05 47A53},
  title = {Lagrangians, SO(3)-instantons and mixed equation},
  publisher = {arXiv},
  year = {2021},
  copyright = {arXiv.org perpetual, non-exclusive license}
}

@book {Cieliebak2012,
    AUTHOR = {Cieliebak, Kai and Eliashberg, Yakov},
     TITLE = {From {S}tein to {W}einstein and back},
    SERIES = {American Mathematical Society Colloquium Publications},
    VOLUME = {59},
      NOTE = {Symplectic geometry of affine complex manifolds},
 PUBLISHER = {American Mathematical Society, Providence, RI},
      YEAR = {2012},
     PAGES = {xii+364},
      ISBN = {978-0-8218-8533-8},
   MRCLASS = {53-02 (32E10 32Q65 53C15 53D05 58E05)},
  MRNUMBER = {3012475},
MRREVIEWER = {Chris M. Wendl},
       DOI = {10.1090/coll/059},
       URL = {https://doi.org/10.1090/coll/059},
}

\end{document}